\newcommand*{\mailto}[1]{\href{mailto:#1}{\nolinkurl{#1}}}
\newcommand{\msc}[1]{\href{http://www.ams.org/msc/msc2020.html?t=&s=#1}{#1}}
\newcommand{\qbinom}[2] {{#1 \brack #2}_q}
\newtheorem{theorem}{Theorem}[section]
\newtheorem{lemma}[theorem]{Lemma}
\newtheorem{proposition}[theorem]{Proposition}
\newtheorem{corollary}[theorem]{Corollary}
\theoremstyle{definition}
\newtheorem{definition}[theorem]{Definition}
\newtheorem{example}[theorem]{Example}
\theoremstyle{remark}
\newtheorem{remark}[theorem]{Remark}
\newcommand{\R}{{\mathbb R}}
\newcommand{\N}{{\mathbb N}}
\newcommand{\Z}{{\mathbb Z}}
\newcommand{\C}{{\mathbb C}}
\newcommand{\mlam}{\boldsymbol{\lambda}}
\newcommand{\mgam}{\boldsymbol{\gamma}}
\newcommand{\mulk}{\boldsymbol{{\rm k}}}
\newcommand{\wand}{\mathrm{D}}
\newcommand{\Wr}{\mathsf{w}}
\newcommand{\E}{\mathrm{e}}
\newcommand{\I}{\mathrm{i}}
\newcommand{\sgn}{\mathrm{sgn}}
\newcommand{\supp}{\mathrm{supp}}
\newcommand{\loc}{{\mathrm{loc}}}
\newcommand{\cc}{{\mathrm{c}}}
\newcommand{\wt}{\tilde}
\newcommand{\eps}{\varepsilon}
\newcommand{\cI}{\mathcal{I}}
\newcommand{\OO}{\mathcal{O}}
\newcommand{\oo}{o}
\newcommand{\ledot}{\,\cdot\,}
\newcommand{\redot}{\cdot\,}
\newcommand{\id}{{\mathbbm 1}}
\newcommand{\qd}{{[1]}}
\newcommand{\dip}{\upsilon}
\newcommand{\SM}{\mathcal{R}}
\newcommand{\Peakons}{\mathcal{P}}
\newcommand{\CHdom}{\mathcal{D}}
\newcommand{\NLz}{(z,0-)}
\newcommand{\NL}{(0-)}
\newcommand{\hyp}[5]{\,\mbox{}_{#1}F_{#2}\!\left(
  \genfrac{}{}{0pt}{}{#3}{#4};#5\right)}
\newcommand{\Hm}[1]{\leavevmode{\marginpar{\tiny%
			$\hbox to 0mm{\hspace*{-0.5mm}$\leftarrow$\hss}%
			\vcenter{\vrule depth 0.1mm height 0.1mm width \the\marginparwidth}%
			\hbox to
			0mm{\hss$\rightarrow$\hspace*{-0.5mm}}$\\\relax\raggedright #1}}}  
\newcommand{\dlmf}[1]{%
\cite[%
 \def\nextitem{\def\nextitem{, }}%
%  \@for \el:=#1\do{\nextitem\href{http://dlmf.nist.gov/\el}{(\el)}}%
 \@for \el:=#1\do{\nextitem\expandafter\dlmf@eq@href\el...\end}%
]{dlmf}%
}
\def\dlmf@eq@href#1.#2.#3.#4\end{%
  \href{http://dlmf.nist.gov/#1.#2.E#3}{(#1.#2.#3)}}
\newcommand{\eat}[1]{}
\numberwithin{equation}{section}
\begin{document}

\title[Infinite-peakon solutions]{Infinite-peakon solutions of the Camassa--Holm equation}

\author[X.-K.\ Chang]{Xiang-Ke Chang}
\address{SKLMS \& ICMSEC, Academy of Mathematics and Systems Science, Chinese Academy of Sciences, Beijing 100190, P.\ R.\ China, and School of Mathematical Sciences, University of Chinese Academy of Sciences, Beijing 100049, P.\ R.\ China}
\email{\mailto{changxk@lsec.cc.ac.cn}}
\thanks{X.-K.\ Chang was supported by the National Natural Science Foundation of China (NSFC) under Grants No.~12222119 and~12288201, and by the Youth Innovation Promotion Association CAS}

\author[J.\ Eckhardt]{Jonathan Eckhardt}
\address{Department of Mathematical Sciences\\ Loughborough University\\ Epinal Way\\ Loughborough\\ Leicestershire LE11 3TU \\ UK}
\email{\mailto{J.Eckhardt@lboro.ac.uk}}

\author[A.\ Kostenko]{Aleksey Kostenko}
\address{Faculty of Mathematics and Physics\\ University of Ljubljana\\ Jadranska ul.\ 19\\ 1000 Ljubljana\\ Slovenia\\ and 
Institute for Analysis and Scientific Computing\\ Vienna University of Technology\\ Wiedner Hauptstra\ss e 8-10/101\\1040 Vienna\\ Austria}
\email{\mailto{Aleksey.Kostenko@fmf.uni-lj.si}}
\thanks{\ \ \ \ A.\ Kostenko was supported by the Slovenian Research Agency (ARIS) under Grants No.~N1-0137 and~P1-0291 and by the Austrian Science Fund (FWF) under Grant No.~I4600.}

%\thanks{... (to appear)}
%\thanks{\href{http://dx.doi.org/...}{... (to appear)}}
%\thanks{\href{http://dx.doi.org/...}{... {\bf X} (20XX), no.~X, pp--pp}}

\keywords{Conservative Camassa--Holm flow, solitons, multi-peakons, inverse spectral transform, Hamburger moment problem}
\subjclass[2020]{Primary \msc{37K15}, \msc{34L05}; Secondary \msc{34A55}, \msc{35Q51}}

\begin{abstract}
We study a class of (conservative) low regularity solutions to the Camassa--Holm equation on the line by exploiting the classical moment problem (in the framework of generalized indefinite strings) to develop the inverse spectral transform method. 
In particular, we identify explicitly the solutions that are amenable to this approach, which include solutions made up of infinitely many peaked solitons (peakons). 
We determine which part of the solution can be recovered from the moments of the underlying spectral measure and provide explicit formulas.
%One of our crucial observations is that the solution can be recovered completely only if either the moment problem is determinate or the corresponding spectral measure is an N-extremal solution in the indeterminate case. 
% In both cases, we provide explicit formulas for the solution in terms of the moments. 
We show that the solution can be recovered completely if the corresponding moment problem is determinate, in which case the solution is a (potentially infinite) superposition of peakons. However, we also explore the situation when the underlying moment problem is indeterminate.

  As an application, our results are then used to investigate the long-time behavior of solutions. 
 We will demonstrate this on three exemplary cases of solutions with: (i) discrete underlying spectrum (one may choose for this initial data corresponding to Al-Salam--Carlitz II polynomials; notice that they correspond to an indeterminate moment problem); (ii) step-like initial data associated with the Laguerre polynomials, and (iii) asymptotically eventually periodic initial data associated with the Jacobi polynomials.
 %Indeed, the proposed construction enables us to model different regimes, at least from the spectral perspective. 
 \end{abstract}

\maketitle

{\scriptsize{\tableofcontents}}

%%%%%%%%%%%%%%%%%%%%%%%%%%%%%%%% 
\section{Introduction}
%%%%%%%%%%%%%%%%%%%%%%%%%%%%%%%% 
 
%%%%%%%%%%%%%%%%%%%%%%%%%%%%%%%% 
\subsection{The Camassa--Holm equation and multi-peakons}
%%%%%%%%%%%%%%%%%%%%%%%%%%%%%%%% 

The Camassa--Holm equation is a nonlinear wave equation given by 
  \begin{equation}\label{eqnCH}
   u_{t} -u_{xxt}  = 2u_x u_{xx} - 3uu_x + u u_{xxx}.
  \end{equation}
It was first noticed by B.\ Fuchsteiner and A.\ S.\ Fokas~\cite{fofu81} to be formally integrable with Hamiltonians given by
 \begin{align}\label{eqnCHHam}
  H_1 & = \int u^2 + u_x^2\, dx, & H_2 & =  \frac{1}{2}\int u^3 + uu_x^2\, dx,
   \end{align}
   where integration is either over $\R$ or $\mathbb{T}$. 
 Its formulation as a non-local conservation law
\begin{align}
\label{eqnCHweak}
  u_t + u u_x + P_x  = 0, 
\end{align}
 where the source term $P$ is defined (for suitable functions $u$) as the convolution 
 \begin{align}\label{eq:PequCHdef}
 P = \frac{1}{2}\E^{-|\cdot|} \ast \biggl(u^2 + \frac{1}{2} u_x^2\biggr), 
 \end{align} 
is reminiscent of the three-dimensional incompressible Euler equation. However, its intensive study began only in the 1990s after the paper~\cite{caho93} by R.~Camassa and D.~D.~Holm, who, in particular, observed that~\eqref{eqnCH} is completely integrable, that is, enjoys a Lax pair structure. Among many interesting features (like finite time blow-up of smooth solutions that resembles wave-breaking to some extent~\cite{coes98,coes98b,mc04}), the Camassa--Holm equation exhibits soliton interaction, including peaked ones, called {\em peakons}, which is another fundamental discovery of~\cite{caho93}. 
More specifically, \eqref{eqnCHweak}--\eqref{eq:PequCHdef} admits so-called multi-peakon solutions of the form
\begin{align}\label{eqnMP}
  u(x,t) = \frac{1}{2}\sum_{n=1}^N p_n(t)\, \E^{-|x-q_n(t)|}, 
 \end{align}
 where the coefficients $p_n$ and $q_n$ satisfy the  system of ordinary differential equations 
 \begin{align}\label{eqnMPsys}
  \dot{q}_n  & = \frac{1}{2}\sum_{k=1}^N p_k\, \E^{-|q_n-q_k|}, &
  \dot{p}_n  & = \frac{1}{2}\sum_{k=1}^N p_n p_k\, \sgn(q_n - q_k)\, \E^{-|q_n-q_k|}.
 \end{align}
 We note that the system in~\eqref{eqnMPsys} is Hamiltonian, that is,  
 \begin{align}\label{eqnMPsysH}
  \dot{q}_n & = \frac{\partial H(p,q)}{\partial p_n}, & 
  \dot{p}_n & = - \frac{\partial H(p,q)}{\partial q_n}, 
 \end{align}
 with the Hamiltonian given by 
 \begin{align}\label{eq:H}
  H(p,q)= \frac{1}{4} \sum_{n,k=1}^N p_n p_k \, \E^{-|q_n-q_k|} =  2\|u\|^2_{H^1(\R)}.
 \end{align}
 
The isospectral problem for the Camassa--Holm equation is a Sturm--Liouville problem of the form 
  \begin{align}\label{eq:Spec}
  - f'' + \frac{1}{4}f=z\, \omega f,
  \end{align}
 where $\omega = u - u_{xx}$ (which will have to be understood in a distributional sense in general), is known as the {\em momentum}. 
 In particular, for $u$ given by~\eqref{eqnMP}, one has
  \begin{align}\label{eqnMPomega}
  \omega(x,t) = \sum_{n=1}^N p_n(t)\, \delta_{q_n(t)}(x), 
 \end{align}
 where $\delta_q$ is the Dirac delta-measure located at $q\in \R$.
 This spectral problem is connected via a simple change of variables with {\em Krein strings}, an object introduced by M.\ G.\ Krein in the 1950s~\cite{kr52,kakr74}.   The work of Krein can be viewed as a far reaching generalization of investigations of T.\ J.\ Stieltjes on the moment problem~\cite{sti}. In particular, the framework of Krein strings provides a mechanical interpretation of Stieltjes' work (see~\cite[Appendix]{akh} and~\cite[Section~13]{kakr74}) and thus one may use the latter to integrate multi-peakon solutions. This has been done by R.\ Beals, D.\ H.\ Sattinger and J.\ Szmigielski in~\cite{besasz00}, who found explicit expressions for the coefficients of multi-peakons using classical formulas of Stieltjes. 
 
 The Stieltjes moment problem (or, more generally, the setting of Krein strings) requires $\omega$ to be a positive Borel measure, so that all peakon heights $p_n$ in~\eqref{eqnMP} must be of one sign.
 This entails that all peakons move in the same direction and, moreover, that the positions $q_n$ of the peakons stay distinct for all times. 
  On the one hand, the formulas of Stieltjes are purely algebraic and hence continue to work for sign indefinite heights, but on the other hand, something peculiar happens -- it is not always possible to solve the corresponding inverse spectral problem in this case. 
  This means that for certain spectral data there is no measure $\omega$ such that the problem~\eqref{eq:Spec} has the given spectral data. In fact, this happens exactly at times when peakons collide. Namely, unlike the Korteweg--de Vries equation, the Camassa--Holm equation has solitons that travel in either direction. When allowing peakons with heights of different sign, one sees that peakons with positive heights move to the right whereas peakons with negative heights move to the left, which leads to collisions. The most illustrative example of such a situation is a symmetric peakon-antipeakon interaction, where the solution $u$ is given by  
\begin{align}\label{eq:PeakPM}
u(x,t) = \frac{1}{2}\left(p(t)\E^{-|x+q(t)|} - p(t)\E^{-|x-q(t)|}\right).
\end{align}
Explicit formulas for $p$ and $q$ can be found in~\cite{caho93} for example. If both $p(0)$ and $q(0)$ are positive, then at a certain time $t^\times>0$ the peakons meet and $u$ tends pointwise to zero as $t\uparrow t^\times$. Moreover, all this happens in such a way that the solution $u$ stays uniformly bounded but its derivative develops a singularity at the point where the two peakons collide; for $u$ given by~\eqref{eq:PeakPM}, one has $u_x(0,t)\to -\infty$ as $t\uparrow t^\times$ (see Figure~\ref{fig:peak-antipeak}). 
\begin{figure}[h]
  \centering
  % Requires \usepackage{graphicx}
  \includegraphics[width=1\textwidth]{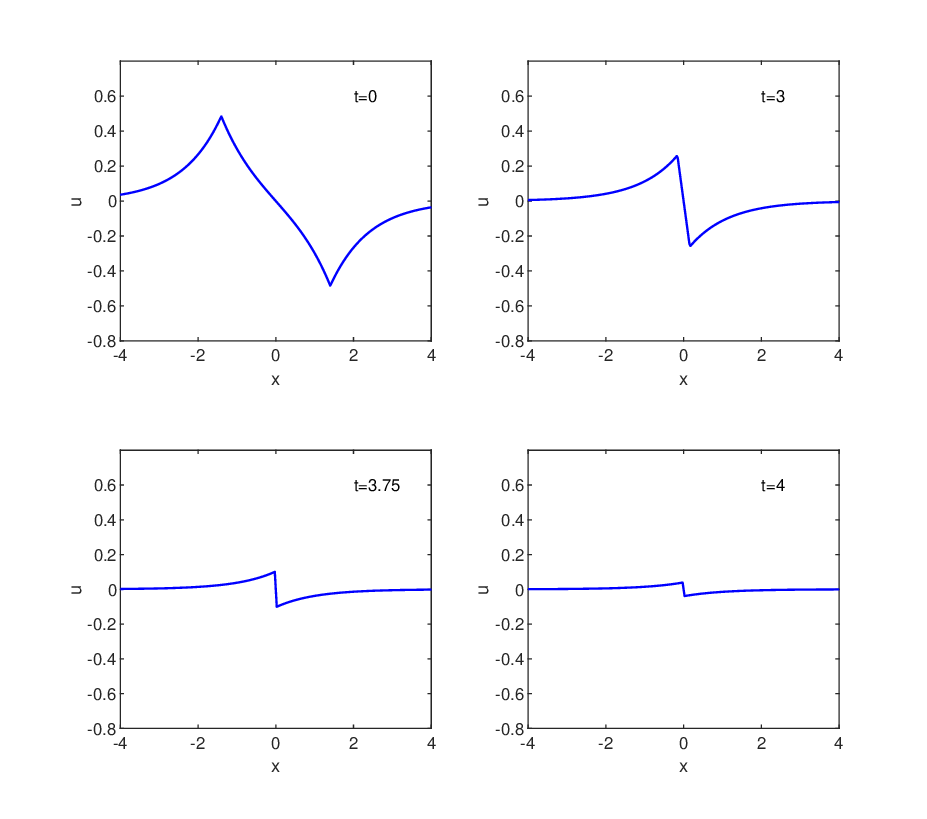}
  \caption{{\small  Peakon-antipeakon interaction:\\ Snapshots of $u(x,t)$ with $p(0)=\nicefrac{65}{63}$ and $q(0)=\log\nicefrac{65}{16}$;
  % at times $t=0$, $3$, $3.75$, $4$; 
  the collision time is $t^\times=\log 64 \approx4.16$.}}%\approx 4\nicefrac{10}{63}$.}}
  \label{fig:peak-antipeak} 
  \end{figure}
 Even for general multi-peakons, such collisions always happen in pairs, that is, it is not possible for three or more peakons to collide at a single point (see~\cite{besasz00,besasz01,ConservMP} for example).

It was noticed independently in~\cite{brco07} and~\cite{hora07} that when solutions blow up, the energy concentrates on sets of Lebesgue measure zero (in the peakon-antipeakon interaction described above, the whole energy concentrates at the point of collision). 
This led to the notion of {\em conservative solutions}, which are weak solutions $(u,\mu)$ of the system 
  \begin{align}
 \begin{split}\label{eqnOurCH}
  u_t + u u_x + P_x & = 0, \\
  \mu_t + (u\mu)_x & = (u^3 - 2Pu)_x, 
 \end{split}
 \end{align}
 where the auxiliary function $P$ satisfies
 \begin{align}\label{eq:PequDef}
  P - P_{xx} & = \frac{u^2+ \mu}{2}.
 \end{align} 
We will call system~\eqref{eqnOurCH} the {\em two-component Camassa--Holm system} because it includes the Camassa--Holm equation~\eqref{eqnCH} (set $\mu = u^2 + u_x^2$) as well as its two-component generalization (see \cite{chlizh06, coiv08, esleyi07, hoiv11})
 \begin{align}
 \begin{split}\label{eqn2CH}
    u_{t} -u_{xxt} & = 2u_x u_{xx} - 3uu_x + u u_{xxx} - \varrho \varrho_x, \\
   \varrho_t & = -u_x \varrho - u\varrho_x,
 \end{split}
 \end{align}
where one just needs to set $\mu = u^2 + u_x^2 + \varrho^2$. The role of the additional positive Borel measure $\mu$ is to control the loss of energy at times of blow-up. 
Solutions of this kind have been constructed by a generalized method of characteristics that relies on a transformation from Eulerian to Lagrangian coordinates and was accomplished for various classes of initial data in~\cite{brco07, hora07, grhora12, grhora12b}.
 
 From our point of view, such conservative solutions are of special interest because they preserve the integrable structure of the Camassa--Holm equation and can be obtained by employing the inverse spectral transform method~\cite{ConservMP,ConservCH,Eplusminus}. 
This approach is based on the solution of an inverse problem, which is equivalent to that for so-called {\em generalized indefinite strings}~\cite{IndefiniteString}, a spectral problem that generalizes Krein strings and originated from work of M.\ G.\ Krein and H.\ Langer~\cite{krla79} on the indefinite moment problem.
More specifically, it was shown in~\cite{ConservMP} that replacing~\eqref{eq:Spec} with  
\begin{align}\label{eqnISP}
 -f'' + \frac{1}{4} f = z\, \omega f + z^2 \dip f, 
\end{align}
where $\dip$ is a positive Borel measure, one is able to integrate conservative multi-peakon solutions. The role of the additional coefficient $\dip$ is to control the concentration of energy on Lebesgue measure zero sets. More precisely, for a given pair $(u,\mu)$, the measure $\dip$ is defined by 
\begin{align}
 \mu(B) = \dip(B) + \int_B u(x)^2 + u'(x)^2\, dx
\end{align}
for all Borel subsets $B\subseteq \R$. 
For instance, in the peakon-antipeakon interaction mentioned above, at the time of the blow-up, the corresponding momentum $\omega$ vanishes and $\dip$ is simply $c\delta_0$, where $c$ is the total energy of the system and $\delta_0$ is the Dirac delta-measure at $x=0$~\cite[Appendix~A]{ConservMP}. These observations have been developed further in~\cite{ConservCH,Eplusminus}. In particular, in~\cite{Eplusminus}, two of us extended the conservative multi-peakon flow to a rather wide phase space $\CHdom$ of initial data that requires strong decay at one endpoint but only mild boundedness-type conditions at the other endpoint (see Definition~\ref{defPS}). There we were able to solve the inverse spectral problem for~\eqref{eqnISP} with coefficients in $\CHdom$, which allowed us to employ the inverse spectral transform method to show that the two-component Camassa--Holm system is well-posed on $\CHdom$ in a certain sense (see Theorem~\ref{thm:consCHweak} and Remark~\ref{rem:wellposed} below). 

The main goal of the current paper is to investigate conservative solutions constructed in~\cite{Eplusminus} with the help of the classical moment problem\footnote{By this we always mean the {\em Hamburger moment problem} if it is not explicitly stated otherwise.} (see~\cite{akh,sc17} for example). 
Since the moment problem can be included into the spectral theory of generalized indefinite strings (see~\cite{IndMoment}), all the necessary prerequisites are at our disposal. 
On the one hand, the moment problem approach clearly has a narrower range of applicability since it requires the existence of finite moments of the spectral measure. 
On the other hand, its obvious advantage is the explicit procedure to solve the inverse spectral problem. 
The latter not only allows to provide explicit formulas for solutions, but also to investigate these solutions both numerically and analytically. 
Let us emphasize that in this paper we are mostly interested in the non-degenerate case of the moment problem, that is, when the support of the measure is an infinite set (the case of a finite set always leads to  multi-peakons and was studied in the works~\cite{besasz00,ConservMP} mentioned above).

%%%%%%%%%%%%%%%%%%%%%%%%%%%%%%%% 
\subsection{Overview of the main results}
%%%%%%%%%%%%%%%%%%%%%%%%%%%%%%%% 
 
Section~\ref{secPre} is of preliminary nature. Here we first recall the main notions and facts from~\cite{Eplusminus} needed in this article: The definition and properties of the phase spaces $\CHdom$ (Definition~\ref{defPS}) and $\CHdom^+$, the associated spectral problem~\eqref{eqnDE} as well as the solution of the inverse problem for both $\CHdom$ (Theorem~\ref{thm:ISP}) and $\CHdom^+$ (Corollary~\ref{corIP+}); construction of conservative solutions by means of the inverse spectral transform (Definition~\ref{def:CHspecflow} and Theorem~\ref{thm:consCHweak}). We also briefly recall in this section the notion of a generalized indefinite string and connect it to the spectral problem~\eqref{eqnDE} (for the reader's convenience, a more detailed discussion of generalized indefinite strings and their relation to the classical moment problem is included in Appendix~\ref{appMoment}). Moreover, we describe the sets of pairs $(u,\mu)$, which are of particular interest to us in this article: Multi-peakon profiles (Definition~\ref{def:MP}) and pairs that are of multi-peakon form from the left, which in particular include infinite multi-peakon profiles (Remark~\ref{rem:exmpls}).

Section~\ref{secMoment} and Section~\ref{sec:conssolMP} form the core of this article. 
As it was mentioned already, application of the classical moment problem requires the existence of finite moments of the underlying spectral measure. 
Therefore, we begin by characterizing all pairs $(u,\mu)$ in $\CHdom$ such that the spectral measure $\rho$ associated with the problem~\eqref{eqnISP} has at least $2K+1$ finite moments $s_0, s_1, \ldots, s_{2K}$ for some $K\in \N\cup\{0\}$, where  
\begin{align}
s_k = \int_\R \lambda^k \rho(d\lambda).
\end{align}
It turns out (see Theorem~\ref{thmMomK}) that for this it is necessary and sufficient that $(u,\mu)$ is of multi-peakon form to the left of some point $\ell\in\R$, that is,  
 \begin{align}\label{eq:MPell}
 \omega|_{(-\infty,x_N)} & = \sum_{n=1}^{N-1} \omega_n \delta_{x_n}, & \dip|_{(-\infty,x_N]} & =  \sum_{n=1}^{N} \dip_n \delta_{x_n},
 %  \omega|_{(-\infty,\ell)} & = \sum_{n=1}^N \omega_n \delta_{x_n}, & \dip|_{(-\infty,\ell)} & = \sum_{n=1}^N \dip_n \delta_{x_n}, 
 \end{align}
 for some $N\in\N\cup\{0\}$, increasing points $x_1,\ldots,x_N$, real weights $\omega_1,\ldots,\omega_N$ and non-negative weights $\dip_1,\ldots,\dip_N$. 
 The relationship between $N$ and $K$ is made explicit in~\eqref{eq:KviaN}, from which it follows that all moments exist exactly when~\eqref{eq:MPell} takes place with $N=\infty$. 
 One can recover this multi-peakon part of the pair $(u,\mu)$ explicitly from the moments by using the corresponding Hankel determinants (see Corollary~\ref{corStrKForm} and also Remark~\ref{rem:UatInfty}).  A relevant question in this regard is whether one is able to recover the pair $(u,\mu)$ completely from the corresponding moments (which are understood to also include the first negative moment $s_{-1}$). Of course, if the support of $\rho$ is an infinite set, for this to happen it is necessary to have all the moments of $\rho$ finite. However, still we would be able to recover $(u,\mu)$ only up to a certain point $L$, where all the $x_n$-s accumulate.  Clearly, if $L=\infty$, then we obtain $(u,\mu)$ on all of $\R$ out of the moments. The question for which $\rho$ the corresponding $L$ equals infinity can be answered with the help of the moment problem. More precisely, {\em $L=\infty$ if and only if the corresponding moment problem is determinate}, that is, $\rho$ is the unique non-negative Borel measure on $\R$ whose moments coincide with the sequence $(s_k)_{k=0}^\infty$. To a certain extent this answer should not be too surprising since it has its roots in the work of Krein on the Stieltjes moment problem. 
Now it remains to clarify whether one would be able to recover the pair $(u,\mu)$ completely if $L<\infty$, that is, if the corresponding moment problem is indeterminate. The latter means that there are infinitely many measures having the same moments. It is somehow natural (at least from the perspective of Krein's parameterization of solutions to the Stieltjes moment problem in the indeterminate case) that the moments of $\rho$ do not contain any information about $(u,\mu)$ in the interval $(L,\infty)$, which means that both $\omega$ and $\dip$ are not supported on $(L,\infty)$. 
It turns out the latter can also be characterized by means of the moment problem: If $L<\infty$, then the pair $(u,\mu)$ has no support in the interval $(L,\infty)$ and $\dip(\{L\})=0$ if and only if the associated spectral measure is an {\em N-extremal solution} of the corresponding moment problem (this will be explained in detail in Section~\ref{sec:DiscrSpec}). 

 In Section~\ref{sec:conssolMP}, we apply the above results to the conservative Camassa--Holm flow. The remarkable fact is that the properties like determinacy/indeterminacy of the moment problem and N-extremality are preserved under the flow since on the spectral side the evolution is simply given by  
   \begin{align}\label{eqnSMEvoIntro}
    \rho(d\lambda;t) = \E^{-\frac{t}{2\lambda}} \rho_0(d\lambda),
  \end{align}
and for every $t\in\R$ the function $\lambda\mapsto \E^{-\frac{t}{2\lambda}}$ is bounded on the support of every $\rho_0$ arising from $(u,\mu)\in \CHdom$ (see Theorem~\ref{thm:ISP}). In particular, this implies that if $\rho_0$ has finite moments up to order $2K$, then so does the measure~\eqref{eqnSMEvoIntro} for all $t\in\R$, which means that the corresponding solution $ (u(\ledot,t),\mu(\ledot,t))$ is of multi-peakon form to the left of some point for some $t$ if and only it is of this form for all $t$. Moreover, the evolution of this multi-peakon part can be recovered from the evolution of the moments (Theorem~\ref{thm:MPevolt}).   Because the explicit formulas for the peakon part of a solution are the same as for usual multi-peakons in~\cite{besasz00}, the dynamics are alike too (Proposition~\ref{prop:dynamics}). In particular, under the additional positivity and determinacy assumptions, we show that the corresponding infinite multi-peakon solution satisfies an infinite dimensional analog of~\eqref{eqnMPsys} (see Corollary~\ref{cor:ODEforCH}). Let us also stress that these results can be seen as a manifestation of the finite propagation speed of conservative solutions. Namely, it is known that classical solutions to the Camassa--Holm equation enjoy the finite propagation speed property in the following sense~\cite{co05}: {\em If $u$ is a smooth solution to~\eqref{eqnCH} and $\omega(\ledot,0)$ has compact support, then $\omega(\ledot,t)$ has compact support for all $t$}. Setting $K=0$, our results immediately imply that if $(u_0,\mu_0)$ is such that $\omega_0$ and $\dip_0$ has no support near $-\infty$, then for the corresponding conservative solution $(u(\ledot,t),\mu(\ledot,t))$ both $\omega(\ledot,t)$ and $\dip(\ledot,t)$ also have no support near $-\infty$ for all $t\in\R$ (see Remark~\ref{rem:finspeed}).    
 
Since the set $\CHdom$ is sufficiently rich and diverse (at least from the spectral perspective), any further analysis of solutions to the two-component Camassa--Holm system would require a restriction to certain subclasses of initial data. We decided to restrict to three particular cases and our choices will be explained below. Let us also mention that all our long-time asymptotcs results are established under the additional assumption that the initial data $(u_0,\mu_0)$ belongs to the space $\CHdom^+$, which is the same as to assume that the corresponding spectral measure is supported on $(0,\infty)$. This assumption is mainly of technical nature and the results can be extended to a wider setting. 

As our first choice, we decided to consider spectral measures supported on discrete subsets of $\R$ and this is the content of Section~\ref{sec:DiscrSpec}. There are several reasons for this choice. On the one hand, for a given measure to be an N-extremal solution of an indeterminate moment problem it is necessary to be supported on a discrete subset of $\R$.  On the other hand, it is known that discreteness of the spectrum of~\eqref{eqnISP} is related to decay of the function $u$ and the measure $\dip$ (for instance, this holds true if $u\in H^1(\R)$, compare with~\eqref{eqnCHHam}, and $\dip\equiv 0$). 
%In particular, this includes pairs $(u,\mu)\in \CHdom$ having the form
%\begin{align}%\label{eq:PureMPintro}
%u(x) & =  \frac{1}{2}\sum_{n=1}^\infty \omega_n \E^{-|x-x_n|}, & \dip & = \sum_{n=1}^\infty \dip_n\delta_{x_n},
%\end{align}
%with $x_n\to \ell$ as $n\to\infty$ for some $\ell\in \R$.
In fact, the recent results in~\cite{DSpec} provide a complete characterization of pairs $(u,\mu)$ in $\CHdom$ such that the spectrum of~\eqref{eqnISP} is a discrete set (see Remark~\ref{rem:DSpec}). One of the main results in Section~\ref{sec:DiscrSpec} is Theorem~\ref{thm:longt_dis}, which establishes long-time asymptotics for the positions and heights of the individual peakons in this case. While we get a rather detailed asymptotics as $t\to-\infty$ (see~\eqref{asym_neg}), which can also be considered as an extension of Theorem~6.4 from~\cite{besasz00} as well as of Theorem~5.2 from~\cite{li09}, the results when $t\to\infty$ might seem somewhat unsatisfactory and even to a certain extent misleading (see Remark~\ref{rem:PeakDiscr+}). 
On the other hand, these asymptotics are rather natural since all the peakons move to the right and accumulate either at $\infty$ (when the moment problem is determinate) or at some finite point $L(t)$ (since $L$ also changes with time), which slows them down. 
Let us also stress that the results of this section extend and complement the results from~\cite{li09} (see Corollary~\ref{cor:Li} and Remark~\ref{rem:Li01}(c)), where a different approach was used.  More specifically, the main focus in~\cite{li09} is on pairs $(u,\mu)$ in $\CHdom^+$ such that $u$ has pure infinite multi-peakon form
\begin{align}\label{eq:PureMPintro}
u(x) =  \frac{1}{2}\sum_{n=1}^\infty \omega_n \E^{-|x-x_n|}, 
\end{align}
with positions $x_1<x_2<\dots<x_n<\dots$ accumultating at some finite point $L$ and with heights satisfying certain summability assumptions (see Remark~\ref{rem:Li01}). Under these rather restrictive assumptions, \cite{li09} contains, on the one hand, a slightly stronger claim regarding the asymptotic behavior as $t\to\infty$ (see Theorem~4.3 there), but on the other hand, it is not clear to us whether the approach of~\cite{li09} would allow to establish~\eqref{eq:LasympLi}, the asymptotic behavior of the accumulation point of peakons' positions. Indeed, the approach of~\cite{li09} differs from ours in several crucial aspects. First of all, the corresponding weak solutions are constructed in~\cite{li09} with the help of the infinite dimensional version of~\eqref{eqnMPsysH}--\eqref{eq:H}. However, most importantly the long-time analysis is based on the use of Toda flows on Hilbert--Schmidt operators~\cite{dlt85,li08} and the underlying operators are nothing but the inverses of Jacobi (tri-diagonal) matrices (in the multi-peakon case, these relationships are explained in~\cite{besasz01} and they are based on a rather widely known connection between positive Jacobi matrices and Krein--Stieltjes strings, see~\cite[Appendix]{akh}, \cite{kakr74a} and also~\cite{IndMoment}). However, as we mentioned above, a finite accumulation point for positions implies that the corresponding spectral measure is a solution of an indeterminate moment problem and therefore this limits the applicability of the approach from~\cite{li09} (indeed, comparing with generalized indefinite strings, the framework of Jacobi matrices is rather inconvenient to work with when describing solutions to the indeterminate moment problem; see Theorem~\ref{th:KreinParam} for example). Let us make one more comment in this regard. Our analysis shows that the spectral measures that arise from the class considered in~\cite{li09} are specific N-extremal solutions to the Stieltjes moment problem. It was a notoriously difficult problem to provide an explicit construction of such a measure, which was resolved only in the 1960s. More specifically, despite the fact that examples of measures that give rise to indeterminate moment problems were known already to Stieltjes,  
%Notice that the pairs from $\Peakons_{L+}(0)$ lead to N-extremal solutions of the corresponding moment problem. 
% The theory of the indeterminate Hamburger moment problem was essentially accomplished in the early 1920s with the works of R.\ Nevanlinna and M.\ Riesz, however, 
 the first example of an N-extremal measure was found much later by T.~S.~Chihara~\cite{chi68}. 
 Indeed,  cases of indeterminate moment problems are present within the $q$-analog of the Askey-scheme and the corresponding construction involves the Al-Salam--Carlitz polynomials of type II (see also~\cite{bech20}). In particular, we exploit the Al-Salam--Carlitz polynomials to construct an explicitly solvable pair from the class considered in~\cite{li09} (see Proposition~\ref{prop:ASCpoln}). One of the corresponding solutions is depicted in Figure~\ref{fig:ASC3} on page~\pageref{fig:ASC3}. 
 
 In Section~\ref{sec:LaguerrePeaks}, we perform a detailed study of the conservative solution associated with the Laguerre polynomials. More specifically, as initial data we take the pair $(u_0,\mu_0)\in\CHdom^+$ such that its spectral measure is explicitly given by
 \begin{align}
 \rho_{\gamma,\alpha}(d\lambda) =  \frac{1}{\Gamma(\gamma+1)}\id_{(\alpha,\infty)}(\lambda)(\lambda-\alpha)^\gamma\E^{-(\lambda-\alpha)} d\lambda,
\end{align}
where $\gamma>-1$ and $\alpha>0$ are parameters. It is well known that the corresponding moment problem is determinate. Moreover, taking into account that the support of $ \rho_{\gamma,\alpha}$ is contained in $(0,\infty)$, $\dip\equiv 0$ and hence all the information about $(u,\mu)$ is contained in $u$, which has the form~\eqref{eq:PureMPintro},
where $\omega_n>0$ for all $n\in \N$ and $x_n\uparrow \infty$. The coefficients of $u$ can be expressed with the help of the Laguerre polynomials and one can find a rather precise asymptotic behavior of  $\omega_n$ and $x_n$ for large  $n$ (see Lemma~\ref{lem:asympLaginn}). Our interest in studying this particular case stems from the fact that $u$ is close asymptotically to a step-like function. More specifically, it follows from~\cite[Theorem~13.5]{ISPforCH} that $u - (4\alpha)^{-1} \in H^1[0,\infty)$ (see also Lemma~\ref{lem:killipsimon}). Furthermore, the latter property as well as the form~\eqref{eq:PureMPintro} are preserved under the flow, that is, the corresponding solution $(u(\ledot,t),\mu(\ledot,t))$ satisfies $u(\ledot,t) - (4\alpha)^{-1} \in H^1[0,\infty)$ and 
\begin{align}\label{eq:PuretMPintro}
u(x,t) =  \frac{1}{2}\sum_{n=1}^\infty \omega_n(t) \E^{-|x-x_n(t)|}, 
\end{align}
for all $t\in\R$. One might think of this situation as of a regime with dispersion for~\eqref{eqnCH} only at $+\infty$. Our main result in this section is a peakon-wise long-time asymptotic behavior of $u(\ledot,t)$ as $|t|\to \infty$ (Theorem~\ref{thm:LaguerreMPs}). 
In sharp contrast with the purely discrete spectrum situation, the analysis shows that all peakons move asymptotically with the same speed. However, their velocities slow down as $t\to\infty$, which might be considered as an effect of dispersion at $+\infty$, but on the other hand, as $t\to-\infty$, they move to the left at asymptotically constant speed (one might think of this as the absence of dispersion on the left) and their speed is determined by the size of the spectral gap, $\alpha$. However, in both cases the distances between peaks grow logarithmically as $|t|\to\infty$. Our numerics confirm the long-time analysis (see Figure~\ref{fig:lague7} on page~\pageref{fig:lague7} depicting $u(x,t)$ with the parameters $\gamma=0$ and $\alpha=\nicefrac{1}{2}$ and~\cite{animations} for an animation).   
The analysis extends to a more general class of measures (see Theorem~\ref{th:LagPertrbd}). However, it seems to be an interesting and nontrivial problem to investigate the long-time behavior of solutions with initial data $(u_0,\mu_0)\in \CHdom^+$ having pure infinite multi-peakon form and such that $u_0-c\in H^1[0,\infty)$ with some constant $c>0$. Take into account that the Sobolev space $H^1$ is intimately related with the Camassa--Holm equation (see~\eqref{eqnCHHam}). Furthermore, it was shown recently in~\cite{ISPforCH} that this class is invariant under the flow~\eqref{eqnSMEvoIntro} and, moreover,~\cite{ISPforCH} contains a complete spectral characterization of this class.  

In the final Section~\ref{sec:JacobiPeaks}, we focus on the case when the spectral measure at the initial time is given by the Jacobi-type weight~\eqref{eq:JacobWeight} (notice that we can easily reduce a more general situation with weights given by~\eqref{eq:JacobWeightGen} to~\eqref{eq:JacobWeight} by employing very simple transformations described in Remark~\ref{rem:scaling}). Using the asymptotics of the Jacobi polynomials, one can show (see Lemma~\ref{lem:asympJacobinn2}) that the corresponding initial profile $u_0$ has a pure infinite multi-peakon form, which is asymptotically eventually periodic, that is, as $x\to \infty$, it is close to the profile 
\begin{align}
\tilde{u}_0(x) = \frac{1}{2}\sum_{n=1}^\infty a \E^{-|x-cn-\ell|}, 
\end{align}
with certain constants $a>0$, $c\in \R$, and the latter partially explains our interest in this example. Theorem~\ref{th:JacobiPeakons} shows that from the long-time peakon-wise perspective, the flow preserves the eventually periodic nature, that is, as $|t|\to\infty$ all peaks move in the same direction at an asymptotically constant speed. However, at the same time the distance between peaks grows as $\log(t^2)$. As in the previous cases, the analysis is based on the study of long-time behavior of the corresponding Hankel determinants. Our analysis is confirmed by numerical evidence; see Figure~\ref{fig:jacNew} on page~\pageref{fig:jacNew} with the snapshots of $u(x,t)$ and~\cite{animations} for an animation. % in the interval $[-100,x_{N}]$ at times $t = -10$, $-6$, $-2$, $0$, $2$, $10$.
As in the Laguerre case, the analysis can be extended to a wider class of measures, but it remains open how far it can be pushed.

Let us conclude this lengthy introduction with one more remark. One of the most prominent features of multi-peakons is the fact that many qualitative properties of solutions to the Camassa--Holm equation can already be observed for multi-peakons. Solutions constructed as superposition of an infinite number of solitons is the most natural next development (the idea to consider a closure of multi-soliton solutions in a suitable topology is not new and in the context of the Korteweg--de Vries equation it goes back at least to the work of V.\ A.\ Marchenko~\cite{mar91} in the 1990s, see also~\cite{gkz92}; let us also mention the growing interest in soliton gases~\cite{elka,zak}), however, in the context of the Camassa--Holm equation we are only aware of the work of L.-C.~Li~\cite{li09}. Furthermore, in~\cite{Eplusminus} the flow on the space $\CHdom$ was constructed as an extension by continuity (in a suitable sense) from conservative multi-peakons. Also, the space $\CHdom$ is sufficiently rich even despite a very restrictive assumption~\eqref{eqnMdef-}. More precisely, one can get arbitrary types of spectral behavior for both~\eqref{eq:Spec} and~\eqref{eqnISP} with the coefficients $\omega$ and $\dip$ from the spaces $\CHdom^+$ and, respectively, $\CHdom$. In this paper, we have touched only three particular situations, however, many more cases remain to be explored. It is conceivable that our study may inspire further research on ``peakon gases".

 \subsection*{Notation} 
 For an interval $I\subseteq\R$, we denote with $H^1_{\loc}(I)$, $H^1(I)$ and $H^1_{\cc}(I)$ the usual Sobolev spaces defined by 
\begin{align}
 H^1_{\loc}(I) & =  \lbrace f\in AC_{\loc}(I) \,|\, f'\in L^2_{\loc}(I) \rbrace, \\
 H^1(I) & = \lbrace f\in H^1_{\loc}(I) \,|\, f,\, f'\in L^2(I) \rbrace, \\ 
 H^1_{\cc}(I) & = \lbrace f\in H^1(I) \,|\, \supp(f) \text{ compact in } I \rbrace.
\end{align}
 The space of distributions $H^{-1}_{\loc}(I)$ is the topological dual space of $H^1_{\cc}(I)$. 
 A distribution in $H^{-1}_{\loc}(I)$ is said to be {\em real} if it is real for real-valued functions in $H^1_{\cc}(I)$. 

  For integrals of a function $f$ that is locally integrable with respect to a Borel measure $\mu$ on an interval $I$, we will employ the notation 
\begin{align}\label{eqnDefintmu}
 \int_x^y f\, d\mu = \begin{cases}
                                     \int_{[x,y)} f\, d\mu, & y>x, \\
                                     0,                                     & y=x, \\
                                     -\int_{[y,x)} f\, d\mu, & y< x, 
                                    \end{cases} 
\end{align}
 rendering the integral left-continuous as a function of $y$. 

For two functions $f$ and $g$, we write $f = \OO(g)$ if $f \le Cg$ for all sufficiently large values of the variables of the two functions, where $C>0$ is an absolute positive constant. If the limit of the ratio $f/g$ tends to zero as the variables of the functions tend to infinity, we write $f = \oo(g)$. Finally, $f \sim g$ denotes that $f = (1 + \oo(1))g$, that is, $f/g$ tends to $1$ when the variables tend to infinity.

Throughout the paper we shall use the following multi-index notation:
\begin{itemize}[leftmargin=*, widest=x]
\item For $\mlam = (\lambda_1,\dots,\lambda_n)$,  we shall denote 
$\rho(d\mlam) = \rho(d\lambda_1)\dots\rho(d\lambda_n)$. 
\item For a connected subset $\cI\subseteq\R$ we shall denote 
\begin{align}
\int_\cI\dots\int_\cI F(\lambda_1,\dots,\lambda_n)\rho(d\lambda_1)\dots\rho(d\lambda_n) = \int_{\cI^n}F(\mlam)\rho(d\mlam).
\end{align}
\item $\wand_n(\mlam)$ is the square of the Vandermonde determinant, that is, $\wand_1\equiv 1$ and $\wand_n(\mlam) = \prod_{1\leq i< j\leq n}|\lambda_i-\lambda_j|^2$ for $n\ge 2$. 
\item Similarly, for any index set $J = \{k_1,\dots, k_n\}\subset \N$ with $k_1<k_2<\dots<k_n$, $\mlam_J = (\lambda_{k_1},\dots,\lambda_{k_n})$ and $F(\mlam_J) = F(\lambda_{k_1},\dots,\lambda_{k_n})$.
\end{itemize}

%%%%%%%%%%%%%%%%%%%%%%%%%%%%%%%%
\section{Preliminaries}\label{secPre}
%%%%%%%%%%%%%%%%%%%%%%%%%%%%%%%%
 
  We are first going to consider the phase space $\CHdom$ for the two-component Camassa--Holm system~\eqref{eqnOurCH}--\eqref{eq:PequDef}, first introduced in~\cite{Eplusminus} as follows:
  
 \begin{definition}\label{defPS}
 The set $\CHdom$ consists of all pairs $(u,\mu)$ such that $u$ is a real-valued function in $H^1_{\loc}(\R)$ and $\mu$ is a positive Borel measure on $\R$ with
\begin{align}\label{eqnmuac}
   \int_B u(x)^2 + u'(x)^2\, dx \leq  \mu(B)
\end{align}
for every Borel set $B\subseteq\R$, satisfying the asymptotic growth restrictions  
\begin{align}
 \label{eqnMdef-}   \int_{-\infty}^0 \E^{-s} \bigl(u(s)^2 + u'(s)^2 \bigr) ds +  \int_{-\infty}^{0} \E^{-s} \dip(ds) & < \infty,  \\
 \label{eqnMdef+}   \limsup_{x\rightarrow\infty}\, \E^{x} \biggl(\int_{x}^{\infty}\E^{-s}(u(s) + u'(s))^2ds + \int_{x}^{\infty}\E^{-s}\dip(ds)\biggr) & < \infty,
\end{align}
where $\dip$ is the positive Borel measure on $\R$ defined such that 
\begin{align}\label{eqndipdef}
 \mu(B) = \dip(B) + \int_B u(x)^2 + u'(x)^2\, dx. 
\end{align}
\end{definition}
 
\begin{remark} 
A couple of remarks are in order:
\begin{enumerate}[label=(\alph*), ref=(\alph*), leftmargin=*, widest=e]
    \item
We prefer to work with pairs $(u,\mu)$ and the unusual condition~\eqref{eqnmuac} instead of the simpler definable pairs $(u,\dip)$ for various reasons. 
For example,  in the context of the conservative Camassa--Holm flow, the measure $\mu$ satisfies the transport equation in~\eqref{eqnOurCH} and represents the (local) energy of a solution. Moreover, the measure $\mu$ is more natural when considering suitable notions of convergence on $\CHdom$; see~\cite[Section~2]{Eplusminus} for details.
\item 
 Condition~\eqref{eqnMdef-} in this definition requires strong decay of both, the function $u$ and the measure $\dip$, at $-\infty$ (in particular, the condition on $u$ in~\eqref{eqnMdef-} is equivalent to the function $x\mapsto\E^{\nicefrac{-x}{2}}u(x)$ belonging to $H^1$ near $-\infty$), whereas condition~\eqref{eqnMdef+} on the growth near $+\infty$ is rather mild and satisfied as soon as $u+u'$ is bounded and $\dip$ is a finite measure for example. Despite its cumbersome looking form, after a simple change of variables, condition~\eqref{eqnMdef+} turns into a boundedness condition under the action of the classical Hardy operator (see~\cite{Eplusminus} for further details and also Remark~\ref{rem:Hardy} below). 
  \end{enumerate}
 \end{remark} 

In the following, we are going to introduce the spectral quantities that linearize the conservative Camassa--Holm flow on $\CHdom$. 
To this end, let us fix a pair $(u,\mu)$ in $\CHdom$ and define the real distribution $\omega$ in $H^{-1}_{\loc}(\R)$ by
\begin{align}\label{eqnDefomega}
 \omega(h) = \int_\R u(x)h(x)dx + \int_\R u'(x)h'(x)dx, 
\end{align}
so that one has $\omega = u - u''$ in a weak sense. 
We also mention that the positive Borel measure $\dip$ on $\R$ is defined by~\eqref{eqndipdef} and that it is always possible to uniquely recover the pair $(u,\mu)$ from the distribution $\omega$ and the measure $\dip$.

Associated with the pair $(u,\mu)$ is the ordinary differential equation 
\begin{align}\label{eqnDE}
 - f'' + \frac{1}{4} f = z\, \omega f + z^2 \dip f, 
\end{align}
where $z$ is a complex spectral parameter. 
Due to the low regularity of the coefficients, this differential equation has to be understood in a distributional sense in general; see \cite{ConservCH, IndefiniteString, gewe14, sash03}:
  A solution of~\eqref{eqnDE} is a function $f\in H^1_{\loc}(\R)$ such that 
 \begin{align}\label{eqnDEweakform}
   \int_{\R} f'(x) h'(x) dx + \frac{1}{4} \int_\R f(x)h(x)dx = z\, \omega(fh) + z^2 \int_\R f(x) h(x) \dip(dx) 
 \end{align} 
 for every function $h\in H^1_\cc(\R)$.
 We note that the derivative of such a solution $f$ is in general only defined almost everywhere. 
 However, there is always a unique left-continuous function $f^\qd$ on $\R$ such that 
 \begin{align}\label{eqnfqpm} 
     f^\qd = f' +\frac{1}{2} f - z (u +u') f 
\end{align} 
 almost everywhere on $\R$ (see \cite[Lemma~A.2]{ConservCH}), called the {\em quasi-derivative} of $f$. 

 The main consequence of the strong decay restriction on the pair $(u,\mu)$ at $-\infty$ in condition~\eqref{eqnMdef-} is the existence of a particular fundamental system $\phi(z,\redot)$, $\theta(z,\redot)$ of solutions to the differential equation~\eqref{eqnDE} with the asymptotics 
\begin{align}\label{eqnphiasym}
  \phi(z,x) & \sim \E^{\frac{x}{2}}, & \theta(z,x) & \sim \E^{-\frac{x}{2}},   \\
\label{eqnthetaasym}
  \phi^\qd(z,x) & \sim \E^{\frac{x}{2}}, &  \theta^\qd(z,x) & = \oo\bigl(\E^{\frac{x}{2}}\bigr),   
\end{align}
as $x\rightarrow-\infty$; see~\cite[Theorem~3.1]{Eplusminus}. 
With these solutions, we are able to define the {\em Weyl--Titchmarsh function} $m$ on $\C\backslash\R$ via 
\begin{align}\label{eq:m-funct-def}
  m(z) = -\lim_{x\rightarrow\infty} \frac{\theta(z,x)}{z\phi(z,x)},
\end{align}
which is a Herglotz--Nevanlinna function. 
Even more, according to~\cite[Proposition~3.3]{Eplusminus} it has a particular integral representation of the form 
\begin{align}\label{eqnWTmIntRepZero}
  m(z) = \int_\R \frac{z}{\lambda(\lambda-z)} \rho(d\lambda)  
\end{align}
for some positive Borel measure $\rho$ on $\R$ with  
\begin{align}\label{eq:Poisson}
 \int_\R \frac{\rho(d\lambda)}{1+\lambda^2} < \infty
\end{align}
and no mass in a small enough neighborhood of zero.
The measure $\rho$ can be seen to be a {\em spectral measure} for a self-adjoint linear relation associated with~\eqref{eqnDE}; we only refer to~\cite{CHPencil} for more details. 
One of the main properties of the {\em (direct) spectral transform} $(u,\mu)\mapsto\rho$ that will linearize the conservative Camassa--Holm flow on $\CHdom$ has been proved in~\cite[Theorem~4.1]{Eplusminus}:

\begin{theorem}\label{thm:ISP}
The mapping $(u,\mu)\mapsto\rho$ is a bijection between $\CHdom$ and the set $\SM_0$ of all positive Borel measures $\rho$ on $\R$ satisfying~\eqref{eq:Poisson} and whose topological support does not contain zero. 
\end{theorem}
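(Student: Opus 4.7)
The plan is to reduce the inverse spectral problem for~\eqref{eqnDE} to the corresponding inverse spectral problem for generalized indefinite strings, for which a complete bijection theorem is already available (and is stated in Appendix~\ref{appMoment} of the paper, coming from~\cite{IndefiniteString}). Concretely, the strong decay condition~\eqref{eqnMdef-} at $-\infty$ invites a Liouville-type change of variable of the form $y=1-\E^{-x}$, replacing $f$ by $\E^{x/2}\tilde f(y)$; this absorbs the potential term $\tfrac{1}{4}f$ and brings~\eqref{eqnDE} into the canonical string form $-\tilde f'' = z\tilde\omega \tilde f + z^2 \tilde\dip \tilde f$ on $(-\infty,1)$, with the left endpoint at $-\infty$ behaving like a limit-point endpoint and the right endpoint encoding the behavior of $(u,\mu)$ near $+\infty$. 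The whole theorem then becomes a matter of tracking how the two sets $\CHdom$ and $\SM_0$ correspond through this transformation.

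First, I would check that the direct map $(u,\mu)\mapsto\rho$ takes values in $\SM_0$. The integrability~\eqref{eq:Poisson} is immediate from the fact that $m$ is Herglotz--Nevanlinna with the special representation~\eqref{eqnWTmIntRepZero}, which was already established in~\cite[Proposition~3.3]{Eplusminus}. The absence of support near zero for $\rho$ is the spectral translation of the fact that the transformed string is regular (in the sense of generalized indefinite strings) at the right endpoint, which in turn follows from~\eqref{eqnMdef+}: roughly, a Hardy-type inequality reformulates~\eqref{eqnMdef+} as a finite mass condition for the transformed coefficients on a neighborhood of the right endpoint.

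Injectivity would proceed via uniqueness of the Weyl--Titchmarsh function. Two pairs in $\CHdom$ giving the same $\rho$ yield the same $m$ through~\eqref{eqnWTmIntRepZero}, hence the two associated generalized indefinite strings share the same $m$-function; the inverse uniqueness result for generalized indefinite strings then forces the two strings to coincide, and reversing the change of variables recovers $\omega$ and $\dip$, and therefore $(u,\mu)$, uniquely by~\eqref{eqnDefomega} and~\eqref{eqndipdef}.

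Surjectivity is where the main obstacle lies. Given $\rho\in\SM_0$, one defines the candidate $m$ by~\eqref{eqnWTmIntRepZero} and invokes the inverse theorem of~\cite{IndefiniteString} to produce a generalized indefinite string realizing it. The delicate part is to verify, after undoing the Liouville transformation, that the resulting distribution $\omega\in H^{-1}_{\loc}(\R)$ and positive measure $\dip$ genuinely come from a pair $(u,\mu)\in\CHdom$: one must check~\eqref{eqnmuac}, which amounts to showing $u\in H^1_{\loc}(\R)$ with the energy estimate, and, more importantly, the two asymptotic conditions~\eqref{eqnMdef-}--\eqref{eqnMdef+}. The decay~\eqref{eqnMdef-} at $-\infty$ is the direct counterpart of the limit-point nature of the transformed string at its left endpoint and will follow from the fact that $\supp(\rho)$ is bounded away from zero (which controls the $\E^{-x}$ weighted norm via the resolvent estimate). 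The condition~\eqref{eqnMdef+} at $+\infty$, by contrast, is the most technical step: it corresponds to a boundedness property of the coefficients of the transformed string near the right endpoint under the action of the classical Hardy operator, and establishing this rigorously -- tracking the constants through both the inverse spectral recovery and the change of variables -- is where I expect the real work to lie.
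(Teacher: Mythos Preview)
The paper does not give an in-text proof of this theorem; it is simply quoted from \cite[Theorem~4.1]{Eplusminus}. That said, the paper does spell out the transformation to generalized indefinite strings that underlies the argument (equations~\eqref{eqnDefa}--\eqref{eqnDefbeta} and the surrounding discussion), and your overall strategy---reduce to the inverse spectral theorem for generalized indefinite strings from~\cite{IndefiniteString}---is exactly the one used there.

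However, your Liouville map is not the right one. The actual change of variables is $y=\E^{x}$, sending $\R$ to $(0,\infty)$, so that $-\infty$ goes to the \emph{left} endpoint $0$ and $+\infty$ goes to $+\infty$. This is dictated by the structure of the problem: the strong decay condition~\eqref{eqnMdef-} at $-\infty$ is precisely what guarantees $\tilde\Wr\in L^2[0,\infty)$ and $\tilde\dip([0,\infty))<\infty$ (this is \cite[Lemma~2.1]{Eplusminus}, as noted after~\eqref{eqnDefbeta}), so that the string is well-posed on $[0,\infty)$ with the initial condition at $0$; the mild growth condition~\eqref{eqnMdef+} becomes the Hardy-type bound~\eqref{eqnMdefTilde} at the infinite right endpoint. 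Your map $y=1-\E^{-x}$ reverses the roles of the two ends, sending $-\infty$ to $-\infty$ and $+\infty$ to the finite point $1$, which neither fits the $[0,L)$ setup of Definition~\ref{def:GIS} nor matches how the fundamental system $\phi,\theta$ with asymptotics~\eqref{eqnphiasym}--\eqref{eqnthetaasym} at $-\infty$ corresponds to the string solutions with initial data at $0$. Once you replace your map by $y=\E^x$, the injectivity and surjectivity outline you sketch---via uniqueness of $m$ for generalized indefinite strings and the characterization in \cite[Lemma~2.2]{Eplusminus} of which strings arise---is essentially how the proof in~\cite{Eplusminus} runs.
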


\begin{remark}
In fact, upon equipping $\CHdom$ and $\SM_0$ with suitable topologies, the spectral transform in Theorem~\ref{thm:ISP} becomes homeomorphic; see~\cite[Proposition~4.5]{Eplusminus}.  
 \end{remark}

We shall also need the following important subclass of spectral measures: Denote by $\SM_0^+$ the set of all measures in $\SM_0$ whose support is contained in $(0,\infty)$. The analog of Theorem~\ref{thm:ISP} in this case was established in~\cite[Corollary~4.2]{Eplusminus}: 

 \begin{corollary}\label{corIP+}
 The mapping $(u,\mu)\mapsto \rho$ is a bijection between the set $\CHdom^+$ of all pairs $(u,\mu)$ in $\CHdom$ such that $\omega$ is a positive Borel measure on $\R$ and $\dip$ vanishes identically, and $\SM_0^+$. 
 \end{corollary}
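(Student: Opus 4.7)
The plan is to use Theorem~\ref{thm:ISP} as a black box: since the spectral transform $(u,\mu)\mapsto\rho$ is already a bijection between $\CHdom$ and $\SM_0$, all that remains is to identify the subset of pairs in $\CHdom$ for which $\omega$ is a positive Borel measure and $\dip\equiv 0$ with the subset $\SM_0^+\subseteq\SM_0$ of measures supported on $(0,\infty)$. So I would split the argument into two implications.

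For the forward direction, suppose $(u,\mu)\in\CHdom$ with $\omega\geq 0$ a Borel measure and $\dip\equiv 0$. Then~\eqref{eqnDE} reduces to the classical Krein-string eigenvalue problem $-f''+\tfrac14 f = z\,\omega f$. Taking $h=\bar f$ in the weak formulation~\eqref{eqnDEweakform} for an eigenfunction $f$ yields
\begin{equation*}
\int_\R \bigl(|f'(x)|^2 + \tfrac14 |f(x)|^2\bigr)\,dx = z\int_\R |f(x)|^2\,\omega(dx),
\end{equation*}
so that the associated self-adjoint relation is non-negative and its spectrum lies in $[0,\infty)$. Because $\rho\in\SM_0$ already has no mass in a neighborhood of $0$, this forces $\supp\rho\subseteq(0,\infty)$, i.e.\ $\rho\in\SM_0^+$.

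For the converse, assume $\supp\rho\subseteq(0,\infty)$ and let $(u,\mu)\in\CHdom$ be the pair provided by Theorem~\ref{thm:ISP}. The integral representation~\eqref{eqnWTmIntRepZero} then shows that $m$ extends analytically across $(-\infty,0)$; moreover for $z<0$ one has $z/\bigl(\lambda(\lambda-z)\bigr)<0$ for every $\lambda>0$, so $m(z)<0$ on the negative real axis. In other words, $z\mapsto -m(-z)$ is a Stieltjes function of $z$, which is the analytic signature of a classical Krein string. Appealing to the Weyl--Titchmarsh correspondence for generalized indefinite strings (as recalled from~\cite{IndefiniteString} in the appendix of~\cite{Eplusminus}), this Stieltjes form of $m$ is equivalent to the vanishing of the $z^2\dip$ term in~\eqref{eqnDE} together with the positivity of $\omega$; equivalently, any non-trivial $\dip$ or any sign-indefinite part of $\omega$ would push spectral mass of $\rho$ onto $(-\infty,0]$, contradicting our hypothesis.

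The main obstacle is precisely the converse implication: the forward direction is essentially a one-line quadratic-form estimate, whereas the converse relies on the full inverse spectral theory of generalized indefinite strings to translate the half-line support of $\rho$ back into structural positivity of $\omega$ and vanishing of $\dip$. Since this technology has already been developed in~\cite{Eplusminus}, the corollary reduces to quoting~\cite[Corollary~4.2]{Eplusminus}, with the sketch above providing the conceptual dictionary.
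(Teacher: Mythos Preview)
The paper itself gives no proof of this corollary at all: it simply records that the result was established in~\cite[Corollary~4.2]{Eplusminus}. Your proposal ultimately lands on the same citation, so in substance you and the paper agree.

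That said, a couple of points about the sketch you add on top. In the forward direction, you take $h=\bar f$ in the weak formulation~\eqref{eqnDEweakform}, but that identity is only asserted for $h\in H^1_{\cc}(\R)$, and eigenfunctions of~\eqref{eqnDE} are not compactly supported; one needs a truncation and limiting argument (or to work directly with the quadratic form of the associated self-adjoint relation) to make this rigorous. In the converse direction, the natural statement is not that $z\mapsto -m(-z)$ is Stieltjes but that $m$ itself is a Stieltjes function in the sense of Remark~\ref{rem:KreinSting}: the Weyl--Titchmarsh function of the associated generalized indefinite string $(\infty,\tilde\omega,\tilde\dip)$ coincides with $m$, and by Remark~\ref{rem:KreinSting} this $m$ is Stieltjes precisely when the string is a Krein string, i.e.\ $\tilde\omega$ is a positive measure and $\tilde\dip\equiv0$, which via~\eqref{eqnDefa}--\eqref{eqnDefbeta} translates back to $\omega\geq0$ and $\dip\equiv0$. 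This is the dictionary you allude to, and it is exactly what~\cite[Corollary~4.2]{Eplusminus} packages; once you invoke that reference, the sketch is superfluous.
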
 

\begin{remark}\label{rem:IP+}
In fact, every pair $(u,\mu)$ in the set $\CHdom^+$ is uniquely determined by the function $u$ and hence $\CHdom^+$ can be identified with a subset of $H^1_{\loc}(\R)$. 
For further details we refer to~\cite[Lemma~4.3]{Eplusminus}. 
 \end{remark}

By means of the correspondence in Theorem~\ref{thm:ISP}, we next introduce a flow on $\CHdom$.

\begin{definition}\label{def:CHspecflow} 
The {\em conservative Camassa--Holm flow}  $\Phi$ on $\CHdom$ is a mapping
 \begin{align}
   \Phi\colon  \CHdom\times\R \rightarrow\CHdom
 \end{align}
defined as follows: Given a pair $(u,\mu)$ in $\CHdom$ with associated spectral measure $\rho$ and some $t\in\R$, the corresponding image $\Phi^t(u,\mu)$ under $\Phi$ is defined to be the unique pair in $\CHdom$ for which the associated spectral measure is given by    
   \begin{align}\label{eqnSMEvo}
    B \mapsto \int_B \E^{-\frac{t}{2\lambda}} \rho(d\lambda)
  \end{align}
  on the Borel subsets of $\R$. 
  \end{definition}
  
 We note that $\Phi^t(u,\mu)$ is well-defined since the measure given by~\eqref{eqnSMEvo} belongs to $\SM_0$ whenever so does $\rho$ and hence the existence of a unique corresponding pair in $\CHdom$ is guaranteed by Theorem~\ref{thm:ISP}. 
  The definition of this flow is of course motivated by the well-known time evolution of spectral data for spatially decaying classical solutions of the Camassa--Holm equation as well as multi-peakons; see~\cite[Section~6]{besasz98}.  
  
   \begin{definition}\label{def:ConsSolution}
A {\em global conservative solution} of the two-component Camassa--Holm system~\eqref{eqnOurCH} with initial data $(u_0,\mu_0)\in\CHdom$ is a continuous curve 
\begin{align}
  \gamma\colon t\mapsto(u(\ledot,t),\mu(\ledot,t))
\end{align}
from $\R$ to $\CHdom$ with $\gamma(0)=(u_0,\mu_0)$ that satisfies~\eqref{eqnOurCH} in the sense that for every test function $\varphi\in C_\cc^\infty(\R\times\R)$ one has  
 \begin{align}
 & \int_\R \int_\R u(x,t) \varphi_t(x,t) + \biggl(\frac{u(x,t)^2}{2} + P(x,t) \biggr) \varphi_x(x,t) \,dx \,dt = 0, \\
 \begin{split} 
 &  \int_\R \int_\R \varphi_t(x,t) + u(x,t) \varphi_x(x,t) \,\mu(dx,t) \,dt  \\ 
 &   \qquad\qquad\qquad\qquad = 2\int_\R \int_\R u(x,t)\biggl(\frac{u(x,t)^2}{2} - P(x,t) \biggr) \varphi_x(x,t) \,dx \,dt,
 \end{split}
 \end{align}
 where the function $P$ on $\R\times\R$ is given by 
 \begin{align}
  P(x,t) =  \frac{1}{4} \int_\R \E^{-|x-s|} u(s,t)^2 ds +  \frac{1}{4} \int_\R \E^{-|x-s|} \mu(ds,t).
 \end{align}
\end{definition}   
  
  In~\cite[Theorem~5.4]{Eplusminus}, it was shown that the conservative Camassa--Holm flow $\Phi$ indeed gives rise to these kinds of global conservative solutions.
  
   \begin{theorem}\label{thm:consCHweak}
  For every pair $(u_0,\mu_0)\in\CHdom$, the integral curve $t\mapsto \Phi^t(u_0,\mu_0)$ is a global conservative solution of the two-component Camassa--Holm system~\eqref{eqnOurCH} with initial data $(u_0,\mu_0)$.
 \end{theorem}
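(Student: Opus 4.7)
The plan is to prove the statement by a density/continuity argument, treating first the base case of (conservative) multi-peakons — where everything is explicit — and then transferring the weak-form identities to general $(u_0,\mu_0)\in\CHdom$ through the continuity of the spectral transform. The first ingredient is the continuity of the integral curve itself, required by Definition~\ref{def:ConsSolution}. On the spectral side, $t\mapsto \rho_t$ with $\rho_t(d\lambda)=\E^{-t/(2\lambda)}\rho_0(d\lambda)$ is continuous because $\rho_0\in\SM_0$ has support bounded away from $0$, so the weight $\E^{-t/(2\lambda)}$ is uniformly bounded and jointly continuous in $(\lambda,t)$ on $\supp\rho_0$. Combined with the homeomorphism property mentioned after Theorem~\ref{thm:ISP}, this yields continuity of $t\mapsto \Phi^t(u_0,\mu_0)$ in $\CHdom$.

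Next I would verify the weak equations on the dense subclass of conservative multi-peakons, that is, on initial data whose spectral measure has finite support contained in $\R\setminus\{0\}$. For such data the flow~\eqref{eqnSMEvo} preserves finite support, and the classical Stieltjes inversion formulas (in the form developed by Beals--Sattinger--Szmigielski and adapted to the conservative setting in~\cite{ConservMP}) give explicit rational expressions, in terms of the exponentials $\E^{-t/(2\lambda_k)}$, for the positions and heights of the peaks together with the atomic part of $\dip$ at collision times. Plugging these formulas into the test integrals of Definition~\ref{def:ConsSolution} reduces both equations to straightforward identities that hold pointwise in $t$ outside the discrete set of collision times; at collisions, the atomic contribution of $\dip$ exactly compensates the defect in $u^2\,dx$, so the measure-valued transport equation still holds in the distributional sense. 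Density of finite-support measures in $\SM_0$ (for the topology of~\cite[Proposition~4.5]{Eplusminus}) transfers via the homeomorphism of Theorem~\ref{thm:ISP} to density of multi-peakons in $\CHdom$, providing a sequence $(u_0^{(n)},\mu_0^{(n)})\to (u_0,\mu_0)$ for arbitrary initial data.

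Finally, I would pass to the limit in the weak formulation. The flow $\Phi$ is jointly continuous in the time variable and the initial data (again via the spectral picture, since the evolution $\rho_0\mapsto \E^{-t/(2\lambda)}\rho_0$ is jointly continuous in $t$ and $\rho_0$ and has $\SM_0$-valued orbits), so $\Phi^t(u_0^{(n)},\mu_0^{(n)})\to \Phi^t(u_0,\mu_0)$ locally uniformly in $t$. Writing $P^{(n)}$ for the convolution built from $(u^{(n)},\mu^{(n)})$, the compact support of the test function $\varphi$, the local energy bound~\eqref{eqnmuac}, and the uniformly bounded, uniformly decaying kernel $\tfrac12\E^{-|\cdot|}$ together give the dominated convergence needed for $P^{(n)}\to P$ and for each of the nonlinear integrands $u^{(n)}\varphi_t$, $((u^{(n)})^2/2+P^{(n)})\varphi_x$, $u^{(n)}\varphi_x\,\mu^{(n)}(dx,t)$, and $u^{(n)}((u^{(n)})^2/2-P^{(n)})\varphi_x$. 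The main obstacle — and the reason the topology on $\CHdom$ from~\cite{Eplusminus} is built the way it is — is to reconcile two competing requirements: it must be weak enough that multi-peakons are dense and the spectral map is a homeomorphism, yet strong enough that the genuinely nonlinear expressions above converge along the approximating sequence. The decay condition~\eqref{eqnMdef-} and the Hardy-type condition~\eqref{eqnMdef+} are precisely what provide the uniform integrability of $\E^{-|x-s|}$ against $u^2\,ds$ and $\mu(ds)$ needed to push the multi-peakon identities through the limit.
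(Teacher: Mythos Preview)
The paper does not actually prove this theorem: it is quoted from~\cite[Theorem~5.4]{Eplusminus} as a preliminary result, with no argument given here. Your outline --- verify the weak identities on the dense class of conservative multi-peakons (where the spectral measure has finite support and the formulas from~\cite{ConservMP} apply), then transfer them to general data by the joint continuity of $\Phi$ in~\cite[Proposition~5.2]{Eplusminus} and the homeomorphism of Theorem~\ref{thm:ISP} --- is precisely the strategy carried out in~\cite{Eplusminus}, so in that sense you have correctly reconstructed the intended approach.

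One point deserves a caveat. You describe the passage to the limit as a matter of dominated convergence once the topology on $\CHdom$ is in place, but the actual work in~\cite{Eplusminus} lies exactly in designing that topology and proving that convergence in it controls the nonlinear integrands $u^2$, $uu_x$, $u\mu$, and $P$ on compact $(x,t)$-sets; the growth condition~\eqref{eqnMdef+} by itself does not give uniform local $L^\infty$ bounds on $u$ along an arbitrary convergent sequence, and the argument there passes through an auxiliary characterization of convergence (roughly, locally uniform convergence of $u$ together with weak$^\ast$ convergence of $\mu$) rather than a single domination. So while your plan is the right one, the sentence ``the compact support of the test function~$\varphi$, the local energy bound~\eqref{eqnmuac}, and the uniformly bounded kernel together give the dominated convergence needed'' understates the difficulty: this is exactly where the substance of~\cite[Section~5]{Eplusminus} resides.
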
 
 
\begin{remark}\label{rem:wellposed}
It is widely known that classical solutions to the Camassa--Holm equation~\eqref{eqnCH} may develop singularities in finite time. On the other hand, it was proved that~\eqref{eqnCH} possesses global weak solutions~\cite{xizh00}, which are not unique however and hence continuation of solutions after blow-up is a delicate matter. A particular kind of weak solutions are so-called {\em conservative} solutions, the notion of which was suggested independently in~\cite{brco07} and~\cite{hora07}. Solutions of this kind have been constructed by a generalized method of characteristics that relied on a transformation from Eulerian to Lagrangian coordinates and was accomplished for various classes of initial data in~\cite{brco07, grhora12, grhora12b, hora07, hora07c}. The question about uniqueness of conservative weak solutions to the Camassa--Holm equation and its two-component generalization is a subtle one. 
 Uniqueness of conservative weak solutions to the Camassa--Holm equation has been established in~\cite{bcz15} (see also~\cite{bre16}) under the assumption that the initial data $u_0$ belongs to $H^1(\R)$. 
 However, the notion of weak solution employed in~\cite{bcz15} is stronger than ours, so that this uniqueness result does not apply in our case. 
On the other hand, in combination with continuous dependence on the initial data established in~\cite[Proposition~5.2]{Eplusminus}, Theorem~\ref{thm:consCHweak} leads to a well-posedness result for the two-com\-po\-nent Camassa--Holm system~\eqref{eqnOurCH} on $\CHdom$ (for further details see~\cite[Remark~13.21]{ISPforCH}).  
 \end{remark}

Later on, we are also going to use that the measure $\rho$ associated with a pair $(u,\mu)$ in $\CHdom$ is the spectral measure of a corresponding generalized indefinite string.

\begin{definition}\label{def:GIS}
A {\em generalized indefinite string} is a triple $(L,\tilde{\omega},\tilde{\dip})$ such that $L\in(0,\infty]$, $\tilde{\omega}$ is a real distribution in $H^{-1}_{\loc}[0,L)$ and $\tilde{\dip}$ is a positive Borel measure on the interval $[0,L)$.
The unique function $\tilde{\Wr}$ in $L^2_{\loc}[0,L)$ such that 
 \begin{align}
  \tilde{\omega}(h) & = - \int_0^L \tilde{\Wr}(x)h'(x)dx 
 \end{align}
 for all functions $h\in H^1_{\cc}[0,L)$ is called the {\em normalized anti-derivative} of $\tilde{\omega}$. 
\end{definition}

Associated with such a generalized indefinite string $(L,\tilde{\omega},\tilde{\dip})$ is the ordinary differential equation  
  \begin{align}\label{eqnGISODE}
  -y'' = z\, \tilde{\omega}y + z^2 \tilde{\dip}y
 \end{align}
 on the interval $[0,L)$, where $z$ is a complex spectral parameter. 
 Spectral problems of this form go back at least to work of M.\ G.\ Krein and H.\ Langer from the 1970s on indefinite analogues of the classical moment problem~\cite{krla79,la76}.
 In the generality above, they were introduced in~\cite{IndefiniteString}, where it was proved that they serve as a canonical model for self-adjoint operators with simple spectrum. 

 In order to make the connection with~\eqref{eqnDE} more precise, with a pair $(u,\mu)$ in $\CHdom$ we shall associate the generalized indefinite string $(\infty,\tilde{\omega},\tilde{\dip})$, where $\tilde{\omega}$ is defined via its normalized anti-derivative $\tilde{\Wr}$ by    
 \begin{align}\label{eqnDefa}
     \tilde{\Wr}(x) & = -\frac{u(\log x) + u'(\log x)}{x}  
 \end{align}
 and the positive Borel measure $\tilde{\dip}$ on $(0,\infty)$ is defined by 
 \begin{align}\label{eqnDefbeta}
  \tilde{\dip}(B)   = \int_{\log(B)} \E^{- x} \dip(dx) %=  \int_B \frac{1}{x}\, d\dip\circ\log(x)
 \end{align}
 for every Borel set $B\subseteq(0,\infty)$. 
 This is well-defined because the function $\tilde{\Wr}$ turns out to belong to $L^2[0,\infty)$ and the measure $\tilde{\dip}$ turns out to be finite so that it can be extended to a Borel measure on $[0,\infty)$ by defining that it has no point mass at zero; see~\cite[Lemma~2.1]{Eplusminus}.
 Furthermore, these coefficients satisfy the asymptotic condition
  \begin{align}\label{eqnMdefTilde}
  \limsup_{x\rightarrow\infty}\,  x \int_{x}^\infty  \wt{\Wr}(s)^2ds  + x \int_x^\infty\wt{\dip}(ds) < \infty.
\end{align}
 In fact, the properties above characterize all generalized indefinite strings that arise in this way (see~\cite[Lemma~2.2]{Eplusminus}) and the pair $(u,\mu)$ can be recovered from the generalized indefinite string $(\infty,\tilde{\omega},\tilde{\dip})$ via 
  \begin{align}\label{eqnuitofa}
   u(x) & = - \frac{1}{\E^{x}} \int_0^{\E^x}  \wt{\Wr}(s) s\, ds, & \dip(B) & =   \int_{\E^B} x\, \tilde{\dip}(dx), 
  \end{align} 
  and then using relation~\eqref{eqndipdef}.
 Now the measure $\rho$ introduced above is precisely the spectral measure of the generalized indefinite string $(\infty,\tilde{\omega},\tilde{\dip})$ in the sense of~\cite{IndefiniteString}. 

Particular kinds of pairs in $\CHdom$ correspond to multi-soliton solutions of the conservative Camassa--Holm flow:

\begin{definition}\label{def:MP}
We say that a pair $(u,\mu)$ in $\CHdom$ is a {\em multi-peakon profile} if it is of the form 
\begin{align}
  u(x) & = \frac{1}{2} \sum_{n=1}^N \omega_n \E^{-|x-x_n|}, & \dip & = \sum_{n=1}^N \dip_n \delta_{x_n}, 
\end{align}
for some $N\in\N\cup\{0\}$, increasing points $x_1,\ldots,x_N$ in $\R$, real weights $\omega_1,\ldots,\omega_N$ and non-negative weights $\dip_1,\ldots,\dip_N$. The collection of all multi-peakon profiles will be denoted by $\Peakons$. 
\end{definition}

If $(u,\mu)$ is a multi-peakon profile, then the distribution $\omega$ is a measure supported on a finite set and given by 
\begin{align}
    \omega = \sum_{n=1}^N \omega_n \delta_{x_n}.
\end{align} 
For the corresponding generalized indefinite string $(\infty,\tilde{\omega},\tilde{\dip})$, the distribution $\tilde{\omega}$ and the measure $\tilde{\dip}$ are supported on a finite set as well and we compute that  
 \begin{align}\label{eq:GISviaMP}
   \tilde{\omega} & = \omega_0\delta_0 + \sum_{n=1}^N \frac{\omega_n}{\E^{x_n}}\delta_{\E^{x_n}}, & \tilde{\dip} & = \sum_{n=1}^N \frac{\dip_n}{\E^{x_n}}\delta_{\E^{x_n}},
 \end{align}
 for some constant $\omega_0\in\R$, which can be determined explicitly in terms of the other constants.
 In fact, for almost all $x<x_1$ one has 
 \begin{align}
   \omega_0 = \tilde{\Wr}(\E^x) = -\frac{u(x) + u'(x)}{\E^x} = -\sum_{n=1}^N \frac{\omega_n}{\E^{x_n}}.
 \end{align}
 
\begin{remark}\label{rem:MP=rational} 
The importance of multi-peakon profiles among all pairs in the set $\CHdom$ stems from the fact that they correspond precisely to spectral measures $\rho$ in $\SM_0$ that are supported on a finite set (see Proposition~\ref{prop:MP} in the next section or~\cite{besasz00, ConservMP}). In particular, the corresponding Weyl--Titchmarsh functions are precisely the rational Herglotz--Nevanlinna functions that are analytic at zero and admit the integral representation~\eqref{eqnWTmIntRepZero}.  
 \end{remark}

 More generally, in this article we will be concerned with pairs $(u,\mu)$ in $\CHdom$ that are of multi-peakon form merely to the left of some point $\ell\in\R$, that is, such that 
 \begin{align}\label{eqnMPtox}
   \omega|_{(-\infty,\ell)} & = \sum_{n=1}^N \omega_n \delta_{x_n}, & \dip|_{(-\infty,\ell)} & = \sum_{n=1}^N \dip_n \delta_{x_n}, 
 \end{align}
 for some $N\in\N\cup\{0\}$, increasing points $x_1,\ldots,x_N$ in $(-\infty,\ell)$, real weights $\omega_1,\ldots,\omega_N$ and non-negative weights $\dip_1,\ldots,\dip_N$. 
 In this case, the distribution $\tilde{\omega}$ and the measure $\tilde{\dip}$ of the corresponding generalized indefinite string $(\infty,\tilde{\omega},\tilde{\dip})$ are supported on a finite set when restricted to $[0,\E^\ell)$. 
 More precisely, one has that 
 \begin{align}\label{eqnStrViaMP}
   \tilde{\omega}|_{[0,\E^\ell)} & = \omega_0\delta_0 + \sum_{n=1}^N \frac{\omega_n}{\E^{x_n}}\delta_{\E^{x_n}}, & \tilde{\dip}|_{[0,\E^\ell)} & = \sum_{n=1}^N \frac{\dip_n}{\E^{x_n}}\delta_{\E^{x_n}},
 \end{align}
 for some constant $\omega_0\in\R$. 
 The converse holds true as well; if the generalized indefinite string $(\infty,\tilde{\omega},\tilde{\dip})$ is of the form in~\eqref{eqnStrViaMP}, then the pair $(u,\mu)$ is of the form in~\eqref{eqnMPtox}. The significance of this class of pairs $(u,\mu)$ stems from the fact that it admits a complete spectral characterization and, moreover, this class is preserved under the conservative Camassa--Holm flow. We are going to substantiate all this in the following section.
 
\begin{remark}\label{rem:exmpls}
 Particular examples of pairs $(u,\mu)$ as above are {\em infinite multi-peakon profiles} of the form  
\begin{align}\label{eqnMPinf}
\omega & = \sum_{n=1}^\infty \omega_n \delta_{x_n}, & \dip & = \sum_{n=1}^\infty\dip_n\delta_{x_n},
\end{align}
for increasing points $x_1,x_2,\ldots$ in $\R$ with $x_n\rightarrow\infty$, real weights $\omega_1,\omega_2,\ldots$ and non-negative weights $\dip_1,\dip_2,\ldots$. 
Such a pair $(u,\mu)$ clearly satisfies~\eqref{eqnMPtox} for all points $\ell\in\R$ (where $N$ is allowed to vary with $\ell$).
These examples in particular include infinite multi-peakon profiles that are eventually periodic or almost periodic.

Another class of pairs of the form~\eqref{eqnMPinf}, however, with a bounded sequence $x_n$, was studied in~\cite{li09}. More specifically, the main object in~\cite{li09} are pairs $(u,\mu)$ such that $\dip$ vanishes identically and $u$ is given by 
\begin{align}
u(x) = \frac{1}{2}\sum_{n=1}^\infty \omega_n \E^{-|x-x_n|},
\end{align}
where the strictly increasing sequence of points $x_1,x_2,\dots$ accumulates at $\ell\in\R$ and positive weights $\omega_1,\omega_2,\ldots$ satisfy certain decay assumptions (for further details we refer to Section~\ref{sec:DiscrSpec}). 
 \end{remark} 

\begin{remark}\label{rem:Hardy}
If $(u,\mu)$ is an infinite multi-peakon profile, then the measure $\dip$, whose support is bounded from below and discrete, satisfies~\eqref{eqnMdef+} exactly when
\begin{align}\label{eq:HardyForV}
\sup_{n\in\N} \E^{x_n} \sum_{k\ge n}\frac{\dip_k}{\E^{x_k}} <\infty.
\end{align} 
For the function $u$ the corresponding condition in~\eqref{eqnMdef+} is much more involved since $\omega_n$ may take values of both signs.

It is not difficult to show that~\eqref{eq:HardyForV} is equivalent to 
\begin{align}\label{eq:HardyForValt}
\sup_{n\to \infty} \sum_{x_k \in [n,{n+1})}\dip_k <\infty.
\end{align}
However, it seems a rather difficult task to simplify the above characterization without further assumptions on the growth of the $x_n$-s. For instance, if $x_n = n$ for all $n\in\N$ (or, more generally, if $(x_n)_{n\in\N}$ is a restriction of a Delone set to $(\ell,\infty)$ for some $\ell\in\R$), then the above condition holds true if and only if $\sup_n \dip_n < \infty$. Clearly, the latter is necessary, however, it is in general far from being sufficient (for example, take $x_n = \log n$ for all $n\in\N$).
Let us also mention that one may relate~\eqref{eq:HardyForV} to the action of a weighted discrete Hardy operator. Namely, for a fixed strictly increasing sequence $X=(x_n)_{n\in\N}$ consider the operator $H_X$ acting on sequences $(f_n)_{n\in\N}$ by
\begin{align}
H_X f(n) = \E^{x_n} \sum_{k\ge n} \frac{f_k}{\E^{x_k}}.
\end{align}
From this perspective, condition~\eqref{eq:HardyForV} for the discrete measure $\dip$ is nothing but a description of the pre-image of $\ell^\infty(\N)$ under the action of the Hardy-type operator $H_X$ in the cone of all positive bounded sequences.  %in the case when $x_n=\log n$  
%Indeed, (at least formally) observe that the corresponding function $\tilde{\Wr}$ has the form
%\begin{align*}
%\tilde{\Wr}(x) = - \sum_{\E^{x_k} > x} \frac{\omega_k}{\E^{x_k}}.
%\end{align*}
%The necessary condition $\tilde{\Wr}\in L^2(0,\infty)$ for the validity of~\eqref{eqnMdefTilde} reads
%\begin{align}
%\|\tilde{\Wr}\|^2_{L^2(0,\infty)} = \sum_{n\ge 0} (\E^{x_{n+1}} - \E^{x_{n}})\biggl(\sum_{k\ge n+1} \frac{\omega_k}{\E^{x_k}}\biggr)^2 <\infty.
%\end{align} 
%On the other hand, under the additional positivity condition that $\omega_n \ge 0$ for all sufficiently large $n$, the corresponding condition simplifies and takes %the form~\eqref{eq:HardyForV}.
\end{remark}
 
%\begin{remark}
% Under the assumption that the pair $(u,\mu)$ is of the form~\eqref{eqnMPtox}, the fundamental system $\phi$, $\theta$ is given by 
% \begin{align}
%   \phi(z,x) & = \E^{\frac{x}{2}}, & \theta(z,x) & = \E^{-\frac{x}{2}} - z\omega_0\E^{\frac{x}{2}}, 
% \end{align}
% for $x\leq x_1$, where $\omega_0$ is such that $u(x)+u'(x) = -\omega_0\E^{x}$ near $-\infty$. 
%\end{remark} 

%%%%%%%%%%%%%%%%%%%%%%%%%%%%%%%%
\section{Spectral measures with finite moments}\label{secMoment}
%%%%%%%%%%%%%%%%%%%%%%%%%%%%%%%%

 As in the previous section, we suppose that $(u,\mu)$ is an arbitrary pair in $\CHdom$ and let $\rho \in \SM_0$ denote the corresponding spectral measure. 
 The distribution $\omega$ and the measure $\dip$ are defined in the same way as in Section~\ref{secPre} as well.
 We are next going to characterize (in terms of the coefficients $\omega$ and $\dip$) when the spectral measure $\rho$ has finite moments up to a certain order $2K$. 
  It will turn out that this means precisely that our pair $(u,\mu)$ is of multi-peakon form~\eqref{eqnMPtox} to the left of some point. 
 
 Before we are able to state our main results, it is necessary to introduce some more notation: 
 We will denote with $s_{-1},s_0,s_1,\ldots$ the moments of the measure $\rho$ as long as they exist, that is, we set 
 \begin{align}
   s_k & = \int_\R \lambda^k \rho(d\lambda). 
 \end{align}
  Furthermore, for $l\in\Z$ and $k\in \N\cup\{0\}$ we will use the Hankel determinants  
   \begin{align}  \label{eqnHankel1,2}
  \Delta_{l,k} & = \begin{vmatrix} s_l & s_{l+1} & \cdots & s_{l+k-1} \\ s_{l+1} & s_{l+2} & \cdots & s_{l+k} \\ \vdots & \vdots & \ddots & \vdots \\ s_{l+k-1} & s_{l+k} & \cdots & s_{l+2k-2} \end{vmatrix}
  \end{align}
 as far as they are well-defined (when $k$ is zero, these determinants should be interpreted as equal to one), where we set the $s_{k}$ equal to zero for $k<-1$. 
 For future reference, we also state the useful relation 
  \begin{align} 
   \label{eqnDeltaRel}   \Delta_{l+1,k}  \Delta_{l-1,k}  - \Delta_{l+1,k-1} \Delta_{l-1,k+1} & = \Delta_{l,k}^2,          
  \end{align} 
 which follows from Sylvester's determinant identity \cite{ba68, ga} and holds as long as the involved determinants are defined.
 
 \begin{remark}\label{remDeltas}
 Later on, we shall need the well-known multiple integral expression for the Hankel determinants (see~\cite{sze} for example) 
\begin{align}\label{eq:HankelviaIntegral}
\Delta_{l,k} = \frac{1}{k!}\int_{\R^k} \mlam^l \wand_k(\mlam)\rho(d\mlam), 
\end{align}
where $\mlam^l = \lambda_1^l\lambda_2^l\cdots\lambda_k^l$, $\rho(d\mlam) = \rho(d\lambda_1)\cdots \rho(d\lambda_k)$ and $\wand_k$ is the square of the Vandermonde determinant.  
  \end{remark}

Let us suppose for now that $\rho$ is supported on a finite set with $D$ points, so that all moments of the measure $\rho$ exist. 
 In this case, the determinants $\Delta_{0,0},\ldots,\Delta_{0,D}$ are positive, but $\Delta_{0,k}$ is zero when $k>D$.
 Similarly, one has that $\Delta_{1,k}$ is zero when $k>D$, but $\Delta_{1,D}$ is not zero in view of~\eqref{eqnDeltaRel} with $l=1$ and $k=D$ because the support of $\rho$ does not contain zero and hence $\Delta_{2,D}>0$. 
  Now let $N\in\N$ be the number of non-zero elements of the sequence $\Delta_{1,0},\ldots,\Delta_{1,D}$ and define the increasing function 
 \begin{align}\label{eqnkapparat}
   \kappa\colon\{1,\dots,N\}\rightarrow\{0,\ldots,D\}
 \end{align}
  such that $\Delta_{1,\kappa(1)},\ldots,\Delta_{1,\kappa(N)}$ enumerates all non-zero members of the sequence $\Delta_{1,0},\ldots,\Delta_{1,D}$.  
 Since this sequence does not have any consecutive zeros, we could alternatively also define $\kappa$ recursively via 
  \begin{align}\label{eqnkapparec}
     \kappa(1) & = 0, &   \kappa(n+1) & = \begin{cases} \kappa(n)+1, & \Delta_{1,\kappa(n)+1}\not=0, \\ \kappa(n)+2, & \Delta_{1,\kappa(n)+1}=0. \end{cases}
  \end{align} 
 Our first result is essentially contained in~\cite{ConservMP} and is based on the work of M.~G.~Krein and H.~Langer~\cite{krla79}. 

\begin{proposition}\label{prop:MP}
  The spectral measure $\rho$ is supported on a finite set if and only if the pair $(u,\mu)$ is a multi-peakon profile. 
  In this case, the pair $(u,\mu)$ has the form 
              \begin{align}\label{eqnGISfin}
         u(x) & = \frac{1}{2} \sum_{n=1}^{N-1} \omega_n \E^{-|x-x_n|}, & \dip & = \sum_{n=1}^{N-1} \dip_n \delta_{x_n}, 
      \end{align}
       where
       \begin{align}
       N & = \#\{k\in\{0,\ldots,D\}\,|\,\Delta_{1,k} \not= 0\}, & D & =\#\supp(\rho),
       \end{align}
        the increasing points $x_1,\ldots,x_{N-1}$ in $\R$, the real weights $\omega_1,\ldots,\omega_{N-1}$ and the non-negative weights $\dip_1,\ldots,\dip_{N-1}$ are given by 
      \begin{subequations}\label{eqnGISDel}
      \begin{align}
                x_n & = \log\frac{\Delta_{2,\kappa(n)}}{\Delta_{0,\kappa(n)+1}}, \label{eqnGISDelx} \\
        	  \omega_n & = \biggl(\frac{\Delta_{-1,\kappa(n)+1}}{\Delta_{1,\kappa(n)}} - \frac{\Delta_{-1,\kappa(n+1)+1}}{\Delta_{1,\kappa(n+1)}}\biggr)\frac{\Delta_{2,\kappa(n)}}{\Delta_{0,\kappa(n)+1}}, \label{eqnGISDelomega} \\
	 \dip_n & = \biggl(\frac{\Delta_{-2,\kappa(n)+2}}{\Delta_{0,\kappa(n)+1}} -  \frac{\Delta_{-2,\kappa(n+1)+1}}{\Delta_{0,\kappa(n+1)}}\biggr)\frac{\Delta_{2,\kappa(n)}}{\Delta_{0,\kappa(n)+1}}. \label{eqnGISDeldip}
      \end{align}
           \end{subequations}
\end{proposition}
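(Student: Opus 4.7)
The plan is to reduce the statement to the inverse spectral problem for generalized indefinite strings and then apply the explicit reconstruction algorithm going back to Krein and Langer in the indefinite Hamburger setting.

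For the equivalence, I would work through the correspondence $(u,\mu) \leftrightarrow (\infty,\tilde{\omega},\tilde{\dip})$ furnished by \eqref{eqnDefa}--\eqref{eqnuitofa}. From \eqref{eq:GISviaMP} one reads off that $(u,\mu)$ is a multi-peakon profile precisely when the distribution $\tilde{\omega}$ and the measure $\tilde{\dip}$ are supported on a finite subset of $[0,\infty)$. Since $\rho$ is the spectral measure of this generalized indefinite string (cf.\ Remark~\ref{rem:MP=rational} and \cite{IndefiniteString}), the standard spectral theory yields that $\rho$ has finite support exactly when the Weyl--Titchmarsh function $m$ in \eqref{eqnWTmIntRepZero} is rational, which in turn holds exactly when the generalized indefinite string coefficients are finitely supported. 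The counting $N-1$ versus $D$ reflects the extra atom $\omega_0\delta_0$ of $\tilde{\omega}$ at the origin that does not correspond to a peakon; the precise count via $N = \#\{k : \Delta_{1,k}\neq 0\}$ then emerges from the inversion algorithm below.

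For the explicit formulas \eqref{eqnGISDel}, I would proceed via a continued fraction expansion of $m(z)$. Expanding the representation \eqref{eqnWTmIntRepZero} asymptotically at infinity yields $m(z) \sim -\sum_{k\ge 0} s_{k-1} z^{-k-1}$, whose coefficients are the moments appearing in the Hankel determinants. When $\rho$ is supported on finitely many points, $m$ is a rational Herglotz--Nevanlinna function admitting a unique terminating continued fraction. In the definite case (i.e., $(u,\mu)\in\CHdom^+$, when all peakon heights are positive) this is the classical Stieltjes algorithm, whose Jacobi-type coefficients are expressible via ratios of Hankel determinants by familiar identities. In the indefinite setting some $\Delta_{1,k}$ may vanish; the Krein--Langer modification handles such degeneracies by fusing two steps of the classical algorithm into one whenever they occur, and the index function $\kappa$ in \eqref{eqnkapparec} enumerates precisely the non-degenerate steps. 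Applying the algorithm and matching the continued fraction parameters against the GIS coefficients via \eqref{eq:GISviaMP} yields $\E^{x_n} = \Delta_{2,\kappa(n)}/\Delta_{0,\kappa(n)+1}$ together with the analogous expressions for the weights of $\tilde{\omega}$ and $\tilde{\dip}$; transforming back through \eqref{eqnuitofa} produces the stated formulas for $x_n$, $\omega_n$, and $\dip_n$.

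The main obstacle lies in the degeneracies: one must show (i) no two consecutive $\Delta_{1,k}$ can vanish, so the recursion in \eqref{eqnkapparec} produces a well-defined $\kappa$; (ii) the resulting $\dip_n$ are non-negative, which is required for admissibility in $\CHdom$; and (iii) the ratio $\Delta_{2,\kappa(n)}/\Delta_{0,\kappa(n)+1}$ is positive and its logarithms strictly increase with $n$, delivering honest real positions $x_1<\cdots<x_{N-1}$. All three rest on the Sylvester-type identity \eqref{eqnDeltaRel} applied to the moment sequence $(s_k)_{k\ge -2}$, together with positivity of appropriate principal Hankel minors --- the bookkeeping being precisely what is carried out in \cite{ConservMP} following \cite{krla79}.
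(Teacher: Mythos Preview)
Your proposal is correct and follows essentially the same route as the paper: reduce to the associated generalized indefinite string via the transformation \eqref{eqnDefa}--\eqref{eqnuitofa} and then invoke the Krein--Langer inverse spectral algorithm for finitely supported coefficients. The paper simply cites the ready-made result for generalized indefinite strings (\cite[Proposition~2.2]{StieltjesType}, \cite[Section~5.3]{IndMoment}) rather than sketching the continued fraction mechanism, but what you outline is precisely the content of those references.
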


\begin{proof}
 By employing the explicit transformation to generalized indefinite strings described in Section~\ref{secPre}, this can be inferred immediately from the corresponding result for generalized indefinite strings (see~\cite[Proposition~2.2]{StieltjesType}, \cite[Section~5.3]{IndMoment}). 
\end{proof}

\begin{remark}\label{rem:formulas3.9}
A few remarks are in order:
\begin{enumerate}[label=(\alph*), ref=(\alph*), leftmargin=*, widest=e]
\item\label{rem:3.3a}
The expressions for the weights in~\eqref{eqnGISDel} can also be put in different forms. % (see~\cite[Section~5.3]{IndMoment}). 
For example, relation~\eqref{eqnDeltaRel} with $l=0$ shows that one has 
\begin{align}\label{eqnGISDelomegaB}
  \omega_n = & \frac{\Delta_{0,\kappa(n)+1}\Delta_{2,\kappa(n)}}{\Delta_{1,\kappa(n)}\Delta_{1,\kappa(n)+1}} \not= 0,  & \dip_n & = 0, 
\end{align}
 as long as $\Delta_{1,\kappa(n)+1}$ is not zero.
 On the other side, if $\Delta_{1,\kappa(n)+1}$ is zero, then relation~\eqref{eqnDeltaRel} with $l=-1$ allows us to write 
 \begin{align}
   \dip_n = \frac{\Delta_{-1,\kappa(n)+2}^2 \Delta_{2,\kappa(n)}^{\,}}{\Delta_{0,\kappa(n)+1}^2\Delta_{0,\kappa(n)+2}^{\,}} >0.
 \end{align}
 In particular, these considerations make it clear that $\dip_n+|\omega_n|>0$ and that the weight $\dip_n$ is not zero if and only if $\Delta_{1,\kappa(n)+1}$ vanishes.
\item \label{rem:3.3b}
If the spectral measure is supported on the positive half-line so that $\rho\in \SM_0^+$, then the function $\kappa$ defined above simplifies to
    \begin{align}\label{eq:kappan-positive}
      \kappa(n) = n-1
    \end{align}
    because vanishing of the Hankel determinants $\Delta_{1,k}$ is restricted and the expressions in~\eqref{eqnGISDel} turn into
      \begin{align}\label{eq:3.9positive}
                x_n & = \log\frac{\Delta_{2,n-1}}{\Delta_{0,n}}, &
        	  \omega_n & = \frac{\Delta_{0,n}\Delta_{2,n-1}}{\Delta_{1,n-1}\Delta_{1,n}}, &
	  \dip_n & = 0. 
      \end{align}
   These formulas are essentially (up to a slightly different choice of spectral measure) the same as the ones in~\cite[Theorem~5.6]{besasz00}. 
   \end{enumerate}
\end{remark}

Our main goal is to extend the above formulas to a much larger class of pairs in the set $\CHdom$. We begin with the following important observation.

\begin{theorem}\label{thmMomK}
 Suppose that the spectral measure $\rho$ is not supported on a finite set and let $K\in\N\cup\{0\}$. 
 Then the moments of $\rho$ exist up to order $2K$ if and only if there is an integer $N\in\N$, increasing points $x_1,\ldots,x_{N}$ in $\R$, real weights $\omega_1,\ldots,\omega_{N-1}$ and non-negative weights $\dip_1,\ldots,\dip_{N}$ with $\dip_n+|\omega_n|>0$ for all $n\in\{1,\ldots,N-1\}$ and 
    \begin{align}\label{eq:KviaN}
       N - 1 + \#\{n\in\{1,\ldots,N\}\,|\,\dip_n\not=0\} \geq K
    \end{align} 
    such that the pair $(u,\mu)$ has the form  
    \begin{align}\label{eqnstrK}
      \omega|_{(-\infty,x_N)} & = \sum_{n=1}^{N-1} \omega_n \delta_{x_n}, & \dip|_{(-\infty,x_N]} & =  \sum_{n=1}^{N} \dip_n \delta_{x_n}.
    \end{align}
\end{theorem}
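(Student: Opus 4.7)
The plan is to reduce the claim to the analogous moment characterization for generalized indefinite strings by means of the explicit correspondence from Section~\ref{secPre}. Let $(\infty,\tilde{\omega},\tilde{\dip})$ be the generalized indefinite string associated with $(u,\mu)$ via \eqref{eqnDefa}--\eqref{eqnDefbeta}, so that $\rho$ is simultaneously the spectral measure of this string. By \eqref{eqnStrViaMP}, the pair $(u,\mu)$ is of the multi-peakon form \eqref{eqnstrK} on $(-\infty,x_N]$ precisely when $\tilde{\omega}$ and $\tilde{\dip}$ are finite superpositions of Dirac deltas on the initial interval $[0,\E^{x_N})$, with one extra $\tilde{\omega}$-atom at the origin. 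Hence the theorem is equivalent to a purely string-theoretic statement: the spectral measure $\rho$ admits $2K+1$ finite moments if and only if $\tilde{\omega}$ and $\tilde{\dip}$ are a finite combination of point masses on some initial interval, with the total atom count (weighted correctly) exceeding $K$.

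For the string-level statement I would proceed by truncation. Given a string of multi-peakon form on $[0,\E^{x_N})$, split $\rho$ according to the decomposition of the string into its finite initial segment and its tail beyond $\E^{x_N}$. The initial segment is a finite generalized indefinite string of the type covered by Proposition~\ref{prop:MP}, so its spectral measure is supported on a finite set and all moments exist. The tail contributes to the Weyl--Titchmarsh function $m(z)$ a term whose expansion at $z=\infty$, controlled via the fundamental system \eqref{eqnphiasym}--\eqref{eqnthetaasym} applied at the point $x_N$, decays fast enough to ensure that the corresponding moments $s_k$ of the full measure $\rho$ are finite. Conversely, if $\rho$ has sufficiently many moments, an inductive procedure driven by the recursion \eqref{eqnkapparec} together with relation \eqref{eqnDeltaRel} allows one to extract peaks $x_1,\omega_1,\dip_1,x_2,\ldots$ one at a time from the moment sequence via truncated versions of the formulas \eqref{eqnGISDel}; strict positivity of the relevant Hankel matrices, which follows from $\rho$ not being supported on a finite set, guarantees that the process is well-defined as far as the moments reach.

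The delicate point is the exact accounting in \eqref{eq:KviaN}. The function $\kappa$ of \eqref{eqnkapparec} jumps by $1$ at an $\omega$-peak and by $2$ at a pure $\dip$-atom; this is the manifestation of the Krein--Langer principle that a dipole must be treated as a double point in the indefinite moment problem. Summing these jumps yields $\kappa(N) = (N-1)+\#\{n\in\{1,\ldots,N\}:\dip_n\neq 0\}$, which is precisely the quantity in \eqref{eq:KviaN}, and it is the largest index such that $\Delta_{1,k}$ can be forced to be non-zero from the multi-peakon portion alone. The main obstacle is verifying that the tail past $x_N$ cannot artificially produce additional moments: one must rule out resonance-type cancellations in the tail contribution to $m(z)$ at $z=\infty$ which would allow moments beyond the critical order to survive without the corresponding multi-peakon structure. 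This is done via a careful asymptotic analysis of the tail transfer matrix, treating separately the cases $\dip_N=0$ and $\dip_N\neq 0$, and it is the step where the threshold in \eqref{eq:KviaN} gets pinned down rather than being off by one.
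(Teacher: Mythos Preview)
Your top-level reduction is exactly the paper's proof: transport the statement to the associated generalized indefinite string $(\infty,\tilde{\omega},\tilde{\dip})$ via \eqref{eqnDefa}--\eqref{eqnDefbeta} and \eqref{eqnStrViaMP}, and then invoke the corresponding result for strings. The paper does not prove the string-level statement itself but simply cites \cite[Theorem~2.4]{StieltjesType}; everything beyond your first paragraph is your own attempt to reprove that cited result.

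That attempt is in the right spirit but the mechanism you describe is off. You cannot ``split $\rho$ according to the decomposition of the string into its finite initial segment and its tail'': the spectral measure does not decompose additively when you cut the string. The actual tool is the continued fraction expansion of the Weyl--Titchmarsh function $m$ (Lemma~\ref{lem:GIStrunc} and \eqref{eq:cfracKL} in the appendix). Each atom $(\tilde{\omega}_n,\tilde{\dip}_n)$ contributes one floor of the continued fraction, and the large-$z$ expansion of $m$ gains one or two additional terms per floor depending on whether $\tilde{\dip}_n$ vanishes; this is what produces moments of $\rho$ via the integral representation of $m$. Your reference to the fundamental system asymptotics \eqref{eqnphiasym}--\eqref{eqnthetaasym} ``applied at the point $x_N$'' is misplaced: those are the $x\to -\infty$ normalizations and carry no information at $x_N$. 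What you need instead is the nested linear-fractional relation \eqref{eq:WTtrunc} between $m$ and the Weyl--Titchmarsh function $m_\ell$ of the truncated string.

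Your counting paragraph is essentially correct: the recursion \eqref{eqnkapparec} does give a jump of $2$ precisely at a pure $\tilde{\dip}$-atom (cf.\ Remark~\ref{rem:formulas3.9}\ref{rem:3.3a}), and summing the jumps reproduces the left-hand side of \eqref{eq:KviaN}. The ``tail cannot produce extra moments'' direction you flag as delicate is handled in \cite{StieltjesType} by showing that the tail $m_\ell$ is a genuine Herglotz--Nevanlinna function whose large-$z$ behaviour is already constrained; no resonance-type cancellation can occur because each floor of the continued fraction involves a term $-\tilde{\ell}_n z$ that dominates at infinity.
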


 \begin{proof}
   This follows again from the corresponding result for generalized indefinite strings in~\cite[Theorem~2.4]{StieltjesType} together with the explicit transformation from generalized indefinite strings described in Section~\ref{secPre}, see~\eqref{eqnMPtox} and~\eqref{eqnStrViaMP}. 
 \end{proof}
 
  Even though the moments of the spectral measure $\rho$ do not uniquely determine the measure itself and hence the pair $(u,\mu)$ in this case, we are still able to provide explicit formulas for the multi-peakon part in terms of the Hankel determinants. 
  To this end, let us suppose that $\rho$ is not supported on a finite set and that its moments exist up to order $2K$ for some $K\in\N\cup\{0\}$. 
  Under these assumptions, the determinants $\Delta_{0,k}$ are well-defined and positive for all $k\in\{0,\ldots,K+1\}$, whereas the determinants $\Delta_{1,k}$ are well-defined for $k\in\{0,\ldots,K\}$. 
  Now let $N$ be the number of non-zero elements of the sequence $\Delta_{1,0},\ldots,\Delta_{1,K}$ and define the increasing function 
  \begin{align}
    \kappa\colon \{1,\ldots,N\}\rightarrow\{0,\ldots,K\}
  \end{align}
   such that $\Delta_{1,\kappa(1)},\ldots,\Delta_{1,\kappa(N)}$ enumerates all non-zero members of the sequence $\Delta_{1,0},\ldots,\Delta_{1,K}$. 
  Alternatively, the function $\kappa$ could again be defined via the recursion in~\eqref{eqnkapparec}.  

   \begin{corollary}\label{corStrKForm}
     If the spectral measure $\rho$ is not supported on a finite set and its moments exist up to order $2K$ for some $K\in\N\cup\{0\}$, then the pair $(u,\mu)$ has the form 
         \begin{align}\label{eqnomegadiptoxN}
      \omega|_{(-\infty,x_N)} & =  \sum_{n=1}^{N-1} \omega_n \delta_{x_n}, & \dip|_{(-\infty,x_N)} & =  \sum_{n=1}^{N-1} \dip_n \delta_{x_n},
    \end{align} 
   where the increasing points $x_1,\ldots,x_{N}$ in $\R$, the real weights $\omega_1,\ldots,\omega_{N-1}$ and the non-negative weights $\dip_1,\ldots,\dip_{N-1}$ are given by~\eqref{eqnGISDel}. 
    Furthermore, if the determinant $\Delta_{1,K}$ is zero, then $\kappa(N)=K-1$ and one has  
    \begin{align}
      \dip(\{x_N\}) = \frac{\Delta_{-1,\kappa(N)+2}^2 \Delta_{2,\kappa(N)}}{\Delta_{0,\kappa(N)+1}^2\Delta_{0,\kappa(N)+2}} >0. 
    \end{align}
   \end{corollary}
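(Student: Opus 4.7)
The structural assertion~\eqref{eqnomegadiptoxN} is immediate from Theorem~\ref{thmMomK} (restricting~\eqref{eqnstrK} to the open interval $(-\infty,x_N)$ simply drops the potential point mass $\dip_N\delta_{x_N}$). The substantive content is the identification of the positions and weights through the Hankel-determinant formulas~\eqref{eqnGISDel}, together with the explicit value of $\dip(\{x_N\})$ when $\Delta_{1,K}=0$. My plan is to reduce both claims to the finite-spectrum case (Proposition~\ref{prop:MP}) via a truncation argument.

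For the reduction, I would pick an auxiliary measure $\tilde\rho\in\SM_0$ supported on a finite set whose moments agree with $s_{-1}, s_0, \ldots, s_{2K}$. Solvability of the truncated Hamburger moment problem guarantees the existence of such a $\tilde\rho$ (one may take, for instance, a Gauss-type atomic measure with an appropriate number of nodes, after a possible shift away from $0$ to keep it in $\SM_0$). By Theorem~\ref{thm:ISP} this $\tilde\rho$ corresponds to a pair $(\tilde u,\tilde\mu)\in\CHdom$, and Proposition~\ref{prop:MP} represents $(\tilde u,\tilde\mu)$ as a finite multi-peakon profile with positions and weights given by~\eqref{eqnGISDel}. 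All Hankel determinants that occur in those formulas for indices $n=1,\ldots,N-1$ involve only the moments $s_{-1}, s_0, \ldots, s_{2K}$ (with the convention $s_k=0$ for $k<-1$), so they take the same numerical values whether computed from $\rho$ or from $\tilde\rho$. Consequently, the right-hand sides of~\eqref{eqnGISDel} for $\rho$ and for $\tilde\rho$ coincide.

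The key remaining step, which I expect to be the main obstacle, is to show that $(\tilde u,\tilde\mu)$ actually coincides with $(u,\mu)$ on $(-\infty, x_N)$, so that the positions $\tilde x_n$ and weights $\tilde\omega_n$, $\tilde\dip_n$ predicted by Proposition~\ref{prop:MP} for $\tilde\rho$ are in fact the positions and weights of $(u,\mu)$. I would establish this by passing to the generalized indefinite string side through the transformation~\eqref{eqnDefa}--\eqref{eqnDefbeta}: the restriction of the associated string to the interval $[0,\E^{x_N})$ is supported on finitely many points, and it is determined by the initial segment $s_{-1}, s_0, \ldots, s_{2K}$ of the moment sequence alone. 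This is the string-side localization result underlying Theorem~\ref{thmMomK} (cf.~\cite[Theorem~2.4]{StieltjesType}), and it implies that the restrictions of the strings associated with $\rho$ and $\tilde\rho$ to $[0,\E^{x_N})$ agree; via~\eqref{eqnuitofa} this transfers back to equality of the pairs $(u,\mu)$ and $(\tilde u,\tilde\mu)$ on $(-\infty, x_N)$.

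For the last assertion, the recursion~\eqref{eqnkapparec} forces $\kappa(N) \in \{K-1, K\}$, and the hypothesis $\Delta_{1,K}=0$ excludes the second option, so $\kappa(N)=K-1$. Applying Remark~\ref{rem:formulas3.9}\ref{rem:3.3a} to the truncated pair $(\tilde u,\tilde\mu)$, the vanishing $\Delta_{1,\kappa(N)+1}=\Delta_{1,K}=0$ produces a point mass $\tilde\dip_N$ of $\tilde\dip$ at $\tilde x_N$ equal to the expression displayed in the corollary; all Hankel determinants appearing therein again involve only $s_{-1}, s_0, \ldots, s_{2K}$. Extending the localization of the previous paragraph to include the point $x_N$ (so that $\tilde x_N = x_N$ and $\tilde\dip_N = \dip(\{x_N\})$) then yields the stated formula.
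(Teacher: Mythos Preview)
Your approach differs from the paper's, which simply cites the corresponding result for generalized indefinite strings in \cite[Corollary~2.5]{StieltjesType} and uses the explicit transformation from Section~\ref{secPre}. Your truncation-to-finite-support strategy is a classical and natural idea, and the observation that all Hankel determinants in~\eqref{eqnGISDel} for $n\le N$ (and in the formula for $\dip(\{x_N\})$) involve only $s_{-1},\ldots,s_{2K}$ is correct and is what makes the idea plausible.

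There are, however, two issues. The minor one is the construction of $\tilde\rho$: a shift of the support ``away from $0$'' changes all the moments, so this cannot be used to place $\tilde\rho$ in $\SM_0$. You would instead need to argue that a suitable Gauss-type quadrature for $\rho$ (or for an auxiliary measure built from $\rho$ that encodes $s_{-1}$ as well) has its nodes in the support of $\rho$, which already avoids a neighbourhood of $0$; this is repairable but needs care because you must match $s_{-1}$ in addition to $s_0,\ldots,s_{2K}$.

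The more substantive point is that your argument still hinges on the localization step: that the restriction of the string to $[0,\E^{x_N})$ is determined by $s_{-1},\ldots,s_{2K}$ alone. You attribute this to \cite[Theorem~2.4]{StieltjesType}, but the paper cites that result only for the \emph{characterization} (Theorem~\ref{thmMomK}); the explicit dependence of the coefficients on the moments is precisely \cite[Corollary~2.5]{StieltjesType}, which the paper invokes directly. In other words, the formulas~\eqref{eqnGISDel} \emph{are} the localization result, so your detour through $\tilde\rho$ does not actually bypass the external citation---it repackages it. If one wanted a self-contained argument, one would have to prove the localization independently (e.g., via the continued fraction expansion of $m$ at infinity, as in Remark~\ref{rem:mcontfracApp} and Appendix~\ref{appMoment}), which is essentially what \cite{StieltjesType} does.
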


   \begin{proof}
    This follows again from the corresponding result for generalized indefinite strings in~\cite[Corollary~2.5]{StieltjesType}.
   \end{proof}
  
  \begin{remark}
 Taking $K=0$ in Theorem~\ref{thmMomK}, we conclude that the spectral measure has finite total mass if and only if there is $x_1\in\R$ such that $\omega|_{(-\infty,x_1)} = 0$  and $\dip|_{(-\infty,x_1)} = 0$. Moreover, the maximal such $x_1$ is explicitly given by
  \begin{align}\label{eq:x_1vias0}
  x_1 = -\log s_0. % = -\log \left(  \int_{\R} d\rho\right).
  \end{align} 
Applying Theorem~\ref{thmMomK} once again, however, this time with $K=1$, we conclude that the second moment $s_2$ is infinite if and only if $\dip(\{x_1\}) = 0$ and for every $\eps>0$ the support of either $\omega$ or $\dip$ in $(x_1,x_1+\eps)$ is infinite. 
\end{remark}

It is tempting to connect existence of the first moment $s_1$ with the inequality $\omega(\{x_1\})\neq 0$, however, this question is rather subtle and it does not seem plausible to answer it in the general setting. On the other hand, this can be done under the additional positivity assumption. 

\begin{lemma}\label{lem:s1}
Assume that the pair $(u,\mu)\in \CHdom_0^+$ is such that $\omega|_{(-\infty,x_1)} = 0$ for some $x_1\in \R$ and $\omega|_{[x_1,x_1+\eps)} \neq 0$ for all $\eps>0$. Then the corresponding spectral measure $\rho$ has finite first moment if and only if $\omega(\{x_1\})\neq 0$. Moreover, 
\begin{align}
\omega(\{x_1\}) = \frac{s_0}{s_1}.
\end{align}
\end{lemma}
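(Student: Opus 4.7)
The plan is to extend the explicit multi-peakon formula $\omega_1=s_0/s_1$ coming from~\eqref{eq:3.9positive} in Remark~\ref{rem:formulas3.9}\eqref{rem:3.3b} (taken with $n=1$) to the present, more general setting by an approximation argument. Since $(u,\mu)\in\CHdom^+$ we have $\dip\equiv 0$, $\omega$ is a positive Borel measure, and by Corollary~\ref{corIP+} the spectral measure $\rho$ is supported on $(0,\infty)$, placing us in a Stieltjes moment problem setting; moreover, the remark preceding the lemma already identifies $s_0=\E^{-x_1}$.

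For each $\eps>0$, introduce the truncated pair $(u^\eps,\mu^\eps)\in\CHdom^+$ whose momentum is
\begin{equation*}
\omega^\eps:=\omega(\{x_1\})\,\delta_{x_1}+\omega|_{[x_1+\eps,\infty)},
\end{equation*}
obtained by removing the ``accumulating'' mass of $\omega$ in $(x_1,x_1+\eps)$ while preserving any atom at $x_1$. Then $\omega^\eps$ has a genuine gap in $(x_1,x_1+\eps)$, so if $\omega(\{x_1\})>0$ the pair $(u^\eps,\mu^\eps)$ falls under the hypothesis of Theorem~\ref{thmMomK} with $K=1$ and $N=2$; applying Corollary~\ref{corStrKForm} together with formula~\eqref{eq:3.9positive} (with $n=1$) yields
\begin{equation*}
\omega(\{x_1\})=\frac{s_0^\eps}{s_1^\eps},\qquad s_0^\eps=\E^{-x_1}=s_0,
\end{equation*}
with $s_1^\eps$ finite.

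Passing to the limit $\eps\downarrow 0$, the measures $\omega^\eps$ increase monotonically to $\omega$, so by continuity of the inverse spectral transform (the remark after Theorem~\ref{thm:ISP}) the spectral measures $\rho^\eps$ converge to $\rho$ in a suitable sense, and a monotone convergence argument in the Stieltjes setting yields $s_1^\eps\to s_1$. Taking the limit in the identity above then gives $\omega(\{x_1\})=s_0/s_1$ and in particular $s_1<\infty$, establishing the ``if'' direction and the explicit formula. For the converse, if $\omega(\{x_1\})=0$ then the leftmost support point of $\omega^\eps$ lies at some $x_1^\eps>x_1$ with $x_1^\eps\downarrow x_1$ by the accumulation hypothesis, and the same type of argument (with the role of $x_1$ played by $x_1^\eps$, applying Theorem~\ref{thmMomK} with $K=0$ to control $s_0^\eps$) forces $s_1^\eps\to\infty$ since $\omega^\eps(\{x_1^\eps\})\to 0$ while $s_0^\eps\to s_0>0$; whence $s_1=\infty$.

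The main technical obstacle is the justification of the moment convergence $s_1^\eps\to s_1$ (or $s_1^\eps\to\infty$) under the truncation, which rests on the continuous and monotone dependence of the spectral moments on the momentum coefficient in the Stieltjes setting. This is a standard feature of the Krein-string interpretation of the spectral problem~\eqref{eqnDE}, but needs careful bookkeeping through the transformation~\eqref{eqnDefa}--\eqref{eqnDefbeta} from $(u,\mu)$ to the associated generalized indefinite string.
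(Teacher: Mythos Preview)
Your approximation strategy is conceptually natural but has a genuine gap at the crucial limiting step, and the paper takes a very different, direct route.

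The paper's argument works as follows. Passing to the associated generalized indefinite string, the hypothesis says $\tilde\omega|_{[0,\ell)}=\omega_0\delta_0$ with $\ell=\E^{x_1}$, so by Remark~\ref{rem:mcontfracApp} the Weyl--Titchmarsh function factors as
\[
m(z)=\omega_0+\cfrac{1}{-\ell z+\cfrac{1}{m_\ell(z)}},
\]
where $m_\ell$ is the Weyl--Titchmarsh function of the string truncated at $\ell$. Under the positivity assumption this truncated string is a Krein string, so $m_\ell$ is a Stieltjes function with $m_\ell(\I y)\to c:=\tilde\omega_\ell(\{0\})\ge 0$ as $y\to\infty$ (Remark~\ref{rem:KreinSting}). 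A direct computation shows that $z^2\bigl(m(z)-\omega_0+\tfrac{1}{\ell z}\bigr)$ stays bounded along $z=\I y\to\I\infty$ if and only if $c>0$, which is precisely $\omega(\{x_1\})>0$; the same expansion yields $\omega(\{x_1\})=s_0/s_1$.

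Your argument instead truncates $\omega$ and appeals to continuity of the spectral transform. The problem is that the homeomorphism mentioned after Theorem~\ref{thm:ISP} gives only vague-type convergence of $\rho^\eps$ to $\rho$, which does \emph{not} imply convergence of first moments $s_1^\eps\to s_1$; this is a tightness-at-infinity issue and is essentially the whole content of the lemma. The ``monotone convergence'' you invoke would require $\rho^\eps$ to be monotone in $\eps$, but removing mass from a Krein string does not produce a monotone family of spectral measures in any obvious sense. So the limit step is circular: to pass $s_1^\eps$ to the limit you need exactly the control over $s_1$ that you are trying to establish.

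The converse direction has a separate, concrete flaw. Your claim that $\omega^\eps(\{x_1^\eps\})\to 0$ when $\omega(\{x_1\})=0$ is false in general: take $\omega=\sum_{n\ge 1}\delta_{x_1+1/n}$, where the leftmost atom of every truncation has mass $1$. Here $\omega$ accumulates at $x_1$ with no atom there, yet your mechanism for forcing $s_1^\eps\to\infty$ breaks down completely. The paper avoids this by reading off the obstruction directly from the $c=0$ case of the asymptotic expansion, with no approximation needed.
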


\begin{proof}
Let us consider the corresponding generalized indefinite string $(\infty,\tilde{\omega},\tilde{\dip})$. Then, by Remark~\ref{rem:mcontfracApp}, its Weyl--Titchmarsh function admits the continued fraction expansion~\eqref{eq:mcontfracApp}, where $\omega_0 = -s_{-1}$, $\ell = \E^{x_1}$, and $m_\ell$ is the Weyl--Titchmarsh function of the truncated string $(\infty,\tilde{\omega}_\ell,\tilde{\dip}_\ell)$. However, our assumptions  imply that it is in fact a Krein string and hence $m_\ell$ is a Stieltjes function, that is, it admits the integral representation (see Remark~\ref{rem:KreinSting})
\begin{align*}
m_\ell(z) = c + \int_0^\infty \frac{\rho_\ell(d\lambda)}{\lambda-z},
\end{align*}
where $c$ is a non-negative constant and $\rho_\ell$ is a positive Borel measure on $[0,\infty)$ satisfying~\eqref{eq:mStieltjesRho}. It is well known that $c = \tilde{\omega}_\ell(\{0\})$ and
\begin{align*}
c = \lim_{y\uparrow \infty} m_\ell(\I y).
\end{align*}
However, using~\eqref{eq:mcontfracApp}, we get
\begin{align*}
z^2 \left(m(z) - \omega_0 + \frac{1}{\ell z}\right) = \frac{z}{\ell(1-\ell z m_\ell(z))}, 
\end{align*}
and the right-hand side is bounded as $z\to\I\infty$ only if $c>0$. In this case one has
\begin{align*}
 m(z) = \omega_0 - \frac{s_0}{z} - \frac{s_1}{z^2} + \oo\left(\frac{1}{|z|^2}\right),
 \end{align*}
$z\to\I\infty$, which proves the final equality after simple straightforward calculations.
\end{proof}

\begin{remark}
It is not difficult to extend the above result to a wider setting. For instance, the argument in the proof of Lemma~\ref{lem:s1} applies if the Weyl--Titchmarsh function $m_\ell$ of a truncated string is an $R_1$-function in terminology of~\cite{kakr74a}, that is, when the corresponding spectral measure $\rho_\ell$ satisfies
\begin{align}\label{eq:R1cond}
\int_\R \frac{\rho_\ell(d\lambda)}{1+|\lambda|}<\infty.
\end{align}
However, we are unaware of a characterization of generalized indefinite strings (or canonical systems) enjoying this condition. 
 On the other hand, \cite[Section~6]{AsymCS} contains a characterization of generalized indefinite strings such that $m(z) = c + \oo(1)$ with some $c\in \overline{\C_+}$ in any closed sector in $\C_+$. 

Perhaps, a much more transparent explanation why the problem of the finite first moment is subtle can be given by looking at the integral representation of Herglotz--Nevanlinna functions. Namely, if $m$ is the Weyl--Titchmarsh function of a generalized indefinite string $(L,\tilde{\omega},\tilde{\dip})$, then 
\begin{align}
m(z) = \tilde{\dip}(\{0\}) z + c_2 - \frac{1}{Lz} + \int_{\R\setminus\{0\}}\frac{1}{\lambda-z} - \frac{\lambda}{1+\lambda^2}\rho(d\lambda). 
\end{align}
It is tempting to connect $c_2$ with $\tilde{\omega}(\{0\})$, however, it is not true without further assumptions on the spectral measure $\rho$ (for example, \eqref{eq:R1cond} would be sufficient).
\end{remark}
  
%  \begin{remark}\label{remIds}
  From the explicit expressions for the positions and weights in~\eqref{eqnGISDel}, one can derive the identities  
    \begin{align}
     \label{eqnomegasum} \int_{-\infty}^{x_n} \E^{-x}\omega(dx)  & = \frac{\Delta_{-1,1}}{\Delta_{1,0}} -\frac{\Delta_{-1,\kappa(n)+1}}{\Delta_{1,\kappa(n)}},  \\ 
     \label{eqndipsum}  \int_{-\infty}^{x_n} \E^{-x} (u(x) + u'(x))^2dx + \int_{-\infty}^{x_n} \E^{-x}\dip(dx) & = -\frac{\Delta_{-2,\kappa(n)+2}}{\Delta_{0,\kappa(n)+1}}.
    \end{align} 
  In fact, they also follow readily from the corresponding identities in~\cite[Corollary~5.6]{IndMoment} for generalized indefinite strings (see also~\cite[Corollary~3.4]{chang2014generalized}).   
%   \end{remark}
Let us point out that knowledge of the distribution $\omega$ near $-\infty$ does not allow one to recover the function $u$ near $-\infty$. However, it turns out that the additional knowledge of the first negative moment $s_{-1}$ would suffice for this purpose.
    
\begin{corollary}\label{cor:UviaMoments} 
Under the assumptions of Corollary~\ref{corStrKForm}, the function $u$ is given by 
\begin{align}\label{eqnunearminusinf}
u(x) = \frac{s_{-1}}{2} \E^{x},
\end{align}
for all $x\in (-\infty,x_1]$ and then recursively by 
 \begin{align}\label{form_u_interval}
      u(x) = u(x_n)\cosh(x-x_n) + (u'(x_n-) - \omega_n)\sinh(x-x_n), 
    \end{align}  
for all $x\in [x_n,x_{n+1})$ with $n\in \{1,\dots,N-1\}$. Here the increasing points $x_1,\ldots,x_{N}$ in $\R$ and the real weights $\omega_1,\ldots,\omega_{N-1}$ are given by~\eqref{eqnGISDel}.    
\end{corollary}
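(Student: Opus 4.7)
The plan is to exploit the weak identity $\omega = u - u''$ encoded in~\eqref{eqnDefomega} together with the conclusion of Corollary~\ref{corStrKForm}, which tells us that $\omega|_{(-\infty,x_N)} = \sum_{n=1}^{N-1}\omega_n\delta_{x_n}$ is a finite sum of Dirac masses. Consequently, on each of the open intervals $(-\infty,x_1)$ and $(x_n,x_{n+1})$ for $n=1,\dots,N-1$, the distribution $\omega$ vanishes, so $u$ satisfies the homogeneous ODE $u''=u$ in the classical sense and is therefore a linear combination of $\E^{\pm x}$ (or, centered at any $x_n$, of $\cosh$ and $\sinh$).

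On $(-\infty,x_1)$, write $u(x)=A\E^x+B\E^{-x}$. The integrability condition $\int_{-\infty}^0 \E^{-s}(u(s)^2+u'(s)^2)\,ds<\infty$ in~\eqref{eqnMdef-} immediately forces $B=0$. To pin down $A$, I would pass to the associated generalized indefinite string $(\infty,\tilde\omega,\tilde\dip)$ defined by~\eqref{eqnDefa}--\eqref{eqnDefbeta}. By~\eqref{eqnStrViaMP}, the normalized anti-derivative $\tilde{\Wr}$ of $\tilde\omega$ equals a constant $\omega_0$ on $(0,\E^{x_1})$, while~\eqref{eqnDefa} gives $\tilde{\Wr}(\E^x)=-(u(x)+u'(x))\E^{-x}=-2A$ on the same interval. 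Hence $A=-\omega_0/2$. The identification $\omega_0=-s_{-1}$, which is the main point requiring care, follows from the continued-fraction representation of the Weyl--Titchmarsh function recalled in the proof of Lemma~\ref{lem:s1}; alternatively, one compares the asymptotic expansion $m(z)=\omega_0+\OO(|z|^{-1})$ coming from that expansion with the expansion $m(z)=-s_{-1}+\OO(|z|^{-1})$ obtained by dominated convergence in~\eqref{eqnWTmIntRepZero} as $|z|\to\infty$ along a non-real ray. This gives $A=s_{-1}/2$, and continuity of $u\in H^1_{\loc}(\R)$ extends the formula to $x=x_1$.

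On each interval $[x_n,x_{n+1})$ with $n\in\{1,\dots,N-1\}$, I write the general solution of $u''=u$ centered at $x_n$ in the form $u(x)=A_n\cosh(x-x_n)+B_n\sinh(x-x_n)$. Continuity of $u$ at $x_n$ yields $A_n=u(x_n)$. For $B_n=u'(x_n+)$, I would integrate the distributional identity $u''=u-\omega$ across $x_n$: the contribution of the Dirac mass $\omega_n\delta_{x_n}$ in $\omega$ produces the jump $u'(x_n+)-u'(x_n-)=-\omega_n$, so $B_n=u'(x_n-)-\omega_n$, as claimed. (Note that $\dip$ plays no role here since the equation $\omega=u-u''$ does not involve $\dip$.) Iterating this formula starting from $x=x_1$, where $u(x_1)$ and $u'(x_1-)$ are both known to equal $(s_{-1}/2)\E^{x_1}$ from the first step, produces $u$ on all of $[x_1,x_N)$ and completes the recursion. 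The only non-trivial input is the equality $\omega_0=-s_{-1}$; everything else is a routine ODE computation combined with the distributional jump relation.
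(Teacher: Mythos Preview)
Your argument is correct and follows essentially the same route as the paper's proof: both reduce to the homogeneous ODE $u''=u$ on each subinterval, use the jump relation $u'(x_n+)-u'(x_n-)=-\omega_n$, and identify the constant on $(-\infty,x_1]$ via the associated generalized indefinite string. The only notable difference is in justifying $\omega_0=-s_{-1}$: the paper simply invokes \cite[Corollary~2.5]{StieltjesType}, whereas you supply a self-contained argument by matching the large-$|z|$ asymptotics of $m$ from the continued-fraction expansion in Remark~\ref{rem:mcontfracApp} against those from the integral representation~\eqref{eqnWTmIntRepZero}. Your version has the advantage of not relying on an external reference (and the dominated-convergence step is indeed justified here since $s_0<\infty$ and $\rho$ has a spectral gap at zero), while the paper's citation is more efficient but opaque.
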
   
   
\begin{proof}
Taking into account the first formula in~\eqref{eqnuitofa}, it follows from~\eqref{eqnStrViaMP} that it suffices to know $\omega_0$ in order to recover $u$ from $\omega$ in the interval $(-\infty,x_N)$. Indeed, the normalized anti-derivative $\tilde{\Wr}$ is equal to $\omega_0$ almost everywhere on $[0,\E^{x_1})$, and hence we conclude from~\eqref{eqnuitofa} that  
   \begin{align*}
      u(x) = - \frac{1}{\E^{x}} \int_0^{\E^x} \omega_0 s\, ds = -\frac{\omega_0}{2} \E^{x}
    \end{align*}
    for all $x\leq x_1$ in this case. 
However, under the assumptions of Corollary~\ref{corStrKForm}, the weight $\omega_0$ in~\eqref{eqnStrViaMP} for the corresponding generalized indefinite string $(\infty,\tilde{\omega},\tilde{\dip})$ is given by 
    \begin{align*}
      \omega_0 = - \frac{\Delta_{-1,1}}{\Delta_{1,0}} = -s_{-1} = - \int_\R \frac{\rho(d\lambda)}{\lambda}
    \end{align*}
    in view of~\cite[Corollary~2.5]{StieltjesType}. 
It then remains to notice that in each interval $(x_n,x_{n+1})$ the function $u$ is a linear combination of hyperbolic functions and at every $x_n$ it is continuous with 
\begin{align*}
 u'(x_n+) & = u'(x_n-) - \omega_n. \qedhere
\end{align*}
\end{proof}   

\begin{remark}\label{rem:UatInfty}
%    Let us point out that knowledge of the distribution $\omega$ near $-\infty$ does not allow one to recover the function $u$ near $-\infty$. 
%    However, under the assumptions of Corollary~\ref{corStrKForm}, the weight $\omega_0$ in~\eqref{eqnStrViaMP} for the corresponding generalized indefinite string $(\infty,\tilde{\omega},\tilde{\dip})$ is given by 
%    \begin{align}
%      \omega_0 = - \frac{\Delta_{-1,1}}{\Delta_{1,0}} = -s_{-1} = - \int_\R \frac{\rho(d\lambda)}{\lambda}
%    \end{align}
%    in view of~\cite[Corollary~2.5]{StieltjesType}. 
%    Since the normalized anti-derivative $\tilde{\Wr}$ is equal to $\omega_0$ almost everywhere on $[0,\E^{x_1})$, we conclude from~\eqref{eqnuitofa} that  
%    \begin{align}\label{eqnunearminusinf}
%      u(x) = - \frac{1}{\E^{x}} \int_0^{\E^x} \omega_0 s\, ds = \frac{s_{-1}}{2} \E^{x}
%    \end{align}
%  for all $x\leq x_1$ in this case. 
%    Together with the representation of $\omega$ in~\eqref{eqnomegadiptoxN}, this allows one to then recover the function $u$ explicitly up to $x_N$. 
%    More specifically, on each of the intervals $[x_n,x_{n+1}]$, the function $u$ is given by 
%      \begin{align}\label{form_u_interval}
%      u(x) = u(x_n)\cosh(x-x_n) + (u'(x_n-) - \omega_n)\sinh(x-x_n), 
%    \end{align}  
%    where 
The coefficients $u(x_n)$ and $u'(x_n-)$ can be obtained via the recursion 
     \begin{align}\label{eqnuu'rec}
   \begin{pmatrix} u(x_{n+1}) \\ u'(x_{n+1}-) \end{pmatrix} =   \begin{pmatrix} C_n & S_n \\ S_n & C_n \end{pmatrix}  \begin{pmatrix} u(x_{n}) \\ u'(x_{n}-) \end{pmatrix} - \omega_n \begin{pmatrix} S_n \\ C_n \end{pmatrix}
 \end{align}    
    with the constants $C_n$ and $S_n$ given by 
 \begin{align}
   C_n & = \cosh(x_{n+1}-x_n)= \frac{1}{2}\biggl( \frac{\Delta_{2,\kappa(n+1)}\Delta_{0,\kappa(n)+1}}{\Delta_{0,\kappa(n+1)+1}\Delta_{2,\kappa(n)}} + \frac{\Delta_{0,\kappa(n+1)+1}\Delta_{2,\kappa(n)}}{\Delta_{2,\kappa(n+1)}\Delta_{0,\kappa(n)+1}}\biggr), \\
   S_n & = \sinh(x_{n+1}-x_n)= \frac{1}{2}\biggl( \frac{\Delta_{2,\kappa(n+1)}\Delta_{0,\kappa(n)+1}}{\Delta_{0,\kappa(n+1)+1}\Delta_{2,\kappa(n)}} - \frac{\Delta_{0,\kappa(n+1)+1}\Delta_{2,\kappa(n)}}{\Delta_{2,\kappa(n+1)}\Delta_{0,\kappa(n)+1}}\biggr). \label{form_S}
 \end{align}      
 For later use, we also note that the values $u(x_n)$ satisfy the simpler recursion 
 \begin{align}\label{eqnuxnRecursion}
   u(x_{n+1}) & = u(x_n) \frac{\Delta_{0,\kappa(n+1)+1}\Delta_{2,\kappa(n)}}{\Delta_{2,\kappa(n+1)}\Delta_{0,\kappa(n)+1}} + S_n\frac{\Delta_{2,\kappa(n)}\Delta_{-1,\kappa(n+1)+1}}{\Delta_{0,\kappa(n)+1}\Delta_{1,\kappa(n+1)}},
 \end{align}
 which follows from the recursion in~\eqref{eqnuu'rec} after taking into account the relation 
 \begin{align}\label{eqnu+u'Delta}
   u(x_n) + u'(x_n-) = -\E^{x_n}\tilde{\Wr}(\E^{x_n}-) = \frac{\Delta_{2,\kappa(n)}\Delta_{-1,\kappa(n)+1}}{\Delta_{0,\kappa(n)+1}\Delta_{1,\kappa(n)}},
 \end{align}
 where we used~\eqref{eqnGISDelx} and the first identity in~\cite[Remark~2.3]{StieltjesType}.
  \end{remark}

 We are now going to turn to the situation when the spectral measure $\rho$ is not supported on a finite set and all its moments exist. 
  In this case, all the Hankel determinants are well-defined and the function $\kappa$ extends to an increasing function $\kappa\colon\N\rightarrow\N\cup\{0\}$ such that $\Delta_{1,\kappa(1)},\Delta_{1,\kappa(2)},\ldots$ enumerates all non-zero members of the sequence $\Delta_{1,0},\Delta_{1,1},\ldots$ (of which there are infinitely many as the latter sequence does not contain any consecutive zeros).

    \begin{corollary}\label{corStrinf}
        Suppose that the spectral measure $\rho$ is not supported on a finite set. 
    Then all moments of $\rho$ exist if and only if there are increasing points $x_1,x_2,\ldots$ in $\R$, real weights $\omega_1,\omega_2,\ldots$ and non-negative weights $\dip_1,\dip_2,\ldots$ with $\dip_n+|\omega_n|>0$ for all $n\in\N$ such that the pair $(u,\mu)$ has the form 
    \begin{align}\label{eqnstrinf}
      \omega|_{(-\infty,L)} & =  \sum_{n=1}^{\infty} \omega_n \delta_{x_n}, & \dip|_{(-\infty,L)} & =  \sum_{n=1}^{\infty} \dip_n \delta_{x_n},  & L & = \lim_{n\rightarrow\infty} x_n. 
    \end{align}
       In this case, the increasing points $x_1,x_2,\ldots$ in $\R$, the real weights $\omega_1,\omega_2,\ldots$ and the non-negative weights $\dip_1,\dip_2,\ldots$ are given by~\eqref{eqnGISDel}.
  \end{corollary}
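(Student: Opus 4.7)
The plan is to obtain this corollary as a limit of Theorem~\ref{thmMomK} and Corollary~\ref{corStrKForm} as $K\to\infty$. The main preparatory observation is that since $\rho$ has infinite support, every $\Delta_{0,k}$ is strictly positive, and Sylvester's identity~\eqref{eqnDeltaRel} with $l=0$ forces the sequence $\Delta_{1,0}, \Delta_{1,1},\ldots$ to contain no two consecutive zeros. Hence the counting function $N(K) := \#\{k\in\{0,\ldots,K\}\,|\,\Delta_{1,k}\neq 0\}$ satisfies $N(K)\geq K/2$ and tends to infinity, and the enumerator $\kappa$ extends unambiguously to a strictly increasing map $\N\to\N\cup\{0\}$ with $\kappa(n+1)-\kappa(n)\in\{1,2\}$.

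For the forward direction, I would assume that all moments of $\rho$ are finite. Then for every $K\in\N$ the hypotheses of Corollary~\ref{corStrKForm} are satisfied and yield a finite multi-peakon representation of $(u,\mu)$ on $(-\infty, x_{N(K)})$ with positions and weights prescribed by~\eqref{eqnGISDel}. Because those formulas depend only on the Hankel determinants of $\rho$ and not on $K$, the tuples associated with successive values of $K$ are automatically compatible and glue into infinite sequences $(x_n),(\omega_n),(\dip_n)$; the non-degeneracy $\dip_n+|\omega_n|>0$ holds for every $n\in\N$ because every index eventually becomes an interior point as $K$ grows (cf.~Remark~\ref{rem:formulas3.9}\ref{rem:3.3a}). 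Setting $L := \lim_{n\to\infty}x_n\in(-\infty,\infty]$ and letting $K\to\infty$ (so that $x_{N(K)}\uparrow L$) then transports the finite-$K$ representations into~\eqref{eqnstrinf}.

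For the converse, I would suppose that $(u,\mu)$ has the form~\eqref{eqnstrinf}. Fix an arbitrary $K\in\N$ and choose $N := K+1$. Then the restrictions of $\omega$ and $\dip$ to $(-\infty, x_N)$ and $(-\infty, x_N]$ respectively are precisely of the form required in~\eqref{eqnstrK}, and the inequality~\eqref{eq:KviaN} is trivially fulfilled since $N-1 = K$. Theorem~\ref{thmMomK} then guarantees that the moments of $\rho$ exist up to order $2K$; as $K$ was arbitrary, all moments are finite, and the explicit formulas~\eqref{eqnGISDel} for each fixed $n$ follow by one final application of Corollary~\ref{corStrKForm} with $K$ chosen so that $n<N(K)$. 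The only mildly subtle step in the argument is the consistency of the finite tuples across different truncation levels, but this is already built into the formulas themselves: $\kappa(n)$ and the determinants entering~\eqref{eqnGISDel} are intrinsic to the full infinite sequence of Hankel determinants of $\rho$, so none of $x_n, \omega_n, \dip_n$ truly depends on $K$.
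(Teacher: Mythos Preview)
Your proof is correct and follows exactly the approach the paper takes: the paper's own proof reads simply ``This follows immediately from Theorem~\ref{thmMomK} and Corollary~\ref{corStrKForm},'' and you have carefully spelled out the limiting argument (letting $K\to\infty$) that this sentence encodes, including the consistency of the formulas across truncation levels and the use of Sylvester's identity to ensure $N(K)\to\infty$.
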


  \begin{proof}
   This follows immediately from Theorem~\ref{thmMomK} and Corollary~\ref{corStrKForm}. 
  \end{proof} 
  
  \begin{remark}
Let us mention that the function $u$ can be recovered on the interval $(-\infty,L)$ by following the procedure described in Corollary~\ref{cor:UviaMoments} and Remark~\ref{rem:UatInfty}.  
  \end{remark}
  
  \begin{remark}\label{rem:scaling}
 For future use in the following sections, let us state a few simple transformation rules for the correspondence $(u,\mu)\mapsto\rho$.
\begin{enumerate}[label=(\alph*), ref=(\alph*), leftmargin=*, widest=e]
\item For a scaled spectral measure $\tilde{\rho}$ given by 
\begin{align}
\tilde{\rho}(B) = c\rho(B)
\end{align}
for all Borel subsets of $\R$, where $c>0$ is a positive constant, the corresponding moments and Hankel determinants are clearly connected by 
\begin{align}
s_k(\tilde{\rho}) & = c s_k(\rho), & \Delta_{l,k}(\tilde{\rho}) = c^k \Delta_{l,k}(\rho).
\end{align}
Therefore, the formulas in~\eqref{eqnGISDel} imply that the corresponding multi-peakon profiles are related by 
\begin{align}
x_k(\tilde{\rho}) & = x_k(\rho) - \log c, & \omega_{k}(\tilde{\rho}) & = \omega_{k}(\rho), & \dip_{k}(\tilde{\rho}) & =\dip_k(\rho),
\end{align}
that is, the scaling only leads to a shift in the spatial variable.
\item When the spectral parameter is scaled instead, that is, if for some positive constant $c>0$ one has 
\begin{align}
\int_{(a,b)}\tilde\rho(d\lambda) = \int_{(ca,cb)} \rho(d\lambda)
\end{align}
for all $(a,b)\subseteq \R$, then  the corresponding moments and Hankel determinants are clearly connected by 
\begin{align}
s_k(\tilde{\rho}) & = c^{-k} s_k(\rho), & \Delta_{l,k}(\tilde{\rho}) = c^{-k(k+l-1)} \Delta_{l,k}(\rho).
\end{align}
Using the formulas in~\eqref{eqnGISDel} once again, we conclude that the corresponding multi-peakon profiles are related by
\begin{align}
x_k(\tilde{\rho}) & = x_k(\rho), & \omega_{k}(\tilde{\rho}) & = c\omega_{k}(\rho), & \dip_{k}(\tilde{\rho}) & =c^2\dip_k(\rho),
\end{align}
that is, this only leads to a scaling of the masses. 
\end{enumerate}
\end{remark}
  
\begin{remark}\label{rem:OPs}
   If all moments of the spectral measure $\rho$ exist, then the formulas in~\eqref{eqnGISDel} can also be expressed in terms of the corresponding orthogonal polynomials. 
More specifically (see~\cite{akh} for example), the orthogonal polynomials $P_k$ of the first kind are given by
   \begin{align}\label{eq:Pol1stkind}
    % P_0(z) & = \frac{1}{\sqrt{s_0}}, & 
    P_k(z) & = \frac{1}{\sqrt{\Delta_{0,k}\Delta_{0,k+1}}}\begin{vmatrix} 
s_0 & s_1 & \dots &s_{k} \\ 
s_1 & s_2 & \dots & s_{k+1} \\ 
\vdots & \vdots & \ddots & \vdots \\ 
s_{k-1} & s_k & \dots & s_{2k-1} \\
1 & z &  \dots & z^k \end{vmatrix}, & k & \in \N\cup\{0\}.  
   \end{align}
%   as well as the orthogonal polynomials $Q_k$ of the second kind 
%\begin{align}
%  Q_k(z) & =\int_{\R} \frac{P_k(\lambda) - P_k(z)}{\lambda - z} \rho(d\lambda).
%\end{align}
They satisfy the recurrence relations
\begin{align}
zP_k(z) = b_k P_{k+1}(z) + a_k P_k(z) + b_{k-1}P_{k-1}(z),
\end{align}
for all $k\in \N$, where the coefficients $(a_k)$ and $(b_k)$ are usually referred to as {\em Jacobi parameters} and which can be expressed by means of certain Hankel determinants. 
Evaluating~\eqref{eq:Pol1stkind} at zero, 
% these polynomials have the values 
%\begin{align}
% P_k(0) & = (-1)^k \frac{\Delta_{1,k}}{\sqrt{\Delta_{0,k}\Delta_{0,k+1}}}, %& Q_k(0) & = (-1)^k \frac{\Delta_{-1,k}-s_{-1}\Delta_{1,k}}{\sqrt{\Delta_{0,k}\Delta_{0,k+1}}},
%\end{align}
%   by using relation~\eqref{eqnDeltaRel}, 
we see that~\eqref{eqnGISDelx} turns into\footnote{The remaining formulas in~\eqref{eqnGISDel} require polynomials of the second kind, which is why we omit them here.}   
      \begin{align}
                x_n & = \log\Biggl(\sum_{k=0}^{\kappa(n)} P_k(0)^2 \Biggr).%,  \\[1mm]
%        	  \omega_n \E^{-x_n} & =  \frac{Q_{\kappa(n)}(0)}{P_{\kappa(n)}(0)} - \frac{Q_{\kappa(n+1)}(0)}{P_{\kappa(n+1)}(0)},  \\[1mm]
%	 \dip_n \E^{-x_n} & = \begin{cases} 0, & P_{\kappa(n)+1}(0)\not=0, \\ Q_{\kappa(n)+1}(0)^2, & P_{\kappa(n)+1}(0)=0. \end{cases} 
      \end{align}       
The latter establishes a close relationship with the {\em Christoffel--Darboux kernel},    
%\begin{align}\label{eq:CDkernel}
%K_n(z,\zeta) = \sum_{k=0}^n P_k(z)P_k(\zeta),
%\end{align}
a major object in the theory of orthogonal polynomials %and asymptotic behavior of CD kernels is the classical topic 
 (see~\cite{sim09} for example). Its diagonal is given by
\begin{align}\label{eq:CDkernel}
K_n(z,z) = \sum_{k=0}^n P_k(z)^2 = -\frac{1}{\Delta_{0,n+1}}\begin{vmatrix} 0 & 1 & z & \cdots & z^n \\ 
1& s_0 & s_1 & \cdots & s_{n} \\ z& s_1 & s_2 & \cdots & s_{n+1} \\ \vdots & \vdots& \vdots & \ddots & \vdots \\ z^n & s_{n} & s_{n+1} & \cdots & s_{2n} \end{vmatrix}.
\end{align}
%In particular, evaluating at $z=0$ we get
%\begin{align*}
%\sum_{k=0}^n P_k(0)^2 & = -\frac{1}{\Delta_{0,n+1}}\begin{vmatrix} 0 & 1 & 0 & \cdot & 0 \\ 
%1& s_0 & s_1 & \cdots & s_{n} \\ 0& s_1 & s_2 & \cdots & s_{n+1} \\ \vdots & \vdots& \vdots & \ddots & \vdots \\ 0 & s_{n} & s_{n+1} & \cdots & s_{2n} %\end{vmatrix} \\
%& = \frac{\Delta_{2,n}}{\Delta_{0,n+1}}.
%\end{align*}
We also recall the (confluent) Christoffel--Darboux formula
\begin{align}\label{eq:CDflaGen}
 K_n(z,z) = b_{n} (P_{n+1}'(z)P_{n}(z) - P_{n+1}(z)P_{n}'(z)).
\end{align}
In particular, if $\Delta_{1,n} \neq 0$ for all $n\in \N$ (the latter holds when $\rho \in \SM_0^+$ for example), then we get the formulas 
%\begin{subequations}
%      \begin{align}
%                x_n & = \log\big(K_{\kappa(n)} (0,0) \big),  \\[1mm]
%        	  \omega_n \E^{-x_n} & =  \frac{Q_{\kappa(n)}(0)}{P_{\kappa(n)}(0)} - \frac{Q_{\kappa(n+1)}(0)}{P_{\kappa(n+1)}(0)},  \\[1mm]
%	 \dip_n \E^{-x_n} & = \begin{cases} 0, & P_{\kappa(n)+1}(0)\not=0, \\ Q_{\kappa(n)+1}(0)^2, & P_{\kappa(n)+1}(0)=0. \end{cases} 
%      \end{align}
%           \end{subequations}
\begin{subequations}\label{eqnCFOP}
           \begin{align}
\omega_n &   = \frac{P_{n-1}'(0)}{P_{n-1}(0)} - \frac{P_{n}'(0)}{P_{n}(0)} ,\label{eq:omegaviaCD}\\ 
 \dip_n & = 0,\\[1mm]
\E^{x_n} &   = b_{n-1}(P_n'(0)P_{n-1}(0) - P_n(0)P_{n-1}'(0)). \label{eq:XnviaCD}
\end{align}
\end{subequations}
 \end{remark}
 
  Under the conditions in Corollary~\ref{corStrinf}, the limit $L$ can be clearly written as  
  \begin{align}\label{eqnLviaDelta}
    L = \lim_{n\rightarrow\infty} x_n = \lim_{n\rightarrow\infty} \log\frac{\Delta_{2,\kappa(n)}}{\Delta_{0,\kappa(n)+1}}, 
  \end{align}    
  providing an explicit relation between $L$ and the spectral data in form of the Hankel determinants. 
  Thus, by adding the condition 
     \begin{align}\label{eqnCondStrL}
               \lim_{k\rightarrow\infty} \frac{\Delta_{0,k+1}}{\Delta_{2,k}} =0
     \end{align}    
  on the spectral side in Corollary~\ref{corStrinf}, we can make sure that the limit $L$ is always infinite and that~\eqref{eqnstrinf} holds unrestricted. 
  
  \begin{remark}
  Notice that the sequence in~\eqref{eqnCondStrL} converges to zero if and only if so does its subsequence indexed by $\kappa(n)$. Indeed, the function $\kappa$ enumerates all non-zero elements in the sequence of Hankel determinants $\Delta_{1,k}$ and, by~\eqref{eqnkapparec}, $\kappa(n)\neq \kappa(n+1)-1$ exactly when $\Delta_{1,\kappa(n)+1} = 0$. However, by~\eqref{eqnDeltaRel} we then get 
\begin{align}
    \frac{\Delta_{0,\kappa(n)+1}}{\Delta_{2,\kappa(n)}} = \frac{\Delta_{0,\kappa(n)+2}}{ \Delta_{2,\kappa(n)+1}},
\end{align}    
and it only remains to notice that in this case $\kappa(n+1) = \kappa(n) + 2$.      
  \end{remark}
  
  Instead of condition~\eqref{eqnCondStrL}, one can alternatively also require determinacy of the associated Hamburger moment problem as our next result shows.

   \begin{corollary}\label{corStrinfpure}
        Suppose that the spectral measure $\rho$ is not supported on a finite set. 
    Then all moments of $\rho$ exist and the corresponding Hamburger moment problem is determinate if and only if the pair $(u,\mu)$ is an infinite multi-peakon profile, that is, there are increasing points $x_1,x_2,\ldots$ in $\R$ with $x_n\rightarrow\infty$, real weights $\omega_1,\omega_2,\ldots$ and non-negative weights $\dip_1,\dip_2,\ldots$ with $\dip_n+|\omega_n|>0$ for all $n\in\N$ such that the pair $(u,\mu)$ has the form  
    \begin{align}\label{eqnstrinfpure}
      \omega & =  \sum_{n=1}^{\infty} \omega_n \delta_{x_n}, & \dip & =  \sum_{n=1}^{\infty} \dip_n \delta_{x_n}. 
    \end{align}
       In this case, the increasing points $x_1,x_2,\ldots$ in $\R$, the real weights $\omega_1,\omega_2,\ldots$ and the non-negative weights $\dip_1,\dip_2,\ldots$ are given by~\eqref{eqnGISDel}.  
  \end{corollary}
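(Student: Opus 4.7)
The plan is to deduce the statement from Corollary~\ref{corStrinf} by isolating what the determinacy hypothesis adds. By that corollary, existence of all moments of $\rho$ is already equivalent to $(u,\mu)$ having the multi-peakon form~\eqref{eqnstrinf} on $(-\infty,L)$, where $L=\lim_{n\to\infty} x_n\in(-\infty,\infty]$. Since $L=\infty$ forces $(-\infty,L)=\R$ and thus turns~\eqref{eqnstrinf} into~\eqref{eqnstrinfpure}, the whole corollary reduces to showing that, under the standing assumption that all moments of $\rho$ exist, determinacy of the Hamburger moment problem is equivalent to $L=\infty$. The explicit formulas~\eqref{eqnGISDel} for the positions $x_n$ and weights $\omega_n$, $\dip_n$ will then be inherited directly from Corollary~\ref{corStrinf}.

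For the easy direction, assume $(u,\mu)$ has the form~\eqref{eqnstrinfpure} with $x_n\to\infty$. I would pass to the associated generalized indefinite string $(\infty,\tilde{\omega},\tilde{\dip})$ via~\eqref{eqnDefa}--\eqref{eqnDefbeta}. Both $\tilde{\omega}$ and $\tilde{\dip}$ are then purely discrete, supported (apart from a possible point mass of $\tilde{\omega}$ at the origin) on $\{\E^{x_n}:n\in\N\}$, a set whose only accumulation point is $\infty$. This is precisely the regime in which the moment-problem theory for generalized indefinite strings developed in~\cite{IndMoment} and recalled in Appendix~\ref{appMoment} yields determinacy of the Hamburger moment problem for $\rho$; the crucial point is that the support of the coefficients reaches out to $\infty$.

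For the converse, assume all moments exist and $\rho$ is determinate, and argue by contradiction. Corollary~\ref{corStrinf} supplies~\eqref{eqnstrinf} on $(-\infty,L)$; suppose toward a contradiction that $L<\infty$. Then for the associated string, the coefficients $\tilde{\omega}$ and $\tilde{\dip}$ are discrete on the bounded interval $[0,\E^L)$, while their behaviour on $[\E^L,\infty)$ remains essentially unconstrained by the Hankel moments of $\rho$. Invoking Krein's parameterization of solutions to the indeterminate Hamburger moment problem (Theorem~\ref{th:KreinParam} in the appendix) together with the string--moment dictionary of Appendix~\ref{appMoment}, I would modify the string beyond $\E^L$ to produce a second positive Borel measure with the same moments as $\rho$, contradicting determinacy. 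The main obstacle is exactly this last step: rigorously upgrading the geometric condition $L<\infty$ into an actual non-uniqueness statement on the spectral side. Once this link via~\cite{IndMoment} is in place, $L=\infty$ follows and the proof is complete.
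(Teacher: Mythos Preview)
Your overall strategy matches the paper's exactly: reduce via Corollary~\ref{corStrinf} to the equivalence ``determinacy $\Leftrightarrow L=\infty$'' and then appeal to the string--moment dictionary of~\cite{IndMoment}. The paper's proof is just the one-line citation of Corollary~\ref{corStrinf} together with~\cite[Theorem~5.7]{IndMoment} applied to the associated generalized indefinite string $(\infty,\tilde{\omega},\tilde{\dip})$; your proposal is essentially an unpacking of that citation.

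There is one logical wrinkle in your converse direction, however. To derive a contradiction from $L<\infty$ you invoke Theorem~\ref{th:KreinParam}, but that theorem is stated under the hypothesis that the moment problem is \emph{indeterminate}---which is precisely what you are trying to establish. As written, the argument is circular. The clean fix is to bypass the Krein parameterization entirely and appeal directly to the determinacy criterion (Theorem~\ref{th:determinacy}, equivalently~\cite[Theorem~5.7]{IndMoment}): the Krein--Langer string that the map $\Psi_{\mathcal S}$ associates to the moment sequence has length $\tilde L=\E^{L}$, and finiteness of this length is exactly what forces the indeterminacy side of the criterion. Alternatively, if you want to keep the ``modify the string beyond $\E^L$'' picture, you must first prove (without assuming indeterminacy) that any two generalized indefinite strings agreeing on $[0,\E^L)$ have spectral measures with identical moments; this is true and is part of the dictionary in~\cite{IndMoment}, but it is not Theorem~\ref{th:KreinParam}. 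You correctly flag this step as the main obstacle---it is, and the resolution is the direct citation the paper uses.
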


  \begin{proof}
     This follows from Corollary~\ref{corStrinf} and~\cite[Theorem~5.7]{IndMoment} applied to the corresponding generalized indefinite string $(\infty,\tilde{\omega},\tilde{\dip})$. 
  \end{proof}
  
  \begin{remark}
It goes back to the work of H.\ L.\ Hamburger (see~\cite[Addenda and Problems~II.8]{akh} for example) that the moment problem is determinate if and only if 
 \begin{align}\label{eqnHambStrL}
               \lim_{k\rightarrow\infty} \frac{\Delta_{0,k+1}}{\Delta_{4,k-1}} = 0.
     \end{align} 
   Let us also mention that it was shown in~\cite{bci02} that the moment problem is determinate if and only if the smallest eigenvalue of the Hankel matrix $(s_{i+j})_{0\le i,j\le k}$ tends to zero as $k\to \infty$. 
\end{remark}  
  
  In fact, the knowledge of moments allows to recover an infinite multi-peakon profile completely. To simplify considerations, we state the next result under the additional positivity assumption.
  
  \begin{corollary}\label{corStrinfpure+}
     Suppose that the spectral measure $\rho\in\SM_0^+$ is not supported on a finite set. Then all moments of $\rho$ exist and the corresponding Hamburger moment problem is determinate if and only if there are increasing points $x_1,x_2,\ldots$ in $\R$ with $x_n\rightarrow\infty$, positive weights $\omega_1,\omega_2,\ldots$ such that the pair $(u,\mu)\in \CHdom^+$ has the form 
      \begin{align}\label{eqnstrinfpureU}
      u(x) & =  \frac{1}{2}\sum_{n=1}^{\infty} \omega_n \E^{-|x-x_n|}, & \dip & \equiv 0.
       \end{align}
In this case, the coefficients of $u$ are given by~\eqref{eq:3.9positive} and the series converges locally uniformly on $\R$.  
  \end{corollary}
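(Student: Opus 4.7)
The plan is to obtain this result as a specialization of Corollary~\ref{corStrinfpure} to the positive setting $\rho\in\SM_0^+$. For the forward direction, Corollary~\ref{corStrinfpure} (which does not use positivity) already delivers an infinite multi-peakon representation $\omega=\sum_{n\ge 1}\omega_n\delta_{x_n}$, $\dip=\sum_{n\ge 1}\dip_n\delta_{x_n}$ with $x_n\to\infty$. The hypothesis $\rho\in\SM_0^+$ together with Corollary~\ref{corIP+} forces $(u,\mu)\in\CHdom^+$, so that $\omega$ is a positive Borel measure and $\dip\equiv 0$, hence $\dip_n=0$ and, thanks to the nondegeneracy $\dip_n+|\omega_n|>0$, $\omega_n>0$ for each $n$. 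Moreover, all Hankel determinants $\Delta_{1,k}$ are strictly positive when $\rho\in\SM_0^+$, so Remark~\ref{rem:formulas3.9}\ref{rem:3.3b} simplifies $\kappa$ to $\kappa(n)=n-1$ and reduces~\eqref{eqnGISDel} to~\eqref{eq:3.9positive}.

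The main step is to identify $u$ with the series~\eqref{eqnstrinfpureU}. Starting from the explicit value $u(x)=\tfrac{s_{-1}}{2}\E^x$ on $(-\infty,x_1]$ provided by Corollary~\ref{cor:UviaMoments}, I would propagate across the peaks using continuity of $u$ at each $x_n$ and the prescribed derivative jump $u'(x_n+)-u'(x_n-)=-\omega_n$; since $u-u''=0$ in each gap, an elementary induction gives $u(x)=A_N\E^x+B_N\E^{-x}$ on $(x_N,x_{N+1})$, with
\begin{align*}
A_N=\tfrac{s_{-1}}{2}-\tfrac12\sum_{k=1}^N\omega_k\E^{-x_k},\qquad B_N=\tfrac12\sum_{k=1}^N\omega_k\E^{x_k}.
\end{align*}
On such an interval one has $u+u'=2A_N\E^x$, so the growth restriction~\eqref{eqnMdef+} (which here reduces to $\limsup_{x\to\infty}\E^x\int_x^\infty\E^{-s}(u+u')^2\,ds<\infty$ because $\dip\equiv 0$) forces $\sum_{N\ge 1}A_N^2(\E^{x_{N+1}}-\E^{x_N})<\infty$, which is incompatible with any nonzero limit of the strictly decreasing sequence $A_N$. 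Hence $A_N\to 0$, equivalently $\sum_k\omega_k\E^{-x_k}=s_{-1}<\infty$, and substituting this back the expression $A_N\E^x+B_N\E^{-x}$ becomes algebraically equal to $\tfrac12\sum_{k=1}^\infty\omega_k\E^{-|x-x_k|}$ on $(x_N,x_{N+1})$, thereby establishing~\eqref{eqnstrinfpureU}.

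Local uniform convergence on any compact $[-M,M]$ is then immediate: splitting the series at the largest index with $x_n\le M$ leaves a finite head and a tail dominated by $\tfrac{\E^M}{2}\sum_k\omega_k\E^{-x_k}=\tfrac{\E^M s_{-1}}{2}$, so the Weierstrass M-test applies. The converse direction is direct: if $u$ has the form~\eqref{eqnstrinfpureU} with $\omega_n>0$ and $x_n\to\infty$, then $\omega=\sum_n\omega_n\delta_{x_n}$ is a positive Borel measure and $\dip\equiv 0$, so $(u,\mu)$ is an infinite multi-peakon profile in $\CHdom^+$; Corollary~\ref{corIP+} then gives $\rho\in\SM_0^+$, and Corollary~\ref{corStrinfpure} delivers finiteness of all moments together with determinacy of the moment problem. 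The principal obstacle in the whole argument is the convergence step $\sum_k\omega_k\E^{-x_k}<\infty$, which we extract from the boundedness condition~\eqref{eqnMdef+} built into $\CHdom^+$ rather than from any direct asymptotic analysis of the Hankel determinants.
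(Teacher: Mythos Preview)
Your argument is correct, and it takes a genuinely different route from the paper's. The paper first invokes the external fact that $\rho\in\SM_0^+$ forces $u+u'\in L^\infty(\R)$, which yields the Hardy-type tail bound $\sum_{k\ge n}\omega_k\E^{-x_k}\le C\E^{-x_n}$; it then shows that the series in~\eqref{eqnstrinfpureU} converges locally uniformly to a function of growth at most $C+C_1|x|$, observes that this function solves $u-u''=\omega$ distributionally, and concludes by citing a uniqueness lemma for solutions of this equation with growth $\oo(\E^{|x|})$. Your approach is more self-contained: you compute $u$ explicitly on each gap $(x_N,x_{N+1})$ from the initial value in Corollary~\ref{cor:UviaMoments} and the jump conditions, then use the raw condition~\eqref{eqnMdef+} directly (rather than its $L^\infty$ consequence) to force $A_N\to 0$, which simultaneously identifies $u$ with the series and yields the exact identity $\sum_k\omega_k\E^{-x_k}=s_{-1}$. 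The paper's route gives the stronger Hardy bound as a by-product but leans on two external citations; yours stays entirely within the results already established in the section and is arguably cleaner for the purpose of the corollary.
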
   
  
  \begin{proof}
Taking into account Corollary~\ref{corIP+}, we only need to show that $u$ indeed has the desired form. Since $\rho\in\SM_0^+$, condition~\eqref{eqnMdef+} is equivalent to $u+u'\in L^\infty(\R)$,  see~\cite[Remark~3.8]{Eplusminus}. However, the latter means that %$\omega$ is a positive Borel measure on $\R$ and $\dip\equiv 0$. Moreover, 
 $\omega$ satisfies~\eqref{eq:HardyForV}, that is,
\begin{align*}
 \sum_{k\ge n}\frac{\omega_k}{\E^{x_k}} \le C \E^{-x_n},
\end{align*}
for all $n\in\N$, where $C>0$ is a positive constant independent of $n\in\N$. Therefore, %we can estimate %the right-hand sideThis estimate is sufficient to show that the series on the right-hand side in~\eqref{eqnstrinfpureU} converges locally uniformly on $\R$. Indeed, we can estimate 
\begin{align*}
\sum_{n=1}^{\infty} \omega_n \E^{-|x-x_n|} & %= \frac{1}{2}\sum_{x_n<x} +\sum_{x_n\ge x} \omega_n \E^{-|x-x_n|}\\
       = \sum_{x_n<x} \omega_n \E^{x_n - x} +\sum_{x_n\ge x} \omega_n \E^{x-x_n} \\
       & = \E^{-x}\sum_{x_n<x} \omega_n \E^{x_n} + \E^x\sum_{x_n\ge x} \frac{\omega_n}{ \E^{x_n}}\\
       &\le C+\sum_{x_n<x} \omega_n \\
       &\le C+C_1 |x|,
\end{align*}
where the last estimate follows from~\eqref{eq:HardyForValt}. This immediately implies that the series in~\eqref{eqnstrinfpureU} converges locally uniformly on $\R$. Moreover, it is clear that $u$ given by~\eqref{eqnstrinfpureU} is a distributional solution to $u-u'' = \omega$. However, by~\cite[Lemma~2.9]{Eplusminus}, this equation admits a unique solution satisfying $u(x) = \oo(\E^{|x|})$ as $|x|\to\infty$. 
%, which is given by 
%\begin{align*}
%\frac{1}{2}\int_\R \E^{-|x-s|}\omega(ds).
%\end{align*}
  \end{proof} 

The classical orthogonal polynomials (Chebyshev--Hermite, Jacobi, Laguerre--Sonin) are associated with determinate moment problems, whereas 
indeterminate moment problems are present within the $q$-analog of the Askey-scheme (see, e.g., \cite{bech20} for a detailed account and further references). Historically, the first example of an indeterminate (Stieltjes) moment problem is due to T.\ J.\ Stieltjes.  We are going to discuss this example next.

\begin{example}\label{rem:SW}
The (shifted) Stieltjes--Wigert weight, which is also called ``log-normal" for obvious reasons,
\begin{align}\label{eq:SWweight}
\rho(d\lambda) =\frac{\kappa}{\sqrt{\pi}}\id_{(\alpha,\infty)}(\lambda)\E^{-\kappa^2\log^2 (\lambda-\alpha)}d\lambda,
\end{align}
where $\alpha\ge 0$ and $\kappa>0$, is the classical and, apparently, historically the first example of a measure that leads to an indeterminate moment problem (see~\cite[Chapter~II.2.7]{sze} and also \cite{chr03,ww06}).
 If $\alpha=0$, then the corresponding moments are 
\begin{align}\label{eq:SWmoments}
s_n & = \E^{\frac{(n+1)^2}{4\kappa^2}} = q^{-\frac{(n+1)^2}{2}}, & q & := \E^{\nicefrac{-1}{2\kappa^2}},
\end{align}
for all $n\in \N\cup\{0\}$.
%Therefore, 
%\begin{align*}
%s_n(\rho) & = \frac{\kappa}{\sqrt{\pi}}\int_\alpha^\infty \lambda^n \E^{-\kappa^2\log^2 (\lambda-\alpha)}d\lambda =  \frac{\kappa}{\sqrt{\pi}}\int_0^\infty (\lambda+\alpha)^n \E^{-\kappa^2\log^2 (\lambda)}d\lambda\\
%& = \sum_{j=0}^n \binom{n}{j}\alpha^{n-j} \E^{\frac{(j+1)^2}{4\kappa^2}}\\
%& = \sum_{j=0}^n \binom{n}{j}\alpha^{n-j} s_j.
%\end{align*}
The polynomials of the first kind (when $\alpha=0$) are given by
\begin{align}\label{eq:SWpolynom}
P_n(z) = (-1)^n \frac{q^{\frac{2n+1}{4}}}{\sqrt{(q;q)_{n}}} \sum_{j=0}^n \qbinom{n}{j}q^{j^2}(-\sqrt{q}z)^j, 
\end{align}
where the $q$-factorial and $q$-binomial coefficient are defined by
\begin{align}
(a;q)_n & %= \frac{(q;q)_\infty}{(q^{n+1};q)_\infty} 
 = \prod_{j=1}^n(1-aq^{j-1}), & \qbinom{n}{j} &  = \frac{(q;q)_n}{(q;q)_j(q;q)_{n-j}} .
% = \frac{(1-q^n)(1-q^{n-1})\dots (1-q^{n-j+1})}{(1-q)(1-q^{2})\dots (1-q^{j})}. 
\end{align}%$q \equiv \E^{\nicefrac{-1}{2\kappa^2}}$,
%\begin{align*}
%\qbinom{n}{j} & = \frac{(q;q)_n}{(q;q)_j(q;q)_{n-j}}, & (a;q)_n & = \frac{(a;q)_\infty}{(aq^n;q)_\infty} %= \prod_{j=1}^n(1-q^j)
% ,  & (a;q)_\infty & = \prod_{j=0}^\infty (1-aq^j),
%%and 
%is the $q$-binomial coefficient. 
%Notice that
%\begin{align*}
%(q;q)_n = \frac{(q;q)_\infty}{(q^{n+1};q)_\infty} = \prod_{j=1}^n(1-q^j).
%\end{align*}
Let $(u,\mu)$ be the pair in $\CHdom^+$ corresponding to the measure~\eqref{eq:SWweight} with $\alpha>0$. Taking into account the corresponding Jacobi parameters 
\begin{align}\label{eq:SWjacobi}
a_n & = \alpha + q^{-\frac{4n+3}{2}} + q^{-\frac{4n+1}{2}} - q^{-\frac{2n+1}{2}}, & b_{n-1} & = q^{-2n}\sqrt{1-q^{n}},
\end{align}
and using~\eqref{eq:omegaviaCD} and~\eqref{eq:XnviaCD} below, one can express the heights and positions of the peakons to the left of $L$ by means of the Stieltjes--Wigert polynomials and then 
% we get for $\alpha>0$,
%\begin{align*}
%\omega_n &   = \frac{P_{n-1}'(-\alpha)}{P_{n-1}(-\alpha)} - \frac{P_{n}'(-\alpha)}{P_{n}(-\alpha)}
%\end{align*}
%and 
%\begin{align*}
%\E^{x_n} &   = b_{n-1}(P_n'(-\alpha)P_{n-1}(-\alpha) - P_n(-\alpha)P_{n-1}'(-\alpha))
%\end{align*}
may try to employ the asymptotic expansion of the latter as $n\to \infty$, which was obtained rather recently in~\cite[Theorem~1]{ww06} (see also~\cite{won18}), in order to deduce the asymptotic behavior of the corresponding heights and positions.
%: 
%\begin{align}
%P_n(z) = \frac{\sqrt{\pi}\gamma_n\E^{\nicefrac{l_n}{2}}}{w(z)} &\left((n+\nicefrac{1}{2})^{\nicefrac{1}{6}}\Ai\big((n+\nicefrac{1}{2})^{\nicefrac{2}{3}}\zeta_n\big) A(z,n)\right.\\
%& \left.\quad - (n+\nicefrac{1}{2})^{\nicefrac{-1}{6}}\Ai'\big((n+\nicefrac{1}{2})^{\nicefrac{2}{3}}\zeta_n\big) B(z,n)\right),
%\end{align}
%where $\Ai$ is the Airy function,....
%Notice that 
%\[
%L = \sum_{n\ge 0} P_n(-\alpha)^2 = 
%\]

Let us emphasize that the weight~\eqref{eq:SWweight} satisfies the assumptions of \cite[Theorem~13.5]{ISPforCH} %(just notice that $\int_0^1 \sqrt{\lambda} \log^2(\lambda)d\lambda < \infty$) 
 and hence $u- (4\alpha)^{-1}\in H^1(0,\infty)$. Even more, in view of \cite[Theorem~13.9]{ISPforCH}, the momentum $\omega = u-u_{xx}$ has a nontrivial absolutely continuous part near infinity:  $\varrho - (2\sqrt{\alpha})^{-1} \in L^2(0,\infty)$, where $\varrho^2$ is the Radon--Nikod\'ym derivative of $\omega$ with respect to the Lebesgue measure. However, the refined structure of $u$ on the interval $[L,\infty)$ (e.g., whether $u$ contains any peakons on $[L,\infty)$?) together with the exact value of $L$ remains unclear to us.

In conclusion of this example, let us also mention the following observation of T.\ S.\ Chihara~\cite{chi70}. For every positive $c>0$ and non-negative $\alpha\ge 0$, the measure 
\begin{align}\label{eq:SWchihara}
\rho_c = \frac{1}{\sqrt{q}M(c)}\sum_{n\in \Z} c^nq^{\frac{n^2+2n}{2}} \delta_{cq^n+\alpha},
\end{align}
upon choosing a suitable norming constant $M(c)>0$, also has moments~\eqref{eq:SWmoments} when $\alpha=0$. This, in particular, implies that $\rho_c$ has the same moments as $\rho$ given by~\eqref{eq:SWweight} for all $\alpha>0$ and therefore the corresponding pair $(u_c,\mu_c)\in\CHdom^+$ coincides with $(u,\mu)$ on $(-\infty,L)$.  However, $\rho_c$ is pure point with support accumulating only at $\alpha$ and at $\infty$. Again, the refined structure of the corresponding pair $(u,\mu)\in\CHdom^+$ on $(L,\infty)$ remains unclear to us. We can only claim that $\omega$ has infinite support in $[L,\infty)$ in this case and that it is not of the multi-peakon from the right of any $\ell>L$ (the latter follows from the remark below).
 \end{example} 
 
  \begin{remark}\label{rem:InftyMinToPlus}
  We finish this section with a couple of remarks. First of all, notice that the conditions at $-\infty$ and $+\infty$ in Definition~\ref{defPS} could be switched due to the symmetry
\begin{align}
  (x,t) \mapsto (-x,-t)
\end{align}
of the two-component Camassa--Holm system~\eqref{eqnOurCH}--\eqref{eq:PequDef}. In particular, all the results of this section can easily be extended, upon appropriate modifications, to the pairs $(u,\mu)$ which are of multi-peakon form from the right of some point $\ell\in\R$, that is, such that 
    \begin{align}
   \omega|_{(\ell,\infty)} & = \sum_{n=1}^N \omega_n \delta_{x_n}, & \dip|_{(\ell,\infty)} & = \sum_{n=1}^N \dip_n \delta_{x_n}, 
 \end{align}
  for some $N\in\N\cup\{0\}$, decreasing points $x_1,\ldots,x_N$ in $(\ell,\infty)$, real weights $\omega_1,\ldots,\omega_N$ and non-negative weights $\dip_1,\ldots,\dip_N$. 

However, if $(u,\mu)$ is of multi-peakon form from both sides, then it satisfies the strong decay restriction~\eqref{eqnMdef-} at $-\infty$ and also an analogous decay condition at $+\infty$. This in particular entails that the spectral measure $\rho$ is supported on a discrete set $\sigma\subset \R$. Notice that this class contains pairs $(u,\mu)$ in $\CHdom$ that are of the form
       \begin{align}\label{eqnstrinf2side}
      \omega|_{\R\backslash\{L\}} & =  \sum_{n=1}^{\infty} \omega_n \delta_{x_n}, & \dip|_{\R\backslash\{L\}} & =  \sum_{n=1}^{\infty} \dip_n \delta_{x_n},
    \end{align}
     for some points $x_1,x_2,\ldots$ in $\R\backslash\{L\}$ that only accumulate at $L$, real weights $\omega_1,\omega_2,\ldots$ and non-negative weights $\dip_1,\dip_2,\ldots$, where $L$ is a given point in $\R$.      As will be seen, a subclass of this type of profiles can be associated with $n$-canonical solutions of indeterminate moment problems.
     We postpone this discussion to Section~\ref{sec:DiscrSpec}.   
 \end{remark}
 
%%%%%%%%%%%%%%%
\section{Global conservative solutions and the moment problem}\label{sec:conssolMP}
%%%%%%%%%%%%%%% 
 
Now we are in position to discuss our results' relevance for the conservative Camassa--Holm flow $\Phi$ on $\CHdom$. 
Since it is clear that the existence of moments of the spectral measure $\rho$ is invariant under the flow defined by~\eqref{eqnSMEvo}, the corresponding conditions on the pairs $(u,\mu)$ in Theorem~\ref{thmMomK} and Corollary~\ref{corStrinf} are preserved as well.
In this case, the multi-peakon part of the solution 
\begin{align}
  (u(\ledot,t),\mu(\ledot,t)) = \Phi^t(u_0,\mu_0)
\end{align}
near $-\infty$ can be written down explicitly in terms of the spectral measure $\rho_0$ at initial time zero. 

\begin{theorem}\label{thm:MPevolt}
    Suppose that the spectral measure $\rho_0$ at initial time zero is not supported on a finite set and its moments exist up to order $2K$ for some $K\in\N\cup\{0\}$. 
  Then for each time $t\in\R$, the solution $(u(\ledot,t),\mu(\ledot,t))$ has the form~\eqref{eqnomegadiptoxN}, where the coefficients are given by~\eqref{eqnGISDel}, computed using the time evolved moments 
\begin{align}\label{eq:momentInT}
  s_k(t) = \int_\R \lambda^k \E^{-\frac{t}{2\lambda}} \rho_0(d\lambda).
\end{align}
If all moments of $\rho_0$ exist and the corresponding Hamburger moment problem is determinate, then for each time $t\in\R$, the solution $(u(\ledot,t),\mu(\ledot,t))$ has the form
    \begin{align}\label{eqnPurePeakon}
      \omega(\ledot,t) & =  \sum_{n=1}^{\infty} \omega_n(t) \delta_{x_n(t)}, & \dip(\ledot,t) & = \sum_{n=1}^{\infty} \dip_n(t) \delta_{x_n(t)},
    \end{align}
  where the increasing points $x_1(t),x_2(t),\ldots$ in $\R$ with $x_n(t)\rightarrow\infty$, the real weights $\omega_1(t),\omega_2(t),\ldots$ and the non-negative weights $\dip_1(t),\dip_2(t),\ldots$ are given by~\eqref{eqnGISDel}, computed using the time evolved moments $s_k(t)$.
\end{theorem}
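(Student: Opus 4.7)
The plan is to leverage the spectral characterizations already established in Corollary~\ref{corStrKForm} and Corollary~\ref{corStrinfpure}. The key observation is that since $\rho_0\in\SM_0$ has topological support not containing zero, there is some $\delta>0$ with $\supp\rho_0\subseteq\R\setminus(-\delta,\delta)$. On this support, the multiplicative factor in the time evolution~\eqref{eqnSMEvo} satisfies
\begin{equation*}
 \E^{-|t|/(2\delta)}\le \E^{-t/(2\lambda)}\le \E^{|t|/(2\delta)},
\end{equation*}
so $\rho(\ledot,t)$ and $\rho_0$ are mutually absolutely continuous with Radon--Nikodym derivatives bounded in both directions. Consequently, finiteness of moments up to order $2K$ transfers from $\rho_0$ to $\rho(\ledot,t)$ for every $t\in\R$, the time-evolved moments being precisely those in~\eqref{eq:momentInT}; similarly, if all moments of $\rho_0$ exist, then so do all moments of $\rho(\ledot,t)$.

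For the first part of the theorem I would simply apply Corollary~\ref{corStrKForm} to the pair $(u(\ledot,t),\mu(\ledot,t))=\Phi^t(u_0,\mu_0)$, whose associated spectral measure is $\rho(\ledot,t)$ by Definition~\ref{def:CHspecflow}. Since $\rho(\ledot,t)$ has the same topological support as $\rho_0$ (in particular, is not supported on a finite set) and has moments up to order $2K$, the corollary yields the form~\eqref{eqnomegadiptoxN} with coefficients given by~\eqref{eqnGISDel} evaluated at the Hankel determinants built from the time-evolved moments $s_k(t)$.

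For the second part, I would invoke Corollary~\ref{corStrinfpure} applied to $\rho(\ledot,t)$, so the remaining task is to show that determinacy of the Hamburger moment problem is preserved under~\eqref{eqnSMEvo}. The natural tool is M.~Riesz's classical criterion: a positive Borel measure on $\R$ with all moments finite is determinate if and only if the polynomials are dense in its $L^2$ space. Because $\lambda\mapsto\E^{-t/(2\lambda)}$ is continuous and bounded both above and below by positive constants on $\supp\rho_0$, the Hilbert spaces $L^2(\rho_0)$ and $L^2(\rho(\ledot,t))$ coincide as vector spaces with equivalent norms, and density of a subspace is preserved under equivalence of norms. Hence polynomials remain dense in $L^2(\rho(\ledot,t))$, so $\rho(\ledot,t)$ is determinate, and a second application of Corollary~\ref{corStrinfpure} delivers~\eqref{eqnPurePeakon} with coefficients computed via~\eqref{eqnGISDel} from the moments $s_k(t)$.

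The step I expect to require the most care is this last transfer of determinacy. Had $\supp\rho_0$ not been bounded away from zero, the factor $\E^{-t/(2\lambda)}$ could blow up or degenerate near the origin, the $L^2$-norm equivalence would be lost, and the argument would collapse; in general there is no reason why multiplication of a determinate measure by a positive continuous function should preserve determinacy. It is precisely the structural condition $\rho_0\in\SM_0$, which encodes the strong decay restriction on $(u_0,\mu_0)$ at $-\infty$, that simultaneously guarantees boundedness of the time-evolution factor and allows Riesz's criterion to be applied without further hypotheses.
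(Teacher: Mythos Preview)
Your proof is correct and follows essentially the same route as the paper: apply Corollary~\ref{corStrKForm} and Corollary~\ref{corStrinfpure} to the time-evolved spectral measure, with the only nontrivial point being the preservation of determinacy, for which the paper simply cites \cite[Exercise~6.2]{sc17}. Your argument via the Riesz density criterion and the equivalence of the $L^2$ norms (using that $\E^{-t/(2\lambda)}$ is bounded above and below on $\supp\rho_0$) is exactly the content of that reference, so you have in fact spelled out what the paper leaves as a citation.
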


\begin{proof}
In view of Corollary~\ref{corStrKForm} and Corollary~\ref{corStrinfpure}, we only need to prove that determinacy of the Hamburger moment problem is preserved under the conservative Camassa--Holm flow. 
However, this follows from~\cite[Exercise~6.2]{sc17} for example. 
\end{proof}

\begin{remark}\label{rem:finspeed}
A few remarks are in order:
\begin{enumerate}[label=(\alph*), ref=(\alph*), leftmargin=*, widest=e]
\item Let us point out that the necessary function $\kappa$ in~\eqref{eqnGISDel} is not independent of time, although it will only change when some of the determinants $\Delta_{1,k}$ become either zero or non-zero. Let us also stress that $\Delta_{1,k}$ is an entire function in $t$ since so are the moments in~\eqref{eq:momentInT} and hence the set of real zeros of $\Delta_{1,k}$ is a discrete subset of $\R$ for every $k$. 
\item As in the case of multi-peakons, collisions in this part of the solution may happen only between neighbouring peakons and they may happen only in pairs.
\item  Solutions with initial data $(u_0,\mu_0)$ having an infinite multi-peakon form are completely and explicitly given by the formulas in~\eqref{eqnGISDel}.
  We will consider several particular examples in the later sections. 
  \item If $L$ in~\eqref{eqnLviaDelta} is finite for a given initial data $(u_0,\mu_0)$, then it remains finite for all times. However, the evolution of the part of $(u,\mu)$ supported on $[L,\infty)$ is a nontrivial issue and, in general, requires the knowledge of the whole spectral measure. However, some particular cases will be considered in the next section.
 % This case will be addressed elsewhere. 
 \item  Applying Theorem~\ref{thm:MPevolt} with $K=0$, we conclude that $\omega(\ledot,0)|_{(-\infty,x_1)} = 0$  and $\dip(\ledot,0)|_{(-\infty,x_1)} = 0$ for some $x_1\in \R$ if and only if for all $t\in\R$ there exists $x_1(t)\in \R$ such that $\omega(\ledot,t)|_{(-\infty,x_1(t))} = 0$  and $\dip(\ledot,t)|_{(-\infty,x_1(t))} = 0$. Moreover, the maximal such $x_1$ is explicitly given by
  \begin{align}\label{eq:CHx_1vias0}
  x_1(t) =  -\log \left(  \int_{\R}  \E^{-\frac{t}{2\lambda}} \rho_0(d\lambda)\right).
  \end{align} 
Therefore, conservative solutions enjoy the finite propagation speed and~\eqref{eq:CHx_1vias0} provides an explicit expression for the corresponding wave front. Notice that for classical solutions the finite propagation speed was observed in~\cite{co05}.  
\end{enumerate} 
\end{remark}

  Because the explicit formulas for the peakon part of a solution as above are the same as for usual multi-peakons in~\cite{besasz00}, the dynamics are alike too. 
  In this respect, we are only going to prove that away from times of collision, the positions and weights of the peakons satisfy the same differential equations (whereas these are derived in~\cite{besasz00} using the weak formulation of the Camassa--Holm equation, we will use the evolution of the moments and the explicit expressions of the positions and weights). 
  The behavior at collisions for example can be analyzed in a similar way as for multi-peakons using the properties of the Hankel determinants $\Delta_{l,k}$. 

 \begin{proposition}\label{prop:dynamics}
  Suppose that the spectral measure $\rho_0$ at initial time zero is not supported on a finite set and its moments exist up to order $2K$ for some $K\in\N\cup\{0\}$. 
 If the determinants $\Delta_{1,0}(t),\ldots,\Delta_{1,K}(t)$ are non-zero at some time $t\in\R$, then for all $n\in\{1,\ldots,K\}$ one has 
  \begin{align}\label{eqnxomegadot}
   \dot{x}_n(t) & = u(x_n(t),t), & -\frac{\dot{\omega}_n(t)}{\omega_n(t)} & = \frac{u_x(x_n(t)-,t)+u_x(x_n(t)+,t)}{2}. 
 \end{align}
 \end{proposition}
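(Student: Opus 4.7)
The proof proceeds by direct computation, using the explicit Hankel-determinant formulas from Corollary~\ref{corStrKForm} together with the elementary moment evolution $\dot s_k(t) = -\tfrac{1}{2} s_{k-1}(t)$ obtained by differentiating \eqref{eq:momentInT} under the integral sign. Under the nondegeneracy hypothesis $\Delta_{1,0}(t)\cdots\Delta_{1,K}(t) \neq 0$, the recursion \eqref{eqnkapparec} collapses to $\kappa(n) = n-1$ on $\{1,\ldots,K+1\}$, all weights $\dip_n$ vanish for $n\in\{1,\ldots,K\}$ by Remark~\ref{rem:formulas3.9}\ref{rem:3.3a}, and the peakon data simplify to
\begin{align*}
x_n(t) = \log\frac{\Delta_{2,n-1}(t)}{\Delta_{0,n}(t)}, \qquad \omega_n(t) = \frac{\Delta_{0,n}(t)\Delta_{2,n-1}(t)}{\Delta_{1,n-1}(t)\Delta_{1,n}(t)}.
\end{align*}

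Logarithmic differentiation reduces the left-hand sides of \eqref{eqnxomegadot} to linear combinations of the logarithmic derivatives $\dot\Delta_{l,k}/\Delta_{l,k}$. Applying multilinearity row by row, only the derivative of the first row of the Hankel matrix contributes to $\dot\Delta_{l,k}$: the derivative of row $i$ is proportional to row $i-1$ for every $i\ge 2$, so the corresponding determinants vanish. For the right-hand sides of \eqref{eqnxomegadot}, Corollary~\ref{cor:UviaMoments} and Remark~\ref{rem:UatInfty} assert that $u$ takes the form $u(x) = A_{n-1}\E^{x} + B_{n-1}\E^{-x}$ on each interval $(x_{n-1},x_n)$; the identity \eqref{eqnu+u'Delta} immediately yields $A_{n-1} = \Delta_{-1,n}/(2\Delta_{1,n-1})$, and telescoping the jump recursion $B_n = B_{n-1} + \tfrac{1}{2}\omega_n\E^{x_n}$ via Sylvester's identity \eqref{eqnDeltaRel} (at $l=2$) yields the companion formula $B_{n-1} = \Delta_{3,n-2}/(2\Delta_{1,n-1})$. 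Combined with the jump relation $u'(x_n+) = u'(x_n-) - \omega_n$, this provides closed Hankel-determinant expressions for $u(x_n)$ and $(u'(x_n-)+u'(x_n+))/2$.

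Both identities in \eqref{eqnxomegadot} have now been reduced to purely algebraic identities among Hankel determinants, to be verified by repeated applications of Sylvester's identity \eqref{eqnDeltaRel}. The main technical obstacle is precisely this final step: the ``shifted-row'' determinant produced by $\dot\Delta_{l,k}$ is not itself Hankel, so one must systematically rewrite it as a combination of standard $\Delta_{l',k'}$ before Sylvester-type relations become applicable. The orthogonal-polynomial viewpoint of Remark~\ref{rem:OPs} -- under which positions and heights become Christoffel--Darboux quantities evaluated at $z=0$, and the spectral flow $\rho(d\lambda;t) = \E^{-t/(2\lambda)}\rho_0(d\lambda)$ translates into a Toda-type evolution of the Jacobi parameters -- provides an alternative bookkeeping that can streamline the verification.
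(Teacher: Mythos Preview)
Your outline follows the same overall strategy as the paper's proof: differentiate the moment evolution to get $\dot s_k = -\tfrac12 s_{k-1}$, compute $\dot\Delta_{l,k}$ as a single ``shifted first row'' determinant (this is exactly the paper's $\Gamma_{l-1,k}$ in~\eqref{eqnGammaDef}), and compare against closed Hankel formulas for $u(x_n)$ and the averaged derivative. Your derivation of the coefficients $A_{n-1}=\Delta_{-1,n}/(2\Delta_{1,n-1})$ and $B_{n-1}=\Delta_{3,n-2}/(2\Delta_{1,n-1})$ on $(x_{n-1},x_n)$ is correct and in fact slightly cleaner than the paper's route via the recursion~\eqref{eqnuxnRecursion}; the paper arrives at the equivalent ``right-interval'' expression~\eqref{eqnuxnDelta} with $\Delta_{1,n}$ in the denominator instead.

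The genuine gap is the step you yourself flag as the ``main technical obstacle.'' The shifted-row determinant $\Gamma_{l,k}$ cannot in general be rewritten as a linear combination of standard Hankel determinants $\Delta_{l',k'}$, so the plan ``rewrite and then apply~\eqref{eqnDeltaRel}'' does not work as stated. What is needed instead are \emph{bilinear} identities mixing $\Gamma$'s and $\Delta$'s; the paper supplies precisely these in~\eqref{eqnDeltaGamma1}--\eqref{eqnDeltaGamma2} (both consequences of Sylvester's identity applied to suitably bordered matrices, cf.\ \cite[Lemma~3.3]{chang2014generalized}). Once those two identities are in hand, the $\Gamma$'s in $\dot x_n$ and $\dot\omega_n/\omega_n$ cancel pairwise and the claimed formulas drop out. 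Your proposal stops short of this, and the suggested alternative via a ``Toda-type evolution of the Jacobi parameters'' is not developed enough to close the argument either (it would require deriving the evolution equations for $a_n(t),b_n(t)$ from the spectral flow and then reconnecting to $x_n,\omega_n$ through~\eqref{eqnCFOP}, which is at least as much work). To complete your proof, state and verify the two bilinear relations~\eqref{eqnDeltaGamma1}--\eqref{eqnDeltaGamma2}; everything else you have written is sound.
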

 
 \begin{proof}
  We first compute that  
  \begin{align*}
     \dot{s}_l & = -\frac{1}{2}s_{l-1}, & \dot \Delta_{l,k}=-\frac{1}{2}\Gamma_{l-1,k}, 
 \end{align*}
 for $l\geq0$, where $\Gamma_{l,k}$ denotes the determinant 
   \begin{align}\label{eqnGammaDef}
\Gamma_{l,k}= \begin{vmatrix} s_{l} & s_{l+1} & \cdots & s_{l+k-1} \\ s_{l+2} & s_{l+3} & \cdots & s_{l+k+1} \\ s_{l+3} & s_{l+4} & \cdots & s_{l+k+2} \\ \vdots & \vdots & \ddots & \vdots \\ s_{l+k} & s_{l+k+1} & \cdots & s_{l+2k-1} \end{vmatrix}
 \end{align}
 with the convention that $\Gamma_{l,k}=0$ when $k$ is zero.
 These determinants satisfy the bilinear identities 
  \begin{align}
 \label{eqnDeltaGamma1} \Delta_{l-1,k}\Delta_{l+2,k-1}&=\Delta_{l+1,k-1}\Gamma_{l-1,k}-\Delta_{l,k}\Gamma_{l,k-1},\\
 \label{eqnDeltaGamma2}  \Delta_{l-1,k+1}\Delta_{l+2,k-1}&=\Delta_{l+1,k}\Gamma_{l-1,k}-\Delta_{l,k}\Gamma_{l,k},
   \end{align}
 which can be found as in~\cite[Lemma 3.3]{chang2014generalized} via Sylvester’s determinant identity.
  
   Since the determinants $\Delta_{l,k}$ depend continuously on time, our assumption implies that the determinants $\Delta_{1,0},\ldots,\Delta_{1,K}$ are non-zero in a neighborhood of $t$ and thus also $\kappa(n)=n-1$, so that 
   \begin{align}\label{eqnxomegaDelta}
                x_n & = \log\frac{\Delta_{2,n-1}}{\Delta_{0,n}}, &
        	  \omega_n & = \frac{\Delta_{0,n}\Delta_{2,n-1}}{\Delta_{1,n-1}\Delta_{1,n}}, 
   \end{align}
   for $n\in\{1,\ldots,K\}$ in a neighborhood of $t$. 
   Differentiating this expression, we get 
     \begin{align*}
 \dot{x}_n(t)  &= \frac{1}{2}\biggl(\frac{\Gamma_{-1,n}(t)}{\Delta_{0,n}(t)}-\frac{\Gamma_{1,n-1}(t)}{\Delta_{2,n-1}(t)}\biggr) \\
                    & = \frac{1}{2}\biggl(\frac{\Delta_{0,n}(t)\Delta_{3,n-1}(t)}{\Delta_{1,n}(t)\Delta_{2,n-1}(t)}+\frac{\Delta_{-1,n+1}(t)\Delta_{2,n-1}(t)}{\Delta_{0,n}(t)\Delta_{1,n}(t)}\biggr),
 \end{align*}
 where we used relation~\eqref{eqnDeltaGamma1} with $l=1$ and~\eqref{eqnDeltaGamma2} with $l=0$ to obtain the second line.  
 In order to verify the first equation in~\eqref{eqnxomegadot}, it thus suffices to prove that 
  \begin{align}\label{eqnuxnDelta}
   u(x_n(t),t) & = \frac{1}{2}\biggl(\frac{\Delta_{0,n}(t)\Delta_{3,n-1}(t)}{\Delta_{1,n}(t)\Delta_{2,n-1}(t)}+\frac{\Delta_{-1,n+1}(t)\Delta_{2,n-1}(t)}{\Delta_{0,n}(t)\Delta_{1,n}(t)}\biggr).
 \end{align} 
 However, this follows readily from~\eqref{eqnunearminusinf} when $n=1$ and from the recursion in~\eqref{eqnuxnRecursion} for larger $n$, while also using~\eqref{eqnDeltaRel} with $l=2$ and $l=0$. 
  For the second equation in~\eqref{eqnxomegadot}, we first differentiate the expression in~\eqref{eqnxomegaDelta} to get 
  \begin{align*}
    -\frac{\dot{\omega}_n(t) }{\omega_n(t)} &=\frac{1}{2}\biggl(\frac{\Gamma_{-1,n}(t)}{\Delta_{0,n}(t)} + \frac{\Gamma_{1,n-1}(t)}{\Delta_{2,n-1}(t)} - \frac{\Gamma_{0,n-1}(t)}{\Delta_{1,n-1}(t)} - \frac{\Gamma_{0,n}(t)}{\Delta_{1,n}(t)}\biggr) \\
 &= \frac{1}{2}\biggl(\frac{\Delta_{-1,n}(t)\Delta_{2,n-1}(t)}{\Delta_{0,n}(t)\Delta_{1,n-1}(t)}-\frac{\Delta_{0,n}(t)\Delta_{3,n-1}(t)}{\Delta_{1,n}(t)\Delta_{2,n-1}(t)}\biggr),
 \end{align*}
 where we used relation~\eqref{eqnDeltaGamma1} with $l=0$ and $l=1$. 
 That this is equal to the right-hand side in~\eqref{eqnxomegadot} follows from plugging~\eqref{eqnuxnDelta},~\eqref{eqnu+u'Delta} and~\eqref{eqnxomegaDelta} into
 \begin{align*}
   \frac{u_x(x_n(t)-,t)+u_x(x_n(t)+,t)}{2} & = - u(x_n(t),t) + u(x_n(t),t) +  u_x(x_n(t)-,t) - \frac{\omega_n(t)}{2}
 \end{align*}
 and then using relation~\eqref{eqnDeltaRel} with $l=0$. 
 \end{proof}

% Equations~\eqref{eqnxomegadot} can be seen as a generalization of~\eqref{eqnMPsys} to to a wider class of weak solutions. In particular, under   
 
 \begin{corollary}\label{cor:ODEforCH}
Let $(u_0,\mu_0)\in \CHdom^+$ be such that the corresponding spectral measure $\rho_0$ is not supported on a finite set and all its moments exist. If the corresponding moment problem is determinate, then for each time $t\in\R$, the solution $(u(\ledot,t),\mu(\ledot,t))$ has the form
    \begin{align}\label{eqnPurePeakonU}
      u(x,t) & =  \frac{1}{2}\sum_{n=1}^{\infty} \omega_n(t) \E^{-|x-x_n(t)|}, & \dip(\ledot,t) & = 0,
    \end{align}
  where the increasing points $x_1(t),x_2(t),\ldots$ in $\R$ with $x_n(t)\rightarrow\infty$ and the positive weights $\omega_1(t),\omega_2(t),\ldots$  satisfy the system 
 \begin{align}\label{eqnxomegadotMPsX}
   \dot{x}_n(t) & = \frac{1}{2}\sum_{k=1}^{\infty} \omega_k(t) \E^{-|x_n(t)-x_k(t)|}, \\
   \dot{\omega}_n(t) & = \frac{1}{2} \omega_n(t)\sum_{k=1}^\infty \sgn(x_n(t)-x_k(t)) \E^{-|x_n(t)-x_k(t)|}\omega_k(t).\label{eqnxomegadotMPsW}
   \end{align}
 \end{corollary}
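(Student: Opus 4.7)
My plan is to assemble the statement from three ingredients already proven in the paper: the structural result on infinite peakon profiles under determinacy (Corollary~\ref{corStrinfpure+}), the ODEs for positions and weights derived from the evolution of the moments (Proposition~\ref{prop:dynamics}), and the closed form of $u$ as a superposition of peakons.

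First I would verify that $(u(\redot,t),\mu(\redot,t))\in\CHdom^+$ for every $t\in\R$ and that the evolved spectral measure $\rho_t(d\lambda)=\E^{-t/(2\lambda)}\rho_0(d\lambda)$ has all moments finite and yields a determinate moment problem. Positivity of the support is preserved because $\E^{-t/(2\lambda)}$ is strictly positive on $(0,\infty)$, and determinacy is inherited from $\rho_0$ (as used already in the proof of Theorem~\ref{thm:MPevolt}, via~\cite[Exercise~6.2]{sc17}). Hence Corollary~\ref{corStrinfpure+} gives the representation~\eqref{eqnPurePeakonU} at every time $t$, with positive weights $\omega_n(t)$ and positions $x_n(t)$ accumulating at $+\infty$, and with the series converging locally uniformly on $\R$.

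Next, because $\rho_t\in\SM_0^+$, all Hankel determinants $\Delta_{1,k}(t)$ are strictly positive for every $k\in\N\cup\{0\}$ and every $t\in\R$ (see Remark~\ref{rem:formulas3.9}\ref{rem:3.3b}), so $\kappa(n)=n-1$ and no collisions occur. Fixing any $n\in\N$ and applying Proposition~\ref{prop:dynamics} with some $K\geq n$, we obtain
\begin{align*}
\dot{x}_n(t) & = u(x_n(t),t), & -\frac{\dot{\omega}_n(t)}{\omega_n(t)} & = \frac{u_x(x_n(t)-,t)+u_x(x_n(t)+,t)}{2},
\end{align*}
for every $t\in\R$.

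Finally I would evaluate the right-hand sides using the explicit form~\eqref{eqnPurePeakonU}. Substituting $x=x_n(t)$ in~\eqref{eqnPurePeakonU} yields~\eqref{eqnxomegadotMPsX} directly. For the second equation, each peakon $\frac{1}{2}\omega_k\E^{-|x-x_k|}$ contributes $-\frac{1}{2}\omega_k\sgn(x-x_k)\E^{-|x-x_k|}$ to $u_x$ off the point $x_k$, while at $x=x_n$ the single peakon with $k=n$ has left and right derivatives $\frac{\omega_n}{2}$ and $-\frac{\omega_n}{2}$ averaging to zero. Summing and using $\sgn(0)=0$ gives
\begin{align*}
\frac{u_x(x_n(t)-,t)+u_x(x_n(t)+,t)}{2} = -\frac{1}{2}\sum_{k=1}^{\infty}\omega_k(t)\sgn(x_n(t)-x_k(t))\E^{-|x_n(t)-x_k(t)|},
\end{align*}
which matches~\eqref{eqnxomegadotMPsW}. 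Term-by-term differentiation of the series for $u$ at $x_n(t)$ is justified because $\sum_k\omega_k(t)\E^{-|x_n(t)-x_k(t)|}=2u(x_n(t),t)<\infty$ dominates the derivative series away from $x_n(t)$.

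The only subtle step is the justification that Proposition~\ref{prop:dynamics} applies uniformly in $n$ for all times, which reduces to persistence of the positivity $\rho_t\in\SM_0^+$ and the resulting $\kappa(n)=n-1$; the rest is essentially the classical peakon identity used in~\cite{besasz00}, re-read through the explicit series representation.
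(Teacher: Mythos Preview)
Your proposal is correct and follows essentially the same route as the paper's proof: preservation of $\CHdom^+$ and determinacy under the flow, Corollary~\ref{corStrinfpure+} for the infinite-peakon representation at each time, positivity of the $\Delta_{1,k}(t)$ to rule out collisions, Proposition~\ref{prop:dynamics} for the ODEs, and then substitution of the locally uniformly convergent series~\eqref{eqnPurePeakonU}. Your write-up is in fact slightly more explicit than the paper's in the final evaluation of $u_x(x_n\pm,t)$.
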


\begin{proof}
The first claim is an immediate consequence of Theorem~\ref{thm:MPevolt} and the fact that $(u(\ledot,t),\mu(\ledot,t))\in \CHdom^+$ for all $t\in\R$. In particular, the latter means that the solution is determined solely by $u(\ledot,t)$. Moreover, by Corollary~\ref{corStrinfpure+},  $u(\ledot,t)$ has the form~\eqref{eqnPurePeakon}
\begin{align}\label{eq:Uinfpeak}
u(x,t) = \frac{1}{2}\sum_{n=1}^\infty \omega_n(t)\E^{-|x-x_n(t)|},
\end{align}
with $\omega_n\colon\R\to(0,\infty)$ and $x_n\colon \R\to\R$ such that $x_1(t)<x_2(t)<\dots$ for all $t\in\R$. 
Moreover, the assumption $(u_0,\mu_0)\in \CHdom^+$ implies that peakons do not experience any collisions (since the Hankel determinants $\Delta_{1,k}(t)$ stay positive for all $t\in\R$) and hence~\eqref{eqnxomegadot} holds true for all $t\in\R$ and all $n\in\N$. It remains to plug the form of $u(\ledot,t)$ into~\eqref{eqnxomegadot} and take into account that the convergence is locally uniform.     
\end{proof}
 
 \begin{remark}
The assumption $(u_0,\mu_0)\in \CHdom^+$ is superfluous and it is possible to state the result in a wider generality, however, we decided to state Corollary~\ref{cor:ODEforCH} in the above form for transparence reasons.  
 \end{remark}
 
   \begin{remark}
The situation when the corresponding moment problem is indeterminate (this, in particular, includes the case considered in~\cite{li09}) is more involved and will be discussed in the next section.  
 \end{remark}
 
  \begin{remark} \label{rem:num}
In the following sections we are going to present some numerical illustrations of particular examples. For convenience, we only consider the cases of positive measures $\rho_0$. In this case, the explicit expressions of positions and momentum are given by \eqref{eqnxomegaDelta}. And, in the interval $[x_n,x_{n+1}]$, the function $u$ is explicitly given according to \eqref{form_u_interval}-\eqref{form_S}. However,  in practice, in order to depict the graph of $u$ with sufficiently many peaks, one has to employ numerics and also to address the issue of sensible approximations caused by the calculation of high-order determinants and the machine error.

The numerics for the figures with sufficiently many peaks are produced by the following two main steps.

\underline{Step~1:} By using \textsc{maple}, we compute $x_n$ and $\omega_n$ with $n\in \{1,\ldots,N\}$ for a given $N\in\N$. More precisely, in the case of the Laguerre (see Section \ref{sec:LaguerrePeaks}) and the Jacobi (see Section \ref{sec:JacobiPeaks}) weights, we first calculate the moments $s_0$, $s_1$ and $s_2$ with the precision of up to 200 of digits (or even more) by calling the \textsc{int} function.  By observing a four-term linear recurrence satisfied by the moments of the Laguerre and Jacobi weights, we then recursively compute further moments $s_k$ with $k$ large enough so that the Hankel determinants $\Delta_{0,n}$, $\Delta_{1,n}$ and $\Delta_{2,n}$ with $n\in \{1,\ldots,N\}$ can be derived by calling the function \textsc{determinant}. Subsequently, we obtain  $x_n$ and $\omega_n$ with high-precision by using~\eqref{eqnxomegaDelta}. Furthermore, $u(x_n)$ and $u_x(x_n-)$ can be recursively calculated by employing the recurrence relations~\eqref{eqnuu'rec}--\eqref{form_S} once  $u(x_1)$ and $u_x(x_1-)$ are evaluated. Let us also mention that, in the Al-Salam--Carlitz  case (see Section \ref{sec:DiscrSpec}), we truncate the summation with the first 300 terms to approximate all of the required moments. 

\underline{Step~2:}
We obtain high-precision values of the function $u$ in the interval $(x_n,x_{n+1})$ by employing~\eqref{form_u_interval}. This step and the graphs are implemented in  \textsc{matlab}.  
 \end{remark}

\begin{remark}\label{rem:graph}
In order to plot a multi-peakon profile with a larger number, it might be beneficial to employ a trick based on Remark \ref{rem:scaling}. In fact, it is further observed that, if we scale the moments by
$$
\tilde s_k= cd^k\,s_k(\rho)$$
with some positive constants $c,\,d>0$,
then we have
$$\tilde\Delta_{l,k}= c^kd^{k(k-1)+lk}\, \Delta_{n,k}(\rho)$$
and
\begin{align*}
\tilde x_k& = x_k(\rho) - \log c, & \tilde \omega_{k}& = d^{-1}\,\omega_{k}(\rho).
\end{align*}
%Anyway, how to effectively plot peakon profiles with a larger number of peaks is another story and we will not discuss more on it here. 
\end{remark}

%%%%%%%%%%%%
\section{Purely discrete spectrum}\label{sec:DiscrSpec}
%%%%%%%%%%%%
%%%%%%%%%%%%
\subsection{The global conservative solution and long-time asymptotics} 
%%%%%%%%%%%%
As a first application of the explicit formulas derived in the previous sections, we are now going to establish long-time asymptotics for the positions and heights of the individual  peakons under the presumption that the support of the spectral measure $\rho_0$ at initial time zero is a positive discrete set $\sigma$.
 As before, we will consider the solution  
\begin{align}
  (u(\ledot,t),\mu(\ledot,t)) = \Phi^t(u_0,\mu_0),
\end{align}
where the initial data $(u_0,\mu_0)$ is the pair in $\CHdom^+$ corresponding to the measure $\rho_{0}$. 
 If all moments of $\rho_0$ exist, then it follows from Corollary~\ref{corStrinf} and Corollary~\ref{corIP+} that this solution has the form
    \begin{align}
      \omega(\ledot,t)|_{(-\infty,L(t))} & =  \sum_{n=1}^{\infty} \omega_n(t) \delta_{x_n(t)}, & \dip(\ledot,t) & = 0,  & L(t) & = \lim_{n\rightarrow\infty} x_n(t). 
    \end{align}
 with coefficients given by~\eqref{eq:3.9positive}.

\begin{theorem}\label{thm:longt_dis}
Suppose that the spectral measure $\rho_0$ at initial time zero is not supported on a finite set and all its moments exist. 
If the support of $\rho_0$ is a positive discrete set so that $\supp(\rho_0)=\{\lambda_n\,|\,n\in\N\}$ for an unbounded and strictly increasing sequence $(\lambda_n)$, then for every fixed $n\in\N$ the following asymptotics hold:
 \begin{enumerate}[label=(\roman*), ref=(\roman*), leftmargin=*, widest=ii]
\item\label{itmLongti} As $t\rightarrow\infty$ one has 
 \begin{align}\label{eqnLongtiDiscr}
   x_n(t) & \rightarrow \infty, &  x_n(t) &=o(t),& \omega_n(t)&  \rightarrow 0.
 \end{align}
\item\label{itmLongtii} As $t\rightarrow-\infty$ one has 
 \begin{align}\label{asym_neg}
   x_n(t) & = \frac{t}{2\lambda_n} -\log\rho_0(\{\lambda_n\}) - 2\sum_{i=1}^{n-1} \log\biggl(\frac{\lambda_n}{\lambda_i}-1\biggr) + \oo(1), & \omega_n(t) & \rightarrow \frac{1}{\lambda_n}.
 \end{align}
\end{enumerate}
\end{theorem}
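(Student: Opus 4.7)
The plan is to exploit the explicit formulas~\eqref{eq:3.9positive} for the positions $x_n(t)$ and heights $\omega_n(t)$ in terms of Hankel determinants, applied to the time-evolved moments in~\eqref{eq:momentInT}. Combining the integral representation~\eqref{eq:HankelviaIntegral} with the spectral evolution $\rho(d\lambda;t) = \E^{-t/(2\lambda)}\rho_0(d\lambda)$ and the fact that $\rho_0$ is supported on $\{\lambda_n\}_{n\in\N}$ converts each Hankel determinant into an absolutely convergent series of positive terms
\begin{align*}
\Delta_{l,k}(t) &= \sum_{i_1<\cdots<i_k} W^{(l)}_{i_1,\ldots,i_k}\,\E^{-t\sum_{j=1}^{k}1/(2\lambda_{i_j})},\\
W^{(l)}_{i_1,\ldots,i_k} &= \prod_{j=1}^{k}\lambda_{i_j}^{l}\,\rho_0(\{\lambda_{i_j}\})\prod_{a<b}(\lambda_{i_b}-\lambda_{i_a})^{2}.
\end{align*}

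To establish~(ii), I would observe that the exponent $\sum_{j=1}^{k}1/(2\lambda_{i_j})$ attains its unique maximum among all $k$-subsets of $\N$ at $I_0=\{1,\ldots,k\}$, with a definite spectral gap to the next-largest value, since $(1/\lambda_n)$ is strictly decreasing. Hence, as $t\to-\infty$, only the $I_0$-term survives:
\begin{align*}
\Delta_{l,k}(t) = W^{(l)}_{1,\ldots,k}\,\E^{-t\sum_{j=1}^{k}1/(2\lambda_j)}\,(1+\OO(\E^{\delta_k t})),\qquad \delta_k>0.
\end{align*}
Substituting into $\E^{x_n(t)} = \Delta_{2,n-1}(t)/\Delta_{0,n}(t)$ causes all shared contributions from indices $1,\ldots,n-1$ to cancel: the exponential contributes $\E^{t/(2\lambda_n)}$, the $\rho_0$-masses contribute $1/\rho_0(\{\lambda_n\})$, and the mismatched Vandermonde entries contribute $\prod_{i<n}\lambda_i^{2}/(\lambda_n-\lambda_i)^{2}$, so that taking logarithms reproduces~\eqref{asym_neg}. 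For $\omega_n(t) = \Delta_{0,n}\Delta_{2,n-1}/(\Delta_{1,n-1}\Delta_{1,n})$ the exponential, $\rho_0$-mass and Vandermonde factors cancel identically between numerator and denominator, leaving exactly the power $1/\lambda_n$.

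For~(i), I would use the Christoffel--Darboux representation from Remark~\ref{rem:OPs} together with $P_0^{(t)}(0)^{2}=1/s_0(t)$ to obtain $\E^{x_n(t)} = K_{n-1}(0,0;t) \ge 1/s_0(t)$; since $s_0(t)=\sum_n \rho_0(\{\lambda_n\})\E^{-t/(2\lambda_n)}\to 0$ as $t\to\infty$ by dominated convergence, this forces $x_n(t)\to\infty$. The bound $x_n(t) = \oo(t)$ would follow from the extremal characterisation $K_{n-1}(0,0;t)^{-1}=\min\{\int|p|^{2}\,d\rho_t : \deg p\le n-1,\, p(0)=1\}$ by testing against a Lagrange-type polynomial supported at $\lambda_{N(t)},\ldots,\lambda_{N(t)+n-1}$ with $N(t)\to\infty$ chosen so that $t/\lambda_{N(t)}=\oo(t)$. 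Finally, $\omega_n(t)\to 0$ would come from writing $\omega_n\E^{x_n}=\Delta_{2,n-1}^{2}/(\Delta_{1,n-1}\Delta_{1,n})$ and combining the Cauchy--Schwarz-type inequality $\Delta_{1,k}^{2}\le\Delta_{0,k}\Delta_{2,k}$ (a consequence of~\eqref{eqnDeltaRel}) with the divergence of $\E^{x_n(t)}$.

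The main obstacle is the $t\to\infty$ regime: unlike part~(ii), the series defining $\Delta_{l,k}(t)$ lacks a uniform maximizer as $t\to+\infty$ because $1/\lambda_n\to 0$ and the dominant mass spreads over configurations involving arbitrarily large indices. Consequently, only the qualitative conclusions of~(i) can be obtained from universal spectral-theoretic bounds, whereas sharper rates would require finer information on the joint growth of $(\lambda_n)$ and the decay of the weights $\rho_0(\{\lambda_n\})$.
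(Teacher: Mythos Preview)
Your treatment of part~(ii) and of $x_n(t)\to\infty$ in part~(i) agrees with the paper's. Your Lagrange-interpolation route to $x_n(t)=\oo(t)$ is different from the paper's argument but sound: bounding $|q(0)|^2/\int|q|^2\,d\rho_t$ for every polynomial of degree at most $n-1$ via interpolation at $\lambda_N,\ldots,\lambda_{N+n-1}$ yields $K_{n-1}(0,0;t)\le C_N\,\E^{t/(2\lambda_N)}$ and hence $\limsup_{t\to\infty} x_n(t)/t\le 1/(2\lambda_N)$ for every $N$. The paper instead shows $\dot{x}_n(t)\to 0$ by proving that ratios of the form $\Gamma_{l-1,k}(t)/\Delta_{l,k}(t)$ tend to zero, where $\Gamma_{l-1,k}$ is the modified Hankel determinant from~\eqref{eqnGammaDef}, whose series expansion carries an extra factor $\sum_{i\in J}1/\lambda_i$.

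Your argument for $\omega_n(t)\to 0$, however, breaks down. The inequality $\Delta_{1,k}^2\le\Delta_{0,k}\Delta_{2,k}$ bounds the \emph{denominator} of $\omega_n\E^{x_n}=\Delta_{2,n-1}^2/(\Delta_{1,n-1}\Delta_{1,n})$ from above and therefore produces only a \emph{lower} bound on $\omega_n\E^{x_n}$. In fact $\omega_n\E^{x_n}$ is not bounded as $t\to\infty$: already for $n=1$ one has $\omega_1\E^{x_1}=1/s_1(t)\to\infty$ by dominated convergence, so no argument based on controlling $\omega_n\E^{x_n}$ can work. The paper proceeds differently: it rewrites $\omega_n=\Gamma_{0,n}/\Delta_{1,n}-\Gamma_{0,n-1}/\Delta_{1,n-1}$ via a Sylvester-type identity and then proves $\Gamma_{l-1,k}(t)/\Delta_{l,k}(t)\to 0$ by an $\varepsilon$-splitting of the defining series over index sets $J$. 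One fixes $K$ with $1/\lambda_K\le\varepsilon$, notes that subsets with $\min J\ge K$ contribute at most $k\varepsilon$ to the ratio, and checks that the finitely many remaining subsets are exponentially negligible against the denominator. This is the same estimate that drives $\dot{x}_n(t)\to 0$, so in the paper both conclusions $\omega_n\to 0$ and $x_n=\oo(t)$ flow from a single computation.
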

 
 \begin{remark}\label{rem:PeakDiscr+}
 Before proving the above statements, let us mention that the asymptotics as $t\rightarrow\infty$ are somewhat misleading about what happens to the function $u$ as it appears that all peakons fade away and no peakons are present in the long-time picture (in particular, $u(\ledot,t)$ converges to zero pointwise as $t\to \infty$). 
 However, this is only due to the initial location of the peakons accumulating to the left of some point, which leads to an infinite number of interactions that each peakon experiences, successively driving its height to zero. 
 Looking at the function $u$ instead, one has the conservation law
  \begin{align}\label{eq:lawW1infty}
     \|(u+u_x)(\ledot,0)\|_{L^\infty(\R)} \leq 12\sqrt{2}\|(u+u_x)(\ledot,t)\|_{L^\infty(\R)}, % \leq 12\sqrt{2}\|(u+u_x)(\ledot,0)\|_{L^\infty(\R)}
 \end{align}
which holds true for all times $t\in\R$ (see~\cite[Remark~5.10]{Eplusminus}) and, moreover, one can indeed observe the emergence of an infinite train of peakons after long enough time; see~\cite[Section~9]{IsospecCH}, \cite[Section~4]{CouplingProblem} and~\cite[Section~6]{CHTrace}.  
 \end{remark}

\begin{remark}\label{rem:DSpec}
  Discreteness of the support of the spectral measure $\rho$ is equivalent to a certain decay condition on the pair $(u,\mu)$ at $+\infty$; see~\cite[Proposition~3.10]{Eplusminus}. 
 More specifically, the support of the spectral measure $\rho$ of a pair $(u,\mu)$ in $\CHdom$ is discrete if and only if 
  \begin{align}\label{eqnSinSinftyCH}
    \lim_{x\rightarrow\infty} \E^{x}\biggl(\int_{x}^{\infty}\E^{-s}(u(s) + u'(s))^2ds + \int_{x}^{\infty}\E^{-s}\dip(ds)\biggr) = 0.
  \end{align}
 For instance, this condition is satisfied when $(u,\mu)$ is of the form~\eqref{eqnPurePeakon} with
 \begin{align}
   \sum_{n=1}^\infty |\omega_n| & < \infty, & \sum_{n=1}^\infty \dip_n & < \infty. 
 \end{align}
 Under the additional assumption that the support of the spectral measure is positive, that is, that $\rho\in\SM_0^+$, condition~\eqref{eqnSinSinftyCH} is equivalent to 
 \begin{align}\label{eqnSinSinftyCH+}
    \lim_{x\rightarrow\infty} u(x) + u'(x-) = 0;
  \end{align}
  see~\cite[Corollary~3.13]{Eplusminus}. 
In the case of an infinite multi-peakon profile of the form~\eqref{eqnPurePeakon}, the latter is clearly equivalent to (compare this with Remark~\ref{rem:Hardy})
 \begin{align}\label{eqnSinSinftyCH+MP}
    \lim_{n\rightarrow\infty} \E^{x_n}\sum_{k\ge n} \frac{\omega_k}{\E^{x_k}}= \lim_{n\rightarrow\infty} \sum_{x_k \in [n,n+1)} \omega_k = 0.
  \end{align} 
 \end{remark}
 
 \begin{proof}[Proof of Theorem~\ref{thm:longt_dis}]
Since the spectral measure $\rho_0$ is of the form 
  \begin{align*}
    \rho_0 = \sum_{n\in\N} \gamma_n \delta_{\lambda_n}
  \end{align*}
  for some positive constants $\gamma_n$, the formula in Remark~\ref{remDeltas} turns into 
  \begin{align}\label{eqnDeltaRepLT}
    \Delta_{l,k}(t)=\sum_{J\in \mathcal{J}_k}\mgam_{J}\mlam_{J}^{l}\wand_k(\mlam_J)\exp\biggl(-\frac{t}{2}\sum\limits_{j \in J} \frac{1}{\lambda_j}\biggr), 
      \end{align}
  where the set $\mathcal{J}_k$ consists of all subsets of $\N$ with precisely $k$ elements and
  \begin{equation*}
    \mgam_J=\prod_{j\in J}\gamma_{j},
    \qquad
    \mlam_J^l=\prod_{j\in J}\lambda_j^l,
    \qquad
    \wand_k(\mlam_J)=\prod_{i,j\in J\atop i<j}|\lambda_i-\lambda_j|^2,
  \end{equation*}
  for $J\in\mathcal{J}_k$. 
  In addition, we also have a similar representation
  \begin{align*}
 \Gamma_{l-1,k}(t) &=  \sum_{J\in \mathcal{J}_k} \sum_{i\in J}\frac{1}{\lambda_i} \mgam_{J}\mlam_{J}^{l}\wand_k(\mlam_J)\exp\biggl(-\frac{t}{2}\sum\limits_{j \in J} \frac{1}{\lambda_j}\biggr)
 \end{align*}
  for the determinants defined in~\eqref{eqnGammaDef}.
 Now the expression for $\omega_n$ in~\eqref{eq:3.9positive} can be rewritten as
\begin{align*}
 \omega_n(t) & = \frac{\Delta_{0,n}(t)\Delta_{2,n-1}(t)}{\Delta_{1,n-1}(t)\Delta_{1,n}(t)}=\frac{\Gamma_{0,n}(t)}{\Delta_{1,n}(t)}-\frac{\Gamma_{0,n-1}(t)}{\Delta_{1,n-1}(t)}, 
\end{align*}
where we have used the identity (see~\cite[Lemma 3.3]{chang2014generalized} for example)
$$ \Delta_{2,n-1}\Delta_{0,n}=\Delta_{1,n-1}\Gamma_{0,n}-\Gamma_{0,n-1}\Delta_{1,n}.$$ 
In order to prove that $\omega_n(t)\rightarrow0$ as $t\rightarrow\infty$, it is therefore sufficient to show that 
\begin{align*}
  \frac{\Gamma_{l-1,k}(t)}{\Delta_{l,k}(t)} \rightarrow 0.
\end{align*}
 For a given $\eps>0$, choose $K\in\N$ such that $\frac{1}{\lambda_K}\leq\eps$. 
 We then have the inequality
     \begin{align*}
  \sum_{J\in \mathcal{J}_k\atop \min(J)< K}\sum \limits_{i\in J}\frac{1}{\lambda_i}  \mgam_{J}\mlam_{J}^{l}\wand_k(\mlam_J)\exp\biggl(-\frac{t}{2}\sum\limits_{j \in J} \frac{1}{\lambda_j}\biggr)&  \leq  \sum_{J\in \mathcal{J}_k\atop \min(J)< K}\sum \limits_{i\in J}\frac{1}{\lambda_i}  \mgam_{J}\mlam_{J}^{l}\wand_k(\mlam_J)\E^{-\frac{t}{2\lambda_K}}\\
  & = C_K \E^{-\frac{t}{2\lambda_K}} 
 \end{align*}
 for some positive constant $C_K$, from which we get 
  \begin{align*}
\frac{ \Gamma_{l-1,k}(t)}{\Delta_{l,k}(t)}&=\frac{\sum\limits_{J\in \mathcal{J}_k} \sum\limits_{i\in J}\frac{1}{\lambda_i} \mgam_{J}\mlam_{J}^{l}\wand_k(\mlam_J)\exp\Big(-\frac{t}{2}\sum\limits_{j \in J} \frac{1}{\lambda_j}\Big)}{\sum\limits_{J\in \mathcal{J}_k}\mgam_{J}\mlam_{J}^{l}\wand_k(\mlam_J)\exp\Big(-\frac{t}{2}\sum\limits_{j \in J} \frac{1}{\lambda_j}\Big)} \\
&\leq\frac{\sum\limits_{J\in \mathcal{J}_k\atop \min(J)\geq K}\sum \limits_{i\in J}\frac{1}{\lambda_i}  \mgam_{J}\mlam_{J}^{l} \wand_k(\mlam_J) \exp\Big(-\frac{t}{2}\sum\limits_{j \in J} \frac{1}{\lambda_j}\Big)}{\sum\limits_{J\in \mathcal{J}_k}\mgam_{J}\mlam_{J}^{l} \wand_k(\mlam_J) \exp\Big(-\frac{t}{2}\sum\limits_{j \in J} \frac{1}{\lambda_j}\Big)} \\
 & \qquad \qquad\qquad\qquad+ \frac{C_K \E^{-\frac{t}{2\lambda_K}}}{\sum\limits_{J\in \mathcal{J}_k}\mgam_{J}\mlam_{J}^{l} \wand_k(\mlam_J) \exp\Big(-\frac{t}{2}\sum\limits_{j \in J} \frac{1}{\lambda_j}\Big)}.
 \end{align*}
Notice that the last summand 
 is bounded by 
   \begin{align*}
  \frac{C_K \E^{-\frac{t}{2\lambda_K}}}{\sum\limits_{J\in \mathcal{J}_k}\mgam_{J}\mlam_{J}^{l} \wand_k(\mlam_J) \exp\Big(-\frac{t}{2}\sum\limits_{j \in J} \frac{1}{\lambda_j}\Big)} \leq \frac{C_K}{\mgam_{J}\mlam_{J}^{l} \wand_k(\mlam_J)}\exp\biggl(-\frac{t}{2} \biggl(\frac{1}{\lambda_K}-\sum\limits_{j \in J} \frac{1}{\lambda_j}\biggr)\biggr)
 \end{align*}
 for every $J\in \mathcal{J}_k$. 
 Since $J$ can be chosen such that the right-hand side tends to zero as $t\rightarrow\infty$, we get 
   \begin{align*}
 \limsup_{t\rightarrow\infty} \frac{ \Gamma_{l-1,k}(t)}{\Delta_{l,k}(t)} 
 &\leq \limsup_{t\rightarrow\infty} \frac{\sum\limits_{J\in \mathcal{J}_k\atop \min(J)\geq K}\sum \limits_{i\in J}\frac{1}{\lambda_i}  \mgam_{J}\mlam_{J}^{l} \wand_k(\mlam_J) \exp\Big(-\frac{t}{2}\sum\limits_{j \in J} \frac{1}{\lambda_j}\Big)}{\sum\limits_{J\in \mathcal{J}_k}\mgam_{J}\mlam_{J}^{l} \wand_k(\mlam_J) \exp\Big(-\frac{t}{2}\sum\limits_{j \in J} \frac{1}{\lambda_j}\Big)} \\
&\leq \limsup_{t\rightarrow\infty} \frac{\sum\limits_{J\in \mathcal{J}_k\atop \min(J)\geq K} \frac{k}{\lambda_K}  \mgam_{J}\mlam_{J}^{l} \wand_k(\mlam_J) \exp\Big(-\frac{t}{2}\sum\limits_{j \in J} \frac{1}{\lambda_j}\Big)}{\sum\limits_{J\in \mathcal{J}_k\atop \min(J)\geq K}\mgam_{J}\mlam_{J}^{l} \wand_k(\mlam_J) \exp\Big(-\frac{t}{2}\sum\limits_{j \in J} \frac{1}{\lambda_j}\Big)}\\ & = \frac{k}{\lambda_K} \leq k\varepsilon,
 \end{align*}
which implies that $\omega_n(t)\rightarrow0$ as $t\rightarrow\infty$. 
 In order to prove that $x_n(t)\rightarrow\infty$, it clearly suffices to show that $x_1(t)\rightarrow\infty$. 
However, this follows immediately by using the formula
    \begin{align*}
      x_1(t)  = \log\frac{\Delta_{2,0}(t)}{\Delta_{0,1}(t)}= - \log \Biggl(\sum_{n\in\N}\gamma_n\E^{-\frac{t}{2\lambda_n}}\Biggr)
    \end{align*}
 and noting that the sum converges to zero by dominated convergence. 
 For the remaining claim in~\ref{itmLongti}, we simply note that  
 \begin{align*} 
  \dot{x}_n(t)  = \frac{1}{2}\biggl(\frac{\Gamma_{-1,n}(t)}{\Delta_{0,n}(t)}-\frac{\Gamma_{1,n-1}(t)}{\Delta_{2,n-1}(t)}\biggr) \rightarrow0
 \end{align*}
 as $t\rightarrow\infty$, which implies that $x_n(t) =\oo(t)$.
 
 As for the asymptotics when $t\rightarrow-\infty$, it suffices to observe that from the representation in~\eqref{eqnDeltaRepLT} one gets 
\begin{align}\label{eqnDeltalk+}
  \Delta_{l,k}(t) \sim \mgam_{J_k} \mlam_{J_k}^l \wand_k(\mlam_{J_k}) \exp\biggl(-\frac{t}{2} \sum\limits_{i\in J_k}\frac{1}{\lambda_i}\biggr),
\end{align}
 where $J_k=\{1,\ldots,k\}$. 
 The asymptotics in~\ref{itmLongtii} then follow readily by using the formulas in~\eqref{eq:3.9positive}. 
 \end{proof}

\begin{remark}
  The convergence in the asymptotics in~\eqref{asym_neg} as $t\rightarrow-\infty$ is actually exponential. 
  In fact, this follows because~\eqref{eqnDeltalk+} can easily be improved to 
  \begin{align}
     \frac{\Delta_{l,k}(t)}{\mgam_{J_k} \mlam_{J_k}^l \wand_k(\mlam_{J_k})} \exp\biggl(-\frac{t}{2} \sum\limits_{i\in J_k}\frac{1}{\lambda_i}\biggr) = 1+\oo\Bigl(\E^{-\varepsilon_k|t|}\Bigr)
  \end{align}
  for every positive $\varepsilon_k$ with 
  \begin{align}
    \varepsilon_k < \frac{1}{2\lambda_{k}} - \frac{1}{2\lambda_{k+1}}. 
  \end{align} 
\end{remark}

\begin{remark}\label{rem:DiscrAsymp2K}
A careful inspection of the proof of Theorem~\ref{thm:longt_dis} shows that the asymptotic formulas remain true if the spectral measure $\rho_0$ at initial time has only finitely many moments. More specifically, if the moments of $\rho_0$ exist up to order $2K$ with some $K\in \N\cup\{0,\infty\}$, then it follows from 
Theorem~\ref{thmMomK} and Corollary~\ref{corStrKForm} that the corresponding  solution has the form
    \begin{align}\label{eq:thm5.1finpeak}
      \omega(\ledot,t)|_{(-\infty,x_{K+1}(t))} & =  \sum_{n=1}^{K} \omega_n(t) \delta_{x_n(t)}, & \dip(\ledot,t) & = 0. 
    \end{align}
 with coefficients given by~\eqref{eq:3.9positive}. Moreover, the determinants $\Delta_{0,k}$, $\Delta_{1,k}$, $\Delta_{2,k}$ and $\Gamma_{0,k}$ exist for all $k\in \{0,\dots,2K\}$. Therefore, the asymptotic formulas~\eqref{eqnLongtiDiscr} and~\eqref{asym_neg} hold true, however, for $x_n$ with $n\in \{1,\dots,K+1\}$ and for $\omega_n$ with $n\in \{1,\dots,K\}$.
\end{remark}

\begin{remark}
Under the positivity assumption of Theorem~\ref{thm:longt_dis}, the support of the spectral measure $\rho_0$ satisfies
\begin{align}\label{eq:SpforCH}
\sum_{n\in \N}\lambda_n^{-p} <\infty
\end{align}
for some $p>\nicefrac{1}{2}$ if and only if $u_0+u_0'\in L^p(\R)$; see~\cite[Corollary~3.13~(ii)]{Eplusminus}. Clearly, the condition~\eqref{eq:SpforCH} is invariant under the conservative Camassa--Holm flow~\eqref{eqnSMEvo} and in fact, by~\cite[Remark~5.10 and Corollary~5.13]{Eplusminus}, for every $p\in (\nicefrac{1}{2},\infty]$ there is a positive constant $c_p>0$ such that  
\begin{align}\label{eq:lawW1p+}
\frac{1}{c_p}\|(u+u_x)(\ledot,0)\|_{L^p(\R)} \le \|(u+u_x)(\ledot,t)\|_{L^p(\R)} \le c_p\|(u+u_x)(\ledot,0)\|_{L^p(\R)} 
\end{align}
for all times $t\in \R$.  
\end{remark}

Our next goal is to apply the above results to a particular subclass of profiles considered in Remark~\ref{rem:InftyMinToPlus}. 

\begin{definition}
For a given $L\in\R$, let $\Peakons_{L}$ be the collection of all pairs $(u,\mu)$ in $\CHdom$ having the form~\eqref{eqnstrinf2side} with $x_n$-s accumulating at $L$ from the left and 
 \begin{align}\label{eqnstrinf2sideN+}
      \omega|_{(L,\infty)} & =  \sum_{j=1}^{\kappa} \omega_{n_j} \delta_{x_{n_j}}, & \dip|_{(L,\infty)} & =  \sum_{j=1}^{\kappa} \dip_{n_j} \delta_{x_{n_j}},
    \end{align}
    for some $\kappa\in \N\cup\{0\}$. 
    \end{definition}

The importance of the above class of profiles stems from the fact that they are closely connected with $n$-canonical solutions of indeterminate moment problems. 

\begin{theorem}\label{thm:MPevoltIndet}
    If a pair $(u_0,\mu_0)$ belongs to $\Peakons_{L_0}$ for some $L_0\in\R$, then for each time $t\in\R$ there is $L(t)\in\R$  such that the solution $(u(\ledot,t),\mu(\ledot,t))$ belongs to $\Peakons_{L(t)}$.
\end{theorem}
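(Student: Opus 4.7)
The plan is to characterize membership in $\Peakons_{L_0}$ in spectral terms and then to show that this characterization is invariant under the evolution $\rho_0 \mapsto \rho_t := \E^{-\frac{t}{2\lambda}}\rho_0$. I would split the defining property of $\Peakons_{L_0}$ into two spectral ingredients. The infinite peakon structure accumulating at $L_0$ from the left is, by Corollary~\ref{corStrinf}, precisely the statement that all moments of the spectral measure $\rho_0$ exist, with $L_0$ given explicitly by the limit~\eqref{eqnLviaDelta}. Combining this with Corollary~\ref{corStrinfpure}, the finiteness $L_0\in\R$ is equivalent to the corresponding Hamburger moment problem being indeterminate. The remaining ingredient---at most finitely many peakons on $(L_0,\infty)$ together with the absence of any non-atomic part in $\omega_0$ or $\dip_0$---can be recognized, via the Nevanlinna-type parameterization of the solution set of the indeterminate moment problem, as saying that $\rho_0$ is an $n$-canonical solution of that moment problem for some $n\in\N\cup\{0\}$ (the value of $n$ being controlled by $\kappa$ and by whether $\dip_0$ carries mass at $L_0$).

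Next I would verify that both ingredients are preserved by the flow. Since $\supp \rho_0$ is bounded away from zero by Theorem~\ref{thm:ISP} and $\lambda \mapsto \E^{-\frac{t}{2\lambda}}$ is bounded above and below by positive constants on any set bounded away from zero, every moment of $\rho_t$ exists for every $t\in\R$, and the indeterminacy of the corresponding moment problem is preserved (as already used in the proof of Theorem~\ref{thm:MPevolt}). The more delicate point is that the $n$-canonical property is likewise preserved by the multiplicative perturbation $\rho_0 \mapsto \rho_t$; I would argue this either directly from the Nevanlinna parameterization, or equivalently from the observation that the associated generalized indefinite string $(\infty, \tilde{\omega}, \tilde{\dip})$ has purely atomic coefficients supported on a discrete set, a property which must transfer to the string at time $t$ through the inverse spectral transform of Theorem~\ref{thm:ISP}. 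Putting these invariances together, Corollary~\ref{corStrinf} applied to the evolved pair produces the desired multi-peakon structure on $(-\infty, L(t))$ with $L(t)\in\R$ finite, while the preserved $n$-canonicity supplies the finite cluster of peakons on $(L(t),\infty)$ and rules out any continuous part of $\omega$ or $\dip$, placing $(u(\ledot,t),\mu(\ledot,t))$ in $\Peakons_{L(t)}$.

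The main obstacle is precisely the preservation of the $n$-canonical class of spectral measures under multiplication by $\E^{-\frac{t}{2\lambda}}$, equivalently the persistence of pure atomicity of $\tilde{\omega}$ and $\tilde{\dip}$ along the flow. Making this step rigorous requires a careful invocation of the Nevanlinna parameterization, or equivalently of the generalized indefinite string framework from Appendix~\ref{appMoment}; this is where I expect the bulk of the work to lie, whereas the other steps are essentially direct consequences of the results already collected in Sections~\ref{secMoment} and~\ref{sec:conssolMP}.
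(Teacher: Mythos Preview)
Your overall strategy matches the paper's exactly: characterize $\Peakons_{L_0}$ spectrally as ``$\rho_0$ has all moments, the moment problem is indeterminate, and $\rho_0$ is an $n$-canonical solution for some finite $n$'', then show all three properties survive the multiplicative perturbation $\rho_0\mapsto\E^{-t/(2\lambda)}\rho_0$.

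Where you diverge from the paper is in the handling of the one step you correctly flag as the crux, the preservation of $n$-canonicity. The paper's argument here is much simpler than either of your two suggestions. Recall (Appendix~\ref{appMoment}) that $\rho_0$ is $n$-canonical if and only if the closure of the polynomials in $L^2(\R;\rho_0)$ has codimension~$n$. Since $0\notin\supp(\rho_0)$, the weight $\E^{-t/(2\lambda)}$ is bounded above and below by positive constants on $\supp(\rho_0)$, so $L^2(\R;\rho_0)$ and $L^2(\R;\rho_t)$ coincide as sets with equivalent norms. The closure of the polynomials is therefore the \emph{same} subspace in both, with the same codimension. That is the entire argument.

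By contrast, your second suggested route---inferring that purely atomic coefficients of the string ``must transfer'' through the inverse spectral transform---is circular: that transfer is precisely the content of the theorem. Your first route via the Nevanlinna parameterization could presumably be made to work, but it is considerably heavier than the two-line codimension argument above.
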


\begin{proof}
First of all, observe that the moment problem associated with the corresponding spectral measure $\rho_0$ is indeterminate. Next, consider the generalized indefinite string $(\infty,\tilde{\omega}_0,\tilde{\dip}_0)$ associated with the given pair $(u_0,\mu_0)$ via~\eqref{eqnDefa} and~\eqref{eqnDefbeta}. It is immediate to observe that this generalized indefinite string is such that the triple $(\E^{L_0},\tilde{\omega}_0|_{[0,\E^{L_0})},\tilde{\dip}_0|_{[0,\E^{L_0})})$ is a Krein--Langer string and the truncated string $(\infty,\tilde{\omega}_{0,\E^{L_0}},\tilde{\dip}_{0,\E^{L_0}})$ (see~\eqref{eq:StringTrunc}) is also a Krein--Langer string with $N=\kappa$ in~\eqref{eqnKL}. According to Remark~\ref{rem:m-canon}~\ref{itmremNextremii}, the spectral measure of the pair $(u_0,\mu_0)$ is an $n$-canonical solution of the associated indeterminate moment problem for some finite $n\in\N\cup\{0\}$. The latter in particular means that the closure of polynomials in $L^2(\R;\rho_0)$ is a subspace of codimension $n$ in $L^2(\R;\rho_0)$. Taking into account that $0\notin \supp(\rho_0)$,  the same is true for the measure 
   \begin{align*}
\rho(t; d\lambda) = \E^{-\frac{t}{2\lambda}} \rho_0(d\lambda)
  \end{align*}
  with any $t\in \R$, 
which completes the proof by applying Remark~\ref{rem:m-canon}~\ref{itmremNextremii} together with the map~\eqref{eqnuitofa}. % and~\eqref{eqnDefbeta} once again.  
\end{proof}

\begin{remark}
Clearly, $L(t)$ does depend on $t$ (for instance, under the assumptions of Theorem~\ref{thm:longt_dis}, all peakons go to infinity as $t\rightarrow\infty$). The non-negative integer $n$ in the proof of Theorem~\ref{thm:MPevoltIndet} can be expressed (or at least estimated) in terms of the corresponding coefficients in~\eqref{eqnstrinf2sideN+} at initial time $t=0$. 
 \end{remark}
 
   If a pair $(u,\mu)$ in $\CHdom$ is of multi-peakon form to the right of some point $\ell\in\R$, that is, such that 
    \begin{align}
   \omega|_{(\ell,\infty)} & = \sum_{n=1}^N \omega_n \delta_{x_n}, & \dip|_{(\ell,\infty)} & = \sum_{n=1}^N \dip_n \delta_{x_n}, 
 \end{align}
  for some $N\in\N\cup\{0\}$, decreasing points $x_1,\ldots,x_N$ in $(\ell,\infty)$, real weights $\omega_1,\ldots,\omega_N$ and non-negative weights $\dip_1,\ldots,\dip_N$, then it also satisfies a strong decay restriction as in~\eqref{eqnMdef-} at $+\infty$ (notice that the set of pairs $\Peakons_L$ is obviously of this form).
   First of all, this entails that the spectral measure $\rho$ is supported on a discrete set $\sigma$ (this can be seen, for example, from~\eqref{eqnSinSinftyCH}) so that  
   \begin{align}\label{eq:rho_discr}
     \rho = \sum_{\lambda\in\sigma} \gamma_\lambda \delta_\lambda
   \end{align}
   for some positive constants $\gamma_\lambda$, which are usually expressed via the norms of the corresponding eigenfunctions. 
   Moreover, the infinite product
   \begin{align}
     W(z) = \lim_{R\rightarrow\infty} \prod_{\lambda\in\sigma \atop |\lambda|\leq R} \biggl(1-\frac{z}{\lambda}\biggr)
   \end{align}
   converges locally uniformly to an entire function $W$ in this case. 
   Due to the strong decay of the coefficients at $+\infty$, we can furthermore introduce a fundamental system of solutions with prescribed asymptotics at $+\infty$ and an associated spectral measure $\rho_+$ as in Section~\ref{secPre}. 
   This measure can be recovered from $\rho$ via
   \begin{align}\label{eq:rho+}
     \rho_+ = \sum_{\lambda\in\sigma} \frac{1}{\lambda^2\dot{W}(\lambda)^2\gamma_\lambda} \delta_\lambda.
   \end{align}
   In fact, all this is known because in this case the pair $(u,\mu)$ belongs to the class considered in~\cite{ConservCH}; see~\cite[Lemma~2.4]{ConservCH} for the relation between $\rho_+$ and $\rho$.
   It is now possible to apply all the results of this section to the measure $\rho_+$ to obtain characterizations of spectral measures $\rho$ for pairs $(u,\mu)$ in $\CHdom$ that are of multi-peakon form near $+\infty$.
   Of course, one then again obtains explicit formulas for the respective positions and weights of peakons. 
Let us demonstrate this by considering a particular case of pairs from the set $\Peakons_L$. 

\begin{lemma}\label{lem:LiCMP}
Assume that a pair $(u_0,\mu_0)$ belongs to $\Peakons_{L_0}$ for some $L_0\in\R$ and 
 \begin{align}
      \omega_0|_{(L_0,\infty)} & =  0, & \dip_0|_{[L_0,\infty)} & =  0.
    \end{align}
Then the solution $(u(\ledot,t),\mu(\ledot,t))$ belongs to $\Peakons_{L(t)}$ and 
 \begin{align}\label{eqnstrinf2side0+}
      \omega(\ledot,t)|_{(L(t),\infty)} & =  0, & \dip(\ledot,t)|_{[L(t),\infty)} & =  0
    \end{align}
    for all $t\in\R$. Moreover, 
    \begin{align}\label{eq:LevolviaRho+}
   L(t) = \log\left(\sum_{\lambda\in\sigma} \frac{\E^{\frac{t}{2\lambda}}}{\lambda^2\dot{W}(\lambda)^2\gamma_\lambda}\right), 
    \end{align}
and the coefficients of the solution in $(-\infty,L(t))$ can be recovered recursively as described in Corollary~\ref{corStrKForm} and Corollary~\ref{cor:UviaMoments}.
\end{lemma}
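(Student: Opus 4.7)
The approach is to exploit the dual spectral measure $\rho_+$ from~\eqref{eq:rho+}, as sketched in the paragraph preceding the statement and in Remark~\ref{rem:InftyMinToPlus}, and to invoke the symmetric version of Theorem~\ref{thmMomK} adapted via the reflection $x\mapsto -x$.

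First, the hypotheses force $\omega_0$ and $\dip_0$ to be compactly supported: their support is contained in $\{x_n\}_{n\in\N}$, which by~\eqref{eqnstrinf2side} accumulates only at $L_0$, and by the additional assumption does not extend past $L_0$. Hence $(u_0,\mu_0)$ enjoys a strong decay condition at $+\infty$ mirroring~\eqref{eqnMdef-} at $-\infty$, so that the right spectral measure $\rho_{+,0}$ given by~\eqref{eq:rho+} is a well-defined finite positive Borel measure on $\R$. Applying the reflected version of Theorem~\ref{thmMomK} with $K=0$ to the (trivially right-multi-peakon) initial data then identifies $L_0$ with $\log s_0(\rho_{+,0})$, that is, the dual of formula~\eqref{eq:x_1vias0}.

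Next, I would track the evolution of $\rho_+$. The conservative flow preserves the spectrum $\sigma$ and changes only the norming constants according to $\gamma_\lambda(t)=\E^{-\nicefrac{t}{(2\lambda)}}\gamma_\lambda$, which is simply~\eqref{eqnSMEvo} applied to~\eqref{eq:rho_discr}. In particular, the infinite product $W$ is invariant in time, and substitution into~\eqref{eq:rho+} yields
\begin{align*}
\rho_+(t;d\lambda) = \sum_{\lambda\in\sigma}\frac{\E^{\frac{t}{2\lambda}}}{\lambda^2\,\dot W(\lambda)^2\,\gamma_\lambda}\,\delta_\lambda,
\end{align*}
which remains a finite positive Borel measure for every $t\in\R$. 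Applying the reflected version of Theorem~\ref{thmMomK} once more -- now to the solution $(u(\ledot,t),\mu(\ledot,t))$ -- shows that its right support is bounded, and the dual of~\eqref{eq:x_1vias0} delivers exactly formula~\eqref{eq:LevolviaRho+}, together with the vanishing asserted in~\eqref{eqnstrinf2side0+}.

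To promote the solution from merely being "supported to the left of $L(t)$" to membership in $\Peakons_{L(t)}$, I would apply Corollary~\ref{corStrinf} (equivalently, Theorem~\ref{thm:MPevolt}) to $\rho(\ledot,t)=\E^{-\nicefrac{t}{(2\lambda)}}\rho_0$, which remains discretely supported with all moments finite and with infinite support. This yields the multi-peakon form~\eqref{eqnstrinf} with accumulation point $\lim_{n\to\infty}x_n(t)$ from below; this limit must coincide with $L(t)$ since the support is bounded from the right by $L(t)$. The recursive recovery of the coefficients on $(-\infty,L(t))$ from the evolved moments $s_k(t)$ of~\eqref{eq:momentInT} is then exactly Corollary~\ref{corStrKForm} combined with Corollary~\ref{cor:UviaMoments} and Remark~\ref{rem:UatInfty}. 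The main obstacle is making rigorous the dual theory of Remark~\ref{rem:InftyMinToPlus}, in particular the identification $L_0=\log s_0(\rho_{+,0})$ and its time-dependent analog; while conceptually a consequence of the symmetry $(x,t)\mapsto(-x,-t)$ of~\eqref{eqnOurCH}--\eqref{eq:PequDef}, a careful verification would rely on~\cite[Lemma~2.4]{ConservCH} to relate the left and right Weyl--Titchmarsh functions through $W$, which is where the factor $\frac{1}{\lambda^2\dot W(\lambda)^2}$ in~\eqref{eq:LevolviaRho+} ultimately originates.
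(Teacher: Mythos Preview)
Your overall strategy is close to the paper's --- both rely on the dual spectral measure $\rho_+$ and the reflected version of~\eqref{eq:x_1vias0} to obtain the formula~\eqref{eq:LevolviaRho+} for $L(t)$ --- but there is a genuine gap in how you establish~\eqref{eqnstrinf2side0+}. The reflected Theorem~\ref{thmMomK} with $K=0$ (equivalently, finiteness of $s_0(\rho_+(t))$) only yields vanishing of $\omega(\ledot,t)$ and $\dip(\ledot,t)$ on the \emph{open} interval $(L(t),\infty)$; it does not rule out a point mass $\dip(\{L(t)\},t)>0$. Indeed, the $K=0$ case of Theorem~\ref{thmMomK} allows $\dip_1$ to be nonzero at the boundary point (see the remark following Corollary~\ref{corStrKForm}). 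Yet the lemma explicitly asserts $\dip(\ledot,t)|_{[L(t),\infty)}=0$ with a closed bracket, so your argument as written does not prove the full claim.

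The paper closes this gap by invoking N-extremality rather than mere moment finiteness. From the hypotheses ($\kappa=0$ and $\dip_0(\{L_0\})=0$), Proposition~\ref{prop:N-extrem} shows that $\rho_0$ is an N-extremal solution of its (indeterminate) moment problem, i.e., polynomials are dense in $L^2(\R;\rho_0)$. Since $\E^{-t/(2\lambda)}$ is bounded and bounded away from zero on $\supp(\rho_0)$, this density is preserved under the flow, so $\rho(\ledot,t)$ remains N-extremal for every $t$. Translating back via Proposition~\ref{prop:N-extrem} then forces the truncated string beyond $\E^{L(t)}$ to be of the trivial form $(\infty,c\delta_0,0)$, which in the Camassa--Holm picture gives precisely $\dip(\ledot,t)|_{[L(t),\infty)}=0$ including the endpoint. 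Your approach could be repaired by inserting this N-extremality step; alternatively you would need to show that a higher moment of $\rho_+(t)$ is infinite for all $t$ (cf.\ the discussion after~\eqref{eq:x_1vias0}), which is not obvious from the available data.
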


\begin{proof}
Since $(u_0,\mu_0)\in\Peakons_{L_0}$ has the form~\eqref{eqnstrinf2sideN+} with $\kappa=0$ and $\dip_0(\{L_0\}) = 0$, we conclude that the corresponding spectral measure $\rho_0$ is an N-extremal solution of the associated moment problem (see Proposition~\ref{prop:N-extrem}). The latter is equivalent to the fact that the polynomials are dense in $L^2(\R,\rho_0)$. Therefore, arguing as in the proof of Theorem~\ref{thm:MPevoltIndet}, one easily shows that~\eqref{eqnstrinf2side0+} takes place for all  $t\in\R$. 

It remains to notice that the formula for $L(t)$ follows from~\eqref{eq:x_1vias0} applied to the measure $\rho_+$; we only need to mention that the evolution of the spectral measure $\rho_+$ under the conservative Camassa--Holm flow is governed by the same formula~\eqref{eqnSMEvo}, however, the sign in the exponent must be different. 
\end{proof}

\begin{remark}
If $(u_0,\mu_0)\in\Peakons_L$, then applying Theorem~\ref{thm:longt_dis} to the measure $\rho_+$ one can obtain the asymptotics for the multi-peakon part of the solution $(u(\ledot,t),\mu(\ledot,t))$ lying to the right of $L(t)$. We will demonstrate this below in one particular situation.  
\end{remark}

The results of this section extend and complement the main results of~\cite{li09}. Let us state them explicitly.

\begin{corollary}\label{cor:Li}
Assume that the pair $(u_0,\mu_0)\in \CHdom^+$ is such that 
\begin{align}\label{eq:Lidata}
u_0(x) = \frac{1}{2}\sum_{n\in\N}\omega_n \E^{-|x-x_n|},
\end{align}
where $x_1<x_2<\dots$ is a strictly increasing bounded sequence with $\sup_{n\in\N}x_n = L_0 $ and  $\omega_1,\omega_2,\dots$ are positive weights such that
\begin{align}
\omega_0(\R) = \sum_{n\in\N}\omega_n<\infty.
\end{align}
Then:
 \begin{enumerate}[label=(\roman*), ref=(\roman*), leftmargin=*, widest=ii]
 \item The corresponding solution $(u(\ledot,t),\mu(\ledot,t))$ belongs to $\CHdom^+$ for all $t\in\R$ and $u$ has the form
\begin{align}
u(x,t) = \frac{1}{2}\sum_{n\in\N}\omega_n(t) \E^{-|x-x_n(t)|},
\end{align}
where the coefficients satisfy the system of equations~\eqref{eqnxomegadotMPsX}, \eqref{eqnxomegadotMPsW}. 
\item For all $t\in\R$
\begin{align}\label{eq:Wconserv}
\sum_{n\in\N}\omega_n(t)  = \omega_0(\R), 
\end{align}
and $x_1(t)<x_2(t)<\dots$ with $\sup_{n\in\N} x_n(t) = L(t)<\infty$. 
\item As $|t|\to\infty$, the asymptotic behavior of positions and heights is given by~\eqref{eqnLongtiDiscr} and~\eqref{asym_neg}. Moreover,
\begin{align}\label{eq:LasympLi}
L(t) = \begin{cases} \frac{t}{2\lambda_1} + \oo(t), & t\to\infty, \\ \oo(t), & t\to -\infty, \end{cases}
\end{align}
where $\lambda_1$ is the smallest eigenvalue of the corresponding spectral problem.
\end{enumerate}
\end{corollary}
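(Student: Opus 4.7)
The plan is to assemble the statement from three ingredients that are already in place: Lemma~\ref{lem:LiCMP} for the structural claim, Proposition~\ref{prop:dynamics} for the ODE system, and Theorem~\ref{thm:longt_dis} for the peakon-wise asymptotics.

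I would begin by checking that $(u_0,\mu_0)$ falls under Lemma~\ref{lem:LiCMP}. Since $(u_0,\mu_0)\in\CHdom^+$ we have $\dip_0\equiv0$, and since $u_0$ is the pure multi-peakon profile~\eqref{eq:Lidata} with positions accumulating at $L_0=\sup_n x_n$, it follows that $(u_0,\mu_0)\in\Peakons_{L_0}$ with $\omega_0|_{(L_0,\infty)}=0$. Lemma~\ref{lem:LiCMP} then produces $(u(\ledot,t),\mu(\ledot,t))\in\Peakons_{L(t)}$ with no peakons to the right of $L(t)$ and the explicit formula~\eqref{eq:LevolviaRho+} for $L(t)$. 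The evolution~\eqref{eqnSMEvo} multiplies $\rho_0$ by the strictly positive weight $\E^{-t/(2\lambda)}$, so $\rho(t;\ledot)\in\SM_0^+$ at all times, and Corollary~\ref{corIP+} then upgrades the conclusion to $(u(\ledot,t),\mu(\ledot,t))\in\CHdom^+$. Consequently $u(\ledot,t)$ is forced into the pure infinite multi-peakon shape.

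Next I would establish the ODEs. Because the $x_n(t)$ accumulate at the finite point $L(t)$, Corollary~\ref{corStrinfpure+} rules out the determinate case, so the Hamburger moment problem for $\rho(t;\ledot)$ is indeterminate and in particular all moments are finite. Positivity of $\rho(t;\ledot)$ gives $\Delta_{1,k}(t)>0$ for every $k$, so the hypotheses of Proposition~\ref{prop:dynamics} are met for arbitrary $K$, yielding $\dot{x}_n(t)=u(x_n(t),t)$ and the companion equation for $\omega_n(t)$ for all $n\in\N$. Substituting the multi-peakon formula for $u$ (using the usual left/right limits of $u_x$ at $x_n$, which make the diagonal $k=n$ term cancel) produces~\eqref{eqnxomegadotMPsX}--\eqref{eqnxomegadotMPsW}. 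Summing~\eqref{eqnxomegadotMPsW} over $n$ then gives the conservation law~\eqref{eq:Wconserv}, since the double sum $\sum_{n,k}\omega_n\omega_k\,\sgn(x_n-x_k)\E^{-|x_n-x_k|}$ is antisymmetric in $(n,k)$; the interchange of summation and differentiation is justified by the dominant bound $|\omega_n\omega_k\sgn(x_n-x_k)\E^{-|x_n-x_k|}|\le\omega_n\omega_k$ together with the local boundedness of $\sum_n\omega_n(t)$. The finiteness $L(t)<\infty$ is already contained in Lemma~\ref{lem:LiCMP}.

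For part (iii), the peakon-wise asymptotics~\eqref{eqnLongtiDiscr} and~\eqref{asym_neg} follow by applying Theorem~\ref{thm:longt_dis} directly to the spectral measure $\rho_0\in\SM_0^+$. The asymptotic behavior of $L(t)$ is then read off from~\eqref{eq:LevolviaRho+}. Writing $c_n:=1/(\lambda_n^2\dot{W}(\lambda_n)^2\gamma_n)$ and noting that $\sum_n c_n=\E^{L_0}<\infty$ by hypothesis, one factors out $\E^{t/(2\lambda_1)}$ as $t\to+\infty$ so that dominated convergence applied to $\sum_{n\ge2}(c_n/c_1)\E^{t(1/(2\lambda_n)-1/(2\lambda_1))}\to0$ yields $L(t)=t/(2\lambda_1)+\log c_1+\oo(1)$. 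As $t\to-\infty$, the termwise lower bound $\sum_n c_n\E^{t/(2\lambda_n)}\ge\E^{t/(2\lambda_{N+1})}\sum_{n>N}c_n$ gives $L(t)/t\le 1/(2\lambda_{N+1})+\oo(1)$ for each fixed $N$, and sending $N\to\infty$ produces $\limsup L(t)/t\le 0$; combined with $L(t)\le\log\sum_n c_n<\infty$ (so $L(t)/t\ge0$ eventually), this shows $L(t)=\oo(t)$. The only mildly delicate step is the summation–differentiation interchange in the derivation of~\eqref{eq:Wconserv}; everything else is a direct concatenation of previously proved results.
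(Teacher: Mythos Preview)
Your argument has a genuine gap in part~(i): Lemma~\ref{lem:LiCMP} only tells you that $\omega(\ledot,t)|_{(L(t),\infty)}=0$ and $\dip(\ledot,t)|_{[L(t),\infty)}=0$, but it says nothing about a possible point mass $\omega(\{L(t)\},t)$ at the accumulation point itself. If such a mass were present, then $u(\ledot,t)$ would carry an additional peakon $\tfrac{1}{2}\omega_L(t)\E^{-|x-L(t)|}$ on top of the sum over the $x_n(t)$, the ODE system~\eqref{eqnxomegadotMPsX}--\eqref{eqnxomegadotMPsW} would acquire a corresponding extra term, and the conservation law~\eqref{eq:Wconserv} would fail in the stated form. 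That this actually matters is precisely the content of Remark~\ref{rem:Li01}(b), which treats the case when the initial profile \emph{does} have a peakon at $L_0$.

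The paper closes this gap by passing to the reflected spectral measure $\rho_+$ from~\eqref{eq:rho+} (via the symmetry $(x,t)\mapsto(-x,-t)$) and invoking Lemma~\ref{lem:s1}: since $u_0-u_0''$ has no point mass at $L_0$, the first moment of $\rho_+(\ledot,0)$ is infinite; this property is preserved under the flow $\rho_+(d\lambda;t)=\E^{t/(2\lambda)}\rho_+(d\lambda;0)$, and Lemma~\ref{lem:s1} applied at time $t$ then forces $\omega(\{L(t)\},t)=0$. You need an argument of this kind before the rest of your proof goes through. A secondary issue is that your derivation of~\eqref{eq:Wconserv} via antisymmetry is circular as written (you assume local boundedness of $\sum_n\omega_n(t)$ to justify the interchange you use to prove it); the paper sidesteps this by citing~\cite[Proposition~4.1]{ConservCH} directly. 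Your treatment of~(iii) is essentially the same as the paper's, just phrased as a direct estimate on~\eqref{eq:LevolviaRho+} rather than as an application of Remark~\ref{rem:DiscrAsymp2K} with $K=0$ to $\rho_+$.
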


\begin{proof}
By using Lemma~\ref{lem:LiCMP} and following the proof of Corollary~\ref{cor:ODEforCH}, in order to prove (i) it only suffices to show that $\omega(\ledot,t)$ has no point mass at $L(t)$. However, changing the roles of $-\infty$ and $+\infty$ by using the change of variables $(x,t)\mapsto (-x,-t)$ and considering the spectral measure $\rho_+$ defined by~\eqref{eq:rho+}, the latter follows from Lemma~\ref{lem:s1}. Indeed, since $u_0 - u_0''$ has no point mass at $L_0$, the corresponding spectral measure $\rho_{+}(\ledot,0)$ at initial time has infinite first moment according to Lemma~\ref{lem:s1}. However, 
\begin{align*}
\rho_+(d\lambda;t) = \E^{\frac{t}{2\lambda}}\rho_{+}(d\lambda;0)
\end{align*}
and hence so does the measure $\rho_+(\ledot;t)$ for all $t\in\R$. It remains to use  Lemma~\ref{lem:s1} once again.

(ii) We only need to mention that~\eqref{eq:Wconserv} follows from~\cite[Proposition~4.1]{ConservCH} for example. 

(iii) The asymptotic behavior of positions and heights follow from Theorem~\ref{thm:longt_dis} and Remark~\ref{rem:DiscrAsymp2K}. Indeed, one simply needs to apply these results with $K=0$ to the spectral measure $\rho_+$ and take into account the fact that the sign of $t$ in the exponent must be changed.
\end{proof}

\begin{remark}\label{rem:Li01}
A few remarks are in order.
\begin{enumerate}[label=(\alph*), ref=(\alph*), leftmargin=*, widest=e]
\item Notice that~\eqref{eq:LasympLi} complements Theorem~\ref{thm:longt_dis}~\ref{itmLongti}. More specifically, in case if $u_0$ has the form~\eqref{eqnPurePeakonU} with positions accumulating at some finite point $L_0$,  \eqref{eq:LasympLi} shows that as $t\to\infty$ this accumulation point moves asymptotically at constant speed proportional to the reciprocal of the smallest eigenvalue, however, at the same time the speed of each single peakon tends to zero.  
\item If in the setting of Corollary~\ref{cor:Li} $u_0$ has an additional positive peakon at $L_0$, that is, 
\begin{align}
u_0(x) = \tilde{u}_0(x) + \frac{1}{2}\omega_{L_0}\E^{-|x-L_0|},
\end{align}
with $\tilde{u}_0$ given by the right-hand side in~\eqref{eq:Lidata} and $\omega_{L_0}>0$, then, by Lemma~\ref{lem:LiCMP}, the corresponding solution $u(\ledot,t)$ will have the same form for all $t\in\R$. Moreover, by Corollary~\ref{cor:Li}~(i) and Lemma~\ref{lem:s1}, $\omega(\ledot,t)$ has a positive mass at $L(t)$ for all $t\in\R$, which is given explicitly by 
\begin{align}
\omega(\{L(t)\},t) = \frac{s_0(\rho_+(t))}{s_1(\rho_+(t))}.
\end{align}
Using~\eqref{eq:LevolviaRho+}, it is not difficult to show that  
\begin{align}\label{eq:WasympLi}
\omega(\{L(t)\},t) = \begin{cases} \frac{1}{\lambda_1} + \oo(1), & t\to\infty, \\ \oo(1), & t\to -\infty. \end{cases}
\end{align}
\item
By using a different approach based on Toda flows, Corollary~\ref{cor:Li} (except~\eqref{eq:LasympLi}) was obtained by L.-C.~Li in~\cite{li09} under the additional assumption
\begin{align}\label{eq:LIcond}
\sum_{n=1}^\infty \omega_n n^2 <\infty.
\end{align}
Let us mention that a solution to the Camassa--Holm equation~\eqref{eqnCH} with the initial data~\eqref{eq:Lidata} was constructed in~\cite{li09} as a solution to the infinite dimensional ODE system~\eqref{eqnxomegadotMPsX}, \eqref{eqnxomegadotMPsW}.\footnote{Let us mention that the class $S_+$ considered in~\cite{li09} can be obtained from $S_-$ by using the usual symmetry of the Camassa--Holm equation $u(x,t)\to u(-x,-t)$.} However, taking into account Proposition~\ref{prop:dynamics} and using~\cite{Eplusminus}, 
it is not difficult to show that weak solutions to the Camassa--Holm equation constructed in~\cite{li09} coincide with weak solutions constructed with the help of the conservative Camassa--Holm flow in Theorem~\ref{thm:consCHweak}. On the other hand, it does not seem clear to us how to extend the approach of~\cite{li09} to the class of pairs $\Peakons_L$. In particular, it is not clear to us whether one could get~\eqref{eq:LasympLi} by using the approach of~\cite{li09}.
\end{enumerate}  
 \end{remark}
 
 \begin{remark}\label{rem:2sidedPeakons}
Using the results of this section, it is also possible to characterize the spectral measures of pairs in $\CHdom$ that are of multi-peakon form near $-\infty$ and near $+\infty$.  
   For instance, in this way one can characterize the spectral measures $\rho$ of all pairs $(u,\mu)$ in $\CHdom$ that are of the form
       \begin{align}
      \omega|_{\R\backslash\{L\}} & =  \sum_{n=1}^{\infty} \omega_n \delta_{x_n}, & \dip|_{\R\backslash\{L\}} & =  \sum_{n=1}^{\infty} \dip_n \delta_{x_n},
    \end{align}
     for some points $x_1,x_2,\ldots$ in $\R\backslash\{L\}$ that only accumulate at $L$, real weights $\omega_1,\omega_2,\ldots$ and non-negative weights $\dip_1,\dip_2,\ldots$, where $L$ is a given point in $\R$. 
  More precisely, this can be done by applying Corollary~\ref{corStrinf} also to the measure $\rho_+$, which is related to $\rho$ via~\eqref{eq:rho+}. 
 \end{remark}

%%%%%%%%%%%%%%%
\subsection{Al-Salam--Carlitz peakons}\label{ss:AlSalamCarlitz}
%%%%%%%%%%%%%%%
As it was mentioned earlier, cases of indeterminate moment problems are present within the $q$-analog of the Askey-scheme.  The first example of an N-extremal measure was pointed in the 1960s  by T.~S.~Chihara~\cite{chi68} and the corresponding construction involves the Al-Salam--Carlitz polynomials of type II. We are going to use it to construct an explicitly solvable pair from $\Peakons_{L}$, which also belongs to the class considered in~\cite{li09}.

\begin{proposition}\label{prop:ASCpoln}
Let $a$, $q\in (0,\infty)$ satisfy $1<a<\nicefrac{1}{q}$.  
 Then the spectral measure $\rho_0$ of the pair $(u,\mu)\in \CHdom^+$ with $u$ having the form~\eqref{eq:Lidata} and the coefficients given by
 \begin{align}\label{eq:ASCcoef}
x_n & = \log \left(\sum_{k=0}^{n-1}\frac{(aq)^{k}}{(q;q)_{k}}\right), & \omega_n & =  \frac{(q;q)_{n-1}}{a^{n}} \sum_{k=0}^{n-1}\frac{(aq)^{k}}{(q;q)_{k}},
\end{align}
for all $n\in\N$, is
\begin{align}\label{eq:ASCmeas}
\rho_0 = (\nicefrac{q}{a};q)_\infty \sum_{n=0}^\infty  \frac{a^{-n} q^{n^2}}{(\nicefrac{q}{a};q)_n(q;q)_n} \delta_{aq^{-n}-1}.
\end{align}
\end{proposition}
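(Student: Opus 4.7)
The argument decomposes into three ingredients: identifying $\rho_0$ as a classical N-extremal measure, deducing from this the pure infinite multi-peakon structure of the associated pair, and pinning down the explicit coefficients via the orthogonal polynomials.

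First I would verify that $\rho_0\in\SM_0^+$. Since $1<a<1/q$, the support points $\lambda_n:=aq^{-n}-1$ form a strictly increasing positive sequence tending to $\infty$, so $0\notin\supp(\rho_0)\subset(a-1,\infty)$. The prefactor $(q/a;q)_\infty$ converges because $|q/a|<1$ and the weights $a^{-n}q^{n^2}/((q/a;q)_n(q;q)_n)$ are all positive, so $\rho_0$ is a positive Borel measure on $\R$; the super-exponential decay of the weights trivially secures~\eqref{eq:Poisson} as well as the finiteness of all moments. Corollary~\ref{corIP+} then supplies a unique pair $(u,\mu)\in\CHdom^+$ with spectral measure $\rho_0$.

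Second, I would recall that $\rho_0$ is exactly the discrete orthogonality measure, after the shift $\lambda\mapsto \lambda+1$, for the Al-Salam--Carlitz polynomials of type II; this is the classical example of an N-extremal solution of an indeterminate Hamburger moment problem due to Chihara~\cite{chi68} (see also~\cite{bech20}). Being N-extremal, $\rho_0$ makes the polynomials dense in $L^2(\R;\rho_0)$, which is precisely the spectral characterization (appearing in Proposition~\ref{prop:N-extrem} and used in Lemma~\ref{lem:LiCMP}) of those pairs in $\Peakons_L$ for which $\dip$ vanishes and $\omega$ carries no mass on $[L,\infty)$. Consequently, the pair $(u,\mu)\in\CHdom^+$ associated with $\rho_0$ belongs to $\Peakons_L$ with $\dip\equiv 0$ and $\omega|_{[L,\infty)}=0$, and so by Corollary~\ref{corStrinfpure+} the function $u$ admits the pure infinite multi-peakon expansion~\eqref{eq:Lidata} with positive heights and positions accumulating at some finite $L$. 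The summation formula $L=\lim_n x_n = -\log(aq;q)_\infty$ will come out of the formula for $x_n$ once it is proved, via the $q$-binomial theorem $\sum_{k\ge 0}(aq)^k/(q;q)_k = 1/(aq;q)_\infty$.

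Third, to identify $x_n$ and $\omega_n$ I would use the orthogonal polynomial formulas of Remark~\ref{rem:OPs}: denoting by $(P_k)_{k\ge 0}$ the orthonormal polynomials of $\rho_0$, one has
\begin{align*}
 \E^{x_n} & = K_{n-1}(0,0) = \sum_{k=0}^{n-1} P_k(0)^2, &
 \omega_n & = \frac{P'_{n-1}(0)}{P_{n-1}(0)} - \frac{P'_n(0)}{P_n(0)}.
\end{align*}
The Al-Salam--Carlitz II polynomials (in $\lambda+1$) have a known ${}_2\phi_0$ representation, and a direct computation, or equivalently an induction using the explicit Jacobi parameters and the three-term recurrence evaluated at $z=0$, yields $P_k(0)^2 = (aq)^k/(q;q)_k$. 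Summing this identity gives the first formula in~\eqref{eq:ASCcoef}. For $\omega_n$, differentiating the three-term recurrence at $z=0$ one computes $P'_k(0)/P_k(0)$ recursively, and the telescoping in the displayed formula for $\omega_n$ then collapses, after simplification via standard $q$-Pochhammer identities, to $(q;q)_{n-1}a^{-n}\sum_{k=0}^{n-1}(aq)^k/(q;q)_k$.

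The technical heart of the argument, and the only step that requires genuine effort, is the identification $P_k(0)^2=(aq)^k/(q;q)_k$ together with the recursive evaluation of $P'_k(0)/P_k(0)$; both are classical $q$-series calculations, but one must keep careful track of the shift $\lambda\mapsto\lambda+1$ and of the $q/a\leftrightarrow a$ change of parameters between $\rho_0$ and the measures appearing in standard references such as~\cite{bech20}.
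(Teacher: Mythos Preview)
Your computational strategy for $x_n$ and $\omega_n$ via the formulas of Remark~\ref{rem:OPs} is essentially the paper's own (which goes through the equivalent Krein-string parameters $\tilde\ell_{n+1}=P_n(0)^2$ and $\tilde\omega_{n+1}=-1/(b_nP_n(0)P_{n+1}(0))$). The gap is in your second step, and it is genuine.

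N-extremal solutions of an indeterminate moment problem form a one-parameter family; in the language of Proposition~\ref{prop:N-extrem}~\ref{itmNextremii} the free parameter is the constant $c$ in $\tilde\omega_\ell=c\delta_0$, which in pair coordinates is precisely $\omega(\{L\})$. So N-extremality of $\rho_0$ yields only $\omega|_{(L,\infty)}=0$ and $\dip(\{L\})=0$, not ``$\omega$ carries no mass on $[L,\infty)$''. Moreover, $\rho_0$ in~\eqref{eq:ASCmeas} is \emph{not} the standard shifted Al-Salam--Carlitz~II orthogonality measure: that is the measure~\eqref{eq:ASCmeas0} in the proof, supported on $\{q^{-n}-1\}$ (with a point mass at $0$), not on $\{aq^{-n}-1\}$. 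Both have the same moments and orthogonal polynomials, so your $q$-series computations are unaffected, but you have not explained why $\rho_0$---rather than any other N-extremal solution---corresponds to the specific pair with $\omega(\{L\})=0$. The paper closes this gap by working in the opposite direction: it builds the finite-length Krein--Stieltjes string from the common moment sequence, extends it by \emph{zero} mass on $[\tilde L,\infty)$ (thereby fixing $\omega(\{L\})=0$), and then identifies the spectral measure of this extended string as $\rho_0$ via the Krein parameterization of Theorem~\ref{th:KreinParam}, by comparison with the explicit formulas in~\cite[Proposition~1.7]{chr04}. That identification step is what your outline is missing; the ``$q/a\leftrightarrow a$'' remark does not substitute for it.
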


\begin{proof}
Consider the probability measure\footnote{The fact that $\rho$ is a probability measure is due to M.\ E.\ H.\ Ismail~\cite[Theorem~5.1]{ism85}.}
\begin{align}\label{eq:ASCmeas0}
\rho = (aq;q)_\infty \sum_{n= 0}^\infty \frac{a^n q^{n^2}}{(aq;q)_n(q;q)_n} \delta_{q^{-n}-1}.
\end{align} 
It is known that the corresponding moment problem is indeterminate whenever $1<a<\nicefrac{1}{q}$ and, moreover, $\rho$ is an N-extremal solution~\cite{chi68,chr04}. The latter also means that polynomials are dense in $L^2(\R;\rho)$. The associated orthogonal polynomials of the first kind are expressed via the Al-Salam--Carlitz polynomials of type II, %\marginpar{\tiny AK: $\binom{n}{2} = \frac{n^2-n}{2}$}
\begin{align*}
V_n^{(a)}(x;q) %& = (-a)^n q^{-\binom{n}{2}} \qhyp20{q^{-n},x}{-}{q;\frac{q^n}{a}} \\
                       & = (-a)^n q^{-\binom{n}{2}} \sum_{k =0}^n \frac{(q^{-n};q)_k(x;q)_k}{(q;q)_k}q^{-\binom{k}{2}}\frac{q^{nk}}{(-a)^k}.
\end{align*}
%where the $q$-hypergeometric series is defined by
%\[
%\qhyp20{a,b}{-}{q;z} = \sum_{k\ge 0}\frac{(a;q)_k(b;q)_k}{(q;q)_k}q^{-\binom{k}{2}}(-z)^k.
%\]
Indeed, the orthogonality relations
\begin{align*}
(aq;q)_\infty\sum_{k= 0}^\infty \frac{a^k q^{k^2}}{(q;q)_k(aq;q)_k}V_m^{(a)}(q^{-k};q)V_n^{(a)}(q^{-k};q) = \frac{a^n(q;q)_n}{q^{n^2}}\delta_{n,m},
\end{align*}
and the recurrence relations
\begin{align*}
xV_n^{(a)}(x;q) = V_{n+1}^{(a)}(x;q) + (a+1)q^{-n}V_n^{(a)}(x;q) + aq^{1-2n}(1-q^n)V_{n-1}^{(a)}(x;q),
\end{align*}
together imply that the Jacobi parameters corresponding to the measure~\eqref{eq:ASCmeas0} are
\begin{align*}
a_n & = (a+1)q^{-n} - 1, & b_n & = \frac{\sqrt{a(1-q^{n+1})}}{q^{n+\nicefrac{1}{2}}},
\end{align*} 
and  the corresponding normalized orthogonal polynomials are explicitly  given by
\begin{align*}
P_n(x) & = \frac{1}{h_n}V_n^{(a)}(x+1;q), & h_n & = \sqrt{a^n(q;q)_n\, q^{-n^2}}\,.
\end{align*}
By taking into account that
\begin{align*}
V_n^{(a)}(1;q) = (-1)^n a^n q^{-\binom{n}{2}},
\end{align*}
we find the Krein string parameters (see~\cite[Section~5]{beva95}, \cite[f-la~(3.3)]{IndMoment}):
\begin{align*}
\wt{\omega}_{n+1}  & = \frac{-1}{b_{n} P_{n}(0)P_{n+1}(0)}  = - \frac{a^n(q;q)_n}{q^{n^2}V_n^{(a)}(1;q)V_{n+1}^{(a)}(1;q)} = \frac{(q;q)_n}{a^{n+1}},\\
 \wt{\ell}_{n+1} & = P_{n}(0)^2  = \frac{q^{n^2}}{a^n(q;q)_n}|V_n^{(a)}(1;q)|^2 = \frac{(aq)^n}{(q;q)_n},
\end{align*}
for all $n\in\N\cup\{0\}$. Therefore, the generalized indefinite string corresponding to the measure~\eqref{eq:ASCmeas0} is the Krein--Stieltjes string $(\tilde{L},\tilde{\omega})$ given by
\begin{align*}
\tilde{x}_n & = \sum_{k=1}^n \tilde{\ell}_k = \sum_{k=0}^{n-1}\frac{(aq)^{k}}{(q;q)_{k}}, & \tilde{\omega} & = \sum_{n=1}^\infty \tilde{\omega}_n\delta_{\tilde{x}_n},
\end{align*} 
with
\begin{align*}
\tilde{L} & = \lim_{n\to\infty}\tilde{x}_n = \sum_{k=0}^\infty\frac{(aq)^{k}}{(q;q)_{k}} = \frac{1}{(aq;q)_\infty}.
\end{align*}
If we extend this Krein string to $[0,\infty)$ by assigning zero mass to the interval $[\tilde{L},\infty)$, that is, we consider the new Krein string having infinite length and whose mass distribution on $[0,\infty)$ coincides with $\tilde{\omega}$, then, by Lemma~\ref{lem:GIStrunc}, its Weyl--Titchmarsh function is given by~\eqref{eq:WTtrunc}. In view of   
Theorem~\ref{th:KreinParam}, this gives rise to another N-extremal solution of the moment problem. Comparing the parameterizations in Theorem~\ref{th:KreinParam} and in~\cite[Proposition~1.7]{chr04},\footnote{Let us emphasize that in~\cite[Proposition~1.7]{chr04} the measures $\nu_0$ and $\nu_\infty$ are obtained via the Krein parameterization, which differs from the standard Nevanlinna parameterization, see~\cite[pp.~7-8]{chr04}.} we conclude that the corresponding spectral measure is given by~\eqref{eq:ASCmeas}. It remains to use the map~\eqref{eqnuitofa}, which transforms the new Krein string into the pair $(u,\mu)\in \CHdom^+$ given by~\eqref{eqnstrinfpureU} and \eqref{eq:ASCcoef}. 
\end{proof}

\begin{remark}
According to the above considerations, the peakon positions accumulate at a finite point from the left,
\begin{align}
\lim_{n\to \infty} x_n = L = - \log\big( (aq;q)_\infty\big).
\end{align}
Moreover, by construction, $\omega|_{[L,\infty)} = 0$. 
Notice that  
\begin{align}
\sum_{n\in\N}\lambda_n^{-p} = \sum_{n\in\N} \frac{1}{(aq^{1-n}-1)^p} <\infty
\end{align}
for all $p>0$ since $1<a<\nicefrac{1}{q}$.  
The above pair $(u,\mu)$ also satisfies~\eqref{eq:LIcond} since
\begin{align}
\sum_{n\in\N}n^2\omega_n = \sum_{n\in\N}n^2\frac{(q;q)_{n-1}}{a^{n}} \sum_{k=0}^{n-1}\frac{(aq)^{k}}{(q;q)_{k}}\le 
\frac{1}{(aq;q)_\infty} \sum_{n\in\N}\frac{n^2}{ a^{n}} <\infty,
\end{align}
and thus provides an explicitly solvable example for the class considered in~\cite{li09}.    
 \end{remark}

Our next goal is to present numerics for the  solution with initial data determined by~\eqref{eq:ASCcoef}. 
By taking $a=\nicefrac{6}{5}$, $q=\nicefrac{4}{5}$, we can obtain an explicit solution $u$ with Al-Salam--Carlitz infinite peakons according to the formulas~\eqref{eqnxomegaDelta} and~\eqref{form_u_interval}-\eqref{form_S}. 
 For convenience, we consider the corresponding Al-Salam--Carlitz infinite peakons with the rescaled spectral measure
\begin{align}
\tilde \rho_0(\ledot,t) =  \sum_{n=0}^\infty  \frac{a^{-n} q^{n^2}}{(\nicefrac{q}{a};q)_n(q;q)_n} e^{-\frac{t}{2(aq^{-n}-1)}}\delta_{aq^{-n}-1}
\end{align}
instead of $\rho_0$.
It follows from Remark \ref{rem:scaling} that the only distinction is that the positions of all peaks undergo the same shift $\log\big((\nicefrac{q}{a};q)_\infty\big)$.

The graphs of $u$ with the first several peaks are presented in Figure~\ref{fig:ASC3}; see Remark \ref{rem:num} and Remark \ref{rem:graph} for some key points of graphs.
  
\begin{figure}[h]
  \centering
  % Requires \usepackage{graphicx}
  \includegraphics[width=1\textwidth]{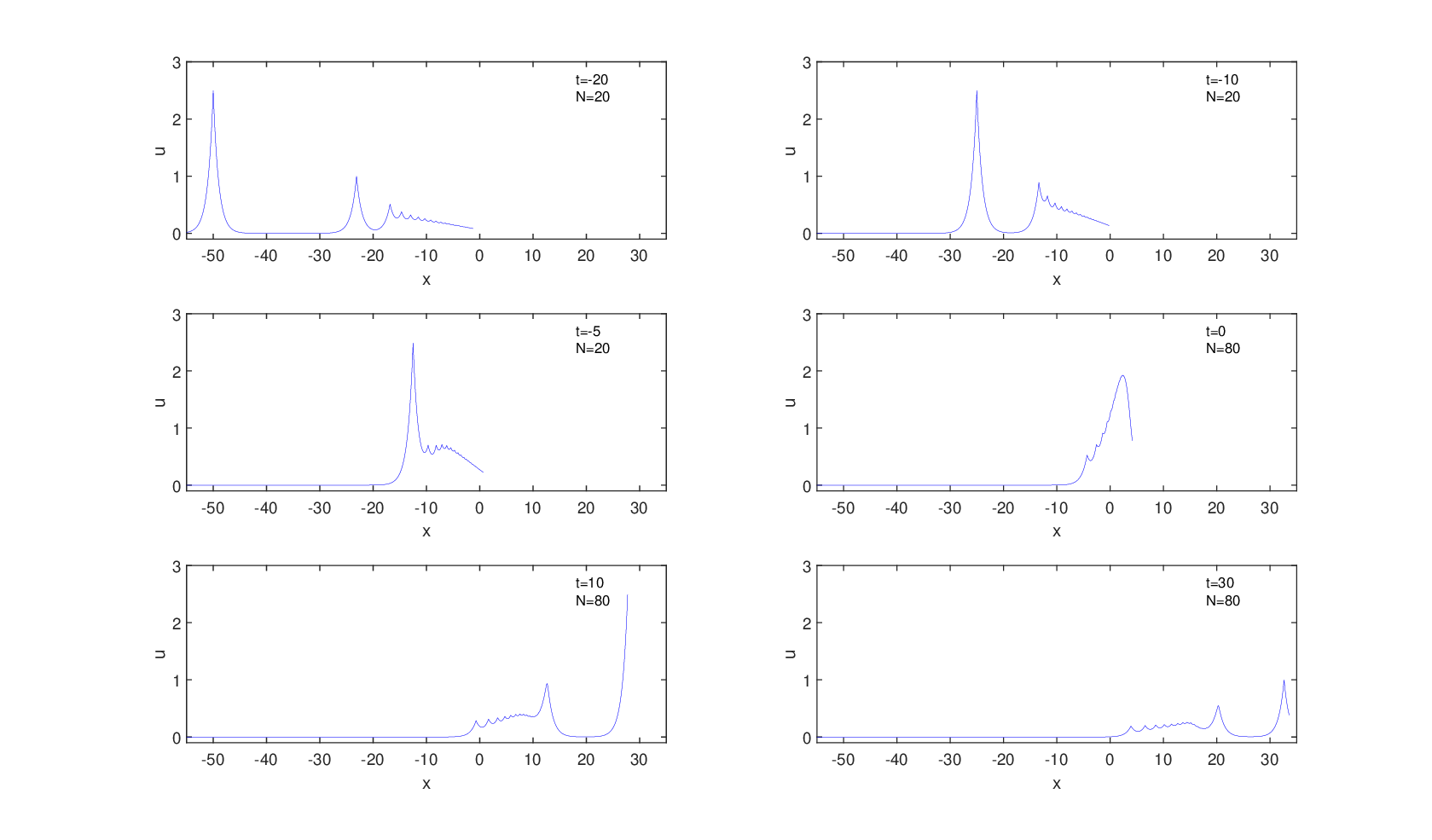}
  \caption{{\small Al-Salam--Carlitz infinite peakons with $a=\nicefrac{6}{5}$ and $q=\nicefrac{4}{5}$: Snapshots of $u(x,t)$ in the interval $[-55,x_{N+1}]$.
  % at times $t=-20$, $-10$, $-5$, $0$, $10$, $30$.
  }}\label{fig:ASC3} 
  \end{figure}

%%%%%%%%%%%
\section{Laguerre peakons}\label{sec:LaguerrePeaks}
%%%%%%%%%%%

%%%%%%%%%%%
\subsection{The Laguerre weight}\label{ss:Laguerre}
%%%%%%%%%%%

Consider the probability measure $\rho_{\gamma,\alpha}$ on $\R$ given by
\begin{align}\label{eq:LaguerreMeasure}
   \rho_{\gamma,\alpha}(d\lambda) =  \frac{1}{\Gamma(\gamma+1)}\id_{(\alpha,\infty)}(\lambda)(\lambda-\alpha)^\gamma\E^{-(\lambda-\alpha)} d\lambda,
\end{align}
where $\gamma>-1$ and $\alpha>0$ are parameters. 
The corresponding (orthonormal) orthogonal polynomials are given explicitly by\footnote{Here we use the standard notation for the generalized binomial coefficient \[ \binom{n+\gamma}{n} = \frac{(\gamma+1)_n}{\Gamma(n+1)}= \frac{\Gamma(n+\gamma+1)}{\Gamma(\gamma+1)\Gamma(n+1)},\] where $\Gamma$ is the classical gamma function.}
\begin{align}\label{eq:OPRLviaLag}
P_n(z) &= \frac{(-1)^n}{h_n} L_n^{(\gamma)}(z-\alpha), & h_n& = \binom{n+\gamma}{n}^{\nicefrac12},
\end{align}
for all $n\in\N\cup\{0\}$, 
where $L_n^{(\gamma)}$ are the Laguerre--Sonin polynomials~\cite[Section~5.1]{sze};
\begin{align}\label{eq:lagsonpol}
L_n^{(\gamma)}(z) %= \frac{\E^{z}}{n!}\frac{d^n}{dz^n}\E^{-z} z^{n} 
=\sum_{k=0}^n\binom{n+\gamma}{n-k}\,\frac{(-z)^k}{k!}.
\end{align}
The Laguerre--Sonin polynomials satisfy the recurrence relations
\begin{align}
(n+1)L_{n+1}^{(\gamma)}(z) = (2n+\gamma+1-z)L_{n}^{(\gamma)}(z) - (n+\gamma)L_{n-1}^{(\gamma)}(z),
\end{align}
for all $n\in\N$ (see~\cite[Equation~(5.1.10)]{sze}), and hence the corresponding Jacobi parameters are given by
\begin{align}\label{eq:LagJacParam}
a_n & = 2n+1+\gamma + \alpha, & b_n & = \sqrt{(n+1)(n+\gamma+1)}.
\end{align}
On the other side, the generalized indefinite string with spectral measure $\rho_{\gamma,\alpha}$ is a Krein--Stieltjes string $(\infty,\wt\omega)$ with coefficients given by (see \cite[(3.3)]{IndMoment})
\begin{align}\label{eq:LaguerreCoeff}
\begin{split}
\wt{\omega}_{n+1} & = \frac{-1}{b_{n} P_{n}(0)P_{n+1}(0)} = \frac{(\gamma+1)_n}{(n+1)!L_n^{(\gamma)}(-\alpha)L_{n+1}^{(\gamma)}(-\alpha)}, \\
 \wt{\ell}_{n+1} & = P_{n}(0)^2 = \binom{n+\gamma}{n}^{-1}L_n^{(\gamma)}(-\alpha)^2,
\end{split}
\end{align}
for $n\in \N\cup\{0\}$. 
%\begin{remark}
%Of course, the coefficients~\eqref{eq:LaguerreCoeff} of the Krein--Stieltjes string look cumbersome compared to Jacobi parameters. However, using the asymptotic behavior of the Laguerre polynomials (see~\cite[Theorem~8.22.3]{sze}), we readily get 
%\begin{align}\label{eqnLagCoeff01}
%\wt{\omega}_{n+1} & \sim \frac{4{\pi}\alpha^{\gamma+\nicefrac12}\E^{\alpha}}{\E^{4\sqrt{\alpha n}}n^{\gamma+\nicefrac12}} , & \wt{\ell}_{n+1} & \sim  \frac{\E^{-\alpha}}{{\pi \alpha^\gamma}}\frac{\E^{4\sqrt{\alpha n}}}{4\sqrt{\alpha n}}, %\bigl(1+\OO\bigl(\frac{1}{\sqrt{n}}\bigr)\bigr),
%\end{align}
%\begin{align}
%L_n(-\alpha) = \frac{(n+1)^{-1/4}}{2\sqrt{\pi}}\frac{\E^{-\alpha/2}}{\alpha^{1/4}}\E^{2\sqrt{\alpha(n+1)}}\bigl(1+\OO\bigl(n^{-1/2}\bigr)\bigr)
%\end{align}
%as $n\to \infty$.
%\end{remark}
Finally, the pair $(u,\mu)$ in $\CHdom$ corresponding to the Laguerre spectral measure~$\rho_{\gamma,\alpha}$ is a pure infinite multi-peakon profile of the form~\eqref{eqnstrinfpure} with $x_n\rightarrow\infty$ and coefficients given explicitly by 
\begin{align}\label{eq:uLagatt=0}
x_n & = \log \left(\sum_{k=0}^{n-1} P_{k}(0)^2\right), & \omega_n & = - \frac{\sum_{k=0}^{n-1} P_{k}(0)^2}{b_{n-1}P_{n-1}(0)P_{n}(0)}, & \dip_n & = 0.
\end{align}
Using the asymptotic behavior of the Laguerre--Sonin polynomials, we can obtain the large $n$ asymptotics for these peakon parameters.

\begin{lemma}\label{lem:asympLaginn}
If $x_n$ and $\omega_n$ are given by~\eqref{eq:uLagatt=0} and~\eqref{eq:OPRLviaLag}, then
  \begin{align} \label{eqnMPLagCoeff02}
   x_n & = 4\sqrt{\alpha n}  + \log\biggl(\frac{\Gamma(\gamma+1)}{8\pi\alpha^{\gamma+1}\E^\alpha}\biggr) + \OO(n^{\nicefrac{-1}{2}}), & \omega_n & = \frac{1}{2\sqrt{\alpha n}} +\OO(n^{-1}), 
   \end{align} 
   as $n\to \infty$.
\end{lemma}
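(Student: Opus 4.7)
The plan is to insert Perron's large-$n$ asymptotic for the Laguerre--Sonin polynomials at a fixed negative argument into the explicit expressions in~\eqref{eq:uLagatt=0} and then estimate the resulting Christoffel--Darboux sum $\E^{x_n}=\sum_{k=0}^{n-1}P_k(0)^2$. By Szeg\H{o}'s Theorem~8.22.3 combined with Stirling's formula $\binom{n+\gamma}{n} = \Gamma(\gamma+1)^{-1}n^\gamma(1+\OO(n^{-1}))$ and the identity $P_n(0)^2 = L_n^{(\gamma)}(-\alpha)^2/\binom{n+\gamma}{n}$ from~\eqref{eq:OPRLviaLag}, one obtains
\begin{align*}
   P_n(0)^2 = A\, n^{-1/2}\, \E^{4\sqrt{\alpha n}}\bigl(1+\OO(n^{-1/2})\bigr),\qquad A := \frac{\Gamma(\gamma+1)\,\E^{-\alpha}}{4\pi\,\alpha^{\gamma+1/2}}.
\end{align*}

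To evaluate $S_n := \sum_{k=0}^{n-1}P_k(0)^2$, I would compare with the continuous analogue $F(x)=x^{-1/2}\E^{4\sqrt{\alpha x}}$, whose antiderivative is $(2\sqrt{\alpha})^{-1}\E^{4\sqrt{\alpha x}}$. A standard Euler--Maclaurin argument (or, equivalently, Abel summation on the telescoping differences of $g(k)=(2\sqrt{\alpha})^{-1}\E^{4\sqrt{\alpha k}}$, which satisfy $g(k)-g(k-1)=F(k)(1+\OO(k^{-1/2}))$) yields
\begin{align*}
   S_n = \frac{A}{2\sqrt{\alpha}}\, \E^{4\sqrt{\alpha n}}\bigl(1+\OO(n^{-1/2})\bigr) = \frac{\Gamma(\gamma+1)}{8\pi\,\alpha^{\gamma+1}\,\E^\alpha}\, \E^{4\sqrt{\alpha n}}\bigl(1+\OO(n^{-1/2})\bigr).
\end{align*}
Taking logarithms and using $\log(1+\OO(n^{-1/2}))=\OO(n^{-1/2})$ then produces the first asymptotic in~\eqref{eqnMPLagCoeff02}. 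For the weights, I would exploit the Christoffel--Darboux consequence
\begin{align*}
   \omega_n = -\frac{S_n}{b_{n-1}P_{n-1}(0)P_n(0)} = \frac{S_n\, h_{n-1}h_n}{b_{n-1}\, L_{n-1}^{(\gamma)}(-\alpha)\, L_n^{(\gamma)}(-\alpha)}
\end{align*}
and substitute Perron's asymptotic in the denominator together with $h_{n-1}h_n=\Gamma(\gamma+1)^{-1}n^\gamma(1+\OO(n^{-1}))$ and $b_{n-1}=n(1+\OO(n^{-1}))$; all $n$- and $\alpha$-dependent prefactors then cancel, leaving $\omega_n = (2\sqrt{\alpha n})^{-1}(1+\OO(n^{-1/2}))$, which rearranges to the desired expansion.

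The main obstacle is maintaining the $\OO(n^{-1/2})$ relative precision throughout. Perron's formula supplies exactly this relative error, so the summation step is the delicate part: one has to observe that $F(n)$ is smaller than $\int_1^n F(x)\,dx$ by a factor $\OO(n^{-1/2})$, so that the pointwise asymptotic of $P_n(0)^2$ can be transferred to a closed-form leading term for $S_n$ without picking up a worse boundary contribution, and that the resulting relative error propagates through the $\log$ (for $x_n$) and through the cancellations in the Christoffel--Darboux quotient (for $\omega_n$).
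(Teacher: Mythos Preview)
Your argument is correct, but it diverges from the paper's in one structural way that is worth noting. For $\E^{x_n}=\sum_{k=0}^{n-1}P_k(0)^2$ the paper does not sum the Perron asymptotics term by term; instead it invokes the confluent Christoffel--Darboux identity to rewrite the partial sum as the closed form
\[
\frac{n!\,\Gamma(\gamma+1)}{\Gamma(n+\gamma)}\Bigl(L_n^{(\gamma)}(-\alpha)L_{n-1}^{(\gamma)\prime}(-\alpha)-L_n^{(\gamma)\prime}(-\alpha)L_{n-1}^{(\gamma)}(-\alpha)\Bigr),
\]
and then substitutes Szeg\H{o}'s Theorem~8.22.3 once. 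This bypasses the Euler--Maclaurin/Abel summation step entirely, so the $\OO(n^{-1/2})$ relative error is read off directly from Perron's formula rather than having to be propagated through a sum. Your summation argument is legitimate---the key point, which you identify, is that the error contribution $\sum_k k^{-1}\E^{4\sqrt{\alpha k}}$ is $\OO(n^{-1/2})$ relative to the main term---but the Christoffel--Darboux shortcut is cleaner and avoids this bookkeeping. For $\omega_n$ the paper likewise uses a different representation: it rewrites $\omega_n$ via~\eqref{eq:omegaviaCD} as the difference $L_{n-1}^{(\gamma)\prime}(-\alpha)/L_{n-1}^{(\gamma)}(-\alpha)-L_n^{(\gamma)\prime}(-\alpha)/L_n^{(\gamma)}(-\alpha)$ and extracts the $\OO(n^{-1/2})$ cancellation directly from the asymptotics of the logarithmic derivative, whereas you feed your estimate for $S_n$ back into the quotient form from~\eqref{eq:uLagatt=0}. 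Both routes require the same precision from Perron's expansion and arrive at the same answer; the paper's choice simply avoids recycling the $S_n$ computation.
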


\begin{proof}
The Christoffel--Darboux formula~\eqref{eq:CDflaGen} for the Laguerre--Sonin polynomials reads~\dlmf{18.2.13}, \cite[Equation~(5.1.11)]{sze}:
\begin{align*}
\sum_{k=0}^{n-1} P_k(0)^2 = \frac{n!\Gamma(\gamma+1)}{\Gamma(n+\gamma)}\Big(L_{n}^{(\gamma)}(-\alpha)L_{n-1}^{(\gamma)'}(-\alpha) - L_{n}^{(\gamma)'}(-\alpha)L_{n-1}^{(\gamma)}(-\alpha)\Big).
\end{align*}
%But
%\[
%(h_{n-1}h_n)^2 = \binom{n+\gamma-1}{n-1}\binom{n+\gamma}{n} = \frac{\Gamma(n+\gamma)}{\Gamma(n)\Gamma(\gamma+1)}\frac{\Gamma(n+\gamma+1)}{\Gamma(n+1)\Gamma(\gamma+1)} = \binom{n+\gamma}{n}\frac{n}{n+\gamma}= \binom{n+\gamma-1}{n-1}\frac{n+\gamma}{n}
%\]
%and hence
% \[
% \frac{1}{\sqrt{n(n+\gamma)}}\sum_{k=0}^{n-1} \frac{k!}{\Gamma(k+\gamma+1)}L_k^{(\gamma)}(z)^2 = \frac{1}{\sqrt{\binom{n+\gamma-1}{n-1}\binom{n+\gamma}{n}}}\Big(L_{n}^{(\gamma)}(z)L_{n-1}^{(\gamma)}(z) - L_{n}^{(\gamma)}(z)L_{n-1}^{(\gamma)}(z)\Big).
 %\]
  Using the asymptotic behavior of the Laguerre polynomials (see~\cite[Theorem~8.22.3]{sze}), 
% \begin{align*}
% L_n^{(\gamma)}(-\alpha) & = \frac{n^{\frac{2\gamma-1}{4}}}{2\sqrt{\pi}}\frac{\E^{2\sqrt{\alpha n} - \frac{\alpha}{2}}}{\alpha^{\frac{2\gamma + 1}{4}}}(1+\OO(n^{\nicefrac{-1}{2}})),\\
% L_n^{(\gamma)'}(-\alpha) & = \frac{n^{\frac{2\gamma-1}{4}}}{2\sqrt{\pi}}\left(\frac{1}{2} - \sqrt\frac{n}{\alpha}  + \frac{2\gamma+1}{4\alpha}\right)\frac{\E^{2\sqrt{\alpha n} - \frac{\alpha}{2}}}{\alpha^{\frac{2\gamma + 1}{4}}}(1+\OO(n^{\nicefrac{-1}{2}})),
% \end{align*} 
%as $n\to \infty$,
we readily compute by taking into account~\eqref{eq:uLagatt=0} that % Also use the second terms in the expansion of the Laguerre polynomials and Gautschi's inequality
 \begin{align*}
\E^{x_n} & = \frac{n!\Gamma(\gamma+1)}{\Gamma(n+\gamma)}\Big(L_{n}^{(\gamma)}(-\alpha)L_{n-1}^{(\gamma)'}(-\alpha) - L_{n}^{(\gamma)'}(-\alpha)L_{n-1}^{(\gamma)}(-\alpha)\Big) \\
%& = \sum_{k=0}^{n-1} P_k(0)^2 
%%%%%Caluclations%%%%%
%%& = \frac{n!}{\Gamma(n+\gamma)}\frac{n^{\gamma/2-1/4}}{2\sqrt{\pi}}\frac{(n-1)^{\gamma/2-1/4}}{2\sqrt{\pi}}\frac{\E^{\frac{-\alpha}{2}+2\sqrt{n\alpha}}}{(\alpha)^{\gamma/2+1/4}}\frac{\E^{\frac{-\alpha}{2}+2\sqrt{(n-1)\alpha}}}{(\alpha)^{\gamma/2+1/4}}\left(\frac{\sqrt{n}}{\sqrt{\alpha}} - \frac{\sqrt{n-1}}{\sqrt{\alpha}} \right)\\
%& = \frac{n!}{\Gamma(n+\gamma)}\frac{n^{\gamma-1/2}}{4\pi\alpha^{\gamma+1}}\frac{\E^{-\alpha+2\sqrt{n\alpha}+2\sqrt{(n-1)\alpha}}}{2\sqrt{n}}\\
&  = \frac{\Gamma(\gamma+1)}{8\pi\alpha^{\gamma+1}\E^\alpha} \E^{4\sqrt{\alpha n}}(1+\OO(n^{\nicefrac{-1}{2}})),
\end{align*}
which yields the claimed asymptotics for $x_n$, and that 
\begin{align*}
\omega_n  & %= -\frac{\E^{x_n}}{b_{n-1}p_{n-1}(0)p_{n}(0)} 
%& = \frac{p_{n-1}'(-\alpha)}{p_{n-1}(-\alpha)} - \frac{p_n'(-\alpha)}{p_n(-\alpha)} 
 = \frac{L_{n-1}^{(\gamma)'}(-\alpha) }{L_{n-1}^{(\gamma)}(-\alpha) } - \frac{L_{n}^{(\gamma)'}(-\alpha) }{L_{n}^{(\gamma)}(-\alpha) }
 =  \frac{1}{2\sqrt{\alpha n}} +\OO(n^{-1}).\qedhere
 \end{align*}
 \end{proof}

It follows from~\cite[Theorem~13.5]{ISPforCH} that $u-(4\alpha)^{-1}\in H^1[0,\infty)$, that is,
\begin{align}\label{eq:H1killipsimon}
\int_0^\infty \left(u(x)-\frac{1}{4\alpha}\right)^2 + u'(x)^2\, dx <\infty.
\end{align}
However, it is also possible to show that asymptotics like~\eqref{eqnMPLagCoeff02} imply~\eqref{eq:H1killipsimon}.

\begin{lemma}\label{lem:killipsimon}
 If $u$ has the form~\eqref{eqnstrinfpureU} with positions and heights satisfying
  \begin{align} \label{eqnMPLagCoeff03}
   x_n & = 4\sqrt{\alpha n} + c+ \OO(n^{-\eps}), & \omega_n & = \frac{1}{2\sqrt{\alpha n}}(1+ \OO(n^{-\eps})), 
   \end{align} 
  for some constants $c\in\R$ and $\eps>\nicefrac{1}{3}$, then $u-(4\alpha)^{-1}\in H^1[0,\infty)$.
\end{lemma}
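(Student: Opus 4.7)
The plan is to split $u$ into its right- and left-moving pieces and reduce the claim to pointwise and $L^2$ control of those pieces on each peakon interval $[x_N,x_{N+1})$. For $x\in(x_N,x_{N+1})$ one checks directly from the defining series that
\begin{align*}
  (u-u')(x) &= \E^{-x}\sum_{n\le N}\omega_n\E^{x_n} =: \E^{-x}S_N,\\
  (u+u')(x) &= \E^{x}\sum_{n>N}\omega_n\E^{-x_n} =: \E^{x}T_N,
\end{align*}
so the assertion $u-(4\alpha)^{-1}\in H^1[0,\infty)$ is equivalent to $(u\pm u')-(4\alpha)^{-1}\in L^2[0,\infty)$. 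It thus suffices to control the $L^2$ mass of those two quantities on each interval $[x_N,x_{N+1})$ and sum over $N$; any initial segment is harmless since $u$ is a bounded continuous function on bounded sets.

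The heart of the argument is to establish, with $\eta:=\min(\eps,\nicefrac{1}{2})$, the asymptotic expansions
\begin{align*}
S_N=\frac{\E^{x_N}}{4\alpha}\bigl(1+\OO(N^{-\eta})\bigr),\qquad T_N=\frac{\E^{-x_{N+1}}}{4\alpha}\bigl(1+\OO(N^{-\eta})\bigr).
\end{align*}
To prove this I would first use the hypotheses to write $\omega_n\E^{\pm x_n}=a_n^\pm(1+\OO(n^{-\eps}))$ with the explicit model coefficients $a_n^\pm:=\E^{\pm(c+4\sqrt{\alpha n})}/(2\sqrt{\alpha n})$. A direct computation using the substitution $s=4\sqrt{\alpha y}$ shows that $y\mapsto a_y^\pm$ admits the antiderivative $\pm\E^{\pm(c+4\sqrt{\alpha y})}/(4\alpha)$, so that an Euler--Maclaurin comparison delivers $\sum_{n\le N}a_n^+=\E^{c+4\sqrt{\alpha N}}/(4\alpha)+\OO(a_N^+)$ with relative discretization error of size $\OO(N^{-1/2})$, and analogously for the tail sum $\sum_{n>N}a_n^-$. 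Combining this with the relative perturbation error $\OO(N^{-\eps})$ and the identity $\E^{c+4\sqrt{\alpha N}}=\E^{x_N}(1+\OO(N^{-\eps}))$ then yields the displayed formulas.

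From these asymptotics and the elementary inequalities $|\E^{x_N-x}-1|\le x-x_N$ and $|\E^{x-x_{N+1}}-1|\le x_{N+1}-x$ on $(x_N,x_{N+1})$, one obtains the pointwise bound
\begin{align*}
\bigl|(u\pm u')(x)-\tfrac{1}{4\alpha}\bigr|\le \frac{\Delta x_N}{4\alpha}+C N^{-\eta}\qquad\text{for }x\in[x_N,x_{N+1}),
\end{align*}
where $\Delta x_N:=x_{N+1}-x_N$. The hypotheses further imply $\Delta x_N\le 4\sqrt{\alpha}(\sqrt{N+1}-\sqrt{N})+2CN^{-\eps}=\OO(N^{-\eta})$, so
\begin{align*}
\int_{x_N}^{x_{N+1}} \bigl|(u\pm u')(x)-\tfrac{1}{4\alpha}\bigr|^2 dx \le C\bigl((\Delta x_N)^3+\Delta x_N\,N^{-2\eta}\bigr)\le C'N^{-3\eta}.
\end{align*}
The hypothesis $\eps>\nicefrac{1}{3}$ is precisely what ensures $\eta>\nicefrac{1}{3}$, whence $\sum_N N^{-3\eta}<\infty$, and the proof is complete.

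The main obstacle is the asymptotic analysis of $S_N$ and $T_N$: one has to keep track of both the intrinsic discretization remainder of size $\OO(N^{-1/2})$ produced by the Euler--Maclaurin comparison with $\int a_y^\pm\,dy$ and the perturbation remainder of size $\OO(N^{-\eps})$ coming from the approximation of $\omega_n\E^{\pm x_n}$ by $a_n^\pm$, so that the combined error is indeed $\OO(N^{-\eta})$. Once this is settled, the exponent $3\eta$ appearing in the $L^2$ estimate (coming from the product $\Delta x_N\cdot N^{-2\eta}$ together with the bound $\Delta x_N=\OO(N^{-\eta})$) singles out the sharp threshold $\eps>\nicefrac{1}{3}$.
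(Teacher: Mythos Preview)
Your argument is correct and complete; the asymptotic analysis of $S_N$ and $T_N$ via the explicit antiderivative of $a_y^\pm$ does give the claimed $\OO(N^{-\eta})$ relative error, and the interval-by-interval $L^2$ estimate then produces the summable bound $\OO(N^{-3\eta})$ exactly as you say.

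The paper proceeds differently: instead of computing the $L^2$ integrals by hand, it invokes a discrete characterization from~\cite[Corollary~12.5]{ISPforCH} stating that for such infinite multi-peakon profiles, $u-(4\alpha)^{-1}\in H^1[0,\infty)$ is equivalent to the convergence of
\[
\sum_{n}\bigl(1-\E^{x_{n-1}-x_n}\bigr)^3+\sum_{n}\bigl(1-\E^{x_{n-1}-x_n}\bigr)\Bigl(\E^{x_n}\sum_{k\ge n}\omega_k\E^{-x_k}-\tfrac{1}{4\alpha}\Bigr)^2,
\]
and then verifies these two series directly from the hypotheses. In effect the paper only needs the tail-sum estimate (your $T_N$), since the cited criterion already encodes the structure of $u$ on the intervals; your approach is more self-contained in that it avoids this external reference, at the price of also treating the partial sums $S_N$ and carrying out the $L^2$ integration explicitly. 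Both routes rest on the same integral comparison for $\sum\omega_k\E^{\pm x_k}$ and arrive at the same threshold $\eps>\nicefrac{1}{3}$ via the cube $(\Delta x_N)^3\sim N^{-3\eta}$.
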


\begin{proof}
By~\cite[Corollary~12.5]{ISPforCH} and also in view of~\eqref{eq:GISviaMP}, a function $u$ given by~\eqref{eqnstrinfpureU} satisfies~\eqref{eq:H1killipsimon} if and only if
 \begin{align}\label{eq:KSstrThA}
 \sum_{n\in\N} \big(1 - \E^{x_{n-1}-x_n}\big)^3 + \sum_{n\in\N} \big(1 - \E^{x_{n-1}-x_n}\big)\biggl(\E^{x_n} \sum_{k\ge n} \frac{\omega_k}{\E^{x_k}} - \frac{1}{4\alpha}\biggr)^2 <\infty.
 \end{align}
Taking into account that $x_n > x_{n-1}$ for all $n$, it is not difficult to see that the first series converges exactly when 
 \begin{align*}
 \sum_{n\in\N} (x_n - x_{n-1})^3  <\infty.
 \end{align*}
%%%Use the trivial bound $x/2 < 1- \E^{-x} < x$ for all $x\in (0,\log 2)$. 
The latter clearly holds if $x_n$ satisfy~\eqref{eqnMPLagCoeff03} since in this case $x_n - x_{n-1} = \OO(n^{-\eps})$ as $n\to \infty$.
Next, taking into account that
\[
\int_{n}^\infty \frac{dx}{\sqrt{ x}\E^{4\sqrt{\alpha x}}} \le \sum_{k\ge n} \frac{1}{\sqrt{k}\E^{4\sqrt{\alpha k}}} \le \int_{n-1}^\infty \frac{dx}{\sqrt{ x}\E^{4\sqrt{\alpha x}}} % = \frac{1}{2\sqrt{\alpha}} \E^{-4\sqrt{\alpha(n-1)}}
\]
holds true for all $n\in\N$, we get from~\eqref{eqnMPLagCoeff03} the bound 
\[
 \OO(n^{-\eps})\le \E^{x_n} \sum_{k\ge n} \frac{\omega_k}{\E^{x_k}} - \frac{1}{4\alpha} \le \frac{1}{4\alpha}\biggl(\E^{\frac{2\sqrt{\alpha}}{\sqrt{n-1}}}(1+\OO(n^{-\eps})) - 1\biggr).
\]
 This immediately implies convergence of the second series in~\eqref{eq:KSstrThA}.
\end{proof}

\begin{remark}
Lemma~\ref{lem:killipsimon} implies that the spectral measure corresponding to an infinite multi-peakon profile with asymptotics~\eqref{eqnMPLagCoeff03} satisfies certain Szeg\H{o}-type conditions (see~\cite[Section~13.2]{ISPforCH}). 
However, its more refined structure (like the presence of a singular continuous spectrum) under the asymptotics~\eqref{eqnMPLagCoeff03} seems to be an interesting problem, even in the special case when $u$ is given explicitly by
\begin{align}
u(x) = \sum_{n\in\N} \frac{1}{\sqrt{n}}\E^{-|x-\sqrt{n}|}.
\end{align}
\end{remark}

%%%%%%%%%%%%%%%%%%%
\subsection{The global conservative solution and long-time asymptotics}\label{ss:LaguerreCH}
%%%%%%%%%%%%%%%%%%%

Consider now the solution 
\begin{align}\label{eq:LagFlow}
  (u(\ledot,t),\mu(\ledot,t)) = \Phi^t(u_0,\mu_0),
\end{align}
where the initial data $(u_0,\mu_0)$ is the pair in $\CHdom$ corresponding to the Laguerre measure $\rho_{\gamma,\alpha}$ defined in~\eqref{eq:LaguerreMeasure}. 
Since the moment problem for the measure $\rho_{\gamma,\alpha}$ is determinate, % (this well known fact follows, in particular, from the fact that $b_n = n(1+\oo(1))$ as $n\to \infty$ together with the Carleman test~\cite{akh}), 
 we infer from Theorem~\ref{thm:MPevolt} and Corollary~\ref{corIP+} that this solution has the form
    \begin{align}\label{eq:LagFlowForm}
      \omega(\ledot,t) & =  \sum_{n=1}^{\infty} \omega_n(t) \delta_{x_n(t)}, & \dip(\ledot,t) & = 0,
    \end{align}
 with coefficients given by~\eqref{eq:3.9positive} and such that the increasing sequence of points $x_1(t),x_2(t),\ldots$ accumulates at $\infty$ for each fixed $t\in\R$.  
 We are next going to use these explicit formulas to derive long-time asymptotics for the peakon parameters of this solution. 
 % Moreover, one has that $u(\ledot,t) - \frac{1}{4\alpha}\in H^1[0,\infty)$ in view of~\cite[Theorem~13.5]{ISPforCH}. 

\begin{theorem}\label{thm:LaguerreMPs}
For every fixed $n\in\N$ the following asymptotics hold:
\begin{enumerate}[label=(\roman*), ref=(\roman*), leftmargin=*, widest=ii]
\item As $t\rightarrow\infty$ one has 
\begin{align}
\begin{split}
 \omega_n(t) & = \sqrt\frac{2}{t} + \oo(t^{\nicefrac{-1}{2}}), \\
  x_n(t) & = \sqrt {2t}+(n-\gamma-\nicefrac{3}{2})\log\sqrt{2 t}+\log\frac{2^{\gamma +1}\Gamma(\gamma+1)}{\sqrt{2\pi}\E^{\alpha}(n-1)!} +\oo(1).
 \end{split}
\end{align}
\item As $t\rightarrow-\infty$ one has 
\begin{align}
\begin{split}
\omega_n(t)& = \frac{1}{\alpha} + \oo(1), \\
 x_n(t) & = \frac{t}{2\alpha}+(2n-1+\gamma)\log \frac{|t|}{2\alpha} - \log\frac{\alpha^{1+\gamma}\Gamma(n+\gamma)(n-1)!}{\Gamma(\gamma+1)} + \oo(1).
\end{split}
\end{align}
\end{enumerate}
\end{theorem}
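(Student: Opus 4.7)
The plan is to extract everything from the integral representation~\eqref{eq:HankelviaIntegral} of the Hankel determinants $\Delta_{l,k}(t)$ associated with the time-evolved Laguerre measure
\[
\rho(d\lambda;t) = \frac{\id_{(\alpha,\infty)}(\lambda)(\lambda-\alpha)^\gamma}{\Gamma(\gamma+1)}\,\E^{-(\lambda-\alpha) - \frac{t}{2\lambda}}\,d\lambda.
\]
Once the leading-order asymptotics of $\Delta_{0,k}(t)$, $\Delta_{1,k}(t)$ and $\Delta_{2,k}(t)$ are in hand, substituting them into the explicit formulas in~\eqref{eq:3.9positive} (valid here since $\rho(\ledot;t)\in\SM_0^+$ for all $t$) will yield both expansions in a completely parallel way.

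For $t\to+\infty$, the exponent $f_t(\lambda) = \alpha - \lambda - \frac{t}{2\lambda}$ has a unique interior saddle point at $\lambda^\star(t) = \sqrt{t/2}$, with $f_t(\lambda^\star) = \alpha - \sqrt{2t}$ and $|f_t''(\lambda^\star)| = 2\sqrt{2}\,t^{-\nicefrac 12}$. I apply the multi-dimensional Laplace method to each $k$-fold integral by substituting $\lambda_j = \lambda^\star + (t/8)^{\nicefrac 14}\mu_j$. Under this scaling the Vandermonde squared becomes $(t/8)^{k(k-1)/2}\wand_k(\boldsymbol\mu)$ and the residual integral is the Mehta/Hermite $\beta=2$ evaluation
\[
\int_{\R^k}\wand_k(\boldsymbol\mu)\,\E^{-\sum_j \mu_j^2/2}\,d\boldsymbol\mu = (2\pi)^{k/2}\prod_{j=1}^k j!\,.
\]
This produces $\Delta_{l,k}(t) \sim A_k\,(t/2)^{k^2/4 + k(\gamma+l)/2}\,\E^{-k\sqrt{2t}}$ with an explicit constant $A_k$ that is independent of $l$. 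Because all $l$-independent constants cancel in the ratio defining $\omega_n(t)$, the prefactor becomes exactly $(t/2)^{-\nicefrac12}=\sqrt{2/t}$; taking the corresponding ratio for $x_n$, the $\E^{-k\sqrt{2t}}$ factors combine to give the leading $\sqrt{2t}$ and the coefficient of $\log\sqrt{2t}$ follows from the simple index arithmetic with the power $k^2/4 + k(\gamma+l)/2$.

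For $t\to-\infty$ the factor $\E^{-t/(2\lambda)}$ concentrates the mass near the left endpoint $\lambda=\alpha$. I rescale $\lambda_j = \alpha + \alpha^2 s_j/|t|$, so that
\[
-\frac{t}{2\lambda_j} = \frac{|t|}{2\alpha} - \frac{s_j}{2} + \OO(|t|^{-1}),
\]
while $(\lambda_j-\alpha)^\gamma = (\alpha^2/|t|)^\gamma s_j^\gamma$, $\E^{-(\lambda_j-\alpha)}\to 1$, and the Vandermonde contributes $(\alpha^2/|t|)^{k(k-1)}\wand_k(\mathbf s)$. The limiting integral is a Laguerre Selberg evaluation,
\[
\int_{(0,\infty)^k}\prod_j s_j^\gamma\,\wand_k(\mathbf s)\,\E^{-\sum_j s_j/2}\,d\mathbf s = 2^{k(k+\gamma)}\prod_{j=1}^k\Gamma(\gamma+j)\,j!\,,
\]
leading to
\[
\Delta_{l,k}(t) \sim \frac{(2\alpha^2)^{k(k+\gamma)}\alpha^{kl}}{|t|^{k(k+\gamma)}\,k!\,\Gamma(\gamma+1)^k}\,\E^{k|t|/(2\alpha)}\prod_{j=1}^k\Gamma(\gamma+j)\,j!\,.
\]
Substituting into~\eqref{eq:3.9positive} and simplifying, the $|t|^{k(k+\gamma)}$-factors combine to $|t|^{2n-1+\gamma}$, the $\E^{k|t|/(2\alpha)}$-factors collapse to the leading $t/(2\alpha)$, the $\alpha^{kl}$-factors yield the $(2\alpha)^{-(2n-1+\gamma)}$ and $\alpha^{-(\gamma+1)}$ contributions, while the product of $\Gamma(\gamma+j)j!$ telescopes into the constant $\alpha^{-(1+\gamma)}\Gamma(n+\gamma)(n-1)!/\Gamma(\gamma+1)$; the corresponding ratio for $\omega_n$ reduces to $1/\alpha$ after all $n$-dependent factors cancel.

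The main obstacle is the rigorous justification of these Laplace-type expansions uniformly in the Vandermonde weight, which vanishes on the diagonal and has polynomial growth on the tails. I would handle this by splitting each $k$-fold integral into a neighborhood of the concentration set (a shrinking ball around $\lambda^\star(t)$ in the first case, or a shrinking strip $\{\lambda_j - \alpha \le \delta\}$ in the second), where the Taylor expansion of $f_t$ and the polynomial factors are uniformly controlled, and a remainder which is bounded exponentially by the uniform negativity of $f_t$ off the saddle (respectively, by $\E^{-c|t|}$ coming from the rapid decay of $\E^{-(\lambda-\alpha)-t/(2\lambda)}$ away from $\alpha$). A secondary bookkeeping issue is to track the multiplicative constants carefully so that the identification with the Mehta and Laguerre Selberg integrals produces precisely the stated constants; however, the cancellations dictated by~\eqref{eq:3.9positive} guarantee a substantial simplification at the end.
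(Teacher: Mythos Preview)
Your proposal is correct and follows essentially the same route as the paper: the paper also derives the asymptotics of the Hankel determinants $\Delta_{l,k}(t)$ via Laplace's method applied to the multiple integral representation~\eqref{eq:HankelviaIntegral}, localizing at $\lambda^\star=\sqrt{t/2}$ for $t\to\infty$ (reducing to the Gaussian/Mehta integral) and at the left edge $\alpha$ for $t\to-\infty$ (reducing to the Laguerre Selberg integral), and then substitutes into~\eqref{eq:3.9positive}. The only cosmetic difference is that the paper first rescales $\mlam\to\sqrt{t/2}\,\mlam$ so the saddle sits at a fixed point before applying Laplace, whereas you work directly at the moving saddle; the resulting constants and powers agree, and your observation that the prefactor $A_k$ is independent of $l$ is exactly what makes the $\omega_n$ computation collapse to $\sqrt{2/t}$.
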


\begin{figure}[h]
  \centering
  % Requires \usepackage{graphicx}
  \includegraphics[width=1\textwidth]{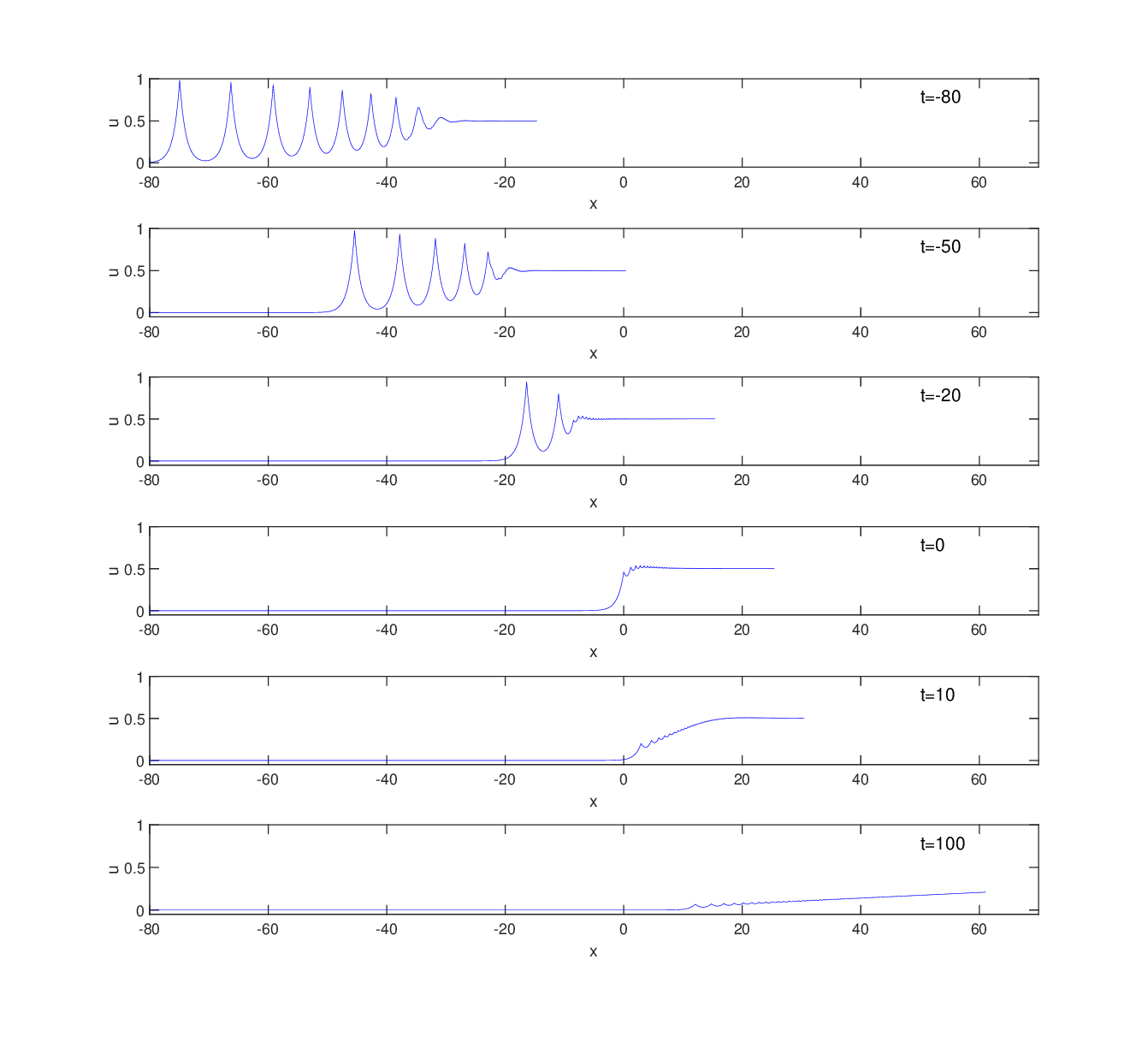}
  \caption{{\small Laguerre infinite peakons with  $\gamma=0$ and $\alpha=\nicefrac{1}{2}$:\\ Snapshots of $u(x,t)$ in the interval $[-80,x_{101}]$.
  % at times $t = -80$, $-50$, $-20$, $0$, $10$, $100$.
  }}\label{fig:lague7}
\end{figure}

\begin{remark}\label{rem:LagPeakons}
Despite the fact that every peakon has asymptotically the same height and moves at asymptotically the same speed as $t\to -\infty$, the distance between peaks grows logarithmically as $|t|\to \infty$. More precisely, for $n\in\N$ one has  
\begin{align}
%x_n(t) & \sim \sqrt{2t}, &   
|x_{n+1}(t) - x_n(t)|  = \log\sqrt{2t} - \log n + \oo(1),
\end{align}
as $t\to \infty$, and 
\begin{align}
%x_n(t) & \sim \frac{t}{2\alpha}, &  
|x_{n+1}(t)-x_n(t)|  = \log (t^2)  - \log (4\alpha^2n(n+\gamma)) + \oo(1),
\end{align}
as $t\to-\infty$. 

Let us also mention that the graphs of $u$ with the parameters $\gamma=0$ and $\alpha=\nicefrac{1}{2}$ are presented in Figure~\ref{fig:lague7}; see also~\cite{animations} for animations. We refer to Remark \ref{rem:num} and Remark \ref{rem:graph} for some key points regarding the corresponding numerics.
 \end{remark}

Our proof of this long-time behavior will be based on an analysis of the Hankel determinants $\Delta_{l,k}(t)$ introduced in Section~\ref{secMoment} with the moments explicitly given by
\begin{align}\label{eq:momentLag-t}
s_k(t)=\int_\R \lambda^k \E^{-\frac{t}{2\lambda}} \rho_{\gamma,\alpha}(d\lambda) 
= \frac{1}{\Gamma(\gamma+1)} \int_\alpha^\infty \lambda^k (\lambda-\alpha)^{\gamma} \E^{\alpha-\lambda-\frac{t}{2\lambda}} d\lambda,
\end{align}
for all $k\in\Z$. 
Since the measure $\rho_{\gamma,\alpha}$ is supported on $(0,\infty)$, the Hankel determinants $\Delta_{1,k}(t)$ are always positive  (see Remark~\ref{rem:formulas3.9}~\ref{rem:3.3b}) and hence $\kappa(n) = n-1$ for all $n\in\N$ and, moreover, 
\begin{align}\label{sol_xw}
x_n(t) & =\log \frac{\Delta_{2,n-1}(t)}{\Delta_{0,n}(t)}, & \omega_n(t) & =\frac{\Delta_{0,n}(t)\Delta_{2,n-1}(t)}{\Delta_{1,n}(t)\Delta_{1,n-1}(t)}.
\end{align}

\begin{remark} \label{rem:LaguerreRMT} 
Hankel determinants and random matrix models with the deformed measure
\begin{align}\label{Lag_meas_t}
\begin{split}
 \rho_{\gamma,\alpha}(d\lambda;t) &= \E^{-\frac{t}{2\lambda}}\rho_{\gamma,\alpha}(d\lambda) = \frac{1}{\Gamma(\gamma+1)}\id_{(\alpha,\infty)}(\lambda)(\lambda-\alpha)^{\gamma}\E^{\alpha-\lambda-\frac{t}{2\lambda}} d\lambda
\end{split}
\end{align}
 have attracted lots of interest recently. The corresponding problems include linear statistics of random variables \cite{chenits10}, eigenvalues of the Wigner--Smith time-delay matrix in the context of quantum
transport and electrical characteristics of chaotic cavities \cite{brou97,Mezzadri13,texier13}, and bosonic replica field theories \cite{osipov07}. Note that particular interest is given to the case of $\alpha=0$, which is referred to as the singularly perturbed Laguerre weight.  For studies of large $n$ and small $t$ asymptotics, see~\cite{Mezzadri13,lyu19,xu14cmp,xu15jat} for example.  
 \end{remark}

\begin{lemma}\label{lem:HankelLaguerre} 
For all $k$, $n\in\N\cup\{0\}$ the following asymptotics hold:
\begin{enumerate}[label=(\roman*), ref=(\roman*), leftmargin=*, widest=ii]
\item\label{itemHankLag+} As $t\rightarrow\infty$ one has 
\begin{align}\label{eq:DeltaLagAt+Infty}
\Delta_{k,n}(t) & \sim  \E^{-n\sqrt{2t}} \sqrt{2t}^{\frac{n(n+ 2k+2\gamma)}{2}} d_{k,n}^+, & 
d_{k,n}^+ & = \frac{(2\pi)^{\frac{n}{2}}\E^{\alpha n}}{2^{n(n+k+\gamma)}}\prod_{j=0}^{n-1} \frac{j!}{\Gamma(\gamma+1)}.
\end{align}
\item\label{itemHankLag-}
 As $t\rightarrow-\infty$ one has 
 \begin{align}\label{eq:DeltaLagAt-Infty}
\Delta_{k,n}(t) & \sim  \E^{n\frac{|t|}{2\alpha}} \biggl|\frac{t}{2\alpha}\biggr|^{-n(n+\gamma)} d_{k,n}^-, & d_{k,n}^- &= \alpha^{n(n+k+\gamma)} \prod_{j=0}^{n-1}(j!)^2\binom{\gamma+j}{j}.
%\frac{j!\,\Gamma(\gamma+j+1)}{\Gamma(\gamma+1)}.
\end{align}
\end{enumerate}
\end{lemma}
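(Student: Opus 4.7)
The plan is to start from the multiple integral representation~\eqref{eq:HankelviaIntegral}, which for the deformed Laguerre weight~\eqref{Lag_meas_t} reads
\begin{align*}
\Delta_{k,n}(t) = \frac{1}{n!\,\Gamma(\gamma+1)^n}\int_{(\alpha,\infty)^n}\prod_{i=1}^n \lambda_i^k (\lambda_i-\alpha)^\gamma\,\wand_n(\mlam)\prod_{i=1}^n e^{\alpha-\lambda_i-\frac{t}{2\lambda_i}}\,d\lambda_i,
\end{align*}
and apply Laplace's method to the phase $\phi_t(\lambda) = -\lambda - t/(2\lambda)$, treating the two regimes $t\to+\infty$ and $t\to-\infty$ separately. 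In both cases the remaining multi-dimensional integral will reduce to a classical Selberg-type (Mehta) integral whose exact evaluation supplies the constant $d_{k,n}^\pm$.

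For part~\ref{itemHankLag+}, $\phi_t'(\lambda) = -1 + t/(2\lambda^2)$ vanishes at $\lambda_t^\ast = \sqrt{t/2}$, and $\phi_t''(\lambda_t^\ast) = -2\sqrt{2/t} < 0$, so $\phi_t$ has a unique interior maximum at $\lambda_t^\ast$ with $\phi_t(\lambda_t^\ast) = -\sqrt{2t}$. Introduce the natural saddle-point rescaling $\lambda_i = \lambda_t^\ast + (t/2)^{1/4}y_i$. Then Taylor expansion yields $\alpha - \lambda_i - t/(2\lambda_i) = \alpha - \sqrt{2t} - y_i^2 + O(y_i^3\,t^{-1/4})$, while $\prod_i \lambda_i^k(\lambda_i-\alpha)^\gamma = (t/2)^{n(k+\gamma)/2}(1+\oo(1))$, the Vandermonde contributes $(t/2)^{n(n-1)/4}$, and the Jacobian contributes $(t/2)^{n/4}$ in total. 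Dominated convergence then reduces matters to the Hermite (GUE) Selberg integral
\begin{align*}
\frac{1}{n!}\int_{\R^n}\wand_n(\mathbf{y})\prod_{i=1}^n e^{-y_i^2}\,dy_i = \frac{\pi^{n/2}}{2^{n(n-1)/2}}\prod_{j=0}^{n-1}j!.
\end{align*}
Rewriting $(t/2)^{n(n+2k+2\gamma)/4} = \sqrt{2t}^{n(n+2k+2\gamma)/2}\cdot 2^{-n(n+2k+2\gamma)/2}$ and combining powers of $2$ via $2^{n/2}/2^{n(n+k+\gamma)} = 2^{-n(2n-1+2k+2\gamma)/2}$ gives~\eqref{eq:DeltaLagAt+Infty}.

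For part~\ref{itemHankLag-} the situation is qualitatively different: now $\phi_t'(\lambda) = -1 - |t|/(2\lambda^2) < 0$, so $\phi_t$ is strictly decreasing on $(0,\infty)$ and the maximum of $\alpha + \phi_t$ on $[\alpha,\infty)$ is attained at the endpoint $\lambda = \alpha$, with value $|t|/(2\alpha)$. The appropriate endpoint rescaling is $\lambda_i = \alpha + 2\alpha^2 v_i/|t|$, $v_i\in (0,\infty)$, for which
\begin{align*}
\alpha - \lambda_i - \frac{t}{2\lambda_i} = \frac{|t|}{2\alpha} - v_i + O(v_i^2/|t|),
\end{align*}
together with $\prod_i\lambda_i^k = \alpha^{nk}(1+\oo(1))$, $\prod_i(\lambda_i-\alpha)^\gamma = (2\alpha^2/|t|)^{n\gamma}\prod_i v_i^\gamma$, Vandermonde $(2\alpha^2/|t|)^{n(n-1)}\wand_n(\mathbf{v})$, and Jacobian $(2\alpha^2/|t|)^n$. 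Dominated convergence reduces the problem to the Laguerre (LUE) Selberg integral
\begin{align*}
\frac{1}{n!}\int_{(0,\infty)^n}\wand_n(\mathbf{v})\prod_{i=1}^n v_i^\gamma e^{-v_i}\,dv_i = \prod_{j=0}^{n-1}j!\,\Gamma(j+\gamma+1).
\end{align*}
Using $\Gamma(j+\gamma+1) = \Gamma(\gamma+1)\binom{j+\gamma}{j}j!$ to cancel $\Gamma(\gamma+1)^n$, and factoring $(2\alpha^2/|t|)^{n(n+\gamma)} = \alpha^{n(n+\gamma)}|t/(2\alpha)|^{-n(n+\gamma)}$, produces~\eqref{eq:DeltaLagAt-Infty}.

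The main technical obstacle in both cases is providing a genuinely uniform bound to justify dominated convergence, rather than the formal leading-order computation sketched above. For $t\to+\infty$ one needs to control the rescaled integrand for $|y_i|$ not small, where the cubic error in the expansion of $\phi_t$ is no longer negligible; however, the uniform strict concavity of $\phi_t$ on any bounded neighbourhood of $\lambda_t^\ast$, combined with the crude bound $\phi_t(\lambda) \le -\lambda/2$ valid for $\lambda \ge \lambda_t^\ast\,2^{1/4}$ and $t$ large, together with the matching bound $\phi_t(\lambda)\le -\lambda_t^\ast$ on $[\alpha,\lambda_t^\ast/2]$, yields a Gaussian-plus-exponential envelope in $\mathbf{y}$ that dominates the Vandermonde-weighted integrand uniformly in $t$. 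For $t\to-\infty$, the monotonicity of $\phi_t$ directly implies $e^{\alpha-\lambda_i - t/(2\lambda_i)} \le e^{|t|/(2\alpha)}e^{-cv_i}$ on the rescaled domain for some $c > 0$ and $|t|$ large, which comfortably dominates the polynomial-in-$\mathbf{v}$ Vandermonde weight.
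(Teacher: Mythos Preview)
Your proposal is correct and follows essentially the same route as the paper: apply Laplace's method to the multiple integral representation~\eqref{eq:HankelviaIntegral}, reducing to the Gaussian Selberg integral for $t\to+\infty$ and to the Laguerre Selberg integral for $t\to-\infty$. The only cosmetic difference is that for $t\to+\infty$ the paper first extends the domain from $(\alpha,\infty)^n$ to $(0,\infty)^n$ (at exponentially small cost) and then performs the \emph{multiplicative} rescaling $\mlam\to\sqrt{t/2}\,\mlam$, which fixes the saddle at $\mlam_1=(1,\ldots,1)$, whereas you use the \emph{additive} shift-and-scale $\lambda_i=\sqrt{t/2}+(t/2)^{1/4}y_i$ around the moving saddle directly; both lead to the same Gaussian integral and the same constant.
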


\begin{proof}
Recall the multiple integral expression for the Hankel determinants (see Remark~\ref{remDeltas}) 
%\begin{align}\label{eq:HankelviaIntegral}
%\Delta_{k,n} = \frac{1}{n!}\int_{\R^n} \mlam^k \wand_n(\mlam)\,\rho(d\mlam), 
%\end{align}
%where $\mlam^k = \lambda_1^k\lambda_2^k\cdots\lambda_n^k$ and $\wand_n$ is the square of the Vandermonde determinant, $\wand_1\equiv 1$ and
%\[
%\wand_n(\mlam) = \prod_{1\leq i< j\leq n}|\lambda_i-\lambda_j|^2
%\] 
%for all $n\in\Z_{\ge 2}$. Therefore, 
\begin{align}\label{eq:HankLagIntegr}
\begin{split}
\Delta_{k,n}(t) & = \frac{1}{n!}\int_{\R_{>\alpha}^n} \mlam^k\wand_n(\mlam) \rho_{\gamma,\alpha}(d\mlam;t) \\
& = \frac{\E^{\alpha n}}{n!\, \Gamma(\gamma+1)^n} \int_{\R_{>\alpha}^n} \mlam^k {\rm R}_{\gamma}(\mlam-\mlam_\alpha)\wand_n(\mlam)  \E^{-\sum_{i=1}^n\lambda_i+\frac{t}{2\lambda_i}}\, d\mlam,
\end{split}\end{align}
where ${\rm R}_\gamma(\mlam) = \prod_{i=1}^n|\lambda_i|^\gamma$ and $\mlam_\alpha = (\alpha,\dots,\alpha)$.

%A simple change of variable $\mlam\to \frac{1}{\mlam}$ gives
%\begin{align*}
%\Delta_{k,n}(t) %&=\frac{1}{n!}\int_\alpha^\infty\cdots\int_\alpha^\infty\prod_{1\leq i< j\leq n}(\lambda_i-\lambda_j)^2\prod_{l=1}^n\E^{-\lambda_l-\frac{t}{2\lambda_l}+\alpha}\lambda_l^k\,d\lambda_l\\
%& = \frac{\E^{\alpha n}}{n!}\int_0^{\frac{1}{\alpha}}\cdots\int_0^{\frac{1}{\alpha}} \Phi(\mlam) \E^{-\frac{t}{2}\sum_{l=1}^n\lambda_l} \,d\mlam,
%\end{align*}
%where
%\begin{align*}
%\Phi(\mlam) =\mlam^{-(k+2)}\wand_n(1/\mlam)\, \E^{-\sum_{l=1}^n\frac{1}{\lambda_l}}. %\prod_{l=1}^n\frac{1}{\lambda_l^{k+2}}.
%\end{align*}
%One can apply the classical Watson lemma (see, e.g.,~\cite{mil}), however, the negative exponent in $\Phi$  kills all terms at zero in the expansion, meaning that $\Delta_{k,n}$ decays in $t$ faster than any polynomial. 

{\em \ref{itemHankLag+} Asymptotics as $t\rightarrow\infty$.} First, instead of~\eqref{eq:HankLagIntegr}, we may consider the integral 
\begin{align}\label{eq:HankLagIntegr02}
I_{k,n}(t):= \frac{\E^{\alpha n}}{n!\, \Gamma(\gamma+1)^n}\int_{\R_{>0}^n} \mlam^k {\rm R}_{\gamma}(\mlam-\mlam_\alpha)\wand_n(\mlam) \E^{-\sum_{i=1}^n\lambda_i+\frac{t}{2\lambda_i}}\, d\mlam,
\end{align}
by noticing that $\Delta_{k,n}(t) = I_{k,n}(t) + \OO(\E^{-{t}/{2\alpha}})$ as $t\to\infty$ since
\begin{align*}
\int_{\Omega_n(\alpha)} \mlam^k {\rm R}_{\gamma}(\mlam-\mlam_\alpha)\wand_n(\mlam)  \E^{-\sum_{i=1}^n\lambda_i+\frac{t}{2\lambda_i}}\, d\mlam \le C_n\E^{-\frac{t}{2\alpha}}
\end{align*}
for all $t>0$, where $\Omega_n(\alpha) = \R_{>0}^n\backslash \R_{>\alpha}^n$ and
\begin{align*}
C_n = \int_{\R_{>0}^n} \mlam^k {\rm R}_{\gamma}(\mlam-\mlam_\alpha) \wand_n(\mlam)  \E^{-\sum_{i=1}^n\lambda_i}\, d\mlam .
\end{align*}
After a change of variables $\mlam\rightarrow \sqrt{t/2}\,\mlam$ in~\eqref{eq:HankLagIntegr02}, we get
\begin{align*}
I_{k,n}(2t^2)
& = t^{n^2 +n(k+\gamma)}\frac{\E^{\alpha n}}{n!\, \Gamma(\gamma+1)^n} \int_{\R_{>0}^n} \mlam^k {\rm R}_{\gamma}(\mlam-\mlam_{\alpha/t}) \wand_n(\mlam)\E^{-tg(\mlam)}\,d\mlam,
\end{align*}
where the function $g\colon \R_{> 0}^n \to \R_{\ge 0}$ is given by
\begin{align*}
g(\mlam) = \sum_{i=1}^n\lambda_i+\frac{1}{\lambda_i},\qquad \mlam \in \R_{> 0}^n.
\end{align*}
Clearly, the function $g$ attains a global minimum on $\R_{>0}^n$ at $\mlam_1 = (1,\dots,1)$ with $g(\mlam_1) = 2n$. Moreover, it admits the asymptotic expansion
\begin{align*}
g(\mlam) = 2n+ |\mlam - \mlam_1|^2 + \OO(|\mlam - \mlam_1|^3)
\end{align*}
near $\mlam_1$ and hence applying Laplace's method (see~\cite[Section~4.6]{deBr} or~\cite{mil} for example), we  get
\begin{align}\label{eq:LagEstGauss01}
I_{k,n}(2t^2)
& \sim \E^{-2nt}t^{n^2 +n(k+\gamma)}\frac{\E^{\alpha n}}{n!\, \Gamma(\gamma+1)^n} \int_{\R_{>0}^n} \wand_n(\mlam) \E^{- t|\mlam - \mlam_1|^2}\,d\mlam
\end{align}
as $t\to\infty$. 
Indeed, let $B_\delta(\mlam_1)$ be the ball of radius $\delta \in (0,1)$ centred at $\mlam_1$. 
Then, assuming $\gamma>-1$ fixed and  $t>\alpha$ sufficiently large, one has   
\begin{align}\label{eq:LagEstGauss02}
\E^{2nt}\int_{\R_{>0}^n\setminus B_\delta(\mlam_1)}\mlam^{\mulk} {\rm R}_{\gamma}(\mlam-\mlam_{\alpha/t}) \wand_n(\mlam)\E^{-t g(\mlam)}\,d\mlam \lesssim_\delta \E^{-t\eta(\delta)}
\end{align}
for all $\eta(\delta) \in (0,\frac{\delta^2}{1+\delta})$ since 
$\inf_{\lambda\in \R_{>0}\setminus (1-\delta,1+\delta)} \lambda +\frac{1}{\lambda}  \ge 2+ \frac{\delta^2}{1+\delta}$. 
On the other hand, for any $\eps>0$, we can find $\delta>0$ such that $|g(\mlam) -2n - |\mlam - \mlam_1|^2| < \eps|\mlam - \mlam_1|^2$ for all $\mlam\in B_\delta(\mlam_1)$. Moreover, taking into account that
\begin{align}\label{eq:estRlamviadelta}
(1-2\delta)^{k+|\gamma|}\le \lambda^k|\lambda-\alpha/t|^\gamma \le \frac{(1+\delta)^k}{(1-2\delta)^{|\gamma|}}
\end{align}
whenever $|\lambda - 1|\le \delta$ and $t>\alpha/\delta$, we get that 
\begin{align*}
c_1^n\int_{B_\delta(\mlam_1)}\mlam^{\mulk} {\rm R}_{\gamma}(\mlam-\mlam_{\alpha/t}) & \wand_n(\mlam)\E^{-t|\mlam - \mlam_1|^2 (1+\eps)} \,d\mlam
   \le \E^{2nt} \int_{B_\delta(\mlam_1)} \wand_n(\mlam)\E^{-t g(\mlam)}\,d\mlam \\
&   \le c_2^n\int_{B_\delta(\mlam_1)}\mlam^{\mulk} {\rm R}_{\gamma}(\mlam-\mlam_{\alpha/t}) \wand_n(\mlam)\E^{-t|\mlam - \mlam_1|^2 (1-\eps)}\,d\mlam
\end{align*}
holds true for all sufficiently large $t>\alpha/\delta$. Here $c_1$ and $c_2$ are the constants on the left-hand side and, respectively, the right-hand side of~\eqref{eq:estRlamviadelta}. 
Taking into account the estimate~\eqref{eq:LagEstGauss02}, we can approximate all three integrals by replacing the domain of integration by $\R_{>0}^n$ and the order of approximation is exponentially small (actually, it is of order $\OO(\E^{-t\tilde{\eta}})$ with some $\tilde{\eta} = \tilde{\eta}(\delta)>0$). Therefore, sending $\eps$ to zero we end up with~\eqref{eq:LagEstGauss01}.
Finally, it is immediate to see that 
\begin{align*}
 \int_{\R^n\setminus\R_{>0}^n} \wand_n(\mlam) \E^{- t |\mlam - \mlam_1|^2} \,d\mlam \lesssim_n \E^{-t(1-\epsilon)}
\end{align*}
for all $t>0$ and any $\epsilon\in (0,1)$. 
%On the other hand, 
%\begin{align*}
% \int_\R\cdots\int_\R  \wand_n(\mlam) \E^{- t |\mlam - \mlam_1|^2} \prod_{i=1}^n |\lambda_l|^{k_i}\,d\mlam  
 %   & =  \int_\R\cdots\int_\R  \wand_n(\mlam) \E^{- t |\mlam|^2} \prod_{i=1}^n |\lambda_i+1|^{k_i}\,d\mlam \\
%    = t^{-\frac{n^2}{2}} &\int_\R\cdots\int_\R  \wand_n(\mlam) \E^{-|\mlam|^2} \prod_{i=1}^n |1+\lambda_i/\sqrt{t}|^{k_i}\,d\mlam\\
%    = t^{-\frac{n^2}{2}} &\int_\R\cdots\int_\R  \wand_n(\mlam) \E^{-|\mlam|^2} \,d\mlam\times (1+\OO(1/\sqrt{t})).
%\end{align*}
It remains to employ the known result for the Hankel determinant with the Gaussian weight (see~\cite[Equation~(17.6.7)]{mehta} for example)
\begin{equation*}
\int_{\R^n} \wand_n(\mlam) \E^{-t|\mlam|^2}d\mlam 
%= t^{-\frac{n^2}{2}}\int_{\R^n} \wand_n(\mlam) \E^{-|\mlam|^2}d\mlam %= \prod_{j=0}^{n-1}\frac{j! \sqrt \pi}{2^j} 
 =  t^{-\frac{n^2}{2}} \pi^{\frac{n}{2}} 2^{\frac{n(1-n)}{2}}\prod_{j=1}^{n}j!\, . 
\end{equation*}

%\begin{lemma}\label{lem:LagEstGauss}
%For every multi-index $\mulk\in \Z_{\ge 0}^n$, let 
%\begin{align}\label{eq:LagDetGauss}
%G_{\mulk}(t) = \int_{0}^\infty\cdots\int_{0}^\infty \mlam^{\mulk} \wand_n(\mlam)\E^{-t \sum_{i=1}^n\lambda_i+\frac{1}{\lambda_i}}\,d\mlam.
%\end{align}
%Then
%\begin{align}\label{eq:LagEstGauss}
%G_{\mulk}(t) &= \E^{-2nt} t^{-\frac{n^2}{2}} (g_{n}+\oo(1)), & g_n & =\frac{\pi^{\frac{n}{2}}}{2^{\frac{n(n-1)}{2}}}\prod_{j=0}^{n}j!,
%\end{align}
%as $t\to\infty$.
%\end{lemma}
%If $t>\alpha$, then 
%\[
%g(\mlam) > \frac{t}{\alpha}
%\]
%whenever $\mlam \in \R_{>0}^n\setminus [\nicefrac{\alpha}{t},\infty)^n$. Taking into account that
%\begin{align*}
%{\rm R}_{\gamma}(\mlam-\mlam_{\alpha/t}) \le {\rm R}_{\max(\gamma,0)}(\mlam)
%%\end{align*}
%whenever $\mlam\in [\nicefrac{\alpha}{t},\infty)^n$, we get the estimate
%\begin{align*}
%I_{k,n}(2t^2) \le \frac{\E^{\alpha n}}{n!}t^{n^2 +n(k+\gamma)}\left(G_{k+\max(\gamma,0)} + \OO(\E^{-t^2/\alpha})\right)
%\end{align*}
%for all $t>\alpha$, where $G_{\mulk}$ is defined by~\eqref{eq:LagDetGauss}. 

%%%%%%%%%%%%% 
{\em \ref{itemHankLag-}  Asymptotics as $t\rightarrow-\infty$.}
%%%%%%%%%%%%%
 Taking into account the sign change of the variable $t$, the main contribution to the asymptotics of the integral~\eqref{eq:HankLagIntegr} now arises from a neighborhood of $\lambda = \alpha$ since the function $\lambda\mapsto \frac{1}{\lambda}$ has its global maximum on $[\alpha,\infty)$ at the left endpoint. Indeed, one may write~\eqref{eq:HankLagIntegr} as 
 \begin{align*}
 \Delta_{k,n}(2t) &= \frac{\E^{\alpha n}}{n!\, \Gamma(\gamma+1)^n} \int_{\R_{>\alpha}^n} \mlam^k {\rm R}_{\gamma}(\mlam-\mlam_\alpha) \wand_n(\mlam)\E^{-\sum_{i=1}^n\lambda_i+\frac{t}{\lambda_i}}\,d\mlam\\ 
 &= \E^{\frac{n}{\alpha }|t|}\frac{\E^{\alpha n}}{n!\, \Gamma(\gamma+1)^n} \int_{\R_{>\alpha}^n} \Phi(\mlam)\E^{-|t|g(\mlam)}\,d\mlam,
  \end{align*}
 where the functions $\Phi$ and $g$ are given by 
  \begin{align*}
 \Phi(\mlam) & = \mlam^k {\rm R}_{\gamma}(\mlam-\mlam_\alpha) \wand_n(\mlam)\E^{-\sum_{i=1}^n {\lambda_i}}, & g(\mlam) & = \frac{n}{\alpha} - \sum_{i=1}^n \frac{1}{\lambda_i}.
  \end{align*}
  The function $g\colon [\alpha,\infty)^n \mapsto [0,\infty)$ attains its global minimum  at $\mlam_\alpha = (\alpha,\dots,\alpha)$ with $g(\mlam_\alpha) = 0$. Moreover, one has 
    \begin{align*}
  g(\mlam)  = \sum_{i=1}^n \frac{\lambda_i - \alpha}{\alpha^2} + \OO(|\mlam - \mlam_\alpha|^2)
  \end{align*}
  near $\mlam_\alpha$. Arguing again as in~\cite[Chapter~4]{deBr}, \cite{mil}, we conclude that 
   \begin{align*}
  \int_{\R_{>\alpha}^n} \Phi(\mlam)\E^{-|t|g(\mlam)}\,d\mlam \sim \int_{\R_{>\alpha}^n} \Phi(\mlam)\E^{-|t|\sum_{i=1}^n \frac{\lambda_i - \alpha}{\alpha^2}}\,d\mlam,
  \end{align*}
  and thus it remains to estimate the last integral. However, we have 
  \begin{align*}
  \int_{\R_{>\alpha}^n}  \Phi(\mlam) &\E^{-|t|\sum_{i=1}^n \frac{\lambda_i - \alpha}{\alpha^2}}\,d\mlam 
   =   \int_{\R_{>\alpha}^n} \mlam^k {\rm R}_{\gamma}(\mlam-\mlam_\alpha) \wand_n(\mlam)\E^{-\sum_{i=1}^n |t|\frac{\lambda_i-\alpha}{\alpha^2} + \lambda_i} \,d\mlam\\
  & =  \E^{-\alpha n} \int_{\R_{>0}^n} (\mlam+\mlam_\alpha)^k \mlam^\gamma \wand_n(\mlam)\E^{-\sum_{i=1}^n |t|\frac{\lambda_i}{\alpha^2} + \lambda_i} \,d\mlam\\ 
  & =  \E^{-\alpha n}\biggl(\frac{\alpha^2}{|t|}\biggr)^{n^2 + n\gamma}\int_{\R_{>0}^n} \biggl(\frac{\alpha^2}{|t|}\mlam+\mlam_\alpha\biggr)^k \mlam^\gamma \wand_n(\mlam)\E^{ -\sum_{i=1}^n \lambda_i + \frac{\alpha^2}{|t|}\lambda_i} \,d\mlam\\
  & \sim  |t|^{-n(n+\gamma)}\frac{\alpha^{2n^2+n(k+2\gamma)}}{\E^{\alpha n}}\int_{\R_{>0}^n} \mlam^\gamma \wand_n(\mlam)\E^{ -\sum_{i=1}^n \lambda_i} \,d\mlam.
  \end{align*}
  Here, in the third equality, we simply changed the variables $\mlam \to \frac{\alpha^2}{|t|}\mlam$ and the last equality follows from dominated convergence. 
It remains to use the formula %for the Hankel determinant with the Laguerre weight 
(see~\cite[Equation~(17.6.5)]{mehta} for example)
\begin{equation}\label{eq:LaguerreHankelEXPL}
\int_{\R_{>0}^n} \mlam^\gamma \wand_n(\mlam)\E^{-\sum_{i=1}^n\lambda_i}d\mlam= \prod_{j=1}^{n}j!\,\Gamma(\gamma+j).\qedhere
\end{equation}
\end{proof}

Now we are in position to prove the main result of this subsection.

\begin{proof}[Proof of Theorem~\ref{thm:LaguerreMPs}]
 As $t\to\infty$, we get from~\eqref{sol_xw} and~\eqref{eq:DeltaLagAt+Infty}  that 
\begin{align*}
\E^{x_n(t)} = \frac{\Delta_{2,n-1}(t)}{\Delta_{0,n}(t)} & =  \frac{\E^{-(n-1)\sqrt{2t}} \sqrt{2t}^{\frac{(n-1)^2}{2} +(n-1)(2+\gamma)}}{\E^{-n\sqrt{2t}} \sqrt{2t}^{\frac{n^2}{2} +n\gamma}} \frac{d_{2,n-1}^+}{d_{0,n}^+} (1+\oo(1)) \\
& = \E^{\sqrt{2t}} \sqrt{2t}^{\frac{2n-3-2\gamma}{2}} \frac{d_{2,n-1}^+}{d_{0,n}^+} (1 +\oo(1)) 
\end{align*}
as well as 
\begin{align*}
\omega_n(t) & =\frac{\Delta_{0,n}(t)\Delta_{2,n-1}(t)}{\Delta_{1,n}(t)\Delta_{1,n-1}(t)} %\\
%&  =  \frac{\E^{-\sqrt{2n^2t}} t^{\frac{n^2}{4}}\E^{-\sqrt{2(n-1)^2t}} t^{\frac{(n-1)^2}{4}+\frac{(n-1)2}{2}}}{\E^{-\sqrt{2n^2t}} t^{\frac{n^2}{4}+\frac{n}{2}}\E^{-\sqrt{2(n-1)^2t}} t^{\frac{(n-1)^2}{4}+\frac{n-1}{2}}}\left(\frac{d_{0,n}^+d_{2,n-1}^+}{d_{1,n}^+d_{1,n-1}^+}+\oo(1)\right)\\&  
 =  \frac{1}{\sqrt{2t}}\frac{d_{0,n}^+d_{2,n-1}^+}{d_{1,n}^+d_{1,n-1}^+} (1+\oo(1)).
\end{align*}
The asymptotics as $t\to-\infty$ follow in a similar way.
%
%Similarly, as $t\to-\infty$, we compute
%\begin{align*}
%x_n(t) &  = \log \frac{\E^{\frac{n-1}{2\alpha}|t|} |t|^{-(n-1)^2} (d_{2,n-1}^- +\oo(1))}{\E^{\frac{n}{2\alpha}|t|} |t|^{-n^2} (d_{0,n}^- +\oo(1))}\\
%& = \log \E^{\frac{-|t|}{2\alpha}} |t|^{2n-1} (\frac{d_{2,n-1}^-}{d_{0,n}^-} +\oo(1))\\
%&= \frac{t}{2\alpha} + (2n-1)\log(- t) + \log\frac{d_{2,n-1}^-}{d_{0,n}^-} + \oo(1),
%\end{align*}
%and 
%\begin{align*}
%\omega_n(t) 
%&  =  \frac{\E^{\frac{n}{2\alpha}|t|} |t|^{-n^2}\E^{\frac{n-1}{2\alpha}|t|} |t|^{-(n-1)^2}}{\E^{\frac{n}{2\alpha}|t|} |t|^{-n^2}\E^{\frac{n-1}{2\alpha}|t|} |t|^{-(n-1)^2}}\Big(\frac{d_{0,n}^-d_{2,n-1}^-}{d_{1,n}^- d_{1,n-1}^-}+\oo(1)\Big)\\
%&  =  \frac{d_{0,n}^-d_{2,n-1}^-}{d_{1,n}^- d_{1,n-1}^-}+\oo(1).
%\end{align*}
\end{proof}

%\begin{proof}[Proof of Corollary~\ref{cor:Lag-asymp-in-n}]
%Clearly, the claims follow from Theorem~\ref{thm:LaguerreMPs} and we only need to mentioned that
%\begin{align*}
% x_{n+1}-x_{n} & = \log\frac{\Delta_{0,n}\Delta_{2,n}}{\Delta_{0,n+1}\Delta_{2,n-1}}, 
%\end{align*}
%and then to use the asymptotics for Hankel determinants from Lemma~\ref{lem:HankelLaguerre}.
%\end{proof}

\begin{remark}\label{rem:longtimeU}
 Since the function $u$ can be recovered in terms of the Hankel determinants as well (recall Remark~\ref{rem:UatInfty}), one can also use Lemma~\ref{lem:HankelLaguerre} to obtain long-time asymptotics for $u(x,t)$ in regions with $x\leq x_n(t)$ for a fixed $n\in\N$.   
 \end{remark}

%%%%%%%%%%%%%%%%%%
\subsection{Perturbed Laguerre weights}\label{ss:LaguerrePrtb}
%%%%%%%%%%%%%%%%%%

In fact, the arguments used above enable us to extend the long-time asymptotic results to a wider setting.
 More specifically, let $\rho_{\gamma,\alpha}$ be given by~\eqref{Lag_meas_t} with $\gamma>-1$ and $\alpha>0$. Consider the measures 
\begin{equation} \label{geLag_meas_t}
\rho(d\lambda;t) =  h(\lambda)\rho_{\gamma,\alpha}(d\lambda;t),
%= \id_{(\alpha,\infty)}(\lambda)h(\lambda)(\lambda-\alpha)^{\gamma}\E^{\alpha-\lambda-\frac{t}{2\lambda}} d\lambda,
\end{equation}
where $h\colon [\alpha,\infty)\to (0,\infty)$ is a continuous strictly positive function such that $h(\infty):=\lim_{\lambda\rightarrow\infty}h(\lambda)>0$. Let $\Phi^t(u_0,\mu_0)$ denote the integral curve corresponding to \eqref{geLag_meas_t}, that is, consider the solution defined by~\eqref{eq:LagFlow}, where the initial data $(u_0,\mu_0)$ is the pair in $\CHdom$ corresponding to the measure~\eqref{geLag_meas_t} with $t=0$. 
Since the moment problem for the measure $\rho_{\gamma,\alpha}$ is determinate, % (this well known fact follows, in particular, from the fact that $b_n = n(1+\oo(1))$ as $n\to \infty$ together with the Carleman test~\cite{akh}), 
by Theorem~\ref{thm:MPevolt} and Corollary~\ref{corIP+} this solution has the form~\eqref{eq:LagFlowForm}
 with coefficients given by~\eqref{eq:3.9positive} and such that the increasing sequence of points $x_1(t),x_2(t),\ldots$ accumulates at $\infty$ for each fixed $t\in\R$. Also, due to our assumptions on $h$, \cite[Theorem~13.5]{ISPforCH} implies that $u-(4\alpha)^{-1}\in H^1[0,\infty)$ for all $t\in\R$. Regarding the long-time behavior, one can prove the following claim.

\begin{theorem}\label{th:LagPertrbd}
For every fixed $n\in\N$ the following asymptotics hold:
\begin{enumerate}[label=(\roman*), ref=(\roman*), leftmargin=*, widest=ii]
\item As $t\rightarrow\infty$ one has 
\begin{align}
\begin{split}
 \omega_n(t) & = \sqrt\frac{2}{t}  + \oo(t^{\nicefrac{-1}{2}}), \\
  x_n(t) &  = \sqrt {2t} + (n-\gamma-\nicefrac{3}{2})\log\sqrt{2 t} + \log\frac{2^{\gamma +1}\Gamma(\gamma+1)}{\sqrt{2\pi}\E^{\alpha} h(\infty)(n-1)!} + \oo(1), 
  \end{split}
\end{align}
\item As $t\rightarrow-\infty$ one has 
\begin{align}
\begin{split}
\omega_n(t) & = \frac{1}{\alpha} + \oo(1), \\
 x_n(t) & = \frac{t}{2\alpha}+(2n-1+\gamma)\log \frac{|t|}{2\alpha} - \log\frac{\alpha^{1+\gamma}h(\alpha)\Gamma(n+\gamma)(n-1)!}{\Gamma(\gamma+1)} +\oo(1).
 \end{split}
\end{align}
\end{enumerate}
\end{theorem}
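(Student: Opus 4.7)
The plan is to follow the strategy used for Theorem~\ref{thm:LaguerreMPs}, with the key observation that the positivity and asymptotic behavior of $h$ allow us to extract its value at the Laplace critical point as a multiplicative constant to leading order. Specifically, because the moment problem for $\rho_{\gamma,\alpha}$ is determinate and $h$ is bounded and bounded below, the same is true for the weight $h\rho_{\gamma,\alpha}$, so Theorem~\ref{thm:MPevolt} together with Corollary~\ref{corIP+} applies, and the peakon coefficients at time $t$ are again given by~\eqref{sol_xw} with moments
\begin{align*}
s_k(t) = \int_{\alpha}^{\infty} \lambda^k h(\lambda) \rho_{\gamma,\alpha}(d\lambda;t).
\end{align*}
Hence everything reduces to proving an analog of Lemma~\ref{lem:HankelLaguerre} for the perturbed Hankel determinants
\begin{align*}
\Delta_{k,n}(t) = \frac{1}{n!}\int_{\R_{>\alpha}^n} \mlam^k\wand_n(\mlam) \prod_{i=1}^n h(\lambda_i)\, \rho_{\gamma,\alpha}(d\mlam;t),
\end{align*}
namely that the asymptotics in~\eqref{eq:DeltaLagAt+Infty} and~\eqref{eq:DeltaLagAt-Infty} acquire an extra factor $h(\infty)^n$ and $h(\alpha)^n$, respectively.

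For the regime $t\to-\infty$ the argument is a direct adaptation: the Laplace analysis in the proof of Lemma~\ref{lem:HankelLaguerre}\ref{itemHankLag-} shows that the main contribution to $\Delta_{k,n}(2t)$ comes from an arbitrarily small neighborhood of $\mlam_\alpha = (\alpha,\dots,\alpha)$. Since $h$ is continuous at $\alpha$ and uniformly bounded on $[\alpha,\infty)$, one may factor $h(\lambda_i) = h(\alpha)(1+\oo(1))$ on the relevant neighborhood and bound the tails by $h_{\max}^n$ times the tail estimate already used. This yields the claimed $h(\alpha)^n d_{k,n}^{-}$ factor.

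For $t\to+\infty$, after the change of variables $\mlam\to\sqrt{t/2}\,\mlam$, the saddle moves to $\mlam_1=(1,\dots,1)$ but the underlying variable satisfies $\lambda_i\sim\sqrt{t/2}\to\infty$. Since $h(\infty)$ exists, for any $\eps>0$ one has $|h(\sqrt{t/2}\,\lambda_i)-h(\infty)|<\eps$ uniformly for $\lambda_i$ in a fixed ball $B_\delta(1)$ once $t$ is large enough, while on the complement of $B_\delta(\mlam_1)$ the tail bound~\eqref{eq:LagEstGauss02} absorbs the factor $h_{\max}^n$. Substituting $h\mapsto h(\infty)$ in the leading term and passing $\eps\to 0$ produces the extra factor $h(\infty)^n d_{k,n}^{+}$. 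The main (though routine) technical obstacle here is simply bookkeeping these uniform estimates together with the boundedness of $h$ so as to keep the Laplace argument intact.

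Once the modified determinant asymptotics are in hand, the conclusion follows exactly as in the proof of Theorem~\ref{thm:LaguerreMPs}: when inserted into $x_n(t)=\log(\Delta_{2,n-1}/\Delta_{0,n})$, the factor $h(\infty)^{n-1}/h(\infty)^n = h(\infty)^{-1}$ contributes an additive $-\log h(\infty)$ in the $t\to\infty$ asymptotics (and similarly $-\log h(\alpha)$ in the $t\to-\infty$ asymptotics). For $\omega_n(t) = \Delta_{0,n}\Delta_{2,n-1}/(\Delta_{1,n}\Delta_{1,n-1})$, the $h(\infty)^{n+n-1}$ in the numerator cancels with the $h(\infty)^{n+n-1}$ in the denominator, so the heights acquire no correction at leading order, explaining why the asymptotics of $\omega_n$ coincide with those in Theorem~\ref{thm:LaguerreMPs}. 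This produces exactly the displayed formulas.
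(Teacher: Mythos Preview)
Your proposal is correct and follows essentially the same approach as the paper: the paper likewise reduces the result to showing that the Hankel determinant asymptotics of Lemma~\ref{lem:HankelLaguerre} pick up an extra factor $h(\infty)^n$ (respectively $h(\alpha)^n$) via Laplace's method, since only the behavior of $h$ near the relevant spectral edge matters, and then feeds this into~\eqref{sol_xw} exactly as you do.
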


\begin{remark}
A few remarks are in order.
\begin{enumerate}[label=(\roman*), ref=(\roman*), leftmargin=*, widest=iii]
\item
   Comparing these long-time asymptotics with the ones in Theorem~\ref{thm:LaguerreMPs} for the unperturbed Laguerre weight, one can notice that the function $h$ only contributes as a shift in the spatial variable by $-\log h(\infty)$ as $t\to\infty$ and by $-\log h(\alpha)$ as $t\to-\infty$.
\item 
   As before, the proof is based on an analysis of the Hankel determinants and a careful inspection of the proof of Theorem~\ref{thm:LaguerreMPs} and Laplace's method immediately shows that only the behavior of $h$ near the endpoints of the interval $[\alpha,\infty)$ matters. In particular, one may replace the continuity assumption by a less restrictive boundedness-type assumptions on $h$. 
\item 
   One can use the results of~\cite[Section~12]{ISPforCH} to obtain a complete characterization of spectral measures of pairs in $\CHdom^+$ such that $u-(4\alpha)^{-1}\in H^1[0,\infty)$. Moreover, this class of pairs is invariant under the action of the conservative Camassa--Holm flow (see, for example,~\cite[Section~13.3]{ISPforCH} and also Remark~13.23~(c) there). It seems to be a difficult but at the same time interesting problem to analyse the long-time behavior of solutions with initial data given by such pairs. Indeed, these spectral measures allow arbitrary singular spectrum on $[\alpha,\infty)$ as well as infinitely many eigenvalues in the interval $(0,\alpha)$, which may accumulate only at $\alpha$.   
\end{enumerate}
\end{remark}

\eat{ We shall denote the corresponding Hankel determinants by $\Delta_{k,n}(t)$, where $t\in \R$, $k\in\Z_{\ge 0}$ and $n\in\Z_{>0}$.

\begin{lemma}\label{lem:HankLagPerturbed} 
 If  $h$ is a continuous strictly positive function on $[\alpha,\infty)$ such that $\lim_{\lambda\rightarrow\alpha}h(\lambda)=h_\alpha>0$ and $\lim_{\lambda\rightarrow\infty}h(\lambda)=h_\infty>0$, then for the given the deformed measure \eqref{geLag_meas_t}, we have for each fixed $n\in\Z_{\ge 1}$ and $k\in\Z_{\ge 0}$
\begin{enumerate}[label=(\roman*), ref=(\roman*), leftmargin=*, widest=iii]
\item as $t\rightarrow\infty$, 
\begin{align}
\Delta_{k,n}(t) = h_\infty^n\E^{-\sqrt{2n^2t}} t^{\frac{n^2 +2n(k+\gamma)}{4}} ( d_{k,n}^+ +\oo(1)), 
\end{align}
\item as $t\rightarrow-\infty$, 
\begin{align}
\Delta_{k,n}(t) = h_\alpha^n\E^{\frac{n}{2\alpha}|t|} |t|^{-n(n+\gamma)} ( d_{k,n}^- + \oo(1)).
\end{align}
\end{enumerate}
\end{lemma}

\begin{proof}
The above formulas indicate that in fact for a continuous function $h$, its contribution to the asymptotics of Hankel determinants is very much similar to a scaling by a constant factor depending on the sign of $t$ (cf.~Remark~\ref{rem:scaling}). Let us only prove the second asymptotic formula. Applying the Laplace method as in the proof of Lemma~\ref{lem:HankelLaguerre}, we first get
  \begin{align*}
 \Delta_{k,n}(2t) \sim \E^{\frac{n}{\alpha }|t|}\frac{\E^{\alpha n}}{n!}\int_{\R_{>\alpha}^n} h_k(\mlam)  {\rm R}_{\gamma}(\mlam-\mlam_\alpha) \wand_n(\mlam)\E^{-\sum_{i=1}^n {\lambda_i}}\E^{-|t|\sum_{i=1}^n \frac{\lambda_i - \alpha}{\alpha^2}}\,d\mlam
  \end{align*}
as $t\to-\infty$, where 
  \begin{align*}
h_k(\mlam)  := \mlam^kh(\mlam)  = \prod_{i=1}^n \lambda_i^k h(\lambda_i).
  \end{align*}
 Denoting the integral on the RHS by $I_{k,n}(t)$, we obtain
  \begin{align*}
 I_{k,n}(t) & =  \E^{-\alpha n} \int_{\R_{>0}^n}  \mlam^\gamma h_k(\mlam+\mlam_\alpha)\wand_n(\mlam)\E^{-\sum_{i=1}^n |t|\frac{\lambda_i}{\alpha^2} + \lambda_i} \,d\mlam\\ 
  & =  \E^{-\alpha n}\Big(\frac{\alpha^2}{|t|}\Big)^{n^2 + n\gamma} \int_{\R_{>0}^n} h_k\Big(\frac{\alpha^2}{|t|}\mlam+\mlam_\alpha\Big) \mlam^\gamma \wand_n(\mlam)\E^{ -\sum_{i=1}^n \lambda_i - \frac{\alpha^2}{|t|}\lambda_i} \,d\mlam\\
  & =  |t|^{-n(n+\gamma)}\frac{\alpha^{2n^2+n(k+2\gamma)}h(\mlam_\alpha)}{\E^{\alpha n}} \int_{\R_{>0}^n} \mlam^\gamma \wand_n(\mlam)\E^{ -\sum_{i=1}^n \lambda_i} \,d\mlam\times (1+\oo(1)).
  \end{align*}
This implies the desired formula. 

The proof of the asymptotic formula for $t\to\infty$ is analogous, however, the justification of the step with the Laplace method is a little bit more cumbersome and we leave it to the interested reader.
\end{proof}

Using Lemma~\ref{lem:HankLagPerturbed}, we immediately arrive at the peakon-wise large time asymptotic formulas.

The proof is by straightforward calculations and we omit it. Let us only mention that the contribution of the weight function $h$ in the long-time behavior is of local character (only the behavior at the corresponding spectral edge matters) and is very much similar to the simple scaling by a constant weight resulting in the shift in spatial variable (see Remark~\ref{rem:scaling}). %or one may simply take Remark~\ref{rem:scaling} into account and use Theorem~\ref{thm:LaguerreMPs}.
}

%%%%%%%%%%%%%%
\section{Jacobi peakons}\label{sec:JacobiPeaks}
%%%%%%%%%%%%%%

%%%%%%%%%%%
\subsection{The Jacobi weight}\label{ss:Jacobi}
%%%%%%%%%%%

Another particular example is related to the classical Jacobi polynomials, for which we again need to perform a shift in the spectral parameter in order to ensure that zero does not belong to the support of the spectral measure.
Namely,  for constants $a$, $b>-1$ and $\alpha > 0$, set
\begin{align}\label{eq:JacobWeight}
w_\alpha^{(a,b)}(\lambda)= (\lambda-\alpha)^b (1+\alpha- \lambda)^a\id_{(\alpha,1+\alpha)}(\lambda),\qquad \lambda\in\R,
\end{align} 
and define the measure on $\R$ by setting
\begin{align}\label{eq:JacobSpMeas}
\rho_\alpha^{(a,b)}(d\lambda) = w_\alpha^{(a,b)}(\lambda)d\lambda.
\end{align} 

\begin{remark}
Using the transformations in Remark~\ref{rem:scaling}, one can easily extend the results to more general weights of the form 
\begin{align}\label{eq:JacobWeightGen}
w(\lambda) = C(\lambda-\tilde{\alpha})^b (\tilde{\beta} - \lambda)^a\id_{(\tilde{\alpha},\tilde{\beta})}(\lambda),\qquad \lambda\in\R,
\end{align} 
where $C>0$, $a,b> -1$ and $0<\tilde{\alpha}<\tilde{\beta}$. Indeed, applying first the rule described in (b) with $c = (\tilde{\beta} - \tilde{\alpha})^{-1}$ to the  weight~\eqref{eq:JacobWeight} and then choosing $\alpha$ such that $\tilde{\alpha} = \alpha (\tilde{\beta} - \tilde{\alpha})$ it remains to apply (a) with a suitable constant. 
 \end{remark}

%Notice that   
%\begin{align}
%\int_\R w_\alpha^{(a,b)}(\lambda)d\lambda = \int_0^{1} \lambda^b (1- \lambda)^a d\lambda  = B(b+1,a+1) = \frac{\Gamma(a+1)\Gamma(b+1)}{\Gamma(a+b+2)},
%\end{align}
%is the beta integral~\dlmf{5.12.1}. 
The corresponding orthogonal polynomials $p_n$,
normalized by
\begin{align}\label{eq:jacobi-normal}
p_n(1+\alpha) = \binom{n+a}{n} = \frac{(a+1)_n}{n!}
\end{align}
for all $n\in \N\cup\{0\}$ %(see \eqref{K6} for notation of Pochhammer symbols and binomial coefficients), 
are related to the classical  {\em Jacobi polynomials} and can be expressed as a terminating hypergeometric series:
\begin{align}
p_n(z) & = P_n^{(a,b)}(2(z-\alpha) - 1) = \frac{(a+1)_n}{n!}\hyp21{-n,n+a+b+1}{a+1}{1+\alpha -z }.
%& \frac{P_n^{(a,b)}(x)}{P_n^{(a,b)}(1)} & = \hyp21{-n,n+a+b+1}{a+1}{\frac{1-x}{2}}.
\end{align}
The Jacobi polynomials are given by Rodrigues' formula \cite[(4.3.1), (4.3.2)]{sze}
\begin{align}
P_n^{(a,b)}(z) &=\sum_{k=0}^n \binom{n+a}{n-k} \binom{n+b}{k} \left(\frac{z-1}{2}\right)^k\left(\frac{z+1}{2}\right)^{n-k},\label{K5}
%&= \frac{(-1)^n}{2^n n!}(1-x)^{-\alpha}(1+x)^{-\beta} \frac{d^n}{dx^n} \big\{(1-x)^{\alpha+n}(1+x)^{\beta+n} \big\}.\label{eq:01a}
\end{align}
which immediately implies
\begin{align}\label{eq:JPsymmetry}
P_n^{(a,b)}(-z) = (-1)^nP_n^{(b,a)}(z),
\end{align}
and hence, in particular, that 
\begin{align}
P_n^{(a,b)} (-1) =(-1)^n \binom{n+b}{n} = (-1)^n\,\frac{(b+1)_n}{n!}\,.
\end{align}
The $L^2$ norm of $p_n$ is given by (see~\cite[(4.3.3)]{sze})
\begin{align}\begin{split}
k_n^2:= \int_{\alpha}^{1+\alpha} |p_n(\lambda)|^2 w_\alpha^{(a,b)}(\lambda)d\lambda & = \frac{1}{2^{a+b+1}}\int_{-1}^1 |P_n^{(a,b)}(\lambda)| (1+\lambda)^a (1-\lambda)^b d\lambda\\
%& = B(a+1,b+1) \frac{n+a+b+1}{2n+a+b+1}\, \frac{(a+1)_n(b+1)_n}{(a+b+2)_n\,n!} \\
& = \frac{1}{2n+a+b+1}\, \frac{\Gamma(n+a+1) \Gamma(n+b+1)}{n!\,\Gamma(n+a+b+1)}.
\end{split}\end{align}
Using the recurrence formula for the Jacobi polynomials~\cite[Equation~(4.5.1)]{sze}, we get 
\begin{align}\begin{split}
 & 2n(n+a+b)(2n+a+b-2)P_n^{(a,b)}(z)  \\
  & \qquad =  - 2(n+a-1)(n+b-1)(2n+a+b)P_{n-2}^{(a,b)}(z) \\
 & \qquad\qquad + (2n+a+b-1)\big(z(2n+a+b)(2n+a+b-2) + a^2 - b^2\big)P_{n-1}^{(a,b)}(z),
\end{split}\end{align}
which holds true for all $n\in\N$ with $n\geq 2$, we find the Jacobi parameters
\begin{align}\label{eq:JacobiBn2}
\begin{split}
b_{n-1} %& = \frac{j_{n+1}}{j_n} \frac{2(n+1)(n+a+b+1)}{(2n+a+b+1)(2n+a+b+2)}\\
& =  \frac{(n+a)(n+b)}{(2n+a+b)(2n+a+b+1)}. %,\quad n\in\Z_{\ge 2}. 
\end{split}
\end{align}

\begin{remark}
Jacobi polynomials include the Chebyshev polynomials (where the case $a=b=-1/2$ corresponds to the first kind; $a=b=1/2$ to the second kind, etc.),  the Legendre polynomials (when $a=b=0$), and the ultraspherical (Gegenbauer) polynomials (when $a=b$); see \cite{dlmf}, \cite{sze} for further details. 
 \end{remark}

%\begin{remark}
%The corresponding string parameters are given by
%\begin{align*}
%\tilde{\ell}_{n+1} & = \frac{|p_n(0)|^2}{k_n^2}, & \tilde{\omega}_{n+1} & = \frac{-k_nk_{n+1}}{b_np_n(0)p_{n+1}(0)}
%\end{align*}
%Hence
%\begin{align*}
%\tilde{\ell}_{n+1} & =(2n+a+b+1)\, \frac{n!\,\Gamma(n+a+b+1)}{\Gamma(n+a+1) \Gamma(n+b+1)} |P_n^{(a,b)}(-2\alpha - 1)|^2
%\end{align*}
%and
%\begin{align*}
% \tilde{\omega}_{n+1} & = -\frac{2(n+a+1)(n+b+1)}{(2n+a+b+2)(2n+a+b+3)}\frac{k_nk_{n+1}}{P_n^{(a,b)}(-2\alpha - 1)P_{n+1}^{(a,b)}(-2\alpha - 1)}
%\end{align*}
%\begin{align*}
%\frac{1}{2n+a+b+1}\, &\frac{\Gamma(n+a+1) \Gamma(n+b+1)}{n!\,\Gamma(n+a+b+1)} \times \frac{1}{2n+a+b+3}\, \frac{\Gamma(n+a+2) \Gamma(n+b+2)}{(n+1)!\,\Gamma(n+a+b+2)} \\
%& = \left(\frac{\Gamma(n+a+1) \Gamma(n+b+1)}{n!\,\Gamma(n+a+b+1)}\right)^2\frac{(n+a+1)(n+b+1)}{(n+1)(n+a+b+1)}\times \\
%&\times \frac{1}{2n+a+b+1}\frac{1}{2n+a+b+3}
%\end{align*}
%\end{remark}

\begin{lemma}\label{lem:asympJacobinn2}
The pair $(u,\mu)$ in $\CHdom$ corresponding to the spectral measure $\rho_\alpha^{(a,b)}$ is a pure infinite multi-peakon profile of the form~\eqref{eqnstrinfpure} with $\dip\equiv 0$ and 
%If $(\omega_n)_{n\in\N}$ and $(x_n)_{n\in\N}$ are given by~\eqref{eq:uLagatt=0} and~\eqref{eq:OPRLviaLag}, then
  \begin{align} \label{eq:MPJacobiW}
    \omega_n & = \frac{1}{\sqrt{\alpha(1+\alpha)}} + \oo(1),\\ 
    x_n & = (2n+a+b) \log\bigl(1+2\alpha +\sqrt{\alpha(1+\alpha)}\bigl) - \log\bigl(8\pi\alpha^{b+1}(1+\alpha)^{a+1}\bigl) +\oo(1),  \label{eq:MPJacobiX}
     %\E^{\frac{2\sqrt{\alpha}}{\sqrt{n} + \sqrt{n-1}}},
   \end{align} 
   as $n\to \infty$.
\end{lemma}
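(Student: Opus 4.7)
The plan is to combine the inverse-spectral formulas from Remark~\ref{rem:OPs} with the classical Darboux--Perron asymptotic expansion of Jacobi polynomials off the cut $[-1,1]$. First I would observe that $\rho_\alpha^{(a,b)}$ is compactly supported in $(\alpha,1+\alpha)\subset(0,\infty)$, hence belongs to $\SM_0^+$, and that any compactly supported measure is automatically Hamburger-determinate. By Corollary~\ref{corIP+} and Corollary~\ref{corStrinfpure+}, the corresponding pair $(u,\mu)\in\CHdom^+$ is therefore a pure infinite multi-peakon profile of the form~\eqref{eqnstrinfpureU} with $\dip\equiv 0$, positive heights, and positions $x_n\uparrow\infty$. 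The Christoffel--Darboux formulas~\eqref{eqnCFOP} give
\begin{align*}
 \E^{x_n} &= \sum_{k=0}^{n-1} P_k(0)^2, & \omega_n &= \frac{P_{n-1}'(0)}{P_{n-1}(0)} - \frac{P_n'(0)}{P_n(0)},
\end{align*}
and since $P_n(z) = p_n(z)/k_n$ with $p_n(z) = P_n^{(a,b)}(2(z-\alpha)-1)$, the task reduces to analyzing $P_n^{(a,b)}$ and its derivative at $-1-2\alpha$; via the symmetry~\eqref{eq:JPsymmetry} one may equivalently work at $\tilde{z} = 1+2\alpha > 1$.

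Next I would invoke the Darboux--Perron asymptotic (Szeg\H{o}, Theorem~8.21.7) for $z$ outside $[-1,1]$,
\begin{align*}
P_n^{(b,a)}(\tilde{z}) \sim \frac{\phi^{\,n+(a+b+1)/2}}{\sqrt{2\pi n}\,(\tilde{z}-1)^{b/2+1/4}(\tilde{z}+1)^{a/2+1/4}},
\end{align*}
together with the companion expansion $(P_n^{(b,a)})'(\tilde{z})/P_n^{(b,a)}(\tilde{z}) \sim n/\sqrt{\tilde{z}^2-1}$, where $\phi = \tilde{z} + \sqrt{\tilde{z}^2-1} = 1+2\alpha+2\sqrt{\alpha(1+\alpha)} = (\sqrt{\alpha}+\sqrt{1+\alpha})^{2}$. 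Plugging in $\tilde{z}-1 = 2\alpha$, $\tilde{z}+1 = 2(1+\alpha)$, and $\sqrt{\tilde{z}^2-1} = 2\sqrt{\alpha(1+\alpha)}$, and combining with the Stirling asymptotic $k_n^2 \sim 1/(2n)$ applied to the explicit ratio of gammas in Section~\ref{ss:Jacobi}, one obtains
\begin{align*}
 P_n(0)^2 \sim \frac{\phi^{\,2n+a+b+1}}{\pi\,2^{a+b+1}\,\alpha^{b+1/2}(1+\alpha)^{a+1/2}}.
\end{align*}
For the heights, the chain rule together with~\eqref{eq:JPsymmetry} yields $p_n'(0)/p_n(0) = -2(P_n^{(b,a)})'(\tilde{z})/P_n^{(b,a)}(\tilde{z}) \sim -n/\sqrt{\alpha(1+\alpha)}$; the normalization constants $k_n$ cancel in the logarithmic derivative, so telescoping gives~\eqref{eq:MPJacobiW}.

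For the positions, the sum $\E^{x_n}=\sum_{k=0}^{n-1}P_k(0)^2$ is dominated by its last terms because the summands grow geometrically with ratio $\phi^2>1$; applying the identities $\phi-1 = 2\sqrt{\alpha}\,\phi^{1/2}$ and $\phi+1 = 2\sqrt{1+\alpha}\,\phi^{1/2}$ (so that $\phi^2-1 = 4\sqrt{\alpha(1+\alpha)}\,\phi$) to the resulting geometric sum and taking logarithms produces~\eqref{eq:MPJacobiX}. The main obstacle will be the careful bookkeeping of the prefactors in the Darboux expansion, which are sensitive to branch choices and in which the many $2^{1/2}$-type factors must be shown to collapse via $\phi^{1/2} = \sqrt{\alpha}+\sqrt{1+\alpha}$; that the $\oo(1)$ error remains uniform across all $k\leq n-1$ follows from the exponential decay of the relative Darboux remainder together with the negligible geometric tail $1-\phi^{-2n}$ in the summation.
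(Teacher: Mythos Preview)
Your approach is essentially the same as the paper's: both reduce to the Christoffel--Darboux expressions~\eqref{eqnCFOP} and then feed in Szeg\H{o}'s Theorem~8.21.7 at $\tilde z=1+2\alpha$. The only difference is in how $\E^{x_n}=\sum_{k=0}^{n-1}P_k(0)^2$ is evaluated: the paper uses the confluent Christoffel--Darboux identity~\eqref{eq:CDflaGen} to rewrite this sum as $b_{n-1}\bigl(P_n'(0)P_{n-1}(0)-P_n(0)P_{n-1}'(0)\bigr)$ and then inserts the Darboux asymptotic once, whereas you compute the individual terms $P_k(0)^2$ and sum the resulting geometric series via $\phi^2-1=4\sqrt{\alpha(1+\alpha)}\,\phi$. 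Both routes are valid; the paper's avoids having to argue that the Darboux remainder is controlled uniformly over $k\le n-1$ (which you correctly note must be handled).

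One concrete slip to fix: your compressed Darboux formula drops a factor of $2^{(a+b)/2}$, since $(\sqrt{\tilde z-1}+\sqrt{\tilde z+1})^{a+b}=(2\phi)^{(a+b)/2}$, not $\phi^{(a+b)/2}$. Consequently your displayed $P_n(0)^2$ is off by $2^{a+b}$; the correct asymptotic is $P_n(0)^2\sim \phi^{2n+a+b+1}\big/\bigl(2\pi\,\alpha^{b+1/2}(1+\alpha)^{a+1/2}\bigr)$, and with this correction your geometric-sum computation indeed yields $\E^{x_n}\sim \phi^{2n+a+b}\big/\bigl(8\pi\,\alpha^{b+1}(1+\alpha)^{a+1}\bigr)$, matching the paper's proof. (Incidentally, both routes produce $\phi=1+2\alpha+2\sqrt{\alpha(1+\alpha)}$; the stated formula~\eqref{eq:MPJacobiX} appears to be missing a factor of $2$ in front of the square root.)
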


\begin{proof}
Since the spectral measure $\rho_\alpha^{(a,b)}$ has compact support, the corresponding moment problem is determinate, which implies the first claim. Moreover, the support is contained in $(0,\infty)$ and hence $\dip\equiv 0$ (see Remark~\ref{rem:formulas3.9}~\ref{rem:3.3b}). The asymptotic behavior follows from the Christoffel--Darboux formula~\eqref{eq:CDflaGen} (see also~\dlmf{18.2.13}, \cite[Equation~(I.2.5)]{akh}),
% \[
% \frac{1}{b_n}\sum_{k=0}^{n-1} p_k(z)^2 = p_{n}'(z)p_{n-1}(z) - p_{n}(z)p_{n-1}'(z).
% \]
by employing the asymptotic behavior of the Jacobi polynomials (see~\cite[Theorem~8.21.7]{sze})
 \begin{align*}
 P_n^{(a,b)}(z)  =\frac{1}{\sqrt{2\pi n}} \frac{ \bigl(\sqrt{z-1}+\sqrt{z+1} \bigr)^{a+b}}{(z-1)^{a/2}(z+1)^{b/2}}\frac{\bigl(z+\sqrt{z^2-1}\bigr)^{n+1/2}}{(z^2-1)^{1/4}}(1+\oo(1)) ,
 \end{align*}
as $n\to \infty$, which holds true locally uniformly in $\C\backslash[-1,1]$. Indeed, we then readily compute by using~\eqref{eq:XnviaCD} that 
 \begin{align*}
\E^{x_n}   & = \frac{2b_{n-1}}{k_{n-1}k_n} % \frac{2^{-a-b}}{2n+a+b}\frac{n!\Gamma(n+a+b+1)}{\Gamma(n+a)\Gamma(n+b)}\\
                   \bigl(P_{n}^{(a,b)'}(z) P_{n-1}^{(a,b)}(z) - P_{n-1}^{(a,b)'}(z) P_{n}^{(a,b)}(z)\bigr)\big|_{z=-2\alpha-1}\\
                   & = \frac{2b_{n-1}}{k_{n-1}k_n} % \frac{2^{-a-b}}{2n+a+b}\frac{n!\Gamma(n+a+b+1)}{\Gamma(n+a)\Gamma(n+b)}\\
                   \bigl(P_{n}^{(b,a)'}(z) P_{n-1}^{(b,a)}(z) - P_{n-1}^{(b,a)'}(z) P_{n}^{(b,a)}(z)\bigr)\big|_{z=2\alpha+1}\\
               & \sim  \frac{2b_{n-1}}{k_{n-1}k_n} \frac{2^{a+b}}{2\pi\sqrt{n(n-1)}}\frac{ \bigl(z+\sqrt{z^2-1}\bigr)^{2n+a+b}}{(z-1)^{b+1}(z+1)^{a+1}}\bigg|_{z=2\alpha+1} \\
               & = \frac{2b_{n-1}}{k_{n-1}k_n}\frac{1}{8\pi\sqrt{n(n-1)} }\frac{ \big(2\alpha+1+2\sqrt{\alpha(1+\alpha)} \big)^{2n+a+b}}{\alpha^{b+1}(1+\alpha)^{a+1}},
                \end{align*}
                and it remains to notice that 
         \begin{align*}
         \frac{2b_{n-1}}{k_{n-1}k_n}  = n+\oo(n)
         \end{align*}       
         as $n\to \infty$ (see~\dlmf{5.11.12}). 
                Finally, using~\eqref{eq:omegaviaCD}, we get
 \begin{align*} 
\omega_n &  =  2\frac{P_{n}^{(a,b)'}(-1-2\alpha) }{P_{n}^{(a,b)}(-1-2\alpha) } - 2\frac{P_{n-1}^{(a,b)'}(-1-2\alpha) }{P_{n-1}^{(a,b)}(-1-2\alpha) } 
         =  \frac{1}{\sqrt{\alpha(1+\alpha) }} +\oo(1). \qedhere
% =  \frac{\E^{\alpha}}{8\pi\alpha } \E^{2(\sqrt{\alpha(n-1)} + \sqrt{\alpha n})}(1+\OO(n^{-1/2})).
 \end{align*}
\end{proof}

\begin{remark}
Notice that the pair $(u,\mu)$ corresponding to the Jacobi weight behaves asymptotically like a half-periodic multi-peakon profile
\begin{align}\label{eq:u-1period}
u_0(x) = \frac{1}{2}\sum_{n\in\N}\frac{1}{\sqrt{\alpha(1+\alpha)}} \E^{-|x-cn-\ell|}, 
\end{align}
with the constants $c$, $\ell$ given by 
\begin{align}
%\omega & = \frac{1}{\sqrt{\alpha(1+\alpha)}}, & 
 c & = 2\log\bigl(1+2\alpha +\sqrt{\alpha(1+\alpha)}\bigr), & \ell & = \log\frac{\bigl(1+2\alpha +\sqrt{\alpha(1+\alpha)}\bigr)^{a+b}}{8\pi\alpha^{b+1}(1+\alpha)^{a+1}}.
\end{align}
To a certain extent this might not be too surprising because the spectral measure of the pair $(u_0,\mu_0)$ with $u_0$ given by~\eqref{eq:u-1period} and $\dip_0\equiv 0$ is explicitly given by (one may use~\cite{InvPeriodMP} for this purpose)
 \begin{align}\label{eq:rho0-periodic}
\rho(d\lambda) =  \frac{\sinh(c/2)}{\pi\E^{\ell+\nicefrac{c}{2}}}\id_{[\lambda_1,\lambda_2]}(\lambda)\frac{\sqrt{(\lambda - \lambda_1)(\lambda_2-\lambda)}}{\lambda^2 }d\lambda,
%\frac{8\lambda_1}{\pi}\id_{[\lambda_1,\lambda_2]}(\lambda)\frac{1}{\lambda} \frac{\sqrt{(\lambda-\lambda_1)(\lambda_2-\lambda)}}{ 4\lambda_1^2(\lambda_2-\lambda)+ (\lambda-\lambda_1)}\,d\lambda.
\end{align}
where 
\begin{align}
\lambda_1 & =\tanh(c/4)\sqrt{\alpha(1+\alpha)}, & \lambda_2 & =\coth(c/4)\sqrt{\alpha(1+\alpha)}.
\end{align}
\eat{{\tiny 
In the case when $\frac{1}{\sqrt{\alpha(1+\alpha)}} = 1$ and if summation is over $\Z_{\ge 0}$, 
\begin{align}\label{eq:rho-example1}
\rho(d\lambda) =  \frac{\E^{\frac{c}{2}-\ell}\sinh(c/2)}{\pi}\id_{[\lambda_1,\lambda_2]}(\lambda)\frac{\sqrt{(\lambda - \lambda_1)(\lambda_2-\lambda)}}{\lambda^2 }d\lambda
%\frac{8\lambda_1}{\pi}\id_{[\lambda_1,\lambda_2]}(\lambda)\frac{1}{\lambda} \frac{\sqrt{(\lambda-\lambda_1)(\lambda_2-\lambda)}}{ 4\lambda_1^2(\lambda_2-\lambda)+ (\lambda-\lambda_1)}\,d\lambda.
\end{align}
where 
\begin{align*}
\lambda_1 & = \tanh(c/4), & \lambda_2 & = \coth(c/4). 
\end{align*}
}}
%\begin{align*}
%\lambda_2-\lambda_1 & = \coth(c/4) -\tanh(c/4) = \frac{1}{\sinh^2(c/4)} = \frac{1}{\sinh^2 (\frac{1}{2}\log\bigl(1+2\alpha +\sqrt{\alpha(1+\alpha)}\bigr))} \\
%& = \frac{2}{\cosh(\log\bigl(1+2\alpha +\sqrt{\alpha(1+\alpha)}\bigr)))-1}\\
%& = \frac{4(1+2\alpha +\sqrt{\alpha(1+\alpha)})}{(2\alpha +\sqrt{\alpha(1+\alpha)})^2}\\[2mm]
%& = \sinh^2(x) = \frac{(\E^x - \E^{-x})^2}{4} = \frac{\E^{2x} + \E^{-2x} - 2}{4} = \frac{\cosh(2x) -1}{2}\\
%& \cosh(\log(x)) -1 = \frac{\E^{\log x} +\E^{-\log(x)}}{2} -1 = \frac{x+\nicefrac{1}{x}}{2} -1= \frac{x^2+1}{2x}-1 = \frac{(x-1)^2}{2x}
%\end{align*}
 Clearly, this measure is of the form 
\begin{align}
\rho(d\lambda) = h(\lambda)w(\lambda)d\lambda,
\end{align}
where $h$ is smooth and strictly positive on $[\lambda_1,\lambda_2]$ and $w$ is given by~\eqref{eq:JacobWeightGen} with an obvious choice of parameters. 
 \end{remark}

%{\scriptsize {\bf My calculations} for $\E^{x_n}$ (will be removed later):
%Denoting
%\[
%F(x) = \frac{ \big(\sqrt{x-1}+\sqrt{x+1} \big)^{a+b}}{(x-1)^{a/2+1/4}(x+1)^{b/2+1/4}},\quad G_n(x) = g(x)^{n+1/2},\quad g(x) = x+\sqrt{x^2-1},
%\]
%we get 
%\[
%P_n = c_n FG_n,\qquad c_n=\frac{1}{\sqrt{2\pi n}}.
%\]
%Therefore,
%\begin{align*}
%P_n'P_{n-1} - P_nP_{n-1}' & = P_nP_{n-1}\Big(\frac{P_n'}{P_n} - \frac{P_{n-1}'}{P_{n-1}}\Big) \\
%& = c_nc_{n-1}F^2G_{n-1}G_{n}\Big( \frac{G_n'}{G_n} - \frac{G_{n-1}'}{G_{n-1}}\Big)\\
%& = c_nc_{n-1}F^2 g^{2n}\frac{g'}{g}.
%\end{align*}
%Since
%\[
%\frac{g'}{g} = \frac{1+\frac{2x}{2\sqrt{x^2-1}}}{x+\sqrt{x^2-1}} = \frac{1}{\sqrt{x^2 -1}}
%\]
%and
%\[
%(\sqrt{x-1}+\sqrt{x+1})^2 = 2(x+\sqrt{x^2-1}),
%\]
%we get (at least for $x>1$)
%\begin{align*}
%P_n'P_{n-1} - P_nP_{n-1}' &  = \frac{1}{2\pi\sqrt{ n(n-1)}}\frac{ \big(\sqrt{x-1}+\sqrt{x+1} \big)^{2(a+b)}}{(x-1)^{a+1/2}(x+1)^{b+1/2}}(x+\sqrt{x^2-1})^{2n}\frac{1}{\sqrt{x^2 -1}}\\
%& = \frac{2^{a+b}}{2\pi\sqrt{n(n-1)}}\frac{ \big(x+\sqrt{x^2-1} \big)^{2n+a+b}}{(x-1)^{a+1}(x+1)^{b+1}}.
%\end{align*}
%Finally, using symmetry~\eqref{eq:JPsymmetry}, 
%\begin{align*}
%(P_n'P_{n-1} - P_nP_{n-1}')|_{x=-2\alpha-1} & = (P_n^{(b,a)'}P_{n-1}^{(b,a)} - P_n^{(b,a)}P_{n-1}^{(b,a)'})|_{x=2\alpha+1}\\
%& = \frac{2^{a+b}}{2\pi\sqrt{n(n-1)}}\frac{ \big(2\alpha+1+2\sqrt{\alpha(\alpha+1)} \big)^{2n+a+b}}{(2\alpha)^{b+1}(2\alpha+2)^{a+1}}.
%\end{align*}
%}

%%%%%%%%%%%%%%%%%%%
\subsection{The global conservative solution and long-time asymptotics}\label{ss:JacobiCH}
%%%%%%%%%%%%%%%%%%%

Let us now consider the infinite peakon solution for the initial data given by the Jacobi measure in~\eqref{eq:JacobSpMeas}. 
More specifically, the pair $(u(\ledot,t),\mu(\ledot,t))$ corresponds to the measure given by 
\begin{align}\label{Jac_meas_t}
\rho_{\alpha}^{(a,b)}(d\lambda;t) &=  \E^{-\frac{t}{2\lambda}} w_\alpha^{(a,b)}(\lambda) d\lambda,
\end{align}
which obviously gives the measure in~\eqref{eq:JacobSpMeas} when $t=0$. 
Since the moment problem for the measure in~\eqref{eq:JacobSpMeas} is determinate, we infer from Theorem~\ref{thm:MPevolt} and Corollary~\ref{corIP+} that this solution has the form
    \begin{align}
      \omega(\ledot,t) & =  \sum_{n=1}^{\infty} \omega_n(t) \delta_{x_n(t)}, & \dip(\ledot,t) & = 0,
    \end{align}
 with coefficients given by~\eqref{eq:3.9positive} for each time $t\in\R$.  Moreover, the increasing sequence points $x_1(t),x_2(t),\ldots$ accumulates at $\infty$ as $n\to\infty$.
 Our main result again provides the peakonwise long-time behavior. 

\begin{theorem}\label{th:JacobiPeakons}
For every fixed $n\in\N$ the following asymptotics hold:
\begin{enumerate}[label=(\roman*), ref=(\roman*), leftmargin=*, widest=ii]
\item\label{eq:MPjacobi+Infty}
 As $t\rightarrow\infty$ one has
\begin{align}
\begin{split}
 \omega_n(t)  & = \frac{1}{1+\alpha} + \oo(1), \\ 
 x_n(t) & = \frac{t}{2(1+\alpha)} + (2n+a-1)\log \frac{t}{2(1+\alpha)} - \log j_{n}^+ + \oo(1), 
 \end{split}
 \end{align}
 where 
 \begin{align}
  j_{n}^+ & =  (1+\alpha)^{a+1}(n-1)!\, \Gamma(n+a).
  \end{align}
% (2n-1+a+b)\log2
%&-2(n-a)\log(2+\alpha)-\log \left((n-1)!\Gamma(n+a)\right) & b &
%-\log C_{a,b}, 
\item\label{eq:MPjacobi-Infty}
 As $t\rightarrow-\infty$ one has
\begin{align}
\begin{split}
 \omega_n(t) & = \frac{1}{\alpha} + \oo(1), \\
x_n(t) & = \frac{t}{2\alpha} + (2n+b-1)\log \frac{|t|}{2\alpha} - \log j_{n}^-+ \oo(1),
\end{split}
\end{align}
where 
\begin{align}
j_{n}^- = \alpha^{b+1}(n-1)!\, \Gamma(n+b).
\end{align}
\end{enumerate}
\end{theorem}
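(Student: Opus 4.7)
The plan is to follow the strategy developed in Section~\ref{ss:LaguerreCH} for the Laguerre case: first I will establish precise asymptotics of the Hankel determinants $\Delta_{k,n}(t)$ built from the moments of the deformed Jacobi measure~\eqref{Jac_meas_t}, and then extract the peakon parameters from the ratios in~\eqref{sol_xw}. Since $\rho_\alpha^{(a,b)}$ is compactly supported in $(0,\infty)$, the determinants $\Delta_{1,n}(t)$ are strictly positive for all $t\in\R$ by Remark~\ref{rem:formulas3.9}~\ref{rem:3.3b}, so $\kappa(n)=n-1$ and the whole proof reduces to understanding the large-$|t|$ behaviour of
\begin{align*}
\Delta_{k,n}(t) = \frac{1}{n!}\int_{(\alpha,1+\alpha)^n}\mlam^k\,\wand_n(\mlam)\prod_{i=1}^n(\lambda_i-\alpha)^b(1+\alpha-\lambda_i)^a\E^{-\frac{t}{2\lambda_i}}d\mlam.
\end{align*}

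Since $\lambda\mapsto -\frac{1}{2\lambda}$ is strictly increasing on $[\alpha,1+\alpha]$, Laplace's method localises the integral at $\lambda=1+\alpha$ when $t\to\infty$ and at $\lambda=\alpha$ when $t\to-\infty$. For $t\to\infty$, I will use the rescaling $\lambda_i=1+\alpha-\frac{2(1+\alpha)^2}{t}\mu_i$: the singular weight $(1+\alpha-\lambda_i)^a$ turns into $\bigl(\frac{2(1+\alpha)^2}{t}\bigr)^a\mu_i^a$, the regular weight $(\lambda_i-\alpha)^b$ tends to $1$, and the phase reduces to $-\frac{nt}{2(1+\alpha)}-\sum_i\mu_i\cdot(1+\oo(1))$. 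Collecting the Jacobian, the Vandermonde scaling, and the leading contribution $(1+\alpha)^{nk}$ from $\mlam^k$, and using the Selberg-type integral~\eqref{eq:LaguerreHankelEXPL}, yields
\begin{align*}
\Delta_{k,n}(t)\sim \E^{-\frac{nt}{2(1+\alpha)}}\Bigl(\tfrac{t}{2(1+\alpha)^2}\Bigr)^{-n(n+a)}\frac{(1+\alpha)^{nk}}{n!}\prod_{j=1}^n j!\,\Gamma(a+j).
\end{align*}
The analogous analysis at $\lambda=\alpha$ with $\lambda_i=\alpha+\frac{2\alpha^2}{|t|}\mu_i$, in which the roles of $a$ and $b$ are swapped and where now $(\lambda_i-\alpha)^b$ carries the small parameter while $(1+\alpha-\lambda_i)^a$ is regular and trivializes at leading order, produces
\begin{align*}
\Delta_{k,n}(t)\sim \E^{\frac{n|t|}{2\alpha}}\Bigl(\tfrac{|t|}{2\alpha}\Bigr)^{-n(n+b)}\frac{\alpha^{n(n+k+b)}}{n!}\prod_{j=1}^n j!\,\Gamma(b+j)
\end{align*}
as $t\to-\infty$.

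The final step is to plug these two asymptotics into~\eqref{sol_xw}. In the ratio $\E^{x_n(t)}=\Delta_{2,n-1}(t)/\Delta_{0,n}(t)$, the Gamma and factorial products telescope to $\frac{1}{(n-1)!\,\Gamma(n+a)}$ or $\frac{1}{(n-1)!\,\Gamma(n+b)}$, the surviving power of $1+\alpha$ or $\alpha$ is $-(a+1)$ or $-(b+1)$, and the remaining $t$-dependent factors combine into $\E^{\frac{t}{2(1+\alpha)}}\bigl(\frac{t}{2(1+\alpha)}\bigr)^{2n+a-1}$ or $\E^{\frac{t}{2\alpha}}\bigl(\frac{|t|}{2\alpha}\bigr)^{2n+b-1}$; collecting these reproduces the claimed asymptotics for $x_n(t)$ with the constants $j_n^\pm$ stated in the theorem. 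For $\omega_n(t)=\frac{\Delta_{0,n}\Delta_{2,n-1}}{\Delta_{1,n}\Delta_{1,n-1}}$, every Gamma product, every power of $t$ and every exponential cancels identically, leaving only a single surviving power of $1+\alpha$ or $\alpha$ that gives the asymptotic heights $\frac{1}{1+\alpha}$ or $\frac{1}{\alpha}$.

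The main technical obstacle will be the rigorous justification of the Laplace step. One has to show that the contribution of the integrand away from the active endpoint decays exponentially faster than the leading term, and that the rescaled integrand admits a dominating function despite the factors $\mu_i^a$ and $\mu_i^b$, which are integrable near the origin only by virtue of $a,b>-1$. Both ingredients are already carried out in the proof of Lemma~\ref{lem:HankelLaguerre}~\ref{itemHankLag-} and transcribe here almost verbatim; the Jacobi setting is in fact simpler, since the original integration domain is already bounded and no tail control at infinity is required.
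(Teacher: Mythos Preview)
Your proposal is correct and follows essentially the same route as the paper: the paper also proves a separate lemma giving the Hankel determinant asymptotics via Laplace's method at the endpoints $1+\alpha$ and $\alpha$ with exactly the rescalings you indicate, reduces to the Laguerre-type Selberg integral~\eqref{eq:LaguerreHankelEXPL}, and then reads off $x_n(t)$ and $\omega_n(t)$ from the ratios in~\eqref{sol_xw}. Your asymptotic formulas for $\Delta_{k,n}(t)$ match the paper's constants $J^\pm_{k,n}$ after unpacking the powers of $2(1+\alpha)^2$ and $2\alpha$, and your remark that the justification of the Laplace step transcribes from Lemma~\ref{lem:HankelLaguerre}~\ref{itemHankLag-} (and is simpler here due to the bounded domain) is exactly how the paper handles it.
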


\begin{figure}[h]
  \centering
  % Requires \usepackage{graphicx}
  \includegraphics[width=1\textwidth]{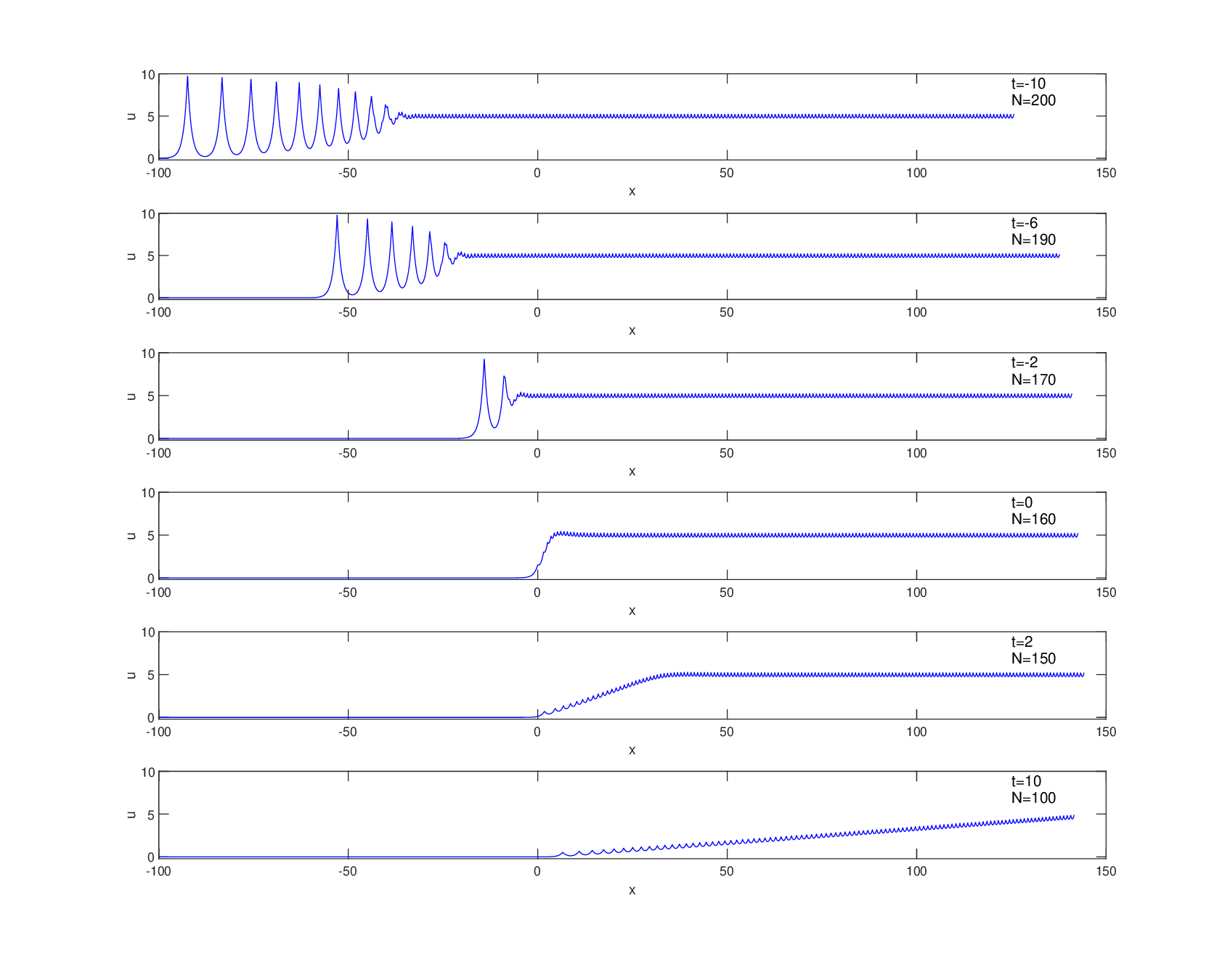}
  \caption{{\small Jacobi--Legendre infinite peakons ($a=b=0$) with $\alpha=\nicefrac{1}{20}$:\\ Snapshots of $u(x,t)$ in the interval $[-100,x_{N+1}]$.
  % at times $t = -10$, $-6$, $-2$, $0$, $2$, $10$.
  }}\label{fig:jacNew}
\end{figure}

\begin{remark}\label{rem:JacobiPeakons}
Let us remark that despite the fact that every peakon has asymptotically the same height and moves at asymptotically the same constant speed, the distance between peaks grows logarithmically, that is, for $n\in\N$ one has   
\begin{align}
%\begin{split}
%x_n(t) & \sim \frac{t}{2(1+\alpha)} , \\[1mm] 
|x_{n+1}(t)-x_n(t)| & =  \log(t^2) - \log\big(4(1+\alpha)^2(n^2+an)\big)+\oo(1),
%\end{split}
\end{align}
 as $t\rightarrow\infty$, and 
\begin{align}
%\begin{split}
%x_n(t) & \sim \frac{t}{2\alpha}, \\[1mm]
 |x_{n+1}(t)-x_n(t)| & = \log(t^2)  - \log\big(4\alpha^2(n^2+bn)\big) + \oo(1),
%\end{split}
\end{align}
as $t\rightarrow-\infty$. 
Let us also mention that the graphs of $u$ with the parameters $a=b=0$ and $\alpha=\nicefrac{1}{20}$ are presented in Figure~\ref{fig:jacNew}; see also~\cite{animations} for animations. We refer to Remark \ref{rem:num} and Remark \ref{rem:graph} for some key points regarding the corresponding numerics.
 \end{remark}

The proof of the above asymptotics relies on an analysis of Hankel determinants. 
 
\begin{lemma}\label{lem:JacobiHankt} 
For all $k$, $n\in\N\cup\{0\}$ the following asymptotics hold:
\begin{enumerate}[label=(\roman*), ref=(\roman*), leftmargin=*, widest=ii]
\item\label{eq:jacobiDelta+Infty}
As $t\rightarrow\infty$ one has 
\begin{align}\label{eq:JacobiHankt+}
\begin{split}
\Delta_{k,n}(t) & \sim  \E^{-\frac{nt}{2(1+\alpha)}}t^{-n(n+a)}J^+_{k,n},\\
J^+_{k,n} &=  2^{n(n+a)}(1+\alpha)^{n(2n+2a+k)}\prod_{j=0}^{n-1}j!\, \Gamma(j+a+1).
\end{split}
\end{align}
\item\label{eq:jacobiDelta-Infty}
  As $t\rightarrow-\infty$ one has 
\begin{align}\label{eq:JacobiHankt-}
\begin{split}
\Delta_{k,n}(t) &\sim   \E^{-\frac{nt}{2\alpha}}|t|^{-n(n+b)}J^-_{k,n},\\
J^-_{k,n} &= 2^{n(n+b)} \alpha^{n(2n+2b+k)}\prod_{j=0}^{n-1}j!\, \Gamma(j+b+1).
\end{split}
\end{align}
\end{enumerate}
\end{lemma}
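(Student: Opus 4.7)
The approach is a direct multi-dimensional Laplace analysis of the integral representation~\eqref{eq:HankelviaIntegral}, which in the Jacobi case reads
\begin{align*}
\Delta_{k,n}(t) = \frac{1}{n!}\int_{(\alpha,1+\alpha)^n} \mlam^k\,\wand_n(\mlam) \prod_{i=1}^n (\lambda_i-\alpha)^b(1+\alpha-\lambda_i)^a \exp\biggl(-\frac{t}{2}\sum_{i=1}^n\frac{1}{\lambda_i}\biggr)\,d\mlam.
\end{align*}
The plan is to treat the two parts of the lemma in parallel, observing that the phase function $g(\mlam)=\sum_{i}1/\lambda_i$ attains its unique global minimum on $[\alpha,1+\alpha]^n$ at $\mlam_*=(1+\alpha,\ldots,1+\alpha)$ in case~\ref{eq:jacobiDelta+Infty} and at $\mlam_*=(\alpha,\ldots,\alpha)$ in case~\ref{eq:jacobiDelta-Infty}. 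Both minima sit at the boundary, which is exactly what makes the integrals scale to a Laguerre-type (rather than a Gaussian) integral of the type that was evaluated in the Laguerre analysis (see~\eqref{eq:LaguerreHankelEXPL}).

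For~\ref{eq:jacobiDelta+Infty}, I would change variables $\lambda_i = 1+\alpha - \mu_i$, yielding
\begin{align*}
\Delta_{k,n}(t) = \frac{1}{n!}\exp\biggl(-\frac{nt}{2(1+\alpha)}\biggr)\int_{(0,1)^n} \mumu^a \wand_n(\mumu)\,F(\mumu)\,\exp\biggl(-\frac{t}{2(1+\alpha)^2}\sum_{i}\mu_i\biggr)\,d\mumu,
\end{align*}
where $F(\mumu)\to (1+\alpha)^{nk}$ uniformly on a neighborhood of the origin (the remaining factors $(1-\mu_i)^b \lambda_i^k$ being continuous). A further rescaling $\mu_i = \frac{2(1+\alpha)^2}{t}\nu_i$ extracts a factor $t^{-n(n+a)}$ from the measure $\mumu^a\wand_n(\mumu)\,d\mumu$ and the integral over $\nunu\in \R_{>0}^n$ converges, by dominated convergence, to the classical Laguerre--Selberg integral
\begin{align*}
\int_{\R_{>0}^n}\nunu^a\,\wand_n(\nunu)\,\E^{-\sum_i\nu_i}\,d\nunu = n!\prod_{j=0}^{n-1}j!\,\Gamma(j+a+1),
\end{align*}
i.e., \eqref{eq:LaguerreHankelEXPL}. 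The contribution from $\mumu\notin B_\delta(0)$ is exponentially smaller (since $g$ is strictly larger there) and can be discarded by the standard Laplace-method estimate used already in the proof of Lemma~\ref{lem:HankelLaguerre}. Collecting the prefactors yields precisely $J_{k,n}^+$. The symmetric computation with $\lambda_i=\alpha+\mu_i$ and rescaling by $\frac{2\alpha^2}{|t|}$ produces case~\ref{eq:jacobiDelta-Infty}, exploiting the formal exchange $(a,1+\alpha)\leftrightarrow(b,\alpha)$ arising from~\eqref{eq:JPsymmetry} at the level of the integrand.

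The only delicate point is the uniform control of the integrand near the boundary minimum, where the Jacobi factor $(1+\alpha-\lambda_i)^a$ (respectively $(\lambda_i-\alpha)^b$) is singular when $a<0$ (respectively $b<0$). However, since the rescaling absorbs this factor exactly into the $\nunu^a$ (respectively $\nunu^b$) of the resulting Laguerre integrand, which is integrable for any $a,b>-1$, the passage to the limit by dominated convergence is justified on a fixed neighborhood $B_\delta(0)$ of the minimum, and the truncation outside $B_\delta(0)$ costs only an exponentially small error of the form $\OO(\E^{-\eps |t|})$ for a suitable $\eps=\eps(\delta)>0$. This is the only real technical step; the rest is bookkeeping of the algebraic prefactors to match $J_{k,n}^{\pm}$.
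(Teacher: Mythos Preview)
Your proposal is correct and follows essentially the same route as the paper: the multiple integral representation, localization at the boundary extremum of the phase $\sum_i 1/\lambda_i$, a shift-and-rescale change of variables by $2(1+\alpha)^2/t$ (respectively $2\alpha^2/|t|$), and reduction via dominated convergence to the Laguerre--Selberg integral~\eqref{eq:LaguerreHankelEXPL}. The only cosmetic difference is that the paper first linearizes the phase via Laplace's method and then rescales, whereas you shift first and rescale directly; after the rescaling the exact (nonlinear) phase is bounded below by $\sum_i\nu_i$ on the relevant domain, which supplies the dominating function, so both orderings are rigorous.
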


\begin{proof}
 The multiple integral expression~\eqref{eq:HankelviaIntegral} yields 
\begin{align*}
\Delta_{k,n}(t)&=\frac{1}{n!}\int_{(\alpha,1+\alpha)^n} \mlam^k \wand_n(\mlam) w_\alpha^{(a,b)}(\mlam) \E^{-\frac{t}{2}\sum_{i=1}^n\frac{1}{\lambda_i}}d\mlam \\
&=\frac{\E^{-\frac{nt}{2(1+\alpha)}}}{n!}\int_{(\alpha,1+\alpha)^n} \mlam^k \wand_n(\mlam) w_\alpha^{(a,b)}(\mlam) \E^{-tg(\mlam)}d\mlam,
\end{align*}
where $g\colon [\alpha,1+\alpha]^n \to \R_{\ge 0}$ is given by 
\begin{align*}
g(\mlam) = \frac{1}{2}\sum_{i=1}^n\biggl(\frac{1}{\lambda_i} - \frac{1}{1+\alpha}\biggr).
\end{align*}
Clearly, the function $g$ is non-negative on $[\alpha,1+\alpha]^n$ and attains its minimum at $\mlam_{1+\alpha}$ with $g(\mlam_{1+\alpha})=0$. The asymptotic behavior of the integral 
\begin{align*}
I_{k,n}(t)&=\int_{(\alpha,1+\alpha)^n} \mlam^k \wand_n(\mlam) w_\alpha^{(a,b)}(\mlam) \E^{-tg(\mlam)} d\mlam
\end{align*}
can be obtained by applying Laplace's method. More specifically, the Taylor expansion of $g$ near $\mlam_{1+\alpha}$ is simply given by
\begin{align*}
g(\mlam) = \sum_{i=1}^n \frac{1 + \alpha - \lambda_i}{2(1+\alpha)^2} +\OO(|\mlam-\mlam_{1+\alpha}|^2).
\end{align*}
Arguing as in~\cite[Chapter~4]{deBr}, \cite{mil}, it is not difficult to conclude that  
\begin{align*}
  I_{k,n}(t) \sim \wt{I}_{k,n}(t)&:=\int_{(\alpha,1+\alpha)^n} \mlam^k \wand_n(\mlam) w_\alpha^{(a,b)}(\mlam) \E^{-\frac{t}{2(1+\alpha)^2}\sum_{i=1}^n (1 + \alpha - \lambda_i)} d\mlam.
\end{align*}
Next, from the change of variables $\lambda_i \mapsto 1+\alpha - \frac{2(1+\alpha)^2}{t}\lambda_i$ we get
\begin{align*}
 &  \biggl(\frac{2(1+\alpha)^{2}}{t}\biggr)^{-n^2 - an} \wt{I}_{k,n}(t) \\
& \quad = \int_{\bigl(0,\frac{t}{2(1+\alpha)^2}\bigr)^n}  \biggl(\mlam_{1+\alpha} - \frac{2(1+\alpha)^2}{t}\mlam\biggr)^k \biggl(\mlam_{1} - \frac{2(1+\alpha)^2}{t}\mlam\biggr)^b  \frac{\mlam^a \wand_n(\mlam)}{\E^{\sum_{i=1}^n  \lambda_i}}d\mlam.
\end{align*}
By dominated convergence, the last integral tends to 
 \begin{align*}
(1+\alpha)^{kn} \int_{\R_{>0}^n} \mlam^a \wand_n(\mlam) \E^{-\sum_{i=1}^n  \lambda_i} d\mlam 
\end{align*}
as $t\to \infty$ and it remains to use~\eqref{eq:LaguerreHankelEXPL}.

The justification of the asymptotics when $t\to-\infty$ is verbatim with the only change that now the main contribution in the corresponding multiple integral comes from the left spectral edge, and we omit the details.
\end{proof}

\begin{proof}[Proof of Theorem~\ref{th:JacobiPeakons}]
All the claims follow from straightforward calculations. Indeed, as $t\to\infty$, we easily get from~\eqref{sol_xw} and~\eqref{eq:JacobiHankt+} that 
\begin{align*}
\E^{x_n(t)}  = \frac{\Delta_{2,n-1}(t)}{\Delta_{0,n}(t)} & = \frac{\E^{-\frac{(n-1)t}{2(1+\alpha)}}t^{-(n-1)(n-1+a)}}{\E^{-\frac{nt}{2(1+\alpha)}}t^{-n(n+a)}} \frac{J^+_{2,n-1}}{J^+_{0,n}} (1+\oo(1)) \\
& = \E^{\frac{t}{2(1+\alpha)}}t^{2n+a-1} \frac{J^+_{2,n-1}}{J^+_{0,n}}\left(1 +\oo(1)\right) 
\end{align*}
%%%%Calculations%%%
%\begin{align*}
%\frac{J^+_{2,n-1}}{J^+_{0,n}} & = \frac{C_{a,b}^{(n-1)}2^{(n-1)^2+(a+b)(n-1)}(2+\alpha)^{2(n-1)^2+(2a+2)(n-1)}\prod_{j=0}^{n-2}j!\, \Gamma(j+a+1)}{C_{a,b}^n2^{n^2+
%(a+b)n}(2+\alpha)^{2n^2+2an}\prod_{j=0}^{n-1}j!\, \Gamma(j+a+1)}\\
%& = \frac{1}{C_{a,b} 2^{2n-1+(a+b)}(2+\alpha)^{2n + 2a }(n-1)!\, \Gamma(n+a)}
%\end{align*}
as well as 
\begin{align*}
\omega_n(t) & =\frac{\Delta_{0,n}(t)\Delta_{2,n-1}(t)}{\Delta_{1,n}(t)\Delta_{1,n-1}(t)}   =  \frac{J^+_{0,n} J^+_{2,n-1}}{J^+_{1,n}J^+_{1,n-1} } + \oo(1).
\end{align*}
The asymptotics as $t\to-\infty$ follow in a similar way.
\end{proof}

\begin{remark}
The above analysis extends to a wider class of spectral measures. Namely, with minor modifications and also taking into account Remark~\ref{rem:scaling}, it is possible to extend the analysis to solutions associated with 
\begin{align}\label{geJac_meas_t}
\rho(d\lambda;t) &= \E^{-\frac{t}{2\lambda}} h(\lambda)w(\lambda)   d\lambda,
\end{align}
where $w$ is the general Jacobi weight in~\eqref{eq:JacobWeightGen} and $h\colon [\tilde{\alpha},\tilde{\beta}]\to (0,\infty)$ is a continuous function. Notice that the latter includes the measure given by~\eqref{eq:rho0-periodic}, which corresponds to the initial data having the form of a half-periodic multi-peakon profile~\eqref{eq:u-1period}. %It is interesting that for large times the peakonwise asymptotic formulas look like the one given by~\eqref{eq:u-1period}. 
We refrain from stating the corresponding results here and leave them to the interested reader. 
 \end{remark}

\eat{
\begin{lemma} Given the deformed measure \eqref{geJac_meas_t}, for fixed $n$, we have 
\begin{enumerate}
\item as $t\rightarrow\infty$, 
\begin{align}
\Delta_{k,n}(t) \sim (h_{2+\alpha})^n\E^{-\frac{nt}{2(2+\alpha)}}2^{n^2+(a+b)n}t^{-n^2-an}(2+\alpha)^{2n^2+2an+kn}\prod_{j=0}^{n-1}j!\, \Gamma(j+a+1).
\end{align}
\item as $t\rightarrow-\infty$, 
\begin{align}
\Delta_{k,n}(t) \sim (h_{\alpha})^n\E^{-\frac{nt}{2\alpha}}2^{n^2+(a+b)n}(-t)^{-n^2-bn}\alpha^{2n^2+2bn+kn}\prod_{j=0}^{n-1}j!\, \Gamma(j+b+1).
\end{align}
\end{enumerate}
\end{lemma}

\begin{theorem}
Given  the deformed measure  \eqref{geJac_meas_t}, for fixed $n$, we have
\begin{enumerate}
\item as $t\rightarrow\infty$, 
\begin{align*}
x_n(t) \sim &\frac{t}{2(2+\alpha)}+(2n-1+a)\log t-(2n-1+a+b)\log2\\
&-2(n-a)\log(2+\alpha)-\log \left((n-1)!\Gamma(n+a)\right)
-\log h_{2+\alpha}, \\
 \omega_n(t) \sim &\frac{1}{2+\alpha},
\end{align*}
\item as $t\rightarrow-\infty$, 
\begin{align*}
x_n(t) \sim &\frac{t}{2\alpha}+(2n-1+b)\log (-t)-(2n-1+a+b)\log2\\
&-2(n-b)\log\alpha-\log \left((n-1)!\Gamma(n+b)\right)
-\log h_{\alpha}, \\
 \omega_n(t) \sim &\frac{1}{\alpha}.
\end{align*}
\end{enumerate}
\end{theorem}
\begin{corollary}
Given  the deformed measure  \eqref{geJac_meas_t}, for fixed $n$, as $t\rightarrow\infty$, there holds 
\begin{enumerate}
\item as $t\rightarrow\infty$, 
\begin{subequations}
\begin{align}
&x_n(t)\rightarrow \infty,\qquad x_n(t)=\OO(t), \qquad \omega_n(t) =\OO(1),\\
& |x_j(t)-x_k(t)|\rightarrow \infty, \qquad \text{for all }\qquad  j\neq k,
\end{align}
\end{subequations}
\item as $t\rightarrow-\infty$,
\begin{subequations}
\begin{align}
&x_n(t)\rightarrow -\infty,\qquad x_n(t)=\OO(t), \qquad \omega_n(t) =\OO(1),\\
& |x_j(t)-x_k(t)|\rightarrow \infty, \qquad \text{for all }\qquad  j\neq k.
\end{align}
\end{subequations}
\end{enumerate}
\end{corollary}

\begin{remark}
The asymptotic rate is different from that of Laguerre case!
\end{remark}}

%%%%%%
%%%%%%
\appendix
%%%%%%
%%%%%%

%%%%%%%%%%%%%%%%%%%%%%%%%%%%%%%%%
\section{Generalized indefinite strings and the classical moment problem}\label{appMoment}
%%%%%%%%%%%%%%%%%%%%%%%%%%%%%%%%%

The major goal in this appendix is to demonstrate how the moment problem can be included into the spectral theory of generalized indefinite strings and also to describe its solutions in the indeterminate case. In fact, a rather detailed account on the former can be found in~\cite{IndMoment}. 

%%%%%%%%%%%%%%%%%%
\subsection{Generalized indefinite strings}\label{appA:GIS}
%%%%%%%%%%%%%%%%%%
We begin by recalling from~\cite{IndefiniteString} that a {\em generalized indefinite string} is a triple $(L,\tilde{\omega},\tilde{\dip})$ such that $L\in(0,\infty]$, $\tilde{\omega}$ is a real distribution in $H^{-1}_{\loc}[0,L)$ and $\tilde{\dip}$ is a positive Borel measure on the interval $[0,L)$.
The unique function $\tilde{\Wr}$ in $L^2_{\loc}[0,L)$ such that 
 \begin{align}
  \tilde{\omega}(h) & = - \int_0^L \tilde{\Wr}(x)h'(x)dx 
 \end{align}
 for all functions $h\in H^1_{\cc}[0,L)$ is called the {\em normalized anti-derivative} of $\tilde{\omega}$. 
Associated with such a generalized indefinite string $(L,\tilde{\omega},\tilde{\dip})$ is the ordinary differential equation  
  \begin{align}\label{eqnGISODEapp}
  -y'' = z\, \tilde{\omega}y + z^2 \tilde{\dip}y
 \end{align}
 on the interval $[0,L)$, where $z$ is a complex spectral parameter. Of course, this differential equation has to be understood in a weak sense:   
  A solution of~\eqref{eqnGISODEapp} is a function $f\in H^1_{\loc}[0,L)$ such that 
 \begin{align}\label{eqnIntDEho}
 f'\NL h(0) + \int_{0}^L f'(x) h'(x) dx = z\, \tilde{\omega}(fh) + z^2 \int_{0}^L f(x)h(x)\tilde{\dip}(dx)
 \end{align}
 for all functions $h\in H^1_{\cc}[0,L) $ and a (unique) constant $f'\NL \in\C$. 
 With this notion of solution, one can introduce the fundamental system of solutions $\theta(z,\redot)$ and $\phi(z,\redot)$ satisfying the initial conditions
 \begin{align}
  \theta(z,0)& = \phi'\NLz =1, &  \theta'\NLz & = \phi(z,0) =0,
 \end{align}
 for every $z\in\C$; see \cite[Lemma~3.2]{IndefiniteString}. 
 Even though the derivatives of these functions are only locally square integrable in general, there are unique left-continuous functions $\theta^\qd(z,\redot)$ and $\phi^\qd(z,\redot)$ on $[0,L)$ such that 
 \begin{align}
   \theta^\qd(z,x) & = \theta'(z,x) + z\Wr(x)\theta(z,x), & \phi^\qd(z,x) & = \phi'(z,x) + z\Wr(x)\phi(z,x),
 \end{align}
 for almost all $x\in[0,L)$;  see \cite[Equation~(4.12)]{IndefiniteString}.
 These functions will henceforth be referred to as {\em quasi-derivatives} of the solutions $\theta(z,\redot)$ and $\phi(z,\redot)$. 
 As functions of the spectral parameter $z$, the solutions as well as their quasi-derivatives are entire; see~\cite[Corollary~3.5]{IndefiniteString} for example.  
 
 One of the main objects in spectral theory for a generalized indefinite string $(L,\omega,\dip)$ is the Weyl--Titchmarsh function $m$ defined on $\C\backslash\R$ by 
\begin{align}
  m(z) = \lim_{x\to L}-\frac{\theta(z,x)}{z\phi(z,x)}.
\end{align}
This Weyl--Titchmarsh function $m$ turns out to be a {\em Herglotz--Nevanlinna function}, that is, it is analytic, maps the upper complex half-plane $\C_+$ into the closure of the upper complex half-plane and satisfies the symmetry relation
 \begin{align}
  m(z^\ast)^\ast = m(z)
 \end{align}
 for all $z\in\C\backslash\R$. % (here and below we denote by $z^\ast$ the complex conjugate of $z\in \C$). 
 Even more, we established in~\cite[Section~6]{IndefiniteString} that the map $(L,\omega,\dip)\mapsto m$ is a homeomorphism between the set of all generalized indefinite strings (equipped with a reasonable topology) and the set of all Herglotz--Nevanlinna functions (equipped with the topology of locally uniform convergence). 

In addition, we will also need the following simple fact:
 For a given generalized indefinite string $(L,\tilde{\omega},\tilde{\dip})$ and any $\ell\in (0,L)$, let us denote by $S_{\ell}$ the generalized indefinite string obtained by cutting out from the left a piece of length $\ell$; we shall call it a {\em truncated generalized indefinite string}. More specifically, this new string $S_{\ell}$ has length $L-\ell$ and coefficients $\tilde{\omega}_{\ell}$ and $\tilde{\dip}_{\ell}$ defined by 
\begin{align}\label{eq:StringTrunc}
\tilde{\Wr}_{\ell} (x) & = \tilde{\Wr}(x+\ell), &
\int_{0}^xd\tilde{\dip}_{\ell} & = \int_{\ell}^{\ell+x}d\tilde{\dip}.
\end{align}
 
\begin{lemma}\label{lem:GIStrunc}
Let $(L,\tilde{\omega},\tilde{\dip})$ be a generalized indefinite string and let $(L-\ell,\tilde{\omega}_\ell,\tilde{\dip}_\ell)$ be a truncated generalized indefinite string with $\ell\in (0,L)$.  Then the corresponding Weyl--Titchmarsh functions $m$ and $m_\ell$ are connected by
\begin{align}\label{eq:WTtrunc}
m(z) = -\frac{\theta(z,\ell) m_\ell(z)  - \frac{1}{z}\theta^\qd_\ell(z,\ell)}{z\phi(z,\ell)m_\ell(z)-\phi^\qd_\ell(z,\ell)},
\end{align}
where $f^\qd_\ell(x) = f'(x) + z\Wr_\ell(x-\ell)f(x)$.
\end{lemma}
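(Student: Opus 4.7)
The plan is to express the original fundamental solutions $\theta(z,\cdot)$ and $\phi(z,\cdot)$, when restricted to $[\ell, L)$, in terms of the fundamental system for the truncated string, and then pass to the limit in the defining formula for $m(z)$.

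First, I would observe that if $y$ solves the ODE~\eqref{eqnGISODEapp} for the original string on $[0,L)$, then the shifted function $\xi \mapsto y(\ell + \xi)$ solves the analogous equation for the truncated string $(L-\ell,\tilde\omega_\ell,\tilde\dip_\ell)$ on $[0, L-\ell)$; this is immediate from the definitions~\eqref{eq:StringTrunc}. Using the initial conditions $\theta_\ell(z,0)=\phi^\qd_\ell(z,0-)=1$ and $\theta^\qd_\ell(z,0-)=\phi_\ell(z,0)=0$ for the truncated fundamental system, one checks by matching position and quasi-derivative at $\xi = 0$ that
\begin{align*}
\theta(z, \ell + \xi) &= \theta(z,\ell)\,\theta_\ell(z,\xi) + \theta^\qd_\ell(z,\ell)\,\phi_\ell(z,\xi), \\
\phi(z, \ell + \xi) &= \phi(z,\ell)\,\theta_\ell(z,\xi) + \phi^\qd_\ell(z,\ell)\,\phi_\ell(z,\xi),
\end{align*}
for all $\xi \in [0, L - \ell)$ and $z \in \C$.

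Next, I would substitute these expansions into $m(z) = \lim_{x \to L} -\theta(z,x)/(z\phi(z,x))$, so that after the change of variable $x = \ell+\xi$ and dividing both numerator and denominator by $\phi_\ell(z,\xi)$, it remains to pass to the limit $\xi \to L-\ell$. Since $\theta_\ell(z,\xi)/\phi_\ell(z,\xi) \to -z\,m_\ell(z)$ by the analogous definition of $m_\ell$, the limit yields
\begin{align*}
m(z) = -\frac{-z\,\theta(z,\ell)\,m_\ell(z) + \theta^\qd_\ell(z,\ell)}{-z^2 \phi(z,\ell)\,m_\ell(z) + z\,\phi^\qd_\ell(z,\ell)},
\end{align*}
which reduces to the desired formula~\eqref{eq:WTtrunc} after dividing numerator and denominator by $-z$.

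The only point requiring some care — and the main (albeit minor) obstacle — is the precise meaning of the quasi-derivatives at the truncation point $\ell$. Since the normalized anti-derivative $\tilde\Wr$ may have a jump at $\ell$ and $\tilde\dip$ may carry a point mass there, the left-continuous quasi-derivative $\theta^\qd(z,\ell)$ of the original string can differ from the quantity that actually matches the initial data for the truncated problem. The subscript-$\ell$ notation $\theta^\qd_\ell(z,\ell) = \theta'(z,\ell) + z\tilde\Wr_\ell(0)\theta(z,\ell)$ in the lemma is designed to pick up precisely the right value — essentially the right-continuous limit at $\ell$ — so that the expansion and limit above are valid. Once this bookkeeping is handled, the whole proof is algebraic.
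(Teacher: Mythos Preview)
The paper states this lemma without proof, treating it as a standard fact in the spectral theory of generalized indefinite strings. Your argument---expressing the original fundamental solutions on $[\ell,L)$ as linear combinations of the truncated fundamental system and then passing to the limit in the definition of $m$---is precisely the natural proof, and your identification of the only delicate point (the correct interpretation of the quasi-derivative at the truncation point $\ell$, encoded in the subscript-$\ell$ notation) is accurate.
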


\begin{remark}\label{rem:mcontfracApp}
Let us write down explicitly one particular situation, which will be of interest to us in Section~\ref{secMoment}.
If $(L,\tilde{\omega},\tilde{\dip})$ and $\ell>0$ are such that $\tilde{\omega}|_{[0,\ell)} = \omega_0 \delta_0$ and $\tilde{\dip}|_{[0,\ell)} = 0$, then, taking into account that 
\begin{align*}
\theta(z,x) & = 1-z\omega_0 x, %& \theta^\qd(z,x) & = - z^2\omega_0^2 x, 
& \phi(z,x) & = x, %& \phi^\qd(z,x) & = 1 + z\omega_0 x, 
\end{align*}
for all $x\in [0,\ell]$, we get 
\begin{align}\label{eq:mcontfracApp}
m(z) = \omega_0 + \cfrac{1}{-z\ell + \cfrac{1}{m_\ell(z)}}.
\end{align}
%
%It is straightforward to see that 
%\begin{align*}
%\im m(\lambda+\I0) & = \frac{\im m_\ell(\lambda+\I 0)}{|\lambda\phi(\lambda,\ell)m_\ell(\lambda+\I 0)-\phi^\qd_\ell(\lambda,\ell)|^2}\\
%\end{align*}
%where 
%\begin{align}
%m_-(z) = -\frac{\phi^\qd_\ell(z,\ell)}{z\phi(z,\ell)},\qquad z\in \C,
%\end{align}
%is a meromorphic Herglotz--Nevanlinna function. If $\omega|_{[0,\ell)} = \omega_0 \delta_0$ and $\dip|_{[0,\ell)} = 0$, then 
%
%for all $x\in (0,\ell)$, and hence 
%
%In particular,
%\begin{align}
%\int_{B}\rho(d\lambda) = \int_B \frac{1}{\ell^2\lambda^2|m_\ell(\lambda+\I 0)  - \frac{1}{\ell\lambda}|^2}\rho_\ell(d\lambda),
%\end{align}
%which in particular implies that
%\begin{align}
%s_1 = \int_{\R}\lambda \rho(d\lambda) = \frac{1}{\ell^2}\int_B \Big|m_\ell(\lambda+\I 0) - \frac{1}{\ell\lambda}\Big|^{-2}\frac{\rho_\ell(d\lambda)}{\lambda},
%\end{align}
\end{remark}

We are now going to focus on a subclass of generalized indefinite strings that is particularly relevant in connection with the moment problem. 

\begin{definition}[\cite{IndMoment}]\label{def:KLstring}
A {\em Krein--Langer string} is a generalized indefinite string $(L,\tilde{\omega},\tilde{\dip})$ such that the coefficients $\tilde{\omega}$ and $\tilde{\dip}$ are supported on a discrete set in $[0,L)$. 
\end{definition}

For a Krein--Langer string $(L,\tilde{\omega},\tilde{\dip})$, the distribution $\tilde{\omega}$ and the measure $\tilde{\dip}$ can be written in a unique way as  
    \begin{align}\label{eqnKL}
      \tilde{\omega} & =  \sum_{n=0}^{N} \tilde{\omega}_n \delta_{\tilde{x}_n}, & \tilde{\dip} & =  \sum_{n=0}^{N} \tilde{\dip}_n \delta_{\tilde{x}_n},
    \end{align}
 for some $N\in\N\cup\{0,\infty\}$, strictly increasing points $(\tilde{x}_n)_{n=0}^N$ in $[0,L)$ with $\tilde{x}_0=0$ and $\tilde{x}_n\rightarrow L$ if $N=\infty$, real weights $(\tilde{\omega}_n)_{n=0}^N$ and non-negative weights $(\tilde{\dip}_n)_{n=0}^N$ with $|\tilde{\omega}_n|+\tilde{\dip}_n>0$ for all $n\geq 1$ (notice that we do allow a simultaneous vanishing of $\tilde{\omega}_0$ and $\tilde{\dip}_0$). 
 Here we also use $\delta_x$ to denote the unit Dirac measure centered at $x$.  
 The distances between consecutive point masses are given by 
 \begin{align}\label{eqnKLell}
   \tilde{\ell}_n = \tilde{x}_n - \tilde{x}_{n-1}. 
 \end{align}

\begin{definition}\label{def:KSstring}
A generalized indefinite string $(L,\tilde{\omega},\tilde{\dip})$ such that the distribution $\tilde{\omega}$ is a positive Borel measure on $[0,L)$ and the measure $\tilde{\dip}$ is identically zero is called a {\em Krein string} and is denoted by $(L,\tilde{\omega})$. A {\em Krein--Stieltjes string} is a Krein string $(L,\tilde{\omega})$ such that the coefficient $\tilde{\omega}$ is supported on a discrete set in $[0,L)$. 
\end{definition}

\begin{remark}\label{rem:KreinSting}
Let us emphasize that a generalized indefinite string is a Krein string if and only if its Weyl--Titchmarsh function $m$ is a {\em Stieltjes function}, that is, it is a Herglotz--Nevanlinna function such that $z\mapsto zm(z)$ is also Herglotz--Nevanlinna. Equivalently, a function $m$ on $\C\backslash\R$ is a Stieltjes function if and only if it admits an integral representation
\begin{align}\label{eq:mStieltjes}
m(z) = c - \frac{1}{Lz} + \int_{(0,\infty)}\frac{\rho(d\lambda)}{\lambda-z},
\end{align}
where $c$ is a non-negative constant, $L$ is a constant in $(0,\infty]$ and $\rho$ is a positive Borel measure on $[0,\infty)$ having no point mass at zero and satisfying 
\begin{align}\label{eq:mStieltjesRho}
 \int_{(0,\infty)}\frac{\rho(d\lambda)}{\lambda+1}<\infty.
\end{align}
 For more details we refer to~\cite{kakr74a}. 
 \end{remark}

 Let us now consider the differential equation~\eqref{eqnGISODEapp} for a Krein--Langer string $(L,\tilde{\omega},\tilde{\dip})$. Since $\tilde{\omega}$ and $\tilde{\dip}$ are both measures in this case, the differential equation reduces to the integral equation
   \begin{align}\label{eq:IntEqn}
  f(x) = f(0) + f'(0-)x - z\int_{0}^{x} (x-s) f(s)\,\tilde{\omega}(ds) - z^2 \int_{0}^{x} (x-s) f(s)\,\tilde{\dip}(ds),
  \end{align}
 which in turn is nothing but  
\begin{align}\label{eqLintEqnKL}
f(x) = f(0) + f'(0-)x - \sum_{\tilde{x}_n<x} (x-\tilde{x}_n) (z\,\tilde{\omega}_n+z^2\tilde{\dip}_n) f(\tilde{x}_n).
\end{align}
Clearly, the solution $f$ is thus continuous and piece-wise linear. Moreover, evaluating $f$ at the points $\tilde{x}_n$, we get
\begin{align} \label{eq:recKL1}
 \begin{split}
f'(\tilde{x}_n+) - f'(\tilde{x}_n-) & = - (z\,\tilde{\omega}_n + z^2\tilde{\dip}_n) f(\tilde{x}_n), \\
f(\tilde{x}_n) - f(\tilde{x}_{n-1}) & = \tilde{\ell}_{n} f'(\tilde{x}_{n-1}+) = \tilde{\ell}_{n} f'(\tilde{x}_{n}-),
 \end{split}
\end{align}
 for $n\geq1$. 
 In particular, the representation~\eqref{eqLintEqnKL} shows that the functions $\theta(\ledot,x)$ and $\phi(\ledot,x)$ are polynomials for every $x\in[0,L)$. 
 Upon setting
\begin{align}
m_n(z) = -\frac{\theta(z,\tilde{x}_n)}{z\, \phi(z,\tilde{x}_n)},\quad z\in\C\backslash\R,
\end{align}
it is not difficult to see using \eqref{eq:recKL1} that $m_n$ admits the continued fraction expansion
\begin{align}\label{eq:cfracKL}
m_n(z) = \tilde{\omega}_0 + \tilde{\dip}_0 z+ \cfrac{1}{-\tilde{\ell}_1 z +  \cfrac{1}{\tilde{\omega}_1 + \tilde{\dip}_1 z + \cfrac{1}{\,\ddots\, + \cfrac{1}{\tilde{\omega}_{n-1}+ \tilde{\dip}_{n-1} z + \cfrac{1}{-\tilde{\ell}_n z}}}}}
\end{align}
for all $n\geq1$. In the setting of Krein strings, this connection was discovered by M.\ G.\ Krein, who showed that Krein--Stieltjes strings play a crucial role in the study of the Stieltjes moment problem~\cite{kr52} (see also~\cite[Appendix]{akh} and~\cite[Section~13]{kakr74}). 
% and that (except for a point mass at zero) the measure $\omega$ can be recovered explicitly in terms of the moments of the spectral measure $\rho$ in this case. 
 In a similar way, Krein--Langer strings are related to the indefinite moment problem~\cite{krla79} and to the Hamburger moment problem~\cite{IndMoment}. We are going to discuss these connections in the following subsection.
 
 %%%%%%%%%%%%%%%%%%
 \subsection{The classical moment problem}\label{appA:moment}
 %%%%%%%%%%%%%%%%%%
Let $(s_k)_{k=0}^\infty$ be a sequence of real numbers. 
 The classical {\em Hamburger moment problem} is to find a positive Borel measure $\rho$ on $\R$ such that the numbers $s_k$ are its moments of order $k$, that is, such that 
\begin{align}\label{eq:Hamburger}
s_k=\int_{\R} \lambda^k\, \rho(d\lambda)
\end{align}
for all $k\in \N\cup \{0\}$.
 Every positive Borel measure $\rho$ on $\R$ that satisfies \eqref{eq:Hamburger} is called a solution of the Hamburger moment problem with data $(s_k)_{k=0}^\infty$. 
 Similarly, the {\em Stieltjes moment problem} is to find a positive Borel measure $\rho$ supported on $[0,\infty)$ such that the numbers $s_k$ are its moments of order $k$, that is, such that 
\begin{align}\label{eq:Stieltjes}
s_k=\int_{[0,\infty)} \lambda^k\, \rho(d\lambda)
\end{align}
for all $k\in \N\cup \{0\}$.
There are two principal questions: 
\begin{enumerate}[label=(\roman*), ref=(\roman*), leftmargin=*, widest=iii]
\item
For which sequences $(s_k)_{k=0}^\infty$ are the moment problems solvable? 
\item
 Are solutions unique? If not, how to describe the set of all solutions?
\end{enumerate}

The answer to the first question is well known (see~\cite{akh, sc17}): {\em The Hamburger (respectively, Stieltjes) moment problem has a solution if and only if the moment sequence $(s_k)_{k=0}^\infty$ is positive (respectively, doubly positive, that is, both sequences $(s_{k})_{k=0}^\infty$ and $(s_{k+1})_{k=0}^\infty$ are positive).} Recall that a sequence $(s_k)_{k=0}^\infty$
is called positive (respectively, strictly positive) if the Hankel determinants\footnote{The Hankel determinants $\Delta_{l,k}$ here are defined by~\eqref{eqnHankel1,2} as in Section~\ref{secMoment}.} $\Delta_{0,n}$ are non-negative (respectively, positive) for all $n\in \N\cup \{0\}$. In particular, the sequence $(s_{k+1})_{k=0}^\infty$ is positive if $\Delta_{1,n}$ are non-negative for all 
$n\in \N\cup \{0\}$. 

Let us now briefly sketch an approach to the second question, which was pioneered by M.\ G.\ Krein~\cite{kr52, kakr74}. More specifically, he observed that the Stieltjes moment problem can be included into the spectral theory of Krein strings, where a major role is played by Krein--Stieltjes strings. Similarly, it was shown in~\cite{IndMoment} that the Hamburger moment problem can be included into the spectral theory of generalized indefinite strings, where the special role is played by Krein--Langer strings. 

From now on, let us suppose that $(s_k)_{k=0}^\infty$ is a strictly positive sequence, which is equivalent to the fact that the corresponding solution to the moment problem is supported on an infinite set.
We define the increasing function 
 \begin{align}
   \kappa\colon \N\rightarrow \N\cup\{0\}
 \end{align}
  such that $\Delta_{1,\kappa(1)},\Delta_{1,\kappa(2)},\ldots$ enumerates all non-zero members of the sequence $(\Delta_{1,k})_{k=0}^\infty$.  
 Since the latter sequence does not have any consecutive zeros, we could alternatively also define $\kappa$ recursively via 
  \begin{align} 
     \kappa(1) & = 0, &   \kappa(n+1) & = \begin{cases} \kappa(n)+1, & \Delta_{1,\kappa(n)+1}\not=0, \\ \kappa(n)+2, & \Delta_{1,\kappa(n)+1}=0. \end{cases}
  \end{align} 
  With this function $\kappa$,  we then set
  \begin{subequations}\label{eqnGISDelApp}
      \begin{align}
                \tilde{x}_n & = \frac{\Delta_{2,\kappa(n)}}{\Delta_{0,\kappa(n)+1}}, \qquad L  = \lim_{n\to \infty} \tilde{x}_n, \label{eqnGISDelxApp} \\
        	  \tilde{\omega}_n & = \frac{\Delta_{-1,\kappa(n)+1}}{\Delta_{1,\kappa(n)}} - \frac{\Delta_{-1,\kappa(n+1)+1}}{\Delta_{1,\kappa(n+1)}}, \label{eqnGISDelomegaApp} \\
	 \tilde{\dip}_n & = \frac{\Delta_{-2,\kappa(n)+2}}{\Delta_{0,\kappa(n)+1}} -  \frac{\Delta_{-2,\kappa(n+1)+1}}{\Delta_{0,\kappa(n+1)}}, \label{eqnGISDeldipApp}
      \end{align}
           \end{subequations}  
           where the determinants $\Delta_{-1,k}$ and $\Delta_{-2,k}$ are computed with $s_{-1}=s_{-2}=0$. 
          This gives rise to a Krein--Langer string $(L,\tilde{\omega},\tilde{\dip})$ with $\tilde{\omega}$ and $\tilde{\dip}$ defined by 
           \begin{align}
             \tilde{\omega} & = \sum_{n=1}^\infty \tilde{\omega}_n \delta_{\tilde{x}_n}, & \tilde{\dip} & = \sum_{n=1}^\infty \tilde{\dip}_n \delta_{\tilde{x}_n}.
           \end{align}
       In this way, we arrive at a map
\begin{align}\label{eq:mapPsi}
\Psi_{\mathcal{S}}\colon (s_k)_{k=0}^\infty\mapsto (L,\tilde{\omega},\tilde{\dip})
\end{align}
from the set of all strictly positive sequences to a subset of Krein--Langer strings. In fact (see~\cite[Theorem~5.3]{IndMoment}), this map establishes: 
\begin{enumerate}[label=(\roman*), ref=(\roman*), leftmargin=*, widest=iii]
\item
A one-to-one correspondence between the set of all strictly positive sequences and the set of all Krein--Langer strings with $|\tilde{\omega}_0|+\tilde{\dip}_0 = 0$ and $N=\infty$.
\item 
 A one-to-one correspondence between the set of all doubly strictly positive sequences and the set of all Krein--Stieltjes strings with $\tilde{\omega}_0 = 0$ and $N=\infty$.
 \end{enumerate}
The latter leads to the following determinacy criterion.

\begin{theorem}\label{th:determinacy}
  Let $(s_k)_{k=0}^\infty$ be a strictly positive sequence.
  Then the following conditions are equivalent:
\begin{enumerate}[label=(\roman*), ref=(\roman*), leftmargin=*, widest=iii]
\item The Hamburger moment problem \eqref{eq:Hamburger} is indeterminate.
\item The series below converges; 
 \begin{align}\label{eq:HambIndet}
 \sum_{n=1}^\infty \tilde{\ell}_{n} + \tilde{\ell}_{n} \tilde{\Wr}_{n}^2 + \tilde{\dip}_{n} & < \infty, & \tilde{\Wr}_n & :=\sum_{k=1}^n \tilde{\omega}_k.
 \end{align}
\end{enumerate}
If the sequence $(s_k)_{k=0}^\infty$ is doubly strictly positive, then the following conditions are equivalent:
\begin{enumerate}[label=(\roman*), ref=(\roman*), leftmargin=*, widest=iii]
\item The Stieltjes moment problem \eqref{eq:Stieltjes} is indeterminate.
\item The series below converges; 
 \begin{align}\label{eq:StilIndet}
 \sum_{n= 1}^\infty \tilde{\ell}_{n} + \tilde{\omega}_{n}  < \infty.
 \end{align}
\end{enumerate}
\end{theorem}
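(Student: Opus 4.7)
The plan is to deduce both equivalences from the bijective correspondence established by the map $\Psi_{\mathcal{S}}$ in~\eqref{eq:mapPsi}, reducing each statement to the classical theory of Weyl's alternative (limit point vs.\ limit circle) at the right endpoint $L$ of the associated (generalized) Krein--Langer string.

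First I would translate the three summability conditions in~\eqref{eq:HambIndet} into geometric conditions on the Krein--Langer string $(L,\tilde{\omega},\tilde{\dip})$ produced by $\Psi_{\mathcal{S}}$. Since $\tilde{x}_n-\tilde{x}_{n-1}=\tilde{\ell}_n$ and $\tilde{x}_0=0$, one has $\sum_{n\ge1}\tilde{\ell}_n = L$, so the first series converges iff $L<\infty$. Next, observe that the normalized anti-derivative $\tilde{\Wr}$ is piecewise constant, taking the value $\tilde{\Wr}_n$ on $(\tilde{x}_{n-1},\tilde{x}_n)$, and hence
\begin{align*}
\int_0^L \tilde{\Wr}(x)^2\, dx = \sum_{n=1}^\infty \tilde{\ell}_n \tilde{\Wr}_n^2,
\end{align*}
so convergence of the second series is equivalent to $\tilde{\Wr}\in L^2[0,L)$. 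Finally, $\sum_{n\ge1}\tilde{\dip}_n = \tilde{\dip}([0,L))$ (using $\tilde{\dip}_0=0$), so the third series converges iff $\tilde{\dip}$ is a finite measure on $[0,L)$. Thus~\eqref{eq:HambIndet} is precisely the assertion that $(L,\tilde{\omega},\tilde{\dip})$ is in the limit circle case at $L$ in the sense developed for generalized indefinite strings in~\cite{IndefiniteString}.

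For the Hamburger direction I would then invoke the classical spectral characterization: the Hamburger moment problem for $(s_k)_{k=0}^\infty$ is indeterminate precisely when the associated Jacobi matrix is in the limit circle case at infinity, equivalently when $\sum_{n\ge 0}|P_n(z)|^2<\infty$ for one (equivalently every) $z\in\C$, where $P_n$ are the orthonormal polynomials of the first kind. Under $\Psi_{\mathcal{S}}$, these orthonormal polynomials are (up to normalization) encoded in the fundamental solutions $\phi(z,\tilde{x}_n)$ and $\theta(z,\tilde{x}_n)$ of~\eqref{eqnGISODEapp} evaluated at the mass points, via the continued-fraction expansion~\eqref{eq:cfracKL}; consequently the Jacobi limit-circle condition corresponds exactly to the limit circle case for the string at $L$, which by the preceding paragraph is~\eqref{eq:HambIndet}. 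This equivalence for generalized indefinite strings is essentially~\cite[Theorem~6.1]{IndefiniteString} (combined with the correspondence of~\cite[Theorem~5.3]{IndMoment}), and I would simply quote it.

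For the Stieltjes direction, the map $\Psi_{\mathcal{S}}$ restricted to doubly strictly positive sequences gives Krein--Stieltjes strings (so $\tilde{\dip}\equiv0$ and $\tilde{\omega}_n\ge0$), and I would reduce to Krein's classical criterion. In this case the third series in~\eqref{eq:HambIndet} is empty, while positivity of all $\tilde{\omega}_n$ combined with $L<\infty$ forces $\tilde{\Wr}_n = \sum_{k=1}^n \tilde{\omega}_k$ to be monotone, so $\sum\tilde{\ell}_n\tilde{\Wr}_n^2<\infty$ is automatic if the total mass $\sum\tilde{\omega}_n$ is finite. Thus the limit circle condition reduces to $L<\infty$ together with $\tilde{\omega}([0,L))<\infty$, which is~\eqref{eq:StilIndet}. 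The main obstacle is bookkeeping: one must carefully verify that the Hamburger limit-circle case on a doubly strictly positive moment sequence yields not merely indeterminacy in Hamburger's sense but Stieltjes indeterminacy (i.e.\ the existence of at least two distinct positive Borel solutions supported on $[0,\infty)$). This however follows from Krein's classical result on Stieltjes strings~\cite{kakr74}, which we only need to transport through $\Psi_{\mathcal{S}}$.
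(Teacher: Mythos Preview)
The paper does not supply a proof of this theorem; it is recorded in the appendix as background, the Hamburger case being drawn from~\cite{IndMoment} (the paper cites~\cite[Theorem~5.7]{IndMoment} for precisely this criterion in the proof of Corollary~\ref{corStrinfpure}) and the Stieltjes case from Krein~\cite{kr52,kakr74}. Your overall strategy for the Hamburger part---identify the three series in~\eqref{eq:HambIndet} with $L<\infty$, $\tilde{\Wr}\in L^2[0,L)$ and $\tilde{\dip}([0,L))<\infty$, recognize this as the limit-circle condition for the generalized indefinite string, and transport the classical Jacobi limit-circle criterion through $\Psi_{\mathcal{S}}$---is correct and is exactly the line of argument in~\cite{IndMoment}. (One small slip: with $\tilde{\omega}_0=0$ and the normalization of Definition~\ref{def:GIS}, the anti-derivative equals $\tilde{\Wr}_{n-1}$, not $\tilde{\Wr}_n$, on $(\tilde{x}_{n-1},\tilde{x}_n)$, so your displayed identity for $\int_0^L\tilde{\Wr}^2$ is off by one index shift.)

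Your Stieltjes paragraph, however, contains a genuine logical error. You write that one must verify ``the Hamburger limit-circle case on a doubly strictly positive moment sequence yields \ldots\ Stieltjes indeterminacy''. This implication is \emph{false}: there exist doubly strictly positive sequences for which the Hamburger problem is indeterminate while the Stieltjes problem is determinate. In string terms, one can have $L<\infty$ and $\tilde{\Wr}\in L^2[0,L)$ while $\sum_n\tilde{\omega}_n=\infty$ (take for instance $\tilde{\omega}_n=1$ and $\tilde{\ell}_n=n^{-4}$). So the route you sketch, passing from~\eqref{eq:StilIndet} through the Hamburger limit-circle condition to Stieltjes indeterminacy, does not close, and the preceding sentence ``the limit circle condition reduces to $L<\infty$ together with $\tilde{\omega}([0,L))<\infty$'' is likewise an overstatement (you only showed one direction). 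The fix is the one you eventually arrive at anyway: invoke Krein's criterion \emph{directly}, which asserts that for a Krein--Stieltjes string the Stieltjes moment problem is indeterminate if and only if both the length and the total mass are finite. That single citation gives both directions of the Stieltjes equivalence and makes the detour through Hamburger unnecessary; the discussion of limit circles should then be presented only as motivation for the form of~\eqref{eq:StilIndet}, not as part of the proof.
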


Notice that the above result implies that indeterminacy either in the sense of Stieltjes or in the sense of Hamburger implies that $L$ is finite.
In conclusion, let us provide a description of all solutions to the moment problem in the indeterminate case. It follows from the results of I.\ S.\ Kac~\cite{kac99} and the connection between canonical systems and generalized indefinite strings~\cite{IndefiniteString}, \cite[Appendix~A]{IndMoment}.

\begin{theorem}\label{th:KreinParam}
  Let $(s_k)_{k=0}^\infty$ be a strictly positive sequence such that the corresponding Hamburger moment problem is indeterminate. Then the map 
  \begin{align}\label{eq:mapMPsol01}
  \tilde{m} \mapsto \rho_{\tilde{m}}
  \end{align}
  defined by 
  \begin{align}\label{eq:mapMPsol02}
  \int_\R \frac{\rho_{\tilde{m}}(d\lambda)}{\lambda - z} = -\frac{\theta(z,L) \tilde{m}(z)  - \frac{1}{z}\theta^\qd(z,L)}{z\phi(z,L)\tilde{m}(z)-\phi^\qd(z,L)},
  \end{align}
  where 
  \begin{align}
  \begin{pmatrix} \theta(z,L) & z\phi(z,L) \\ \theta^\qd(z,L)/z & \phi^\qd(z,L)\end{pmatrix} = 
  \lim_{n\to \infty} \begin{pmatrix} \theta(z,\tilde{x}_n) & z\phi(z,\tilde{x}_n) \\ \theta^\qd(z,\tilde{x}_n)/z & \phi^\qd(z,\tilde{x}_n)\end{pmatrix}
  \end{align}
  and $\theta(z,\redot)$, $\phi(z,\redot)$ is the fundamental system of solutions associated with the Krein--Langer string corresponding to the given moment sequence via the map~\eqref{eq:mapPsi}, establishes a one-to-one correspondence between the set of all Herglotz--Nevanlinna functions (augmented by the constant $\tilde{m}=\infty$ function) and the set of all solutions to the Hamburger moment problem.  
  
If   $(s_k)_{k=0}^\infty$ is a doubly strictly positive sequence such that the corresponding Stieltjes moment problem is indeterminate, then the map~\eqref{eq:mapMPsol01}--\eqref{eq:mapMPsol02} establishes a one-to-one correspondence between the set of all Stieltjes functions (augmented by the constant $\tilde{m}=\infty$ function) and the set of all solutions to the Stieltjes moment problem.
\end{theorem}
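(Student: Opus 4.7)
My plan is to interpret the right-hand side of~\eqref{eq:mapMPsol02} as the Weyl--Titchmarsh function of a generalized indefinite string obtained by gluing, at its endpoint $L$, a second string whose Weyl--Titchmarsh function equals $\tilde{m}$ onto the Krein--Langer string produced by $\Psi_{\mathcal{S}}$. The theorem then becomes a combination of Lemma~\ref{lem:GIStrunc}, the bijection between generalized indefinite strings and Herglotz--Nevanlinna functions from~\cite{IndefiniteString}, and Kac's Nevanlinna-type parameterization~\cite{kac99} in its string incarnation via~\cite[Appendix~A]{IndMoment}.

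First I would invoke Theorem~\ref{th:determinacy}: indeterminacy is equivalent to the convergence of~\eqref{eq:HambIndet}, which forces $L<\infty$ and, via the recursion~\eqref{eq:recKL1}, ensures that the entire functions $\theta(z,\tilde{x}_n)$, $\phi(z,\tilde{x}_n)$, $\theta^\qd(z,\tilde{x}_n)/z$ and $\phi^\qd(z,\tilde{x}_n)$ converge locally uniformly on $\C$ as $n\to\infty$, producing entire boundary values at $x=L$ with Wronskian identically $1$. Given a Herglotz--Nevanlinna $\tilde{m}$, the inverse spectral theorem for generalized indefinite strings~\cite[Section~6]{IndefiniteString} yields a string realising $\tilde{m}$ as Weyl--Titchmarsh function; gluing this string at position $L$ onto the Krein--Langer string produces a new generalized indefinite string on $[0,\infty)$ whose Weyl--Titchmarsh function, by Lemma~\ref{lem:GIStrunc} applied at $\ell=L$, is precisely the right-hand side of~\eqref{eq:mapMPsol02}. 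The degenerate case $\tilde{m}\equiv\infty$ corresponds to no gluing and produces the spectral measure of the bare Krein--Langer string.

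The crucial claim is that the resulting $\rho_{\tilde{m}}$ actually solves the moment problem. This I would verify by asymptotic analysis of $m_{\tilde{m}}(z)$ as $z\to\I\infty$: the moments $(s_k)_{k=0}^\infty$ are encoded in the expansion of the Weyl--Titchmarsh function at infinity, and this expansion is determined by the local behavior of the glued string near $x=0$, i.e., by the Krein--Langer string alone; the continued fraction~\eqref{eq:cfracKL} makes this explicit, since its truncation at level $n$ already recovers the first several moments regardless of what is appended beyond $\tilde{x}_n$. Injectivity of $\tilde{m}\mapsto\rho_{\tilde{m}}$ is immediate from Lemma~\ref{lem:GIStrunc}, whose right-hand side is a M\"obius transformation in $\tilde{m}$. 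Surjectivity, which will be the main obstacle, requires that for any solution $\rho$ of the moment problem, the generalized indefinite string associated to its Herglotz--Nevanlinna transform must coincide with $(L,\tilde{\omega},\tilde{\dip})$ on $[0,L)$, since the formulas~\eqref{eqnGISDelApp} recover this segment purely from the moments; the remaining piece beyond $L$ then defines $\tilde{m}$. This last step is precisely where Kac's limit-circle parameterization~\cite{kac99} enters, via the translation of~\cite[Appendix~A]{IndMoment}.

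For the Stieltjes version I would restrict to Stieltjes functions $\tilde{m}$, which by Remark~\ref{rem:KreinSting} corresponds to the appended string being a Krein string. Since $\Psi_{\mathcal{S}}$ produces a Krein--Stieltjes string in the doubly strictly positive setting, the glued string is again a Krein string, hence its Weyl--Titchmarsh function is Stieltjes and $\rho_{\tilde{m}}$ is supported on $[0,\infty)$; the converse uses that any measure supported on $[0,\infty)$ has a Stieltjes Weyl--Titchmarsh function. The principal technical challenge will be justifying the limit passage $x\to L$ uniformly on compact subsets of $\C$, so that the gluing construction transforms entire objects consistently; the summability condition~\eqref{eq:HambIndet} is exactly the input that enables this.
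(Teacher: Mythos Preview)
Your proposal is correct and follows essentially the same approach the paper indicates: the paper does not give a self-contained proof but simply states that the theorem ``follows from the results of I.\ S.\ Kac~\cite{kac99} and the connection between canonical systems and generalized indefinite strings~\cite{IndefiniteString}, \cite[Appendix~A]{IndMoment}'', and the remark immediately after the theorem spells out precisely the gluing interpretation you describe (attaching a generalized indefinite string with Weyl--Titchmarsh function $\tilde{m}$ at the endpoint $L$ and applying Lemma~\ref{lem:GIStrunc}). Your sketch supplies the details the paper leaves to the cited references.
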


Recall that a solution $\rho$ to the Hamburger (respectively, Stieltjes) moment problem is called {\em N-extremal} if polynomials are dense in $L^2(\R;\rho)$ (respectively,  in $L^2([0,\infty);\rho)$). A solution $\rho$ is called $n${\em -canonical of order} $n\in\N\cup\{0\}$ if $\tilde{m}$ in~\eqref{eq:mapMPsol02} is a rational function of order $n$. N-extremal solutions are canonical of order $0$. It is known that for $n$-canonical solutions, polynomials are dense in $L^p(\R;\rho)$ for any $p\in [1,2)$, but not for $p=2$ unless $n=0$. In fact, for an $n$-canonical solution $\rho$ of order $n\in \N\cup\{0\}$, the closure of polynomials in $L^2(\R;\rho)$ is a subspace of codimension $n$ in $L^2(\R;\rho)$. 
For more details we refer to~\cite{akh, sc17}.

\begin{remark}
The above parameterization~\eqref{eq:mapMPsol01}--\eqref{eq:mapMPsol02} of solutions to the indeterminate moment problem was obtained by M.\ G.\ Krein for the Stieltjes moment problem~\cite[Section~13]{kakr74} and it is usually called {\em Krein's parameterization}. In view of Lemma~\ref{lem:GIStrunc}, this parameterization admits a very transparent interpretation. Namely, with a given sequence of moments $(s_k)_{k=0}^\infty$ we can associate, via the map~\eqref{eq:mapPsi}, a generalized indefinite string $(L,\tilde{\omega},\tilde{\dip})$, which is a Krein--Langer string. Moreover, its Weyl--Titchmarsh function $m$ is obtained as a limit of rational Herglotz--Nevanlinna functions~\eqref{eq:cfracKL} as $n\to \infty$. Notice that this function $m$ corresponds to setting $\tilde{m}$ equal to infinity in~\eqref{eq:mapMPsol02}. In order to obtain all solutions, we need to consider Weyl--Titchmarsh functions of all the strings obtained from $(L,\tilde{\omega},\tilde{\dip})$ by attaching a new generalized indefinite string to the right endpoint $L$. 
Then $\tilde{m}$ in~\eqref{eq:mapMPsol02} is nothing but the Weyl--Titchmarsh function of the new string truncated at $x=L$.  
 \end{remark}

The procedure described above allows to characterize N-extremal and $n$-canonical solutions to indeterminate moment problems.

\begin{proposition}\label{prop:N-extrem}
A positive Borel measure $\rho$ on $\R$ is an N-extremal solution to an indeterminate Hamburger moment problem  if and only if the following conditions are satisfied:
\begin{enumerate}[label=(\roman*), ref=(\roman*), leftmargin=*, widest=iii]
\item The support of $\rho$ is an unbounded discrete subset of $\R$.
\item\label{itmNextremii} The Cauchy transform of $\rho$,
\begin{align}
m(z) = \int_\R \frac{\rho(d\lambda)}{\lambda - z},\quad z\in\C\backslash\R,
\end{align}
is the Weyl--Titchmarsh function either of a Krein--Langer string satisfying~\eqref{eq:HambIndet} or of a generalized indefinite string $(L,\tilde{\omega},\tilde{\dip})$ with $L=\infty$ such that there is an $\ell \in (0,\infty)$ such that $(\ell,\tilde{\omega}|_{[0,\ell)},\tilde{\dip}|_{[0,\ell)})$ is a Krein--Langer string and the truncated string $(L-\ell,\tilde{\omega}_\ell,\tilde{\dip}_\ell)$ is such that $\tilde{\omega}_\ell = c\delta_0$ with $c\in\R$ and $\tilde{\dip}_\ell$ is identically zero. 
\end{enumerate}
\end{proposition}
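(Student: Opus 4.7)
My plan is to combine Theorem~\ref{th:KreinParam} with the classical characterization of N-extremal solutions of indeterminate Hamburger moment problems (see, e.g.,~\cite[Chapter~II]{akh}), which states that a solution $\rho$ is N-extremal if and only if its topological support is discrete, if and only if the parameter $\tilde m$ in Nevanlinna's (equivalently Krein's) parameterization is a constant in $\R\cup\{\infty\}$. The classical characterization immediately yields~(i) and reduces~(ii) to identifying the generalized indefinite strings whose Weyl--Titchmarsh functions arise as the right-hand side of~\eqref{eq:mapMPsol02} for constant values of $\tilde m$.

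Consider first $\tilde m\equiv\infty$. Then the right-hand side of~\eqref{eq:mapMPsol02} degenerates to $-\theta(z,L)/[z\phi(z,L)]$, which is precisely the Weyl--Titchmarsh function of the Krein--Langer string $(L,\tilde\omega,\tilde\dip)$ produced from the moment sequence by the map $\Psi_{\mathcal{S}}$ in~\eqref{eq:mapPsi}; indeterminacy of the moment problem forces~\eqref{eq:HambIndet} by Theorem~\ref{th:determinacy}. This yields the first alternative in~(ii).

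The main work is the case $\tilde m\equiv c\in\R$. The key observation is that this constant $c$ is itself the Weyl--Titchmarsh function of the generalized indefinite string on $[0,\infty)$ with coefficients $\tilde\omega=c\delta_0$ and $\tilde\dip\equiv 0$: a direct computation produces the fundamental system $\theta(z,x)=1-czx$, $\phi(z,x)=x$, so that $m(z)=-\lim_{x\to\infty}(1-czx)/(zx)=c$. Applying Lemma~\ref{lem:GIStrunc} with glueing point equal to the length of the original Krein--Langer string (which is finite under indeterminacy by Theorem~\ref{th:determinacy}), the composite generalized indefinite string obtained by appending this trivial half-line has Weyl--Titchmarsh function exactly equal to the right-hand side of~\eqref{eq:mapMPsol02} with $m_\ell\equiv c$; this is precisely the object in the second alternative of~(ii). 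The condition~\eqref{eq:HambIndet} for the Krein--Langer part is automatic here, since it is exactly what is required for the point-mass coefficients on the bounded piece to extend to a valid generalized indefinite string on $[0,\infty)$.

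The converse follows by running these identifications backwards: if the Cauchy transform of $\rho$ has either of the two forms in~(ii), then Lemma~\ref{lem:GIStrunc} (combined with the trivial computation for the constant-$c$ string) shows that it agrees with the right-hand side of~\eqref{eq:mapMPsol02} for some constant $\tilde m\in\R\cup\{\infty\}$, so Theorem~\ref{th:KreinParam} identifies $\rho$ as the associated N-extremal solution. The main obstacle I anticipate is purely notational: one must carefully distinguish the symbol $L$ as used in~(ii) for the length of the total extended string (infinite) from its use in Theorem~\ref{th:KreinParam} and~\eqref{eq:mapPsi} for the length of the underlying Krein--Langer string (the finite $\ell$ of~(ii)), and verify that $\Psi_{\mathcal{S}}$ assigns to the given moment sequence precisely that Krein--Langer string.
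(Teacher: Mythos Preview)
The paper does not give a proof of this proposition; it is stated in the appendix immediately after Theorem~\ref{th:KreinParam} and the explanatory Remark following it, which already contains the interpretation you use (namely, that $\tilde m$ is the Weyl--Titchmarsh function of whatever string one attaches at the right endpoint). Your argument is correct and is exactly the intended one: identify N-extremal solutions with constant parameters $\tilde m\in\R\cup\{\infty\}$, realize each such constant as the Weyl--Titchmarsh function of either the original Krein--Langer string ($\tilde m=\infty$) or of the trivial half-line $(\infty,c\delta_0,0)$ ($\tilde m=c$), and then invoke Lemma~\ref{lem:GIStrunc}.
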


\begin{remark}\label{rem:m-canon}
A few remarks are in order:
\begin{enumerate}[label=(\roman*), ref=(\roman*), leftmargin=*, widest=iii]
\item Notice that if $\rho$ is an N-extremal solution to an indeterminate moment problem, then it corresponds to a Krein--Langer string satisfying~\eqref{eq:HambIndet} if and only if $\rho$ has a mass at zero, $\rho(\{0\})>0$. In this case, the mass is equal to the reciprocal of the length of the corresponding generalized indefinite string.  
\item\label{itmremNextremii} In order to get a description of $n$-canonical solutions, it suffices to allow $L$ to be finite in Proposition~\ref{prop:N-extrem}~\ref{itmNextremii} and also to allow truncated generalized indefinite strings to be Krein--Langer strings with finite $N$ in~\eqref{eqnKL}.
\item The description of N-extremal and $n$-canonical solutions to the Stieltjes moment problem is analogous and goes back to the work of Krein~\cite{kr52}.   
\end{enumerate}
\end{remark}

%%%%%%%%%%%%%%%%%%%%%%%%%%%%%%%%
\section*{Acknowledgments}
We thank Jacob Christiansen and Harald Woracek for useful comments and hints with respect to the literature on the indeterminate moment problem and Guoce Xin for his suggestions on how to effectively graph more peaks. We are also grateful to Jacek Szmigielski for encouraging comments and for his interest in the infinite-peakon dynamics.
%%%%%%%%%%%%%%%%%%%%%%%%%%%%%%%%

%%%%%%%%%%%%%%%%%%%%%
%%%%%%%%%%%%%%%%%%%%% 

\end{document}